\newtheorem{definition}{Definition}
\newtheorem{lemma}{Lemma}
\newtheorem{remark}{Remark}
\newtheorem{theorem}{Theorem}
\newtheorem{proposition}{Proposition}
\newenvironment{proof}{\textit{Proof. }}{\hfill$\Box$}
\newcommand{\Ov}[1]{\overline{#1}}
\newcommand{\vr}{\varrho}
\newcommand{\vt}{\vartheta}
\newcommand{\vu}{\vc{u}}
\newcommand{\vre}{\vr_\ep}
\newcommand{\vte}{\vt_\ep}
\newcommand{\vue}{\vu_\ep}
\newcommand{\vrd}{\vr_\delta}
\newcommand{\vtd}{\vt_\delta}
\newcommand{\vud}{\vu_\delta}
\newcommand{\vc}[1]{{\bf #1}}
\newcommand{\vcg}[1]{\boldsymbol{#1}}
\newcommand{\Div}{{\rm div}\,}
\newcommand{\rot}{{\rm rot}\,}
\newcommand{\bomega}{\vcg{\omega}}
\newcommand{\dist}{{\rm dist}\,}
\newcommand{\Grad}{\nabla}
\newcommand{\tn}[1]{\mbox {\F #1}}
\newcommand{\dx}{{\rm d} {x}}
\newcommand{\dt}{\, {\rm d} {t}}
\newcommand{\dS}{{\rm d} {S}}
\newcommand{\intO}[1]{\int_{\Omega} #1 \ \dx}
\newcommand{\intdO}[1]{\int_{\partial \Omega} #1 \ {\rm d}S}
\newcommand{\AAA}{{\cal A}}
\newcommand{\BBB}{{\cal B}}
\newcommand{\LL}[2]{L_{z^{#1}\ln^{#2}(1+z)}}
\newcommand{\ep}{\varepsilon}
\font\F=msbm10 scaled 1100
\newcommand{\R}{R}
\font\FF=msbm10 scaled 800
\newcommand{\pder}[2]{\frac{\partial #1}{\partial #2}}
\begin{document}

\title{Existence of stationary weak solutions for the
heat conducting flows}

\author{Piotr B. Mucha$^1$,  Milan Pokorn\'y$^2$  and Ewelina Zatorska$^3$ }

\maketitle

\begin{center}
1. University of Warsaw \\
Institute of Applied Mathematics and Mechanics \\
Banacha 2, Warsaw, Poland \\
e-mail: {\tt p.mucha@mimuw.edu.pl}\\
2. Charles University in Prague \\
Faculty of Mathematics and Physics \\
Sokolovsk\'a 83, 186 75 Praha, 
Czech Republic \\
e-mail: {\tt pokorny@karlin.mff.cuni.cz},\\
3. Imperial College London \\
Department of Mathematics\\
London SW7 2AZ, 
United Kingdom \\
e-mail: {\tt e.zatorska@imperial.ac.uk}
\end{center}

\begin{abstract}
The steady compressible Navier--Stokes--Fourier system is considered, with either Dirichlet or Navier boundary conditions for the velocity and the heat flux on the boundary proportional to the difference of the temperature inside and outside.  In dependence on several parameters, i.e. the adiabatic constant $\gamma$ appearing in the pressure law $p(\vr,\vt) \sim \vr^\gamma + \vr \vt$  and the growth exponent in the heat conductivity, i.e. $\kappa(\vt) \sim (1+ \vt^m)$, and without any restriction on the size of the data,  the main ideas of the construction of weak and variational entropy solutions for the three-dimensional flows with temperature dependent viscosity coefficients are explained. Further, the case when it is possible to prove existence of solutions with bounded density is reviewed. The main changes in the construction of solutions for the two-dimensional flows are mentioned and finally, results for more complex systems are reviewed, where the steady compressible Navier--Stokes--Fourier equations play an important role.
\end{abstract}

\section{Introduction}

This survey paper is devoted to the study of {weak} and {variational entropy solutions} to the system of partial differential equations describing the steady flow of a heat-conducting compressible Newtonian fluid, i.e. 
we consider the  {steady compressible Navier--Stokes--Fourier system}. The fact that we want to have solutions for arbitrary large data results in necessity of dealing with weak (or variational entropy) solutions; the strong or classical solutions are not known to exist, even for arbitrarily regular data.

Note further that we must be more careful with the choice of correct boundary conditions (b.c.). Recall that we have to allow the energy exchange through the boundary as for thermally and mechanically insulated boundary the steady solutions may not exist. More precisely, considering the evolutionary system with mechanically and thermally insulated boundary and a time-independent external force, either the energy of the system grows to infinity, or the force is potential and the velocity tends to zero, the temperature to a constant, and the density solves a certain simple first order partial differential equation, see \cite{FeNo_Stab}.

The steady compressible Navier--Stokes--Fourier system attracted more attention in the last few years. Even though the existence of weak solutions to the steady compressible Navier-Stokes system (i.e. the isentropic system, or the system, where the exchange of the heat is negligible with respect to other processes) 
has been studied already in the seminal monograph \cite{Li_Book2} by the end of the last century, the first results for the system studied here go back to the end of the first decade of this century. Indeed, P.-L. Lions in his monograph considered the system of equations describing the steady flow of a heat conducting compressible fluid, however, he assumed that the density of the fluid is bounded {\it a priori}  in some $L^p$-space for $p$ sufficiently large. As we shall see later, to prove the bound of the density in a better space than $L^1$ is one of the main difficulties for our system of equations. The $L^1$-norm, i.e. the total mass of the fluid, is a quantity which must be known and hence, in a physically reasonable model, it is the only given bound for the density we may expect.

The first existence result for such formulation appeared in 2009 in the paper \cite{MuPo_CMP}. The proof was based on the technique developed for the stationary Navier--Stokes equations in the papers \cite{MuPo_Nonlinearity} and \cite{MuPo_DCDS}, 
which for the {Navier boundary conditions} for the velocity and sufficiently large adiabatic exponent $\gamma$ allowed to prove existence of solutions with bounded density and ``almost bounded'' velocity and temperature gradients,
 see Chapter 12.1.1 for more details.  Note that in this case even the internal energy balance is valid. This result was later  extended in \cite{MuPo_M3AS} to a larger interval 
for $\gamma$ and also for the homogeneous {Dirichlet boundary conditions} (replacing the internal energy balance by the total one), however the value of $\gamma$ still remained far above the largest physically reasonable value $\gamma = 5/3$, i.e. the monoatomic gas model. Note also that both above mentioned results were proved for a three-dimensional domain and for viscosities which depend neither on the temperature, nor on the density of the fluid.  The corresponding result in the two-dimensional case can be found in \cite{PePo_CMUC}.

Later on, in \cite{NoPo_JDE} and \cite{NoPo_SIMA} the authors observed that using directly the estimates from the entropy inequality one can obtain much better results, especially when additionally the viscosity coefficients depend on the temperature as $\sim (1+\vt)$. The existence of a solution for the Navier boundary conditions and any $\gamma >1$ was shown in \cite{JeNoPo_M3AS}. Note that the recently published paper \cite{Zhong_15} claims the existence of weak solutions for the homogeneous Dirichlet boundary conditions for the velocity under the same conditions which guarantee the existence of weak solutions in the case of the  Navier boundary conditions.  However, the authors of this paper are strongly convinced that the proof contains a gap in the part concerning the estimates of the density near the boundary. 

Analogous problems (for the Dirichlet boundary conditions) in two space dimensions were studied in \cite{NoPo_AppMa} and \cite{Po_JPDE} in the context of Orlicz spaces for the density. Finally, the situation when the viscosity behaves as $\sim (1+\vt^\alpha)$, $\alpha \in [0,1)$ is the subject of the forthcoming paper \cite{KrPo_Prep}. Some partial results in this direction can be found in \cite{KrNePo_ZAMP_2013}, where only the case $\gamma>\frac 32$ has been studied.

The paper is organized as follows. In the next section we introduce the model, the rheological relations as well as the thermodynamical concept that we  use. Then we introduce the notions of weak and variational entropy solutions and present the main existence results in the case of the three-dimensional domains. Next section  contains {\it a priori} estimates for our system (only for the adiabatic constant $\gamma > \frac 32$) to demonstrate why it is reasonable to consider two different definitions of the solutions. Following section   contains all the necessary mathematical tools to deal with our problem. Next we present four approximation levels for our problem and briefly explain how to prove existence for the last one and how to pass through several levels to the first one. In the subsequent section  we show {\it a priori} estimates independent of the first approximation parameter and the following section  contains the ideas of the limit passage to the original problem. Note that we restrict ourselves to the viscosity coefficients proportional to $\sim (1+\vt)$, i.e. to $\alpha =1$. 
The next part of the paper is devoted to presentation of existence results for several related systems. First we discuss some ideas for proof of the existence of more regular solutions for $\gamma >3$ in three space dimensions for constant viscosity coefficients and the Navier boundary conditions. Further we comment on the results in two space dimensions.  Finally, we briefly mention few results for more complex system as e.g. the steady flow with radiation or the steady flow of chemically reacting gaseous mixture. 

We also emphasize that results for the steady state solutions are an important step in analysis of {time periodic solutions}. Due to 
estimates constructed in \cite{MuPo_CMP} and \cite{NoPo_JDE} it was possible to prove existence of a weak time periodic solutions to system (\ref{1.1})--(\ref{1.3}) in the paper \cite{FMNP}.
The result has been generalized in \cite{AxPo} .

In the whole paper, we use standard notation for the Lebesgue space
$L^p(\Omega)$ endowed with the norm $\|\cdot\|_{p,\Omega}$ and
Sobolev spaces $W^{k,p}(\Omega)$ endowed with the norm
$\|\cdot\|_{k,p,\Omega}$. If no confusion may arise, we skip the
domain $\Omega$ in the norm. The vector-valued functions will be
printed in bold face, the tensor-valued functions with a special
font. Moreover, we will use notation $\vr \in L^p(\Omega)$,
$\vu \in L^p(\Omega;R^3)$, and $\tn{S} \in L^p(\Omega;R^{3\times
3})$. The generic constants are denoted by $C$ or $c$ and their values
may change even in the same formula or in the same line. We also
use summation convention over twice repeated indeces, from $1$ to
$N$; e.g. $u_i v_i$ means $\sum_{i=1}^N u_i v_i$, where $N=2$ or $3$.

\section{The model}

The steady flow of a compressible heat-conducting fluid in a bounded domain $\Omega \subset R^N$, $N=2$ or $3$, with sufficiently smooth boundary, can be described as follows 
\begin{equation} \label{1.1}
\Div (\vr \vu) = 0,
\end{equation}
\begin{equation} \label{1.2}
\Div (\vr \vu \otimes \vu) - \Div \tn{S} + \Grad p = \vr \vc{f},
\end{equation}
\begin{equation} \label{1.3}
\Div (\vr E \vu) = \vr \vc{f} \cdot \vu - \Div (p \vu) + \Div (\tn{S} \vu) -\Div \vc{q}.
\end{equation}

Here, $\vr \geq 0$ is the density of the fluid, $\vu$ is the velocity
field, $\tn{S}$  is the viscous part of the stress tensor, $p$ is the
pressure, $\vc{f}$ is the vector of specific external forces, $E$ is the
specific total energy, and $\vc{q}$ is the heat flux.   System
(\ref{1.1})--(\ref{1.3}) will be endowed with the boundary conditions on $\partial \Omega$
\begin{equation} \label{1.4}
\vu = \vc{0},
\end{equation}
(i.e. the homogeneous Dirichlet boundary conditions for the velocity), or
\begin{equation} \label{1.4a}
\vu\cdot \vc{n} = 0, \qquad (\tn{S} \vc{n}) \cdot \vcg{\tau} + \lambda \vu \cdot \vcg{\tau} = 0
\end{equation}
(i.e. the Navier slip boundary conditions for the velocity, where $\lambda \geq 0$ is the slip coefficient and $\vcg{\tau}$ denotes the tangent vector to the boundary), and
\begin{equation} \label{1.5}
-\vc{q} \cdot \vc{n} + L(\vt) (\vt - \Theta_0) = 0
\end{equation}
(i.e. the {Newton type boundary conditions} for the temperature; here
$\Theta_0 > 0$ is a given temperature outside $\Omega$).

We also assume that the total mass is given,
\begin{equation} \label{1.5a}
\intO {\vr}  = M >0.
\end{equation}

In what follows we specify the constitutive laws for our gas.
We will assume that the viscous part of the stress tensor obeys the {Stokes law}  for Newtonian fluids, namely
\begin{equation} \label{1.6}
\tn{S} = \tn{S}(\vt,\Grad\vu) = \mu(\vt) \Big[\nabla \vu + (\nabla \vu)^T
- \frac 2N \Div \vu \tn{I}\Big] + \xi(\vt) \Div \vu \tn{I}
\end{equation}
with $\mu(\cdot)$, $\xi(\cdot)$ continuous functions such that
\begin{equation} \label{1.7}
c_1 (1+\vt)^\alpha \leq \mu(\vt) \leq c_2(1+\vt)^\alpha, \qquad 0 \leq \xi(\vt) \leq c_2(1+\vt)^\alpha
\end{equation}
with some $0\leq \alpha \leq 1$.
Moreover, the function $\mu(\cdot)$ is additionally globally Lipschitz on $R^+_0$. 

The heat flux satisfies the {Fourier law}, i.e.
\begin{equation} \label{1.8}
\vc{q} = -\kappa(\vt) \nabla \vt,
\end{equation}
where 
\begin{equation} \label{1.9}
\displaystyle
\kappa(\cdot) \in C([0,\infty)), \qquad c_3(1+\vt^m) \leq \kappa(\vt) \leq c_4 (1+\vt^m), 
\end{equation}
with $m>0$. The coefficient $L(\vt)$ in (\ref{1.5}) satisfies
\begin{equation} \label{1.10}
L(\cdot) \in C([0,\infty)), \qquad c_5(1+\vt)^l \leq L(\vt) \leq c_6 (1+\vt)^l, \quad l\in \R.
\end{equation}

The specific total energy reads
\begin{equation} \label{1.11}
E(\vr,\vt,\vu) = \frac 12 |\vu|^2 + e(\vr,\vt),
\end{equation}
where $e(\cdot,\cdot)$ is the specific internal energy. We will
consider a gas law in the form
\begin{equation} \label{1.12}
p(\vr,\vt)= (\gamma-1) \vr e(\vr,\vt),\quad \mbox{where $\gamma
>1$}.
\end{equation}
This constitutive relation includes at least two physically relevant cases: if  $\gamma = 5/3$ it is
the generic law for the
monoatomic gases, while if  $\gamma = 4/3$ it describes the so-called relativistic gas, cf. \cite{EGH_Book}.

In agreement with the second law of thermodynamics, we postulate
the existence of a differentiable function $s(\vr,\vt)$ called the specific entropy which is (up to an additive
constant) given by the {Gibbs relation}
\begin{equation} \label{G}
\frac{1}{\vartheta} \Big(\vc{D}e(\vr,\vt) + p(\vr,\vt)\vc{D}\Big(\frac{1}{\vr}\Big)\Big) = \vc{D}s(\vr,\vt).
\end{equation}
Due to (\ref{G}) and (\ref{1.1})--(\ref{1.3}), the specific entropy
obeys the entropy equation
\begin{equation}\label{EI}
{\rm div}(\vr s\vu) +{\rm div}\Big(\frac{\vc{ q}}\vt\Big)=\frac
{\tn{S}:\nabla\vu}\vt-\frac {\vc{ q}\cdot\nabla\vt}{\vt^2}.
\end{equation}

It is easy to verify that the functions $p$ and $e$ are compatible
with the existence of entropy if and only if they satisfy the
{Maxwell relation}
\begin{equation} \label{1.19}
\pder{e(\vr,\vt)}{\vr} = \frac{1}{\vr^2} \Big(p(\vr,\vt) - \vt
\pder{p(\vr,\vt)}{\vt}\Big).
\end{equation}
Consequently, if $p \in C^1((0,\infty)^2)$, then it has necessarily
the form
\begin{equation} \label{1.14}
p(\vr,\vt) = \vt^{\frac{\gamma}{\gamma-1}} P\Big(\frac{\rho}{\vt^{\frac {1}{\gamma-1}}}\Big),
\end{equation}
where $P \in C^1(0,\infty)$.

We will assume that
\begin{equation} \label{1.18}
\begin{array}{c}
\displaystyle P(\cdot) \in C^1([0,\infty)) \cap C^2(0,\infty), \\
\displaystyle P(0) = 0, \quad P'(0) = p_0 >0, \quad P'(Z) >0, \quad Z>0,\\
\displaystyle \lim_{Z \to \infty} \frac{P(Z)}{Z^{\gamma}} = p_\infty >0, \\[10pt]
\displaystyle 0 < \frac{1}{\gamma-1} \frac{\gamma P(Z) - Z P'(Z)}{Z} \leq c_7 <\infty, \quad Z>0.
\end{array}
\end{equation}
For more details about (\ref{1.14}) and about physical motivation
for assumptions (\ref{1.18}) see e.g. \cite[Sections 1.4.2 and
3.2]{FeNo_Book}.  The consequences of these assumptions  are listed below.

Exactly the same results, modulo minor
modifications in the proofs, can be obtained with the constitutive
laws
\begin{equation}\label{1.18a}
p(\vr,\vt)=\vr^\gamma+\vr\vt,\quad e(\vr,\vt)=\frac
1{\gamma-1}\vr^{\gamma-1}+ c_v\vt,\;\mbox{with}\; c_v>0,
\end{equation}
whose physical relevance is discussed in \cite{Fe_Book}.
   
We will need several elementary properties of the functions
$p(\vr,\vt)$, $e(\vr,\vt)$ and the entropy $s(\vr,\vt)$. They 
follow more or less directly from  (\ref{1.12})--(\ref{1.18}). We will only
list them referring to \cite[Section 3.2]{FeNo_Book} for more details. Therein, the case $\gamma = \frac 53$ is considered, however, the computations for general $\gamma >1$ are exactly the same.

We have for $K$ a fixed constant
\begin{equation} \label{1.24}
\begin{array}{lclcl}
\displaystyle
c_8 \vr \vt &\leq & p(\vr,\vt) & \leq & c_9 \vr\vt, \mbox{ for } \vr \leq K \vt^{\frac{1}{\gamma-1}}, \\
c_{10} \vr^{\gamma} &\leq & p(\vr,\vt) & \leq & c_{11} \left\{\begin{array}{ll}
\vt^{\frac{\gamma}{\gamma-1}}, &  \mbox{ for } \vr \leq K \vt^{\frac{1}{\gamma-1}}, \\
\vr^\gamma, & \mbox{ for } \vr > K \vt^{\frac{1}{\gamma-1}}.
\end{array}\right.
\end{array}
\end{equation}
Further
\begin{equation} \label{1.25a}
\begin{array}{c}
\displaystyle \pder{p(\vr,\vt)}{\vr} > 0 \qquad \mbox{ in } (0,\infty)^2, \\
\displaystyle p = d \vr^\gamma +p_m(\vr,\vt) , \quad d>0, \qquad \mbox{ with } \quad \pder
{p_m(\vr,\vt)}{\vr} > 0 \qquad \mbox { in } (0,\infty)^2.
\end{array}
\end{equation}
For the specific internal energy defined by (\ref{1.12}) it follows
\begin{equation} \label{1.26}
\left.\begin{array}{c}
\displaystyle \frac{1}{\gamma-1} p_\infty \vr^{\gamma-1} \leq e(\vr,\vt) \leq c_{12} (\vr^{\gamma-1} + \vt),  \\
\displaystyle
\pder{e(\vr,\vt)}{\vr} \vr \leq c_{13} (\vr^{\gamma-1} + \vt)
\end{array}\right\} \mbox{ in } (0,\infty)^2.
\end{equation}
Moreover, for the specific entropy $s(\vr,\vt)$ defined by the Gibbs law (\ref{G}) we have
\begin{equation} \label{1.27}
\begin{array}{c}
\displaystyle \pder{s(\vr,\vt)}{\vr} = \frac{1}{\vt}\Big(-\frac{p(\vr,\vt)}{\vr^2} +
\pder{e(\vr,\vt)}{\vr}\Big) = -\frac{1}{\vr^2} \pder{p(\vr,\vt)}{\vt}, \\
\displaystyle \pder{s(\vr,\vt)}{\vartheta} = \frac{1}{\vartheta} \pder{e(\vr,\vt)}{\vartheta}
= \frac{1}{\gamma-1} \frac{\vt^{\frac{1}{\gamma-1}}}{\vr}
\Big(\gamma P\Big(\frac{\vr}{\vt^{\frac{1}{\gamma-1}}}\Big) - \frac{\vr}{\vt^{\frac{1}{\gamma-1}}}
P'\Big(\frac{\vr}{\vt^{\frac{1}{\gamma-1}}}\Big)\Big) >0.
\end{array}
\end{equation}
We also have for suitable choice of the additive constant in the definition of the specific entropy
\begin{equation} \label{1.29}
\begin{array}{lclcl}
\displaystyle |s(\vr,\vt)| & \leq & c_{14}(1+ |\ln \vr| + |\ln \vt|) \qquad & \mbox{ in } & (0,\infty)^2, \\
\displaystyle
|s(\vr,\vt)| & \leq & c_{15} (1 + |\ln \vr|)  \qquad & \mbox{ in } & (0,\infty) \times (1,\infty), \\
\displaystyle s(\vr,\vt) &\geq & c_{16} >0  \qquad & \mbox{ in } & (0,1) \times (1,\infty), \\
\displaystyle s(\vr,\vt) & \geq & c_{17}(1+ \ln \vt)  \qquad & \mbox{ in } & (0,1) \times (0,1).
\end{array}
\end{equation}

\section{Weak and variational entropy solutions. Main results}

In this section we present definitions of weak and variational entropy solutions to our problem. They differ in the following way: for the weak solution we require that our functions $(\vr,\vu,\vt)$ fulfill all equations of system (\ref{1.1})--(\ref{1.3}) in the distributional sense, while for the variational entropy solutions we do not require \eqref{1.3} to hold. Indeed, in some situations (we shall demonstrate this in the following section) we do not have sufficient regularity, hence the term $\vr |\vu|^2\vu$ from the total energy balance may not be integrable. One possible remedy is to consider the internal energy balance. We shall comment on this later; here let us only mention that the internal energy balance contains term like $\tn{S}(\vt,\Grad \vu):\Grad \vu$ which is possible to control only in $L^1(\Omega)$ and thus any limit passage in this term is difficult to perform. Therefore we shall use another possibility, namely, we replace the total energy balance by the entropy inequality. The reason why we cannot expect the entropy balance to hold is the fact that we are not able to keep equality in the limit passages in two terms and we are obliged to use the weak lower semicontinuity therein. At the first glance it looks like we generalized the definition of a solution too much. On the other hand, if we add to the entropy inequality the identity called  {\it the global total energy balance} which is the total energy balance integrated over $\Omega$ (here, the unpleasant term $\vr |\vu|^2\vu$ disappears), we end up with a system for which it is possible to show that any regular solution fulfilling three equalities (weak formulation for the continuity equation and for the balance of momentum, and the global total energy balance) together with one inequality (the entropy one) is in fact a classical solution to (\ref{1.1})--(\ref{1.3}), 
i.e. the {weak--strong compatibility} holds.

In order to simplify the situation we shall assume that our domain $\Omega$ in the case of the Navier boundary conditions is not axially symmetric. It is connected with the form of the Korn inequality valid in this case. If $\Omega$ is axially symmetric, we have to assume that $\lambda >0$ in (\ref{1.4a}); the results in this situation can be found in \cite{JeNoPo_M3AS}. We shall comment on them later.
We have also  to distinguish between the solution to the Dirichlet boundary conditions (\ref{1.4}) and the Navier boundary conditions (\ref{1.4a}). Moreover, we mostly consider only the case $N=3$. Finally, we take $\alpha =1$ in \eqref{1.7}.

We have

\begin{definition}[weak solution for the Dirichlet b.c.] \label {d 1.1}
The triple $(\vr,\vu,\vt)$ is called a weak solution to system (\ref{1.1})--(\ref{1.4}), (\ref{1.5})--(\ref{1.18}),
if $\vr \in L^{\frac {6\gamma}{5}}(\Omega)$, $\intO{\vr} = M$, $\vu \in W^{1,2}_0(\Omega;\R^3)$,
$\vt \in W^{1,r}(\Omega) \cap L^{3m}(\Omega) \cap L^{l+1}(\partial\Omega)$, $r>1$ with $\vr |\vu|^2 \in L^{\frac 65}(\Omega)$,
$\vr \vu \vt \in L^1(\Omega;\R^3)$, $\tn{S}(\vt,\Grad \vu) \vu \in L^1(\Omega;\R^3)$, $\vt^m \nabla \vt \in L^1(\Omega;\R^3)$, and
\begin{equation} \label{1.21}
\intO {\vr \vu \cdot \nabla \psi} = 0 \qquad \forall \psi \in C^1(\overline{\Omega}),
\end{equation}
\begin{equation} \label{1.22}
\begin{array}{c}
\displaystyle 
\intO {\big(-\vr (\vu\otimes \vu) : \nabla \vcg{\varphi} - p(\vr,\vt) \Div \vcg{\varphi} +
\tn{S} (\vt,\Grad \vu):\nabla \vcg{\varphi}\big)} \\
\displaystyle = \intO{\vr \vc{f} \cdot \vcg{\varphi}} \quad \forall \vcg{\varphi} \in C^1_0(\Omega;\R^3),
\end{array}
\end{equation}
\begin{equation} \label{1.23}
\begin{array}{c}
\displaystyle \intO {-\Big(\frac 12 \vr |\vu|^2 + \vr e(\vr,\vt)\Big) \vu \cdot \nabla \psi } =
\intO {\big(\vr \vc{f} \cdot \vu \psi + p(\vr,\vt) \vu \cdot \nabla \psi\big)}  \\[8pt]
\displaystyle - \intO{\big(\big(\tn{S}(\vt,\Grad \vu) \vu\big) \cdot \nabla \psi+
\kappa(\cdot,\vt) \nabla \vt \cdot \nabla \psi\big)} \\
- \intdO{
L(\vt)(\vt-\Theta_0) \psi} \qquad \forall \psi \in C^1(\overline{\Omega}).
\end{array}
\end{equation}
\end{definition}

We denote 
$$
W^{1,p}_{\vc{n}}(\Omega;\R^3) = \{\vu \in W^{1,p}(\Omega;\R^3); \vu \cdot \vc{n} = 0 \mbox{ in the sense of traces.}\}
$$
Similarly the space $C^1_{\vc{n}}(\Omega;R^3)$ contains all differentiable functions with zero normal trace at $\partial \Omega$.
Then we have 

\begin{definition}[weak solution for the Navier b.c.] \label {d 1.1a}
The triple $(\vr,\vu,\vt)$ is called a weak solution to system (\ref{1.1})--(\ref{1.3}), (\ref{1.4a})--(\ref{1.18}),
if $\vr \in L^{\frac {6\gamma}{5}}(\Omega)$, $\intO{\vr} = M$, $\vu \in W^{1,2}_{\vc{n}}(\Omega;\R^3)$,
$\vt \in W^{1,r}(\Omega) \cap L^{3m}(\Omega) \cap L^{l+1}(\partial\Omega)$, $r>1$ with $\vr |\vu|^2 \in L^{\frac 65}(\Omega)$,
$\vr \vu \vt \in L^1(\Omega;\R^3)$, $\tn{S}(\vt,\Grad \vu) \vu \in L^1(\Omega;\R^3)$, $\vt^m \nabla \vt \in L^1(\Omega;\R^3)$. Moreover, the continuity equation is satisfied in the sense as in (\ref{1.21}), and
\begin{equation} \label{1.22a}
\begin{array}{c}
\displaystyle
\intO {\big(-\vr (\vu\otimes \vu) : \nabla \vcg{\varphi} - p(\vr,\vt) \Div \vcg{\varphi} +
\tn{S} (\vt,\Grad \vu):\nabla \vcg{\varphi}\big)}  + \lambda \intdO{\vu \cdot \vcg{\varphi}} \\
\displaystyle = \intO{\vr \vc{f} \cdot \vcg{\varphi}} \quad \forall \vcg{\varphi} \in C^1_{\vc{n}}(\Omega;\R^3),
\end{array}
\end{equation}
\begin{equation} \label{1.23aa}
\begin{array}{c}
\displaystyle \intO {-\Big(\frac 12 \vr |\vu|^2 + \vr e(\vr,\vt)\Big) \vu \cdot \nabla \psi } =
\intO {\big(\vr \vc{f} \cdot \vu \psi + p(\vr,\vt) \vu \cdot \nabla \psi\big)}  \\[8pt]
\displaystyle - \intO{\big(\big(\tn{S}(\vt,\Grad \vu) \vu\big) \cdot \nabla \psi  +
\kappa(\cdot,\vt) \nabla \vt \cdot \nabla \psi\big)} \\
\displaystyle- \intdO{
L(\vt)(\vt-\Theta_0) \psi} -\lambda \intdO{|\vu|^2\psi} \,\,\, \forall \psi \in C^1(\overline{\Omega}).
\end{array}
\end{equation}
\end{definition}

\begin{definition}[variational entropy solution for the Dirichlet b.c.] \label{d 1.2}
The triple $(\vr,\vu,\vt)$ is called a variational entropy
solution to system (\ref{1.1})--(\ref{1.4}), (\ref{1.5})--(\ref{1.18}), if $\vr  \in L^\gamma
(\Omega)$, $\intO{\vr} = M$, $\vu \in W^{1,2}_0(\Omega;\R^3)$, $\vt \in
W^{1,r}(\Omega) \cap L^{3m}(\Omega) \cap L^{l+1}(\partial\Omega)$, $r>1$, with $\vr \vu \in  L^{\frac 65}(\Omega;\R^3)$, $\vr
\vt \in L^1(\Omega)$, $\vt^{-1} \tn{S}(\vt,\Grad \vu) \vu \in
L^1(\Omega)$, $L(\vt), \frac{L(\vt)}{\vt} \in
L^1(\partial\Omega)$, $\kappa(\vt)\frac{|\nabla
\vt|^2}{\vt^2} \in L^1(\Omega)$ and $
\kappa(\vt)\frac{\nabla \vt}{\vt} \in L^1(\Omega;\R^3)$. Moreover, 
equalities (\ref{1.21}) and (\ref{1.22}) are satisfied in the same
sense as in Definition \ref{d 1.1}, and we have the entropy
inequality
\begin{equation} \label{1.23a}
\begin{array}{c}
\displaystyle \intO{\Big(\frac{\tn{S}(\vt,\Grad \vu):\nabla \vu}{\vt} +
\kappa(\vt) \frac{|\nabla \vt|^2}{\vt^2}\Big)\psi} +
\intdO{ \frac{L(\vt)}{\vt}
\Theta_0 \psi} \\
\displaystyle \leq \intdO{ L(\vt)\psi}
 + \intO {\Big(  \kappa(\vt) \frac{\nabla \vt\cdot\nabla
\psi}{\vt}-\vr s(\vr,\vt)\vu\cdot \nabla \psi \Big)}
\end{array}
\end{equation}
for all non-negative $\psi \in C^1(\overline{\Omega})$, together with the global total energy balance
\begin{equation} \label{1.23ab}
\intdO{ L(\vt)(\vt-\Theta_0)}  = \intO {\vr \vc{f} \cdot \vu} .
\end{equation}
\end{definition}

Similarly as above we have 

\begin{definition}[variational entropy solution for the Navier b.c.] \label{d 1.2a}
The triple $(\vr,\vu,\vt)$ is called a variational entropy
solution to system (\ref{1.1})--(\ref{1.3}), (\ref{1.4a})--(\ref{1.18}), if $\vr  \in L^\gamma
(\Omega)$, $\intO{\vr} = M$, $\vu \in W^{1,2}_{\vc{n}}(\Omega;\R^3)$, $\vt \in
W^{1,r}(\Omega) \cap L^{3m}(\Omega) \cap L^{l+1}(\partial\Omega)$, $r>1$, with $\vr \vu \in  L^{\frac 65}(\Omega;\R^3)$, $\vr
\vt \in L^1(\Omega)$, $\vt^{-1} \tn{S}(\vt,\Grad \vu) \vu \in
L^1(\Omega)$, $L(\vt), \frac{L(\vt)}{\vt} \in
L^1(\partial\Omega)$, $\kappa(\vt)\frac{|\nabla
\vt|^2}{\vt^2} \in L^1(\Omega)$ and $
\kappa(\vt)\frac{\nabla \vt}{\vt} \in L^1(\Omega;\R^3)$. Moreover, 
equalities (\ref{1.21}) and (\ref{1.22a}) are satisfied in the same
sense as in Definition \ref{d 1.1a}, we have the entropy
inequality (\ref{1.23}) in the same sense as in Definition \ref{d 1.2}, together with the global total energy balance
\begin{equation} \label{1.23bb}
\lambda \intdO{|\vu|^2} + \intdO{ L(\vt)(\vt-\Theta_0)}  = \intO {\vr \vc{f} \cdot \vu} .
\end{equation}
\end{definition}

\begin{remark} \label{r 1.1a}
{\rm As mentioned above, any solution in the sense of Definitions \ref{d 1.2} or \ref{d 1.2a} 
which is sufficiently smooth is actually a classical solution to the corresponding problem. 
It can be shown exactly as in the case
of the evolutionary system and we refer to \cite[Chapter 2]{FeNo_Book}
for more details. Indeed, the same holds also for the weak solutions, i.e. for Definitions \ref{d 1.1} and \ref{d 1.1a}, where the proof is straightforward.}
\end{remark}

We will also need the notion of the {renormalized solution} to the continuity equation

\begin{definition}[renormalized solution to the continuity equation] \label{d 1.3}
Let $\vu \in W^{1,2}_{loc}(\R^3;\R^3)$ and $\vr \in L^{\frac 65}_{loc}(\R^3)$ solve
$$
\Div (\vr \vu) = 0 \mbox{ in } {\cal D}'(\R^3).
$$
Then the pair $(\vr,\vu)$ is called a renormalized solution to the continuity equation, if
\begin{equation} \label{1.23c}
\Div (b(\vr) \vu) + \big(\vr b'(\vr) - b(\vr)\big) \Div \vu = 0 \mbox{ in } {\cal D}'(\R^3)
\end{equation}
for all $b \in C^1([0,\infty)) \cap W^{1,\infty}(0,\infty)$ with $zb'(z) \in L^\infty(0,\infty)$.
\end{definition}
  
The main results read

\begin{theorem}[Dirichlet boundary conditions; Novotn\'y, Pokorn\'y, 2011] \label{t 1.1}
Let $\Omega \in C^2$ be a bounded domain in $\R^3$, $\vc{f} \in
L^\infty(\Omega;\R^3)$, $\Theta_0 \geq K_0 >0$ a.e. at $\partial
\Omega$, $\Theta_0 \in L^1(\partial \Omega)$. Let $\gamma >1$, $m > \max\big\{\frac 23,\frac{2}{3(\gamma-1)}\big\}$, $l=0$.
Then there exists a variational entropy solution to
(\ref{1.1})--(\ref{1.4}), (\ref{1.5})--(\ref{1.18}) in the sense of Definition \ref{d 1.2}.
Moreover, $\vr \geq 0$, $\vt >0$ a.e. in $\Omega$ and $(\vr,\vu)$ is
a renormalized solution to the continuity equation.

In addition, if $m>\max\{1, \frac {2\gamma}{3(3\gamma-4)}\}$ and $\gamma >\frac 43$, then the solution is a weak solution
in the sense of Definition \ref{d 1.1}.
\end{theorem}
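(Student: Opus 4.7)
My plan is to build solutions as limits of a multi-level approximation scheme. I would fix $\ep, \delta > 0$ and an integer $N$ and consider the regularized problem obtained by (i) adding an artificial viscosity $-\ep \Delta \vr$ to the continuity equation together with a mass-preserving right-hand side so that (\ref{1.5a}) is respected, (ii) augmenting the pressure by $\delta(\vr^\beta + \vr^2)$ for a large exponent $\beta$ so as to secure a priori higher integrability of the density, (iii) adding a regularizing source to the internal energy balance that keeps the approximate temperature bounded away from zero, and (iv) projecting the momentum equation onto a finite-dimensional space $X_N$ of smooth vector fields. For fixed $(\ep,\delta,N)$ I would solve the approximate system by a Leray--Schauder fixed point argument: given a candidate $\vu \in X_N$, the elliptic continuity equation produces a smooth positive density, a maximum principle on the regularized heat equation yields a strictly positive $\vt$, and the finite-dimensional momentum problem closes the loop. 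At this level every balance in (\ref{1.1})--(\ref{1.3}) holds pointwise, as does the entropy balance derived from the Gibbs relation (\ref{G}).

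Next I would extract uniform estimates from the entropy balance and the integrated total energy balance (\ref{1.23ab}). Setting $\psi \equiv 1$ in the entropy identity and using the growth assumptions (\ref{1.7})--(\ref{1.10}) together with (\ref{1.29}) gives the master inequality
\begin{equation*}
\intO{\frac{\tn{S}(\vt,\Grad\vu):\Grad\vu}{\vt}} + \intO{\kappa(\vt)\frac{|\Grad\vt|^2}{\vt^2}} + \intdO{\frac{L(\vt)\Theta_0}{\vt}} \leq \intdO{L(\vt)} + \intO{\vr \vc{f}\cdot\vu}.
\end{equation*}
Combined with the Korn inequality and the Sobolev embedding, this should deliver $\vu \in W^{1,2}_0(\Omega;\R^3)$ together with $\Grad \vt^{m/2}$ and $\Grad \ln \vt$ in $L^2(\Omega)$, and hence $\vt \in L^{3m}(\Omega) \cap L^{l+1}(\partial\Omega)$. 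To obtain density integrability, I would test the momentum equation with the Bogovskii corrector $\mathcal{B}[\vr^\theta - (\vr^\theta)_\Omega]$ for a small $\theta > 0$; iterating and interpolating against the $L^6$ bound on $\vu$ should reach the target $\vr \in L^{6\gamma/5}$ in exactly the declared range $m > \max\{\frac{2}{3}, \frac{2}{3(\gamma-1)}\}$.

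Finally I would carry out the limit passage successively in $N$, then $\delta$, then $\ep$. Routine weak and weak-$\ast$ compactness arguments, supplemented by Div--Curl type lemmas, handle the limits of the fluxes $\vr \vu$, $\vr \vu \otimes \vu$ and $\vr s(\vr,\vt)\vu$. The decisive step, and the main obstacle I expect, is strong convergence of the density at the last two limits: here I would invoke the Lions--Feireisl effective viscous flux identity, adapted to the temperature-dependent viscosity via a commutator estimate involving the Riesz transform, together with the renormalized continuity equation in the sense of Definition \ref{d 1.3} and a uniform bound on the oscillation defect measure. Since only weak lower semicontinuity is available for the dissipation $\tn{S}:\Grad\vu/\vt$ and for $\kappa(\vt)|\Grad\vt|^2/\vt^2$, the entropy balance will degenerate in the limit into the inequality (\ref{1.23a}); this, combined with the preserved global total energy balance, produces a variational entropy solution. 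To upgrade to a weak solution under $\gamma > \frac{4}{3}$ and the sharper assumption on $m$, I would observe that the bounds $\vr \in L^{6\gamma/5}$ and $\vu \in L^6$ make $\vr |\vu|^2 \vu$ uniformly integrable precisely in this range, so that the full distributional total energy balance (\ref{1.23}) can be recovered from the approximate internal and kinetic energy equations and Definition \ref{d 1.1} verified.
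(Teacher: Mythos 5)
Your blueprint reproduces the outer structure of the argument in the paper (Galerkin projection, artificial elliptic regularization of the continuity equation, $\delta$-augmented pressure, entropy/energy estimates, effective viscous flux with Riesz commutators, oscillation defect measure, renormalized continuity equation), so the overall scaffolding is right. But the step on which the entire stated range of $\gamma$ rests is not what you describe, and as written your plan would prove a strictly weaker theorem.

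The gap is in the density estimate. Testing the momentum equation with the Bogovskii corrector $\mathcal{B}\big[\vr^\theta-(\vr^\theta)_\Omega\big]$ and interpolating against $\vu \in L^6$ produces exactly the constraint recorded in \eqref{e6}: one needs $\theta \leq \min\{2\gamma-3,\frac{3m-2}{3m+2}\gamma\}$ with $\gamma>\frac32$, and iterating does not remove the lower barrier $\gamma>\frac32$; for the weak-solution regularity one lands at $\gamma>\frac53$, not $\gamma>\frac43$, cf.\ \eqref{e8}--\eqref{e9}. To cover the claimed range $\gamma>1$ for variational entropy solutions and $\gamma>\frac43$ for weak solutions, the paper replaces the global Bogovskii estimate by \emph{local weighted pressure estimates} of the form $\sup_{x_0\in\overline\Omega}\int_\Omega \frac{p(\vrd,\vtd)+\delta(\vrd^\beta+\vrd^2)}{|x-x_0|^\alpha}\,\dx \leq C$, obtained by testing the momentum equation with the singular fields $\frac{x-x_0}{|x-x_0|^\alpha}$ away from the boundary, with $d(x)\nabla d(x)\big(d(x)+|x-x_0|^a\big)^{-\alpha}$ on the boundary (Lemma \ref{l 3.5}), and with a patched test function \eqref{lovely_test} for points close to but not on $\partial\Omega$. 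These weighted estimates are then converted into a bound on $\AAA=\int_\Omega\vrd^b|\vud|^2\,\dx$ via the potential-theoretic argument of Lemma \ref{l 3.7} (solve $-\Delta h=\vrd^b$, bound $\|h\|_\infty$, integrate by parts), and only then combined with a Bogovskii step (Lemma \ref{l 3.3}) and a bootstrap \eqref{3.29}--\eqref{3.30} to close. Without this ingredient you cannot reach $\gamma>1$ or $\gamma>\frac43$, and you cannot reproduce the stated thresholds $m>\max\{\frac23,\frac{2}{3(\gamma-1)}\}$ or $m>\max\{1,\frac{2\gamma}{3(3\gamma-4)}\}$.

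Two smaller points. First, the paper uses four parameters, not three: besides $N,\ep,\delta$ there is an $\eta$-regularization $\tn{S}_\eta=\mu_\eta(\vt)/(1+\eta\vt)[\dots]$ which makes the temperature-dependent viscosity bounded, so that the $\eta\to0^+$ passage from the internal to the total energy balance can be carried out; this deserves its own step. Second, the order of the limits matters: you propose $N$, then $\delta$, then $\ep$, whereas the paper takes $\delta\to0^+$ \emph{last} precisely because the $\delta\vr^\beta$ term supplies the extra density integrability \eqref{2.24} needed to survive $\ep\to0^+$; if you send $\delta\to0^+$ before $\ep\to0^+$ you lose that crutch.
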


\begin{theorem}[Navier boundary conditions; Jessl\'e, Novotn\'y, Pokorn\'y, 2014] \label{t 1.2}
Let $\Omega \in C^2$ be a bounded domain in $\R^3$, $\vc{f} \in
L^\infty(\Omega;\R^3)$, $\Theta_0 \geq K_0 >0$ a.e. at $\partial
\Omega$, $\Theta_0 \in L^1(\partial \Omega)$. Let $\gamma >1$, $m > \max\big\{\frac 23,\frac{2}{3(\gamma-1)}\big\}$, $l=0$.
Then there exists a variational entropy solution to (\ref{1.1})--(\ref{1.3}), (\ref{1.4a})--(\ref{1.18}) in the sense of Definition \ref{d 1.2a}.
Moreover, $\vr \geq 0$, $\vt >0$ a.e. in $\Omega$ and $(\vr,\vu)$ is
a renormalized solution to the continuity equation.

In addition, if $m>1$ and $\gamma >\frac 54$, then the solution is a weak solution
in the sense of Definition \ref{d 1.1a}.
\end{theorem}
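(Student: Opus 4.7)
The plan is to follow the four-level approximation scheme already outlined in the paper, adapted to the Navier slip setting by replacing the Dirichlet test spaces $W^{1,p}_0$ by $W^{1,p}_{\vc{n}}$ and adding the boundary contribution $\lambda \intdO{\vu \cdot \vcg{\varphi}}$ to the weak momentum formulation. At the innermost level I would regularize the continuity equation by $-\varepsilon \Delta \vr$, introduce an artificial pressure $\delta \vr^\beta$ with $\beta$ large, truncate the nonlinear couplings in the internal energy equation, and solve the velocity equation by a Galerkin method in a finite dimensional subspace of $W^{1,2}_{\vc{n}}$ via a Leray--Schauder fixed point. The limits in the Galerkin dimension, in the temperature truncation and in $\delta$ proceed essentially as in the Dirichlet case treated earlier in the paper; the genuinely new part is the limit $\varepsilon \to 0$.

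For this limit I would combine three independent sources of uniform estimates. Testing \eqref{1.22a} by $\vu$ and using the Korn inequality valid in $W^{1,2}_{\vc{n}}$ under the non-axisymmetry hypothesis (together with $\mu(\vt) \ge c_1$) gives $\|\vu\|_{1,2}$ in terms of $\intO{\vr \vc{f} \cdot \vu}$ plus a slip penalty on $\partial \Omega$. The entropy inequality, with $L(\cdot) \ge c_5$ (since $l=0$) and the Sobolev embedding $W^{1,2} \hookrightarrow L^6$, produces $\|\nabla \log \vt\|_2$, $\|\nabla \vt^{m/2}\|_2$ and hence $\vt \in L^{3m}(\Omega) \cap L^1(\partial\Omega)$. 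Integrating the total energy balance over $\Omega$ annihilates the awkward convective term $\vr|\vu|^2\vu$ and yields $\lambda \intdO{|\vu|^2} + \intdO{L(\vt)(\vt - \Theta_0)} = \intO{\vr \vc{f} \cdot \vu}$, closing the loop between dissipation and external work. Finally, a Bogovskii test function of the form $\vcg{\varphi} = B[\vr^a - \overline{\vr^a}]$ - admissible under the Navier condition because the standard Bogovskii corrector already satisfies $\vcg{\varphi} \cdot \vc{n} = 0$ - upgrades the density to $L^{\gamma+\Theta}(\Omega)$ for some $\Theta > 0$. Balancing this density integrability against the growths in \eqref{1.7} and \eqref{1.9} is the mechanism fixing the sharp conditions $m > \max\{2/3, 2/(3(\gamma-1))\}$ for the variational entropy setting.

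With uniform estimates in hand, weak compactness handles the lower order terms; the hard step is the pressure limit, for which I would deploy the effective viscous flux identity of Lions in Feireisl's truncated form. Testing the momentum equation by $\Grad \Delta^{-1}[T_k(\vr)]$ (suitably corrected to have zero normal trace so as to remain admissible) produces, in the limit, the commutator identity $\overline{p(\vr,\vt) T_k(\vr)} - (\tfrac{2\mu(\vt)}{N} + \xi(\vt)) \overline{T_k(\vr) \Div \vu} = \overline{p(\vr,\vt)} \, \overline{T_k(\vr)} - (\tfrac{2\mu(\vt)}{N} + \xi(\vt)) \overline{T_k(\vr)} \, \overline{\Div \vu}$. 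Combining this identity with the monotonicity of $\vr \mapsto p(\vr,\vt)$ from \eqref{1.25a} and Feireisl's oscillation defect measure bound in $L^{\gamma+1}$ yields strong $L^1$ convergence of the approximating densities and the renormalized continuity identity for the limit $(\vr,\vu)$. The fact that one obtains an entropy \emph{inequality} rather than an identity in the limit is dictated by the weak lower semicontinuity of the maps $(\vt,\Grad \vu) \mapsto \tn{S}(\vt, \Grad \vu):\Grad \vu /\vt$ and $(\vt,\Grad \vt) \mapsto \kappa(\vt) |\Grad \vt|^2 / \vt^2$.

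Under the stronger hypotheses $m>1$ and $\gamma > 5/4$, the sharpened interpolation between the $L^{3m}$ bound on $\vt$ and the $L^{6\gamma/5}$ bound on $\vr$ forces $\vr |\vu|^2 \vu$, $\vr e(\vr,\vt) \vu$ and $\tn{S}(\vt,\Grad\vu) \vu$ into $L^1(\Omega;\R^3)$ equi-integrably, so that passing to the limit in \eqref{1.23aa} produces the full weak solution rather than only the variational entropy one. The main obstacle in the whole scheme is the higher integrability of $\vr$ up to $\partial\Omega$: this is exactly the point the paper's introduction flags as problematic for Dirichlet conditions in \cite{Zhong_15}. In the Navier setting the obstacle is bypassed because the Bogovskii corrector is admissible in \eqref{1.22a} without any near-boundary rebuilding, and the margin between the pressure estimate and the dissipation estimates is just wide enough to reach $\gamma>1$ for the variational entropy level and $\gamma>5/4$ for the weak one; verifying this margin tightly is where I expect most of the technical effort to concentrate.
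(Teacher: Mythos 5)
Your plan follows the right high-level architecture (four-level approximation, Korn, entropy and global energy estimates, effective viscous flux, oscillation defect measure), but the centerpiece of the $\delta$-independent density estimate is wrong, and this is exactly the point that decides whether one reaches $\gamma>1$ or only $\gamma>\tfrac 32$. The Bogovskii test function $\vcg{\varphi}=B[\vr^a-\overline{\vr^a}]$ alone leads, after balancing the convective term $\vrd(\vud\otimes\vud):\nabla\vcg\varphi$ against the pressure, to the restriction $\gamma>\tfrac 32$ (this is precisely what the paper records in \eqref{e6}, \eqref{e9}, and flags at the start of the section on pressure estimates). To get down to $\gamma>1$ one needs the additional \emph{local weighted} pressure estimate $\sup_{x_0\in\overline\Omega}\intO{p(\vrd,\vtd)/|x-x_0|^\alpha}\le C$, obtained by testing the momentum equation with $\vcg\varphi\sim (x-x_0)/|x-x_0|^\alpha$ (interior) and boundary-adapted variants, and then feeding this into the Green-function argument based on $-\Delta h=\vrd^a+\vrd^b|\vud|^{2b}$ to close a nonlinear inequality for $\BBB=\intO{(\vrd^a|\vud|^2+\vrd^b|\vud|^{2b+2})}$ as in Lemmas \ref{l II3.2}--\ref{l II3.9} and \eqref{II3.36}--\eqref{II3.41}. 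Without this second pressure mechanism your estimates do not reach the stated range of $\gamma$.

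Related to this, your characterization of the ``Navier advantage'' is off. The Bogovskii corrector vanishes identically on $\partial\Omega$, so it is admissible for \emph{both} boundary conditions; that is not where the two cases differ. The actual advantage in the Navier setting is in the \emph{local} weighted test functions near the boundary: only the normal component must vanish, so one can use $\vc{v}^1$ plus the auxiliary $\vc{v}^2$ (equation \eqref{119a}) to recover the extra positive term $(1-\alpha)\vrd|\vud|^2/|x-x_0|^\alpha$ in \eqref{II3.31}. For the Dirichlet case the corresponding information cannot be retrieved near $\partial\Omega$, which is why the admissible range of $\gamma$ for a weak solution differs ($\gamma>\tfrac 54$ versus $\gamma>\tfrac 43$). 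Finally, you attribute the condition $m>\max\{2/3,\,2/(3(\gamma-1))\}$ to balancing the density integrability against \eqref{1.7}, \eqref{1.9}; in the paper, the $\delta$-independent estimates only need $m>\tfrac 23$ and $m>\tfrac{2}{4\gamma-3}$, while the binding constraint $m>\tfrac{2}{3(\gamma-1)}$ arises later, in the oscillation-defect / renormalized-continuity step (Lemma \ref{l II4.2}), where the temperature weight $(1+\vt)^{-1}$ must be removed by an interpolation that uses $\vt\in L^{3m}(\Omega)$. These are separate mechanisms, and identifying where the critical exponent comes from is important for the proof.
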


\begin{remark}
{\rm
The same holds for the problem (\ref{1.1})--(\ref{1.11}) and (\ref{1.18a}) (i.e. with either the Dirichlet or the Navier boundary condition) with the specific entropy defined by the Gibbs
relation (\ref{G}).
}
\end{remark}

\begin{remark}
{\rm If $\Omega$ is an axially symmetric domain and $\lambda>0$ in (\ref{1.4a}), then the variational entropy solutions to problem (\ref{1.1})--(\ref{1.3}), (\ref{1.4a})--(\ref{1.18}) exist under the assumptions of Theorem \ref{t 1.2}. However, for the existence of weak solutions we need additionally $m> \frac{6\gamma}{15\gamma-16}$ for $\gamma \in (\frac 54,\frac 43]$ and $m>\frac{18-6\gamma}{9\gamma-7}$ for $\gamma \in (\frac 43,\frac 53)$. More details can be found in \cite{JeNoPo_M3AS}.}
\end{remark}

\section{A priori estimates for $\gamma > \frac 32$}  

In this section we present a priori estimates for our problem with both the homogeneous Dirichlet and the Navier boundary conditions for the velocity. Let us emphasize that these estimates will not be optimal in most of the cases.  They just illustrate that for some values of $\gamma$ and $m$ one may get estimates which indicate that the weak solution is available, however, in some situations the only hope is the variational entropy solution. The subsequent computations also indicate how one may obtain estimates for the approximate problems. Moreover, we assume that $l=0$, more precisely $L=const$. For simplicity we take in the case of the Navier boundary conditions $\lambda =0$ and assume that $\Omega$ is not axially symmetric.

We start with the entropy inequality (\ref{1.23a}) (note that for sufficiently smooth solutions it can be deduced from the total energy balance, in the case of the existence proof a certain version is available for the approximation), where we use as test function $\psi = 1$. We have
\begin{equation} \label{e1}
\intO{\Big(\kappa(\vt) \frac{|\Grad \vt|^2}{\vt^2} + \frac{1}{\vt} \tn{S}(\vt,\Grad \vu):\Grad \vu\Big)}  + \intdO{\frac{L \Theta_0}{\vt}} \leq \intdO{L}.
\end{equation}
Next we use as test function in the total energy balance $\psi=1$ and get
\begin{equation} \label{e2}
\intdO{L \vt} = \intO{\vr \vu \cdot \vc{f}} + \intdO{L\Theta_0}.
\end{equation}
Using the Korn inequality (see Lemma \ref{l 4.1} in the next section) we have from (\ref{e1})
\begin{equation} \label{e3}
\|\vu\|^2_{1,2} + \|\Grad(\vt^{m/2})\|_2^2 + \|\ln \vt\|_{1,2}^2 \leq C,
\end{equation}
while (\ref{e1}) and (\ref{e2}) together with the Sobolev embedding theorem yield
\begin{equation} \label{e4}
\|\vt\|_{3m} \leq C(1+ \|\vu\|_6 \|\vr\|_{\frac 65} \|\vc{f}\|_{\infty})  \leq C (1+\|\vr\|_{\frac 65}).
\end{equation}
It remains to estimate the density. In order to simplify the situation as much as possible at this moment, we use the estimates based on the application of 
the {Bogovskii operator} \ref{l 4.2} below. To this aim we apply as test function in (\ref{1.22}) or (\ref{1.22a})  a solution to 
$$
\begin{array}{c}
\displaystyle \Div \vcg{\varphi} = \vr^\alpha - \frac{1}{|\Omega|} \intO{\vr^\alpha} \quad \mbox{ in } \Omega \\ 
\displaystyle \vcg{\varphi} = \vc{0} \quad \mbox{ on } \partial \Omega.
\end{array}
$$
We have
\begin{equation} \label{e5}
\begin{array}{c}
\displaystyle \intO{p(\vr,\vt) \vr^\alpha} = - \intO{\vr(\vu\otimes \vu) :\Grad \vcg{\varphi}} + \intO{\tn{S}(\vr,\Grad \vu): \Grad \vcg{\varphi}} \\
\displaystyle - \intO{\vr\vc{f}\cdot \vcg{\varphi}} + \frac{1}{|\Omega|} \intO{p(\vr,\vt)} \intO{\vr^\alpha} = \sum_{i=1}^4 I_i.
\end{array}
\end{equation}  
Recalling that the density is bounded in $L^1(\Omega)$ (the prescribed total mass) and using Lemma \ref{l 4.2} below it is not difficult to check that the most restrictive terms are $I_1$ and $I_2$ leading to bounds (the details can be found in \cite{NoPo_JDE})
\begin{equation} \label{e6}
\alpha \leq \min\Big\{2\gamma-3,\frac{3m-2}{3m+2}\gamma\Big\}, \qquad \gamma > \frac 32, \, m> \frac 23.
\end{equation}
Hence under assumption (\ref{e6}) we have
\begin{equation} \label{e7}
\|\vu\|_{1,2} + \|\Grad(\vt^{m/2})\|_2 + \|\ln \vt\|_{1,2} +\|\vt\|_{3m} + \|\vr\|_{\gamma +\alpha} \leq C.
\end{equation}
Therefore we see that we have all  quantities in the weak formulation integrable (i.e., in particular, the density is bounded in $L^{2+\ep}(\Omega)$, and the term $\vr |\vu|^3$ is integrable in $L^{1+\ep}(\Omega)$) if
\begin{equation} \label{e8}
\gamma > \frac 53, \qquad m \geq 1,
\end{equation}
while all terms in the variational entropy formulation are integrable if
\begin{equation} \label{e9}
\gamma > \frac 32, \qquad m >\frac 23.
\end{equation}
Thus, under these assumptions, we may try to construct a solution to our problems. As we shall see later (cf. \cite{NoPo_JDE}), the limit passage requires one more condition, namely
\begin{equation} \label{e10}
m > \frac{2}{3(\gamma-1)},
\end{equation}
which comes into play for small $\gamma$'s. Under assumptions (\ref{e9})--(\ref{e10}) we may prove existence of variational entropy solutions while under assumptions (\ref{e8}), (\ref{e10}) we could prove existence of weak  solutions, see \cite{NoPo_JDE}. In what follows, using finer density estimates, we weaken the assumptions on $\gamma$ and $m$, i.e. we prove Theorems \ref{t 1.1} and \ref{t 1.2}.

\section{Mathematical tools}

In this section we present several well-known  results needed later in the proof of the existence of weak and variational entropy solutions. We first have

\begin{lemma}[Korn's inequality] \label{l 4.1}
Let $\vt>0$ and $\tn{S}(\vt,\nabla\vu)$ satisfy (\ref{1.6})--(\ref{1.7}) with $\alpha =1$. \newline
(i) Let  $\vu \in W^{1,2}_0(\Omega;R^3)$. Then
\begin{equation} \label{2.1}
\begin{array}{c}
\displaystyle \intO {\frac{\tn{S}(\vt,\nabla \vu) :\nabla \vu}{\vt} } \geq C \|\vu\|_{1,2}^2, \\[8pt]
\displaystyle \intO {\tn{S}(\vt,\nabla \vu) :\nabla \vu} \geq C \|\vu\|_{1,2}^2.
\end{array}
\end{equation}
(ii) Let $\Omega \in C^{0,1}$ and  $\vu \in W^{1,2}_{\vc{n}}(\Omega;R^3)$. Then
\begin{equation} \label{2.2}
\begin{array}{c}
\displaystyle \intO {\frac{\tn{S}(\vt,\nabla \vu) :\nabla \vu}{\vt} } + \int_{\partial \Omega} |\vu|^2 \, {\rm d} S \geq C \|\vu\|_{1,2}^2, \\[8pt]
\displaystyle \intO {\tn{S}(\vt,\nabla \vu) :\nabla \vu} + \int_{\partial \Omega} |\vu|^2 \, {\rm d} S \geq C \|\vu\|_{1,2}^2.
\end{array}
\end{equation}
If $\Omega$ is in addition not axially symmetric, then also \eqref{2.1} holds.
\end{lemma}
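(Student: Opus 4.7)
The plan is to reduce both parts to the standard algebraic identity
$$
\tn{S}(\vt,\nabla\vu):\nabla\vu = 2\mu(\vt)\,|\tn{D}^0(\vu)|^2 + \xi(\vt)(\Div\vu)^2,
$$
where $\tn{D}^0(\vu) = \frac12\bigl(\nabla\vu + (\nabla\vu)^T\bigr) - \frac1N\Div\vu\,\tn{I}$ is the traceless symmetric part of $\nabla\vu$. This follows directly from (\ref{1.6}) by splitting $\nabla\vu$ into symmetric and antisymmetric parts and using $\tn{D}^0(\vu):\tn{I} = 0$. Since (\ref{1.7}) with $\alpha=1$ guarantees $\mu(\vt)\geq c_1$ and $\mu(\vt)/\vt \geq c_1(1+\vt)/\vt \geq c_1$ pointwise, dropping the nonnegative $\xi$-contribution bounds each integrand in (\ref{2.1})--(\ref{2.2}) below by $2c_1|\tn{D}^0(\vu)|^2$, uniformly in $\vt>0$.

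For part (i) I would then invoke the classical identity obtained from two integrations by parts, valid for $\vu\in W^{1,2}_0(\Omega;\R^3)$:
$$
2\int_\Omega|\tn{D}^0(\vu)|^2\dx = \int_\Omega|\nabla\vu|^2\dx + \Big(1-\tfrac{2}{N}\Big)\int_\Omega(\Div\vu)^2\dx \geq \int_\Omega|\nabla\vu|^2\dx,
$$
which together with the Poincaré inequality on $W^{1,2}_0(\Omega;\R^3)$ immediately yields both lines of (\ref{2.1}).

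For part (ii) the identity above acquires uncontrolled boundary terms and must be replaced by the generalized Korn--Ne\v cas inequality: for every Lipschitz domain and every $\vu\in W^{1,2}(\Omega;\R^3)$,
$$
\|\vu\|_{1,2}^2 \leq C\Bigl(\|\tn{D}^0(\vu)\|_{2}^2 + \|\vu\|_{2,\partial\Omega}^2\Bigr),
$$
whose proof rests on Ne\v cas's negative-norm characterization of $L^2$ combined with a compactness/contradiction argument that rules out nontrivial limits. Restricting to $W^{1,2}_{\vc{n}}(\Omega;\R^3)$ yields (\ref{2.2}). The sharpened statement under the non-axial-symmetry hypothesis is then obtained by a second compactness/contradiction argument: if the estimate without the boundary term failed on $W^{1,2}_{\vc{n}}(\Omega;\R^3)$, a suitably normalized sequence would converge (via Rellich and trace compactness) to a nontrivial infinitesimal rigid motion $\vu=\vc{a}+\vcg{\omega}\times\vc{x}$ satisfying $\vu\cdot\vc{n}=0$ on $\partial\Omega$; a standard geometric lemma identifies such $\Omega$ as precisely the axially symmetric ones, contradicting the hypothesis and forcing (\ref{2.1}).

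The main obstacle is the generalized Korn--Ne\v cas inequality used in (ii): it is not elementary, relying on Ne\v cas's lemma and a non-trivial compactness argument. A secondary delicate point is the geometric classification of Lipschitz domains that admit a nontrivial tangential infinitesimal rigid motion, which is what isolates the axially symmetric case in the refined conclusion.
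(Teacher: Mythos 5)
Your argument for part (i) is precisely the integration-by-parts computation the paper points to and is correct: reduce $\tn{S}:\nabla\vu$ to $2\mu(\vt)|\tn{D}^0(\vu)|^2 + \xi(\vt)(\Div\vu)^2$, use $\mu(\vt)\geq c_1$ and $\mu(\vt)/\vt\geq c_1$ from (\ref{1.7}) with $\alpha=1$, apply the identity $2\int_\Omega|\tn{D}^0(\vu)|^2\,\dx = \int_\Omega|\nabla\vu|^2\,\dx + (1-\tfrac 2N)\int_\Omega(\Div\vu)^2\,\dx$ valid on $W^{1,2}_0(\Omega;\R^3)$, and close with Poincar\'e. For part (ii) the paper gives no argument and simply cites \cite{JeNo_JMPA} and \cite{JeNoPo_M3AS}; the Korn--Ne\v{c}as/compactness route you describe is the standard one.

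There is, however, a genuine gap in your sketch of the refined conclusion (that the boundary term may be dropped when $\Omega$ is not axially symmetric). Since you discard the $\xi$-contribution, the compactness argument controls only $\|\tn{D}^0(\vu)\|_2$, the \emph{traceless} symmetric gradient; in $\R^3$ its kernel is the ten-dimensional space of conformal Killing fields (translations, rotations, dilations and special conformal fields), which is strictly larger than the six-dimensional space of rigid motions. Your limit function therefore a priori only satisfies $\tn{D}^0(\vu)=0$ and $\vu\cdot\vc{n}=0$, yet you assert without justification that it has the rigid-motion form $\vc{a}+\vcg{\omega}\times\vc{x}$. Ruling out the dilation and special-conformal components is a separate step that uses boundedness of $\Omega$ together with the tangency condition (e.g.\ integrating $\Div\vu$, an affine function of $x$, over $\Omega$ and applying the divergence theorem kills the dilation; a little more work kills the special-conformal component and, by boundedness, the pure translation); only then does a pure rotation $\vcg{\omega}\times(\vc{x}-\vc{x}_0)$ remain and the geometric classification of axially symmetric domains become applicable. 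Finally, the finite-dimensionality of $\ker\tn{D}^0$ --- and hence the validity of the generalized Korn inequality you invoke --- is particular to $N\geq 3$; in two dimensions $\ker\tn{D}^0$ is the infinite-dimensional space of holomorphic vector fields, so the argument is genuinely three-dimensional, consistent with the lemma's hypotheses.
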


\begin{proof}
The proof of the first statement is nothing but integration by parts, see e.g. \cite{NoPo_JDE}. The proof of the second statement can be found e.g. in \cite{JeNo_JMPA} or \cite{JeNoPo_M3AS}.
\end{proof}

Further we need special solutions to the following problem:
\begin{equation} \label{2.3}
\begin{array}{c}
\Div \vcg{\varphi} = f \quad \mbox{ in } \Omega,\\
\vcg{\varphi} = \vc{0}\quad \mbox{ on } \partial \Omega.
\end{array}
\end{equation}

We have (see e.g. \cite{NoSt_Book})
\begin{lemma}[Bogovskii operator] \label{l 4.2}
Let $f \in L^p(\Omega)$, $1<p<\infty$, $\intO{f} = 0$, $\Omega \in C^{0,1}$. Then there exists a solution to \eqref{2.3} and a constant $C>0$ independent of $f$ such that
\begin{equation} \label{2.6}
\|\vcg{\varphi}\|_{1,p} \leq C \|f\|_p.
\end{equation}
Moreover, the solution operator $B$: $f \mapsto \vcg{\varphi}$ is linear.
\end{lemma}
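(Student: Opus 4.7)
The plan is to proceed in two stages: first solve the problem on domains that are star-shaped with respect to an interior ball by writing down Bogovskii's explicit integral formula, then patch together a general Lipschitz domain by a finite covering argument.

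\textbf{Star-shaped case.} First I would fix a ball $B \subset\subset \Omega$ with respect to which $\Omega$ is star-shaped, and choose $\omega \in \DC(B)$ with $\int_B \omega\, {\rm d}x = 1$. For $f \in \DC(\Omega)$ with $\intO{f}=0$ the candidate solution is
$$
\vcg{\varphi}(x) = \int_\Omega f(y)\, \vc{N}(x,y)\,{\rm d} y,\qquad
\vc{N}(x,y) = \frac{x-y}{|x-y|^N}\int_{|x-y|}^\infty \omega\Big(y + \xi \tfrac{x-y}{|x-y|}\Big)\xi^{N-1}\,{\rm d}\xi.
$$
A short computation (substitution $\xi = |x-y|+r$ plus integration by parts in $r$) gives $\Div \vcg{\varphi}(x) = f(x) - \omega(x)\intO{f} = f(x)$, and the star-shapedness ensures that the $y$--support of $\vc{N}(x,\cdot)$ lies inside $\Omega$ and that $\vcg{\varphi}$ vanishes on $\partial \Omega$. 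Linearity of $f \mapsto \vcg{\varphi}$ is built into the formula.

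\textbf{The $W^{1,p}$-bound.} Next I would differentiate the kernel and isolate the singular part, arriving at a representation
$$
\partial_j \varphi_i(x) = \mathrm{p.v.}\int_\Omega K_{ij}(x, x-y) f(y)\,{\rm d} y + G_{ij}(x)\, f(x),
$$
where, for each frozen $x$, $z \mapsto K_{ij}(x,z)$ is a Calder\'on--Zygmund kernel (homogeneous of degree $-N$, vanishing mean on $\{|z|=1\}$, with the right smoothness) and $G_{ij}\in L^\infty(\Omega)$. The Calder\'on--Zygmund theorem then delivers $\|\nabla \vcg{\varphi}\|_p \leq C\|f\|_p$ for every $1<p<\infty$, and Poincar\'e's inequality applied to the zero-trace field $\vcg{\varphi}$ controls $\|\vcg{\varphi}\|_p$. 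Density of smooth mean-zero functions in the closed mean-zero subspace of $L^p(\Omega)$ then extends the construction and the estimate to all admissible $f$.

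\textbf{Patching on Lipschitz domains.} Since $\Omega \in C^{0,1}$ is bounded, I would cover it by finitely many subdomains $\Omega_1,\ldots,\Omega_K$, each star-shaped with respect to some ball, and use a partition of unity together with a standard mean-zero bootstrap to decompose $f = \sum_{k=1}^K f_k$ with $\mathrm{supp}\, f_k \subset \Omega_k$, $\int_{\Omega_k} f_k\,{\rm d}x = 0$, and $\|f_k\|_{p,\Omega_k} \leq C\|f\|_p$. Applying the star-shaped result to each piece and setting $\vcg{\varphi} = \sum_k \vcg{\varphi}_k$ (extended by zero outside $\Omega_k$) yields the desired solution with the claimed estimate. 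Linearity is preserved at every stage, so $B: f \mapsto \vcg{\varphi}$ is linear.

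\textbf{Main obstacle.} The technical heart of the argument is the verification, in the gradient step, that the kernel obtained by differentiating the Bogovskii kernel genuinely satisfies the Calder\'on--Zygmund size and cancellation conditions uniformly in the base point $x$. This is also precisely where the restriction $1<p<\infty$ is used: the singular integral bound, and hence the conclusion of the lemma, fails at the endpoints $p=1$ and $p=\infty$.
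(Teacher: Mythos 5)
The paper does not prove Lemma \ref{l 4.2}; it is stated as a classical result and referred to \cite{NoSt_Book}, whose treatment is exactly the Bogovskii construction you sketch. Your proposal is the standard argument (explicit Bogovskii kernel on star-shaped domains, Calder\'on--Zygmund bound for the differentiated kernel, Poincar\'e for the zero-order term, finite decomposition of a Lipschitz domain into star-shaped pieces with a mean-zero splitting of $f$), and it is correct in its essentials, including the identification of where the restriction $1<p<\infty$ enters. Since the paper supplies no proof, there is no competing route to compare; you have simply filled in the textbook argument the authors cite.
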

 
Looks from new perspectives at solutions to system (\ref{2.3}) can be found in \cite{DaMu} or in \cite{PaPil}. 
 
Next we recall several technical results needed in the part dealing with the strong convergence for the density. We denote for $v$ a scalar function
\begin{equation} \label{2.4a}
({\cal R}[v])_{ij} = ((\nabla \otimes \nabla)\Delta^{-1})_{ij} v = {\cal F}^{-1}\Big[\frac{\xi_i \xi_j}{|\xi|^2}{\cal F}(v)(\xi)\Big],
\end{equation}
and for $\vc{u}$ a vector-valued function
\begin{equation} \label{2.4b}
({\cal R}[\vc{u}])_{i} = ((\nabla \otimes \nabla)\Delta^{-1})_{ij} u_j = {\cal F}^{-1}\Big[\frac{\xi_i \xi_j}{|\xi|^2}{\cal F}(u_j)(\xi)\Big],
\end{equation} 
with ${\cal F}(\cdot)$ the Fourier transform. We have (see \cite[Theorems 10.27, 10.28 and 10.19]{FeNo_Book})

\begin{lemma}[Commutators I] \label{l 4.3}
\medskip
Let $\vc{U}_\delta \rightharpoonup \vc{U}$ in
$L^p(\R^3;\R^3)$, $v_\delta \rightharpoonup v$ in
$L^q(\R^3)$, where
$$
\frac 1p + \frac 1q = \frac 1s <1.
$$
Then
$$
v_\delta {\cal R}[\vc{U}_\delta] - {\cal
R}[{v}_\delta] \vc{U}_\delta \rightharpoonup v {\cal
R}[\vc{U}] - {\cal R}[{v}] \vc{U}
$$
in $L^s(\R^3;\R^3)$.
\end{lemma}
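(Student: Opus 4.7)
The plan is to pass to the limit by exploiting the special algebraic structure of the commutator, via the Murat--Tartar compensated compactness (div-curl) principle. First I would observe that $\mathcal{R}_{ij}$ is a Fourier multiplier with symbol $\xi_i\xi_j/|\xi|^2$, a homogeneous multiplier of order zero satisfying the Mikhlin--H\"ormander condition. Hence $\mathcal{R}_{ij}\colon L^r(\R^3)\to L^r(\R^3)$ is bounded for every $1<r<\infty$, which gives $\mathcal{R}[v_\delta]\rightharpoonup\mathcal{R}[v]$ in $L^q$ and $\mathcal{R}[\vc{U}_\delta]\rightharpoonup\mathcal{R}[\vc{U}]$ in $L^p$, and by H\"older a uniform $L^s$-bound on $v_\delta\mathcal{R}[\vc{U}_\delta]-\mathcal{R}[v_\delta]\vc{U}_\delta$; up to extracting a subsequence, it converges weakly in $L^s(\R^3;\R^3)$ and the task reduces to identifying the weak limit.

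The key observation is that $\mathcal{R}[\vc{U}]=\nabla(\Delta^{-1}\Div\vc{U})$ coincides with the Helmholtz projection onto curl-free fields, while $\mathcal{R}[v]$ is the Hessian of the scalar potential $\Delta^{-1}v$. Introducing $\phi_\delta:=\Delta^{-1}v_\delta$, $\psi_\delta:=\Delta^{-1}\Div\vc{U}_\delta$ and the Helmholtz decomposition $\vc{U}_\delta=\nabla\psi_\delta+\vc{W}_\delta$ with $\Div\vc{W}_\delta=0$, an elementary manipulation --- substituting $v_\delta=\Delta\phi_\delta$ and applying the product rule --- rewrites each component of the commutator modulo exact derivatives as a sum of bilinear terms $\vc{A}_\delta\cdot\vc{B}_\delta$ in which $\vc{A}_\delta$ is a gradient (hence curl-free) and $\vc{B}_\delta$ is either divergence-free or another gradient; in every piece, the compactness hypothesis of the div-curl lemma (one of $\mathrm{curl}\,\vc{A}_\delta$, $\Div\vc{B}_\delta$ compact in $W^{-1,r}_{loc}$) is automatic because the corresponding quantity is identically zero. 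The Murat--Tartar div-curl lemma then yields distributional convergence of each product to the product of the weak limits, and combined with the $L^s$-bound this identifies the weak $L^s$-limit as $v\mathcal{R}[\vc{U}]-\mathcal{R}[v]\vc{U}$. Uniqueness of the limit upgrades the subsequential convergence to convergence of the whole sequence.

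The main obstacle is the rigorous handling of $\Delta^{-1}$ on $\R^3$, which is not defined on general $L^r$-functions because of the slow decay of the Newtonian kernel. I would circumvent this by working only with derivatives of $\Delta^{-1}$: $\nabla\Delta^{-1}$ is the Riesz potential of order one (bounded $L^r\to L^{r^\ast}$ for $r<3$, with $1/r^\ast=1/r-1/3$) and $\nabla^2\Delta^{-1}=\mathcal{R}$ is a Calder\'on--Zygmund operator bounded on each $L^r$, so the identity can be recast entirely in terms of $\nabla\phi_\delta$, $\nabla^2\phi_\delta$ and $\nabla\psi_\delta$. Localization against a compactly supported test function, combined with Rellich--Kondrachov, then provides the local strong convergence of the potentials needed to justify the div-curl passage to the limit rigorously.
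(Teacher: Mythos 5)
Your strategy---reduce to the div-curl lemma by exploiting that $\mathcal{R}$ produces Hessians and Helmholtz projections---is the route taken in the cited reference \cite{FeNo_Book}, and the preliminaries are right: boundedness of $\mathcal{R}$ on $L^r$, the $L^s$ bound on the commutator, reduction to a subsequence plus uniqueness of the limit, and the careful replacement of $\Delta^{-1}$ by its first- and second-order derivatives on $\R^3$.

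The core step, however, contains a genuine gap. The div-curl lemma does not say ``one of $\mathrm{curl}\,\vc{A}_\delta$, $\Div \vc{B}_\delta$ is compact''; it requires the divergence of one factor \emph{and} the curl of the other to be precompact in $W^{-1,r}_{loc}$. Taking $\vc{A}_\delta$ curl-free only discharges the hypothesis on $\vc{A}_\delta$; you still need $\Div\vc{B}_\delta$ to be precompact. In the pieces where, as you allow, $\vc{B}_\delta$ is ``another gradient'', you have $\Div\vc{B}_\delta=\Delta\psi_\delta$, which is merely bounded in $L^p$ and not precompact in $W^{-1,p}_{loc}$, so the lemma does not apply. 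A product of two weakly converging curl-free fields need not converge to the product of the limits (one-dimensional oscillations give counterexamples), and the exact-derivative remainders you set aside do not help, since they are derivatives of products of weakly converging sequences whose distributional limits are precisely what is unknown.

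The fix is to arrange the decomposition so that no gradient-times-gradient pairings occur. This is possible because, for each fixed $i$, the vector field with components $v_\delta\,\delta_{ij}-\mathcal{R}_{ij}[v_\delta]$ (Kronecker $\delta_{ij}$, not the sequence index) is divergence-free: $\partial_j(v_\delta\,\delta_{ij})=\partial_i v_\delta=\partial_j\partial_i\partial_j\Delta^{-1}v_\delta=\partial_j\mathcal{R}_{ij}[v_\delta]$. Writing $\vc{U}_\delta=\mathcal{R}[\vc{U}_\delta]+\vc{W}_\delta$ with $\Div\vc{W}_\delta=0$, the $i$-th component of the commutator splits as
$$
v_\delta\,\mathcal{R}_i[\vc{U}_\delta]-\mathcal{R}_{ij}[v_\delta]\,U_{\delta,j}
=\big(v_\delta\,\delta_{ij}-\mathcal{R}_{ij}[v_\delta]\big)\,\mathcal{R}_j[\vc{U}_\delta]
\;-\;\mathcal{R}_{ij}[v_\delta]\,W_{\delta,j}.
$$
The first pairing is divergence-free against curl-free (the row of $\mathcal{R}[\vc{U}_\delta]$ is a gradient), and the second is curl-free (a Hessian row) against divergence-free. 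Both satisfy the full set of div-curl hypotheses, and the weak limit is identified as claimed.
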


\begin{lemma}[Commutators II] \label{l 4.4}
\medskip
Let $w \in W^{1,r}(\R^3)$, $\vc{z} \in L^p(\R^3;\R^3)$, $1<r<3$,
$1<p<\infty$, $\frac 1r + \frac 1p -\frac 13 <\frac 1s <1$. Then
for all such $s$ we have
$$
\|{\cal R}[w\vc{z}] - w{\cal R}[\vc{z}] \|_{a,s,R^3} \leq C
\|w\|_{1,r,R^3} \|\vc{z}\|_{p,R^3},
$$
where $\frac a 3 = \frac 1s + \frac 13 -\frac 1p - \frac 1r$. Here, $\|\cdot\|_{a,s, R^3}$ denotes the norm in
the Sobolev--Slobodetskii space $W^{a,s}(\R^3)$.
\end{lemma}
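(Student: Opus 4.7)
\textit{Proof plan.} The operator ${\cal R}=\nabla\otimes\nabla\,(-\Delta)^{-1}$ is a matrix of zero-order Calder\'on--Zygmund operators with homogeneous symbols $m_{ij}(\xi)=\xi_i\xi_j/|\xi|^2$, smooth away from the origin; hence each ${\cal R}_{ij}$ is bounded on $L^q(\R^3)$ for every $1<q<\infty$. My strategy is to establish two endpoint bounds for the commutator $\bigl[{\cal R},w\bigr]\vc{z}:={\cal R}(w\vc{z})-w\,{\cal R}(\vc{z})$ and then interpolate. For the trivial endpoint corresponding to $a=0$, I combine the Sobolev embedding $W^{1,r}(\R^3)\hookrightarrow L^{r^\ast}(\R^3)$ with $1/r^\ast=1/r-1/3$ (which uses $1<r<3$), H\"older's inequality, and the $L^q$-continuity of ${\cal R}$, to get
\begin{equation*}
\bigl\|{\cal R}(w\vc{z})-w\,{\cal R}(\vc{z})\bigr\|_{s_0}\leq C\|w\|_{r^\ast}\|\vc{z}\|_{p}\leq C\|w\|_{1,r}\|\vc{z}\|_{p},\qquad \tfrac{1}{s_0}=\tfrac{1}{p}+\tfrac{1}{r}-\tfrac{1}{3}.
\end{equation*}
No cancellation from the commutator structure is exploited here.

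\textit{Endpoint $a=1$.} Differentiating once,
\begin{equation*}
\nabla\bigl({\cal R}(w\vc{z})-w\,{\cal R}(\vc{z})\bigr)=[\nabla{\cal R},w]\vc{z}-(\nabla w)\,{\cal R}(\vc{z}).
\end{equation*}
The last term lies in $L^{s_1}$ with $1/s_1=1/r+1/p$ by H\"older together with the $L^p$-bound for ${\cal R}$. The first term is the commutator of the order-$+1$ Calder\'on--Zygmund operator $\nabla{\cal R}$ with multiplication by $w$; the Calder\'on commutator theorem (extended from Lipschitz to Sobolev symbols) provides the gain of one derivative,
\begin{equation*}
\|[\nabla{\cal R},w]\vc{z}\|_{s_1}\leq C\|\nabla w\|_{r}\|\vc{z}\|_{p}.
\end{equation*}
Heuristically, the commutator kernel carries a factor $(w(x)-w(y))/|x-y|^{3+1}$, and the bound $|w(x)-w(y)|\leq \int_0^1|\nabla w(x+t(y-x))|\,|x-y|\,dt$ reduces the analysis to a fractional integral of $|\nabla w|$. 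Together this yields $\|{\cal R}(w\vc{z})-w\,{\cal R}(\vc{z})\|_{1,s_1}\leq C\|w\|_{1,r}\|\vc{z}\|_{p}$, the $a=1$ endpoint.

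\textit{Interpolation and main obstacle.} Real interpolation between $L^{s_0}(\R^3)$ and $W^{1,s_1}(\R^3)$ produces the Sobolev--Slobodeckii space $W^{a,s}(\R^3)$ with $a=\theta\in(0,1)$ and $1/s=(1-\theta)/s_0+\theta/s_1$; a direct computation reduces this to $a/3=1/s+1/3-1/p-1/r$, and the admissible range $\theta\in(0,1)$ yields the stated lower bound $1/p+1/r-1/3<1/s<1/p+1/r$ (the condition $1/s<1$ merely ensures $s>1$). The one substantive step is the endpoint $a=1$: proving that the commutator of an order-$+1$ Calder\'on--Zygmund operator with a Sobolev multiplier gains exactly one derivative. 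This is where the smoothness of the symbol $m_{ij}$ away from the origin is genuinely used, and it is classically proved either through the kernel bound sketched above or by a Bony-type paraproduct decomposition of $w\vc{z}$; everything else is essentially formal once the $L^q$-boundedness of ${\cal R}$ and the interpolation machinery are invoked.
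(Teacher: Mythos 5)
The paper does not prove this lemma; it is quoted from \cite[Theorems 10.27, 10.28 and 10.19]{FeNo_Book}, so there is no in-paper argument to compare against. Your interpolation architecture (a trivial $a=0$ bound from H\"older and $L^q$-boundedness of ${\cal R}$, a cancellation bound at $a=1$, then real interpolation between $L^{s_0}$ and $W^{1,s_1}$, with the arithmetic correctly reproducing $a/3 = 1/s + 1/3 - 1/p - 1/r$) is a natural route to such a result. However, there are two gaps that need to be addressed.

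First, the pivotal $a=1$ endpoint is asserted rather than proved. Calder\'on's commutator theorem proper gives $[\nabla{\cal R}, w]:L^p\to L^p$ with a bound in $\|\nabla w\|_\infty$ for \emph{Lipschitz} $w$; what you need is the off-diagonal Sobolev version $[\nabla{\cal R}, w]: L^p\to L^{s_1}$, $1/s_1=1/p+1/r$, with a bound in $\|\nabla w\|_r$. That is a Coifman--Meyer bilinear commutator estimate (equivalently, a Bony paraproduct decomposition with separate control of the low-high, high-low, and high-high pieces), not a direct corollary of the Lipschitz theorem. The kernel heuristic you sketch, writing $w(x)-w(y)=\int_0^1\nabla w(x+t(y-x))\cdot(y-x)\,dt$ and reading off a Riesz potential of $|\nabla w|$ acting on $|\vc{z}|$, does not close as written: the intermediate evaluation point destroys the convolution structure, so you cannot simply conclude with a fractional integration bound. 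The paraproduct alternative you mention is the correct route, but it is precisely the content that must be supplied, not a one-line remark.

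Second, your interpolation reaches only $a=\theta\in(0,1)$, i.e.\ $1/p+1/r-1/3<1/s<1/p+1/r$, and you dismiss the stated upper constraint $1/s<1$ as "merely ensuring $s>1$". That dismissal is correct only when $1/p+1/r\geq 1$. Since the lemma permits $1<r<3$ and any $1<p<\infty$, one can have $1/p+1/r<1$ (e.g.\ $r$ near $3$ and $p$ large), in which case the stated range $1/s<1$ allows $1/s>1/p+1/r$ and hence $a>1$, which your two-endpoint interpolation cannot reach, and which would in any case be surprising for a multiplier with only one Sobolev derivative. You should either restrict to $1/s<\min\{1,1/p+1/r\}$ (the honest outcome of your proof) or explain why the cited theorems in \cite{FeNo_Book} really do yield the wider range.
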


\begin{lemma}[Weak convergence and monotone operators] \label{l 4.5}
Let the couple of non-decreasing functions $(P,G)$ be in $C(R) \times C(R)$. 
  Assume that $\vr_n\in
L^1(\Omega)$ is a sequence such that
$$
\left.\begin{array}{c}
P(\vr_n) \rightharpoonup \overline{P(\vr)}, \\
G(\vr_n) \rightharpoonup \overline{G(\vr)}, \\
P(\vr_n)G(\vr_n) \rightharpoonup \overline{P(\vr)G(\vr)}
\end{array} \right\} \mbox{ in } L^1(\Omega).
$$
Then
$$
\overline{P(\vr)}\, \, \overline{G(\vr)} \leq
\overline{P(\vr)G(\vr)}
$$
a.e. in $\Omega$.
\end{lemma}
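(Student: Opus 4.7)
The plan is to exploit the single pointwise inequality
$$
\bigl(P(s)-P(t)\bigr)\bigl(G(s)-G(t)\bigr) \geq 0, \qquad s,t \in \mathbb{R},
$$
which holds because both $P$ and $G$ are non-decreasing. Rearranging, for every test value $k \in \mathbb{R}$,
$$
P(\vr_n)G(\vr_n) \geq P(\vr_n)G(k) + P(k)G(\vr_n) - P(k)G(k).
$$
The task is to promote $k$ from a constant to a quantity that, after passing to the weak limit, recovers the product $\overline{P(\vr)}\,\overline{G(\vr)}$ on the right-hand side.

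The cleanest execution uses Young measures. Since $P(\vr_n)$, $G(\vr_n)$ and $P(\vr_n)G(\vr_n)$ all converge weakly in $L^1(\Omega)$, the Dunford--Pettis theorem forces equi-integrability of these three sequences, which is enough to extract (along a subsequence) a parametrized probability measure $\{\nu_x\}_{x\in\Omega}$---the Young measure generated by $\vr_n$---with the property that for each continuous $F \in \{P,G,PG\}$ one has
$$
\overline{F(\vr)}(x) = \int_{\mathbb{R}} F(\lambda)\, d\nu_x(\lambda) \qquad \text{for a.e. } x \in \Omega.
$$

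With this representation in hand, the claim reduces to a Chebyshev-type inequality for the probability measure $\nu_x$. For a.e. fixed $x$, integrating the basic monotonicity inequality against the product measure $d\nu_x(\lambda)\, d\nu_x(\mu)$ gives
$$
0 \leq \iint_{\mathbb{R}^2}\bigl(P(\lambda)-P(\mu)\bigr)\bigl(G(\lambda)-G(\mu)\bigr)\, d\nu_x(\lambda)\, d\nu_x(\mu) = 2\int_{\mathbb{R}} PG\, d\nu_x - 2\int_{\mathbb{R}} P\, d\nu_x \int_{\mathbb{R}} G\, d\nu_x,
$$
where the total mass $\nu_x(\mathbb{R})=1$ is used in collapsing the mixed terms. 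This is precisely the pointwise estimate $\overline{P(\vr)G(\vr)}(x) \geq \overline{P(\vr)}(x)\,\overline{G(\vr)}(x)$ almost everywhere in $\Omega$.

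The step that needs the most care---and the one I would flag as the main obstacle---is the Young-measure representation itself, because the three sequences are only equi-integrable in $L^1$ rather than bounded in some $L^p$ with $p>1$, so one must invoke a version of the fundamental theorem on Young measures tailored to the weakly-$L^1$ setting. If one prefers to avoid Young measures altogether, the alternative is to test the basic inequality against a non-negative $\varphi \in C^1_c(\Omega)$ with $k$ replaced by a simple function $k(x)$, pass to the $L^1$-weak limit using equi-integrability, and then recover the product $\overline{P(\vr)}(x)\,\overline{G(\vr)}(x)$ pointwise via a countable dense family of constants together with the continuity of $P$ and $G$; this route is more technical but uses no measure-theoretic machinery beyond Dunford--Pettis.
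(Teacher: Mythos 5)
The paper states this lemma without proof, treating it as a known tool from the literature (it is essentially Theorem 10.19 in Feireisl--Novotn\'y, cited just before). Your Young-measure argument is a correct and conceptually cleaner alternative: the heart of the matter is indeed a Chebyshev-type positive-correlation inequality for comonotone functions against a probability measure, and Young measures make that literal. The textbook route instead first reduces to bounded $P$ and $G$ by truncation, then expands $\bigl(P(\vr_n)-P(\vr_m)\bigr)\bigl(G(\vr_n)-G(\vr_m)\bigr)\geq 0$ against a non-negative $\varphi$, passes $n\to\infty$ and then $m\to\infty$ to get $2\overline{P(\vr)G(\vr)}-2\overline{P(\vr)}\,\overline{G(\vr)}\geq 0$ pointwise, and finally removes the truncation via equi-integrability; this needs no Young measures but is more elaborate. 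The caveat you flagged is genuine: equi-integrability of $P(\vr_n)$, $G(\vr_n)$ and $P(\vr_n)G(\vr_n)$ does \emph{not} by itself give tightness of $\vr_n$ on $\R$, so the Young measure $\nu_x$ need not be a probability measure on $\R$ a.e.\ (mass may escape to $\pm\infty$, for instance when $P$ and $G$ are both bounded). The clean fix is to generate the Young measure on the two-point compactification $[-\infty,+\infty]$, where $\nu_x$ is automatically a probability measure, and to extend $P$, $G$ to $[-\infty,+\infty]$ using their monotonicity: if, say, $P$ is unbounded at $+\infty$ then equi-integrability of $P(\vr_n)$ forces $\nu_x(\{+\infty\})=0$, while if $P$ is bounded it extends continuously to $+\infty$ and the Chebyshev computation runs unchanged. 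With that adjustment your proof is complete; both routes buy the same conclusion, the textbook one being more elementary and yours being shorter once the Young-measure machinery on the compactified line is accepted.
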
 

We also have (see e.g. \cite[Lemma 2.8]{JeNoPo_M3AS})
\begin{lemma} \label{l 4.6}
Let $\Omega$ be bounded,  $f_n \rightharpoonup f$ in $L^1(\Omega)$,  $g_n \to g$ in $L^1(\Omega)$ and  $f_n g_n \rightharpoonup h$ in $L^1(\Omega)$. Then $h = fg$. 
\end{lemma}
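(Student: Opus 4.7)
The plan is to test the weak $L^1$ convergence $f_ng_n\rightharpoonup h$ against arbitrary $\varphi\in L^\infty(\Omega)$ and show that the limit equals $\int_\Omega fg\,\varphi\,\dx$; since $L^\infty(\Omega)$ separates points of $L^1(\Omega)$, this identifies $h=fg$ a.e. The obstruction is that $g\varphi$ lies only in $L^1$, not in $L^\infty$, so one cannot pair it directly with $\{f_n\}$; the remedy will be to truncate $g$ and combine weak--strong duality on the bounded part with an equi-integrability argument on the tail.

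First I would pass to a subsequence so that $g_n\to g$ a.e.\ in $\Omega$, which is available from the strong $L^1$ convergence. By the Dunford--Pettis theorem, the weak $L^1$ convergence of $\{f_n\}$ and of $\{f_ng_n\}$ gives equi-integrability of both sequences; this will be the key input. Let $T_k$ denote truncation at level $k$. For $\varphi\in L^\infty(\Omega)$ I decompose
\[
\int_\Omega f_ng_n\varphi\,\dx = \int_\Omega f_n(T_kg_n-T_kg)\varphi\,\dx + \int_\Omega f_n\,T_kg\,\varphi\,\dx + \int_\Omega f_n(g_n-T_kg_n)\varphi\,\dx.
\]
Since $T_kg\cdot\varphi\in L^\infty$, the middle piece converges to $\int_\Omega f\,T_kg\,\varphi\,\dx$ as $n\to\infty$. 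The first piece I expect to send to $0$ as $n\to\infty$ for each fixed $k$ by Egorov's theorem on the bounded domain $\Omega$ combined with the equi-integrability of $\{f_n\}$ on the exceptional set, exploiting $|T_kg_n-T_kg|\le 2k$ and a.e.\ convergence. The tail is dominated by $|g_n-T_kg_n|\le|g_n|\chi_{\{|g_n|>k\}}$, leading to
\[
\Bigl|\int_\Omega f_n(g_n-T_kg_n)\varphi\,\dx\Bigr| \le \|\varphi\|_\infty\int_{\{|g_n|>k\}}|f_ng_n|\,\dx \le \|\varphi\|_\infty\,\omega(k),
\]
where $\omega(k)\to 0$ uniformly in $n$: since $|\{|g_n|>k\}|\le\|g_n\|_1/k\le C/k$, the equi-integrability of $\{f_ng_n\}$ forces $\omega(k)\to 0$.

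Passing $n\to\infty$ and combining the three terms will yield $\bigl|\int_\Omega h\varphi\,\dx - \int_\Omega f\,T_kg\,\varphi\,\dx\bigr|\le\omega(k)\|\varphi\|_\infty$ for every $\varphi\in L^\infty(\Omega)$. Choosing $\varphi=\operatorname{sgn}(fT_kg)$ provides the uniform bound $\|fT_kg\|_1\le\|h\|_1+\omega(k)$; Fatou's lemma applied to $|fT_kg|\to|fg|$ then supplies the missing integrability $fg\in L^1(\Omega)$. Since $|fT_kg|\le|fg|$ a.e.\ and $fT_kg\to fg$ a.e., dominated convergence gives $\int_\Omega f T_kg\,\varphi\,\dx\to\int_\Omega fg\,\varphi\,\dx$ as $k\to\infty$. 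Sending $k\to\infty$ in the preceding inequality will deliver $\int_\Omega h\varphi\,\dx=\int_\Omega fg\,\varphi\,\dx$ for every $\varphi\in L^\infty(\Omega)$, whence $h=fg$ a.e., as desired.

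The hard part will be the tail term: the danger that $f_n$ and $g_n$ concentrate their large values on the same small-measure sets, which no pointwise or weak hypothesis on $f_n$ alone can exclude. This is precisely what the Dunford--Pettis equi-integrability of the product sequence $\{f_ng_n\}$ rules out, so the decisive ingredient beyond the hypotheses is the observation that the weak $L^1$ convergence of $\{f_ng_n\}$ itself provides a uniform equi-integrable bound on its tails.
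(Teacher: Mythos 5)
The paper does not prove this lemma itself; it merely cites Lemma~2.8 of Jessl\'e--Novotn\'y--Pokorn\'y and moves on, so there is no in-paper argument against which to compare yours. Your proof is, however, correct and complete, and it is essentially the standard argument for this type of weak--strong product lemma: Dunford--Pettis uniform integrability of both $\{f_n\}$ and, crucially, of $\{f_n g_n\}$; an a.e.-convergent subsequence of $\{g_n\}$; the three-way truncation decomposition with Egorov plus uniform integrability of $\{f_n\}$ killing the cross term $\int f_n(T_kg_n - T_kg)\varphi$, and Chebyshev plus uniform integrability of $\{f_n g_n\}$ bounding the tail $\int_{\{|g_n|>k\}}|f_ng_n|$ uniformly in $n$; and Fatou/dominated convergence in the passage $k\to\infty$, which simultaneously supplies $fg\in L^1(\Omega)$ and identifies $h$. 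You are right that the uniform integrability of the product sequence, which follows from its assumed weak $L^1$ convergence, is the decisive extra ingredient beyond the two individual hypotheses. One small remark for completeness: passing to a subsequence along which $g_n\to g$ a.e.\ is harmless because the identity $h=fg$ concerns only the fixed limit functions, and subsequences of weakly convergent sequences converge weakly to the same limits, so the conclusion obtained along the subsequence is the full conclusion.
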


We need the following version of the Schauder fixed point theorem
(for the proof see e.g. \cite[Theorem 9.2.4]{Ev_Book}).
\begin{lemma} \label{l 4.7}
Let ${\cal T}: X \to X$ be a continuous, compact mapping, $X$ be a Banach space. Let for any $t \in [0,1]$ the fixed points $t {\cal T}u = u$ be bounded. Then ${\cal T}$ possesses at least one fixed point in $X$.
\end{lemma}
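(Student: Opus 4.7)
The plan is to reduce the statement to the classical Schauder fixed point theorem (which asserts that a continuous compact self-map of a closed bounded convex subset of a Banach space has a fixed point) via a retraction argument.

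First, I would use the hypothesis to extract a uniform bound. Since the set $\{u \in X : u = t\mathcal{T}u \text{ for some } t \in [0,1]\}$ is bounded, pick $R_0>0$ such that $\|u\| \le R_0$ for every such $u$, and choose any $R>R_0$. Next, define the radial retraction $r\colon X \to \overline{B_R(0)}$ by
\begin{equation*}
r(x) = \begin{cases} x, & \|x\| \le R, \\ R\, x/\|x\|, & \|x\| > R, \end{cases}
\end{equation*}
which is continuous on $X$ and satisfies $\|r(x)\| \le R$ for all $x$. Set $F = r \circ \mathcal{T}\colon \overline{B_R(0)} \to \overline{B_R(0)}$. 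Since $\mathcal{T}$ is continuous and compact, and $r$ is continuous, the map $F$ is continuous and sends bounded sets into relatively compact sets. Because $\overline{B_R(0)}$ is closed, bounded, and convex, the Schauder fixed point theorem applies to $F$, yielding $u^\star \in \overline{B_R(0)}$ with $F(u^\star) = u^\star$.

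It remains to show that $u^\star$ is in fact a fixed point of $\mathcal{T}$ itself. If $\|\mathcal{T}u^\star\| \le R$, then $F(u^\star) = \mathcal{T}u^\star$ and we are done. Otherwise $\|\mathcal{T}u^\star\| > R$, so
\begin{equation*}
u^\star = F(u^\star) = \frac{R}{\|\mathcal{T}u^\star\|}\,\mathcal{T}u^\star = t^\star\, \mathcal{T}u^\star, \qquad t^\star := \frac{R}{\|\mathcal{T}u^\star\|} \in (0,1),
\end{equation*}
and $\|u^\star\| = R$. But then $u^\star$ is a fixed point of $t^\star \mathcal{T}$ with $t^\star \in [0,1]$, so by the a priori bound $\|u^\star\| \le R_0 < R$, a contradiction. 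Hence the second case is excluded and $u^\star = \mathcal{T}u^\star$.

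The main conceptual point, and the only place one has to be a little careful, is the dichotomy in the last paragraph: one must be sure that the retraction truly preserves continuity and compactness, and that the artificially created fixed points on the sphere $\|u\|=R$ are excluded by the a priori hypothesis. Everything else is essentially bookkeeping on top of Schauder's theorem; no estimates specific to the PDE system enter here, which is exactly why Lemma~\ref{l 4.7} will be a convenient black box at the highest approximation level in the existence proof.
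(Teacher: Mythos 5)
Your proof is correct, and it is essentially the standard argument the paper points to (it cites Evans, Theorem 9.2.4, for exactly this Schaefer/Leray--Schauder fixed point theorem, and Evans' proof is the same radial-retraction reduction to Schauder on a closed ball). The case analysis at the sphere $\|u\|=R$ and the verification that $r\circ\mathcal{T}$ remains continuous and compact are handled correctly, so there is nothing to add.
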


\section{Approximation}

\subsection{Approximate system level 4}
Let us now introduce the approximating procedure. For simplicity we consider the Dirichlet boundary conditions. The proof for the Navier boundary conditions is basically the same. We also set
immediately $l=0$ and assume $L$ to be constant. Recall that we have $\alpha =1$ in \eqref{1.7}. The approximation for $l\neq 0$ and $\alpha <1$ can be done similarly. 
We fix $N$ a positive integer and $\varepsilon$, $\delta$ and $\eta >0$ (we pass subsequently $N \to \infty$, $\eta \to 0^+$, $\varepsilon \to 0^+$ and finally  $\delta \to 0^+$, thus the assumption $\varepsilon$ sufficiently small with respect to $\delta$ does not cause any problems) and denote by
$$
X_N = \mbox{span}\, \{\vc{w}^1, \dots,\vc{w}^N\} \subset W^{1,2}_0(\Omega;\R^3)
$$
with $\{\vc{w}^i\}_{i=1}^\infty$ an orthonormal basis in
$W^{1,2}_0(\Omega;\R^3)$. Due  to the smoothness of
$\Omega$ we may additionally assume that $\vc{w}^i \in
W^{2,q}(\Omega;\R^3)$ for all $1\leq q <\infty$ (we may take e.g.
the eigenfunctions of the Laplace operator with the
homogeneous Dirichlet boundary conditions). Similarly we may proceed for the slip boundary conditions, we only replace the eigenfunction to the Laplace operator by e.g. the eigenfunctions to the Lam\'e system with the Navier boundary conditions.  

We look for a triple $(\vr_{N,\eta,\varepsilon,\delta}, \vu_{N,\eta,\varepsilon,\delta}, \vt_{N,\eta,\varepsilon,\delta})$ (denoted briefly $(\vr,\vu,\vt)$) such that $\vr \in W^{2,q}(\Omega)$, $\vu \in X_N$ and $\vt \in W^{2,q}(\Omega)$, $1\leq q <\infty$ arbitrary, where
\begin{equation} \label{5.2}
\begin{array}{c}
\displaystyle \intO {\Big(\frac 12 \vr (\vu \cdot \nabla \vu) \cdot \vc{w}^i - \frac 12 \vr (\vu \otimes \vu) : \nabla \vc{w}^i + \tn{S}_\eta(\vt,\vu) : \nabla \vc{w}^i\Big)} \\
\displaystyle -\intO { \big(p(\vr,\vt)+\delta (\vr^\beta+\vr^2) \big)\Div \vc{w}^i}  = \intO {\vr \vc{f} \cdot \vc{w}^i}
\end{array}
\end{equation}
for all $i = 1,2,\dots,N$,
\begin{equation} \label{5.3}
\varepsilon \vr -\varepsilon \Delta \vr + \Div (\vr \vu) = \varepsilon h \qquad \mbox{ a.e. in } \Omega,
\end{equation}
and
\begin{equation} \label{5.4}
\begin{array}{c}
\displaystyle
-\Div \Big((\kappa_{\eta}(\vt) + \delta \vt^B + \delta \vt^{-1}) \frac{\varepsilon + \vt}{\vt}\nabla \vt\Big) + \Div \big(\vr e(\vr,\vt) \vu\big)  \\
\displaystyle  =\tn{S}_\eta(\vt,\vu):\nabla \vu + \delta \vt^{-1} -p(\vr,\vt) \Div \vu + \delta \varepsilon|\nabla \vr|^2(\beta \vr^{\beta-2} + 2) \quad \mbox{ a.e. in } \Omega,
\end{array}
\end{equation}
with $\beta \geq \max\{8,3\gamma, \frac{3m+2}{3m-2}\}$, $B\geq 2m+2$, $B \leq 6\beta -8$,
$$
\tn{S}_\eta(\vt,\vu)  = \frac{\mu_\eta(\vt)}{1+\eta \vt} \Big[\nabla
\vu + (\nabla \vu)^T - \frac 23 \Div \vu \tn{I}\Big] +
\frac{\xi_\eta(\vt)}{1+\eta \vt} \Div \vu \tn{I}.
$$
In the above formulas, $h = \frac{M}{|\Omega|}$, $\mu_\eta$,
$\xi_\eta$ and $\kappa_\eta$ are suitable regularizations of $\mu$,
$\xi$ and $\kappa$, respectively, that conserve (\ref{1.7}) and (\ref{1.9}) and that converge uniformly on compact
subsets of $[0,\infty)$ to $\mu$, $\xi$ and $\kappa$, respectively.
We consider system (\ref{5.2})--(\ref{5.4}) together with the
following boundary conditions on $\partial \Omega$
\begin{equation} \label{5.5}
\pder{\vr}{\vc{n}} = 0,
\end{equation}
\begin{equation} \label{5.6}
\big(\kappa_{\eta}(\vt) + \delta \vt^B + \delta \vt^{-1}\big) \frac{\varepsilon + \vt}{\vt} \pder{\vt}{\vc{n}} + \big(L + \delta \vt^{B-1}) (\vt - \Theta^\eta_0) + \varepsilon \ln\vt = 0,
\end{equation}
with $\Theta_0^\eta$ a smooth approximation of $\Theta_0$ such that
$\Theta_0^\eta$ is strictly positive at $\partial \Omega$. The
no-slip boundary condition for the approximate velocity is
included in the choice of $X_N$. We have

\begin{proposition} \label{p 5.1}
Let $\varepsilon$, $\delta$, $\eta$ and $N$ be as above, $\beta \geq \max\{8,2\gamma\}$ and $B\geq 2m+2$. Let $\varepsilon$ be sufficiently small with respect to $\delta$. Under the assumptions of Theorem \ref{t 1.1} and the assumptions made above in this section, there exists a solution to system (\ref{5.2})--(\ref{5.6}) such that $\vr \in W^{2,q}(\Omega)$ $\forall q<\infty$, $\vr \geq 0$ in $\Omega$, $\intO \vr = M$, $\vu \in X_N$, and $\vt \in W^{2,q}(\Omega)$ $\forall q <\infty$, $\vt \geq C(N) >0$.
\end{proposition}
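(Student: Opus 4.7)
The plan is to use the Leray--Schauder-type fixed point theorem (Lemma \ref{l 4.7}) in the finite-dimensional space $X_N$, after decoupling the system so that $\vr$ and $\vt$ become well-defined functions of the velocity; the parameters $\eta$, $\varepsilon$, $\delta$ stay fixed throughout. First, for given $\vu\in X_N$ the operator $\vr\mapsto \varepsilon\vr-\varepsilon\Delta\vr+\Div(\vr\vu)$ with the homogeneous Neumann condition \eqref{5.5} is strictly elliptic with smooth bounded coefficients (note $\vu$ vanishes on $\partial\Omega$), so standard $L^p$-theory yields a unique solution $\vr=\vr(\vu)\in W^{2,q}(\Omega)$ for every $q<\infty$. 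The weak maximum principle, together with the strictly positive right-hand side $\varepsilon h$, gives $\vr\geq 0$; integrating the equation over $\Omega$ and using $\vu\cdot\vc{n}=0$ forces $\intO{\vr}=M$. The map $\vu\mapsto\vr(\vu)$ is moreover Lipschitz from $X_N$ into $W^{2,q}(\Omega)$.

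Next, with $\vu\in X_N$ and $\vr=\vr(\vu)$ in hand, I would solve the quasilinear elliptic problem \eqref{5.4}, \eqref{5.6} for $\vt$. The singular source $\delta\vt^{-1}$ together with the strictly positive boundary datum $\Theta_0^\eta$ prevents $\vt$ from degenerating: testing with $(\vt-c)^-$ and exploiting that the right-hand side blows up as $\vt\to 0^+$ yields a strict positive lower bound $\vt\geq c(N)>0$, while the $\delta\vt^B$ term (with $B\geq 2m+2$) supplies the coercivity needed at large $\vt$. To produce the solution I would first truncate $\vt$ in the nonlinear coefficients so that they become bounded and Lipschitz, apply Lax--Milgram (or a Schauder argument in $C^0$) to get a weak $W^{1,2}$ solution, upgrade to $W^{2,q}(\Omega)$ by elliptic bootstrapping (the sources in \eqref{5.4} are in $L^\infty$ once $\vu$, $\vr$ and the truncated coefficients are fixed, since $\mu_\eta(\vt)/(1+\eta\vt)$ and its relatives are bounded), and finally remove the truncation using the a posteriori bounds $c(N)\leq\vt\leq C(N)$. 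This delivers $\vt=\vt(\vu)\in W^{2,q}$ with continuous dependence on $\vu$.

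Finally, I would define $\mathcal{T}:X_N\to X_N$ by letting $\mathcal{T}\vc{v}$ be the unique $\vu\in X_N$ solving
$$
\intO{\tn{S}_\eta(\vt(\vc{v}),\vu):\nabla\vc{w}^i}=\intO{\Big(\tfrac{1}{2}\vr(\vc{v})(\vc{v}\otimes\vc{v}):\nabla\vc{w}^i-\tfrac{1}{2}\vr(\vc{v})(\vc{v}\cdot\nabla\vc{v})\cdot\vc{w}^i+\tilde p\,\Div\vc{w}^i+\vr(\vc{v})\vc{f}\cdot\vc{w}^i\Big)}
$$
for $i=1,\dots,N$, with $\tilde p=p(\vr(\vc{v}),\vt(\vc{v}))+\delta(\vr(\vc{v})^\beta+\vr(\vc{v})^2)$; existence and uniqueness of $\vu$ come from Lax--Milgram in the finite-dimensional space using coercivity of $\tn{S}_\eta$ via the Korn inequality (Lemma \ref{l 4.1}). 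Continuity and compactness of $\mathcal{T}$ on $X_N$ are automatic since $\dim X_N<\infty$ and the previous steps provide continuous dependence. The main obstacle I anticipate is the uniform a priori bound $\|\vu\|_{X_N}\leq C$ for all $\vu$ with $t\mathcal{T}\vu=\vu$, $t\in[0,1]$; I would obtain this by testing the parametrized momentum equation by $\vu$ itself (the symmetric split of the convective term absorbs the error from $\varepsilon h-\varepsilon\vr+\varepsilon\Delta\vr$ coming from \eqref{5.3}) and combining with \eqref{5.4} integrated over $\Omega$ together with the boundary contribution from \eqref{5.6}. This produces a kinetic-plus-internal-energy balance in which the artificial pressure $\delta\vr^\beta$ and the forcing $\delta\vt^{-1}$ supply the required control of $\vr$ and $\vt$, while the Korn inequality converts $\intO{\tn{S}_\eta:\nabla\vu}$ into $\|\vu\|_{1,2}^2$; since all norms on $X_N$ are equivalent, this delivers the desired bound and Lemma \ref{l 4.7} produces the sought fixed point.
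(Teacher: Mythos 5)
Your overall strategy — decouple the system, pass to the logarithmic/Kirchhoff picture, and close with the Leray–Schauder theorem (Lemma~\ref{l 4.7}) — is the right one, and your handling of the density via Lemma~\ref{l 5.3}, the momentum equation via Lax–Milgram and Korn, and the uniform bounds via testing by $\vu$ plus the integrated energy balance all agree with the paper. However, there is a genuine gap in the way you treat the temperature: you attempt to define a map $\vu \mapsto \vt(\vu)$ by solving the \emph{full nonlinear} problem \eqref{5.4}, \eqref{5.6}, and you then place only $\vu$ in the fixed-point space $X_N$. For this to make sense you need the nonlinear temperature problem, for fixed $\vu$ and $\vr=\vr(\vu)$, to have a \emph{unique} solution and to depend continuously on $\vu$; none of the tools you invoke (truncation, Lax–Milgram for the truncated problem, Schauder in $C^0$, bootstrapping) delivers uniqueness. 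Lax–Milgram does not apply to \eqref{5.4} as it stands (the operator is nonlinear and the convective term $\Div(\vr e(\vr,\vt)\vu)$ and the source $-p(\vr,\vt)\Div\vu$ are not variationally symmetric or monotone in $\vt$), and a Schauder argument for the truncated problem produces existence only. Without uniqueness, $\vt(\vu)$ is not a well-defined map and the composed operator $\mathcal{T}$ on $X_N$ does not exist. The paper avoids this precisely by putting the variable $r=\ln\vt$ into the fixed-point space alongside $\vu$: the operator $T:X_N\times W^{2,q}(\Omega)\to X_N\times W^{2,q}(\Omega)$ in \eqref{5.7}–\eqref{5.10} takes $(\vc{v},z)$ as input, freezes all nonlinearities at $\vt=\mathrm{e}^z$, and solves a \emph{linear} elliptic equation \eqref{5.8} for $r$, whose unique solvability and regularity are immediate. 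This also enforces $\vt=\mathrm{e}^r>0$ automatically, whereas your lower-bound argument via $(\vt-c)^-$ is only heuristic. To repair your proof, either enlarge the fixed-point space to $X_N\times W^{2,q}(\Omega)$ and linearize the temperature equation as in \eqref{5.7}–\eqref{5.10}, or supply a genuine uniqueness argument (e.g.\ via the Kirchhoff transform together with a contraction or comparison principle that controls the indefinite-sign source terms), and likewise make the positivity bound rigorous.
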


The detailed proof of the proposition is in  \cite{NoPo_JDE}. Let us only recall the main steps here. We consider a mapping
$$
T: X_N \times W^{2,q}(\Omega) \to X_N \times W^{2,q}(\Omega)
$$
with
$$
T(\vc{v},z) = (\vu,r),
$$
where
\begin{equation} \label{5.7}
\begin{array}{c}
\displaystyle
\intO {\tn{S}_\eta(\mbox{e}^z,\vu) : \nabla \vc{w}^i } =\\
\displaystyle  \intO{\Big(\frac 12 \vr (\vc{v} \otimes \vc{v}) : \nabla \vc{w}^i - \frac 12 \vr (\vc{v} \cdot \nabla \vc{v}) \cdot \vc{w}^i +\big(p(\vr,\mbox{e}^z)+\delta (\vr^\beta +  \vr^2)\big)\Div \vc{w}^i + \vr \vc{f} \cdot \vc{w}^i\Big)}
\end{array}
\end{equation}
$\forall i = 1,2,\dots,N$,
\begin{equation} \label{5.8}
\begin{array}{c}
\displaystyle
-\Div \Big((\kappa_{\eta}(\mbox{e}^z) + \delta \mbox{e}^{zB} + \delta \mbox{e}^{-z}) (\varepsilon + \mbox{e}^z)\nabla r \Big) = - \Div \big(\vr e(\vr,\mbox{e}^z) \vc{v}\big) + \tn{S}_\eta(\mbox{e}^z,\vc{v}):\nabla \vc{v}\\
\displaystyle   + \delta \mbox{e}^{-z} -p(\vr,\mbox{e}^z) \Div \vc{v} + \delta \varepsilon|\nabla \vr|^2(\beta \vr^{\beta-2} + 2) \qquad \mbox{ a.e. in } \Omega,
\end{array}
\end{equation}
with $\vr$, a unique solution to (see Lemma \ref{l 5.3} below)
\begin{equation} \label{5.9}
\begin{array}{c}
\displaystyle \varepsilon \vr -\varepsilon \Delta \vr + \Div(\vr \vc{v}) = \varepsilon h \qquad \mbox{ in } \Omega, \\
\displaystyle \pder{\vr}{\vc{n}} = 0 \qquad \mbox{ at } \partial \Omega,
\end{array}
\end{equation}
together with the boundary conditions on $\partial \Omega$
\begin{equation} \label{5.10}
\begin{array}{c}
\displaystyle \big(\kappa_{\eta}(\mbox{e}^z) + \delta \mbox{e}^{zB} + \delta \mbox{e}^{-z}\big) (\varepsilon + \mbox{e}^z) \pder{r}{\vc{n}} + \big(L+ \delta \mbox{e}^{(B-1)z}\big)(\mbox{e}^z -\Theta_0^n) + \varepsilon r = 0.
\end{array}
\end{equation}

Note that the fixed point of $T$ (provided it exists) corresponds to $r=\ln \vt$ in (\ref{5.2})--(\ref{5.6}).
We now apply Lemma \ref{l 4.7}.

For fixed $\vc{v}\in X_N$, we can find a unique solution to the
approximate continuity equation (\ref{5.9}). More precisely, we
have  (see e.g. \cite[Proposition 4.29]{NoSt_Book})
\begin{lemma} \label{l 5.3}
Let $\varepsilon>0$, $h = \frac{M}{|\Omega|}$. Let $\vc{v} \in X_N$. Then there exists a unique solution to (\ref{5.9}) such that $\vr \in W^{2,p}(\Omega)$ for all $p<\infty$, $\intO \vr = M$ and $\vr \geq 0$ in $\Omega$. Moreover, the mapping $S: \vc{v} \mapsto \rho$ is continuous and compact from $X_N$ to $W^{2,p}(\Omega)$.
\end{lemma}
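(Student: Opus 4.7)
The plan is to treat \eqref{5.9} as a linear second-order elliptic problem in $\vr$ with smooth coefficients, exploiting that $\vc{v}\in X_N$ lies in a finite-dimensional subspace of smooth functions that vanish on $\partial\Omega$. I will proceed in five steps: existence and uniqueness in $W^{1,2}(\Omega)$ via the Fredholm alternative; bootstrap to $W^{2,q}(\Omega)$ for any finite $q$ by standard elliptic regularity; mass conservation by testing with the constant $1$; non-negativity of $\vr$ via duality with the adjoint problem; and finally continuity plus compactness of $S$ by combining the elliptic estimates with the finite-dimensionality of $X_N$.

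For existence and uniqueness I rewrite \eqref{5.9} as $-\varepsilon\Delta\vr + \vc{v}\cdot\nabla\vr + (\varepsilon+\Div\vc{v})\vr = \varepsilon h$ with $\partial_{\vc{n}}\vr = 0$. The principal part $-\varepsilon\Delta+\varepsilon$ is coercive on $W^{1,2}(\Omega)$, while $\vc{v}\cdot\nabla + \Div\vc{v}$ is a compact perturbation, so the operator is Fredholm of index zero and I only need to rule out nontrivial adjoint solutions. Using $\vc{v}\cdot\vc{n}=0$ on $\partial\Omega$ (which holds since $\vc{v}\in X_N\subset W^{1,2}_0$), integration by parts gives the adjoint $-\varepsilon\Delta\phi - \vc{v}\cdot\nabla\phi + \varepsilon\phi = 0$ with $\partial_{\vc{n}}\phi=0$, whose zero-order coefficient is strictly positive, so the strong maximum principle together with Hopf's lemma forces $\phi\equiv 0$. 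Fredholm then yields a unique weak $W^{1,2}$-solution, and iterated Calder\'on--Zygmund estimates lift it to $W^{2,q}(\Omega)$ for every $q<\infty$. Testing the equation against $1$ and using $\vc{v}\cdot\vc{n}=0$ produces $\varepsilon\int_\Omega\vr = \varepsilon M$, hence $\int_\Omega\vr=M$.

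The hard part will be the pointwise non-negativity of $\vr$. A direct maximum principle is not available on the primal equation because the zero-order coefficient $\varepsilon+\Div\vc{v}$ has no definite sign, and when $\varepsilon$ is small compared to $\|\Div\vc{v}\|_\infty$ the natural $L^2$-test of $\vr^-$ also fails. My plan is instead to argue by duality: for any non-negative $f\in L^2(\Omega)$, the adjoint problem $-\varepsilon\Delta\phi - \vc{v}\cdot\nabla\phi + \varepsilon\phi = f$ with $\partial_{\vc{n}}\phi=0$ admits a unique solution, which is non-negative by the strong maximum principle, since its zero-order coefficient $\varepsilon$ is strictly positive and the right-hand side is non-negative. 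Pairing $\vr$ with $f$ and integrating by parts yields
$$\int_\Omega \vr f \dx \;=\; \int_\Omega \varepsilon h\, \phi \dx \;\ge\; 0,$$
and since $f\ge 0$ is arbitrary, $\vr\ge 0$ a.e.\ in $\Omega$.

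For continuity and compactness of $S\colon X_N\to W^{2,q}(\Omega)$ I would take a sequence $\vc{v}_n\to\vc{v}$ in $X_N$. Finite-dimensionality forces convergence in every norm, so the coefficients of \eqref{5.9} converge with all derivatives, and the uniform estimates from the Fredholm theory (valid in a neighborhood of $\vc{v}$ because adjoint kernels remain trivial) deliver a uniform bound on $\vr_n=S(\vc{v}_n)$ in $W^{2,q}(\Omega)$. Rellich provides strong convergence of a subsequence in $W^{1,q}(\Omega)$, which is more than enough to pass to the limit in the weak form of \eqref{5.9}; uniqueness identifies the limit with $S(\vc{v})$ and forces the whole sequence to converge, so $S$ is continuous. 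Compactness is then automatic: bounded sets of the finite-dimensional space $X_N$ are precompact, and their continuous image is precompact in $W^{2,q}(\Omega)$.
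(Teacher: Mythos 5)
The paper does not actually prove Lemma \ref{l 5.3}; it simply refers the reader to Novotn\'y--Stra\v{s}kraba, Proposition~4.29, so there is no in-text proof to compare against. Your argument is correct as a self-contained proof and is, to my knowledge, not the route taken in that reference: there one usually gets non-negativity by first solving the truncated problem $\varepsilon\vr-\varepsilon\Delta\vr+\Div(\vr^{+}\vc{v})=\varepsilon h$ (where $\vr^{+}=\max(\vr,0)$) and testing with the negative part, which kills the convective term outright because $\vr^{+}\nabla\vr^{-}\equiv 0$; this avoids any maximum principle for the drifted operator. Your duality argument is a clean alternative, and it neatly sidesteps the failure of the direct $L^2$ test (which you correctly diagnose: the zero-order coefficient $\varepsilon+\Div\vc{v}$ has no sign). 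Each approach has its merits --- the truncation argument needs only energy estimates but requires a nonlinear fixed point to solve the truncated equation, while your duality approach stays linear but leans on the classical comparison principle for the adjoint.

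Two small points you should tighten. First, the comparison principle you invoke for the adjoint problem $-\varepsilon\Delta\phi-\vc{v}\cdot\nabla\phi+\varepsilon\phi=f$ with homogeneous Neumann data needs $\phi$ regular enough for the Hopf/strong-maximum argument (say $W^{2,q}$, $q>3$, or $C^2(\overline\Omega)$); with $f\in L^2$ you only get $\phi\in W^{2,2}$. This is harmless --- it suffices to take $f\in C_c^\infty(\Omega)$, $f\ge 0$, which is dense enough to conclude $\vr\ge 0$ a.e., and then $\phi$ is smooth because $\vc{v}$ is --- but you should say it. Second, your continuity argument only delivers $\vr_n\to\vr$ in $W^{1,q}$ via Rellich; to upgrade to $W^{2,q}$, apply the elliptic estimate to the difference, noting $-\varepsilon\Delta(\vr_n-\vr)+\varepsilon(\vr_n-\vr)=-\Div(\vr_n\vc{v}_n-\vr\vc{v})$ with right-hand side tending to zero in $L^q$. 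The compactness conclusion from finite dimensionality of $X_N$ is then correct as stated.
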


Both (\ref{5.7}) and (\ref{5.8}) with (\ref{5.10}) are linear elliptic problems. Hence their unique solvability as well as regularity of the solution is straightforward. Using also Lemma \ref{l 5.3} we get  (see \cite[Lemma 3]{MuPo_CMP} for a similar result) the following lemma.

\begin{lemma} \label{l 5.4}
Under the assumptions of Proposition \ref{p 5.1}, for $p>3$, the operator $T$ is a continuous and compact operator from $X_N \times W^{2,p}(\Omega)$ into itself.
\end{lemma}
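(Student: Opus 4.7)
\textit{Proof plan.} The plan is to decompose $T$ into three steps --- obtain $\vr$ from (\ref{5.9}) via Lemma \ref{l 5.3}, solve the finite-dimensional system (\ref{5.7}) for $\vu \in X_N$, and solve the linear Robin problem (\ref{5.8})--(\ref{5.10}) for $r$ --- verify continuity of each step, and observe that the output gains enough regularity for the composition to be compact on bounded sets. The key embedding throughout will be $W^{2,p}(\Omega) \hookrightarrow C^{1,\alpha}(\overline{\Omega})$ with $\alpha = 1 - 3/p$, compact for $p>3$, which yields uniform pointwise bounds on $z$, $\mathrm{e}^z$, $\mathrm{e}^{-z}$, $\vr$ and $\nabla \vr$ and uniform convergence of the corresponding quantities along convergent subsequences in $W^{2,p}(\Omega)$.

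For the velocity, (\ref{5.7}) is a linear system $A_{ij}(z) u_j = b_i(\vc{v},z,\vr)$ with matrix $A_{ij}(z) = \intO{\tn{S}_\eta(\mathrm{e}^z,\vc{w}^j):\nabla \vc{w}^i}$. Korn's inequality (Lemma \ref{l 4.1}) together with the strict positivity of $\mu_\eta/(1+\eta \mathrm{e}^z)$ renders $A(z)$ positive definite, so $\vu = A(z)^{-1}b$ is uniquely determined and depends continuously on $(\vc{v},z,\vr)$; because $X_N$ is finite-dimensional, continuity is automatically upgraded to compactness on bounded sets.

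For the temperature, setting $K(z) = (\kappa_\eta(\mathrm{e}^z) + \delta \mathrm{e}^{zB} + \delta \mathrm{e}^{-z})(\varepsilon + \mathrm{e}^z)$, the pair (\ref{5.8})--(\ref{5.10}) becomes the Robin problem
\begin{equation*}
-\Div\big(K(z)\nabla r\big) = F(\vc{v},z,\vr), \qquad K(z)\pder{r}{\vc{n}} + \varepsilon r = -\big(L + \delta \mathrm{e}^{(B-1)z}\big)\big(\mathrm{e}^z - \Theta_0^\eta\big)\ \text{on}\ \partial \Omega.
\end{equation*}
Because $z$, $\vr$ and $\nabla \vr$ are pointwise bounded and $\vc{v} \in X_N$ is smooth, $K(z)$ has a strictly positive lower bound and lies in $C^{0,\alpha}(\overline{\Omega})$, while $F$ and the boundary datum are in $L^\infty$. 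The $\varepsilon r$-term on $\partial \Omega$ supplies coercivity, so Lax--Milgram yields a unique weak $W^{1,2}$-solution, and the standard Agmon--Douglis--Nirenberg $L^p$-estimate for Robin problems with $C^{0,\alpha}$ coefficients then gives $r \in W^{2,p}(\Omega)$ with $\|r\|_{2,p,\Omega}$ bounded in terms of $\|\vc{v}\|_{X_N}$ and $\|z\|_{2,p,\Omega}$.

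Continuity and compactness of $T$ will follow together. Given $(\vc{v}_n, z_n) \to (\vc{v}, z)$ in $X_N \times W^{2,p}(\Omega)$, continuity of $S$ provides $\vr_n \to \vr$ in $W^{2,p}$, the Morrey embedding gives $z_n \to z$ in $C^{1,\alpha}$, hence $K(z_n) \to K(z)$ in $C^{1}(\overline{\Omega})$ and $F_n \to F$ in $L^p$; rewriting the equation for $r_n$ as $-\Delta r_n = \big(F_n + \nabla K(z_n)\cdot \nabla r_n\big)/K(z_n)$ and exploiting the uniform $W^{2,p}$-bound together with the compact embedding $W^{2,p} \hookrightarrow W^{1,p}$, linear elliptic stability delivers $r_n \to r$ in $W^{2,p}$. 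Compactness is obtained by running this same continuity argument on a subsequence extracted from a bounded sequence: finite-dimensionality of $X_N$ plus the compact embedding $W^{2,p}(\Omega) \hookrightarrow\hookrightarrow C^{1}(\overline{\Omega})$ (for $p>3$) produce convergence of $\vc{v}_n$ and $z_n$ in a weaker topology, and the compactness part of Lemma \ref{l 5.3} upgrades $\vr_n$ to strong convergence in $W^{2,p}$, after which the continuity argument applies verbatim. The main obstacle will be to ensure that the Robin $L^p$-estimate is uniform in the coefficient norms of $K(z)$ and that $K(z)^{-1}$ remains uniformly bounded; once both are controlled by $\|z\|_{2,p,\Omega}$ through the Morrey embedding, the rest of the argument is essentially bookkeeping.
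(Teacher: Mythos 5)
Your proposal is correct and takes essentially the same approach the paper has in mind: the paper does not spell out a proof but merely observes that (5.7) and (5.8)--(5.10) are linear elliptic problems, invokes Lemma 5.3 for $\vr$, and points to [MuPo\_CMP, Lemma 3] for a similar argument; your decomposition into the three solution operators, the Korn/Lax--Milgram/ADN chain, and the Morrey embedding $W^{2,p}\hookrightarrow C^{1,\alpha}$ for $p>3$ is exactly the standard elaboration being referenced. The only place where the write-up is slightly compressed is the final compactness paragraph: the phrase ``the continuity argument applies verbatim'' glosses over the fact that along the extracted subsequence one only has $z_n\rightharpoonup z$ weakly in $W^{2,p}$ and strongly in $C^{1,\alpha'}$, but, as you implicitly use, only the $C^1$-convergence of $z_n$ (hence of $K(z_n)$ and $F_n$) is actually required to close the elliptic stability estimate for $r_n-r$, so the argument does go through.
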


To fulfill the assumptions of Lemma \ref{l 4.7}, we need to verify boundedness of possible fixed points to $t T(\vu,r) = (\vu,r)$, $t \in [0,1]$. As this is the most difficult part of the proof of Proposition \ref{p 5.1}, we give more details here. The full proof can be found in \cite{NoPo_JDE}.

\begin{lemma} \label{l 5.5}
Let the assumptions of Proposition \ref{p 5.1} be satisfied. Let $p>3$. Then there exists $C>0$ such that all solutions to
\begin{equation} \label{5.10a}
t T(\vu,r) = (\vu,r)
\end{equation}
fulfill
$$
\|\vu\|_{2,p} + \|r\|_{2,p} + \|\vt\|_{2,p} \leq C,
$$
where $\vt = \mbox{e}^{r}$ and $C$ is independent of $t\in [0,1]$.
\end{lemma}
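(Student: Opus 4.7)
The strategy is to obtain a priori bounds on any fixed point $(\vu, r)$ of $tT$ that are uniform in $t \in [0,1]$, and then to read off the stated $W^{2,p}$ bound from elliptic regularity applied to the three decoupled linear problems (\ref{5.7}), (\ref{5.9}), and (\ref{5.8})--(\ref{5.10}). A fixed point satisfies a $t$-weighted version of this system in which the convection, pressure, body force, heat production and transport terms carry an extra factor $t$, while the positive principal parts are unchanged. I would proceed in the order: (i) a $W^{1,2}$ bound on $\vu$ together with control of $\vr$, (ii) upper and lower bounds on $\vt = e^r$, (iii) $W^{2,p}$ bootstrap on all three unknowns.

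For step (i), I would test the $t$-weighted momentum equation with $\vu \in X_N$. The antisymmetric splitting in (\ref{5.7}) is engineered so that $\frac12\int\vr(\vu\otimes\vu):\nabla\vu - \frac12\int\vr(\vu\cdot\nabla\vu)\cdot\vu = 0$ pointwise, so convection drops out without any use of the continuity equation. Korn's inequality (Lemma \ref{l 4.1}(i)) then yields
$$C\|\vu\|_{1,2}^2 \le t\intO{\big[(p(\vr,\vt) + \delta(\vr^\beta+\vr^2))\Div\vu + \vr\vc{f}\cdot\vu\big]}.$$
In parallel, Lemma \ref{l 5.3} furnishes $\vr \in W^{2,q}$ with $\vr \ge 0$ and $\intO{\vr} = M$; a maximum-principle argument for (\ref{5.9}) produces an $L^\infty$ bound of the form $\|\vr\|_\infty \le C(\|\vu\|_\infty)$. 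Since $X_N$ is finite-dimensional all norms on it are equivalent, so $\|\vu\|_\infty \le c_N\|\vu\|_{1,2}$; combining this with the coercive $\delta\vr^\beta$ reserve, the growth of $p$ recorded in (\ref{1.18}), and Young's inequality absorbs the right-hand side into the left, closing a $t$-uniform bound on $\|\vu\|_{1,2}$ and thereby on $\|\vr\|_\infty$ as well.

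For step (ii), I would test the $t$-weighted temperature equation with $1$, obtaining an integral identity in which the Robin condition (\ref{5.10}) contributes a coercive $L\vt$ boundary term plus the corrector $\varepsilon r$, while the piece $\delta\vt^{B-1}(\vt-\Theta_0^\eta)$ supplies an $L^B(\partial\Omega)$-coercivity on $\vt$. Together with the already-secured bounds on $\vr$ and $\vu$ and the non-negativity of the dissipation $\tn{S}_\eta:\nabla\vu$, this yields an upper bound on $\vt$ in a suitable $L^p(\Omega)$ and $W^{1,2}$-type space. For the decisive positive lower bound $\vt \ge c(N) > 0$, needed so that $r = \ln\vt$ is itself bounded, I would exploit the two artificial terms $\delta\vt^{-1}$ in (\ref{5.4}) and $\varepsilon\ln\vt$ in (\ref{5.6}): testing with an appropriate negative power of $\vt$, or equivalently invoking a minimum principle for the elliptic equation satisfied by $\vt$, uses the former to inject heat wherever $\vt$ becomes small and the latter to force $\vt$ to stay away from zero on $\partial\Omega$, as detailed in \cite{NoPo_JDE}.

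Once $\vr$, $\vu$, $\vt$ and $1/\vt$ are bounded in $L^\infty$ (or in a sufficiently high $L^p$), equations (\ref{5.7}), (\ref{5.9}), and (\ref{5.8})--(\ref{5.10}) become uniformly elliptic linear problems with bounded coefficients and $L^p$ data; classical $L^p$-elliptic theory then delivers the claimed $W^{2,p}$ bounds on $\vr$ and $r$, while the $W^{2,p}$ bound on $\vu$ is automatic from the equivalence of norms on $X_N$. The principal obstacle is precisely the positive lower bound on $\vt$: without the regularizers $\delta\vt^{-1}$ and $\varepsilon\ln\vt$, tailored exactly for this step, $r = \ln\vt$ could blow up along the iteration and Schauder's theorem (Lemma \ref{l 4.7}) would be inapplicable. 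Everything else reduces to the antisymmetric splitting in (\ref{5.7}), the finite dimensionality of $X_N$, and the coercive $\delta$-reinforced pressure law.
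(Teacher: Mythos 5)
Your step (iii) (the elliptic bootstrap once $\vr$, $\vu$, $\vt$, $1/\vt$ are controlled) matches the paper's steps (vi)--(vii), and your discussion of the role of the artificial terms $\delta\vt^{-1}$ and $\varepsilon\ln\vt$ in securing $\vt>0$ is correct in spirit. However, step (i) contains a genuine gap that the paper's structure is designed precisely to avoid. You claim to close a $t$-uniform $W^{1,2}$ bound on $\vu$ by testing the momentum balance with $\vu$, invoking a maximum-principle bound $\|\vr\|_\infty \leq C(\|\vu\|_\infty)$ from (\ref{5.9}), and using the equivalence of norms on the finite-dimensional $X_N$. This does not close. First, there is no benign maximum principle for (\ref{5.9}): the zeroth-order coefficient $\Div\vc{v}$ in the equation $\varepsilon\vr-\varepsilon\Delta\vr+\vc{v}\cdot\nabla\vr+\vr\Div\vc{v}=\varepsilon h$ can be arbitrarily negative, so the best available $L^\infty$ bound on $\vr$ grows at least polynomially in $\|\vc{v}\|$ (and typically much worse). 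Hence $\|\vr\|_\infty^\beta$ grows like a high power of $\|\vu\|_{1,2}$ (recall $\beta\geq 8$), and the term $t\delta\intO{\vr^\beta\Div\vu}$ cannot be absorbed into $C\|\vu\|_{1,2}^2$ by Young. Second, $p(\vr,\vt)\Div\vu$ contains $\vr\vt\Div\vu$, which requires a bound on $\vt$ that you do not possess at this stage; your step (ii) then borrows ``the already-secured bounds on $\vr$ and $\vu$,'' producing a circular argument.

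The paper breaks this circle by a fundamentally different device. Rather than estimating $\vu$ first, it integrates the internal energy balance (\ref{5.4}) over $\Omega$ and separately integrates its ``entropy'' form (divide by $\vt$) over $\Omega$, and \emph{adds} these to the momentum test (\ref{5.101}) and to the identity obtained by testing the approximate continuity equation (\ref{5.3}) against $\frac{\delta\beta}{\beta-1}\vr^{\beta-1}+2\delta\vr$. In this combination the dangerous terms $\intO{p\Div\vu}$ and $\delta\intO{(\vr^\beta+\vr^2)\Div\vu}$ cancel exactly, leaving in (\ref{5.104}) a left-hand side that is simultaneously coercive in $\|\vu\|_{1,2}$, $\delta\varepsilon\|\vr\|_\beta^\beta$, $\|\nabla\vt\|_2$, $\delta\|\nabla\vt^{B/2}\|_2$, and the boundary temperature norms, against a right-hand side controlled by $\intO{\vr\vc{f}\cdot\vu}$ alone. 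This is what yields (\ref{5.105}) uniformly in $t$ (and, moreover, in $\eta$ and $N$, which is needed for the subsequent limits). Your proposal never invokes the integrated entropy inequality (the paper's step (iii)) for the \emph{upper} bounds; only it, in combination with the energy balance and the continuity equation, produces the cancellations that make the estimate close. Finite dimensionality of $X_N$ is used only at the end, for the elliptic bootstrap (\ref{5.106})--(\ref{5.107}), not as a substitute for these cancellations.
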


Idea of the proof:
\begin{itemize}
\item[(i)] Testing (\ref{5.2}) (in the form of (\ref{5.10a})) by $\vu$ (i.e. linear combinations of $\vc{w}^{i}$) yields
\begin{equation} \label{5.101}
\intO { \tn{S}_\eta (\vt,\vu) : \nabla \vu} = t \intO {\Big(\big(p(\vr,\vt)  + \delta ( \vr^{\beta} +\vr^2)\big)\Div\vu
+ \vr \vc{f} \cdot \vu \Big)}.
\end{equation}

\item[(ii)] Integrating (\ref{5.4}) (in the form of (\ref{5.10a})) over $\Omega$, together with (\ref{5.3}), (\ref{5.6}) and (\ref{5.101}) implies
\begin{equation} \label{5.102}
\begin{array}{c}
\displaystyle
\intdO{\Big(t \big(L+ \delta \vt^{B-1}\big)
(\vt -\Theta_0^\eta) + \varepsilon \ln \vt\Big)} +
(1-t) \intO { \tn{S}_\eta (\vt,\vu) : \nabla \vu}  \\
\displaystyle + \varepsilon \delta  t  \intO { \Big(\frac{\beta}{\beta-1}\vr^\beta  +
2 \vr^2\Big)} \\
\displaystyle = t \intO {\Big(\vr \vc{f} \cdot \vu + \varepsilon \delta \frac{\beta}{\beta-1} h  \vr^{\beta-1} + 2\varepsilon \delta h \vr +  \delta \vt^{-1}\Big)}.
\end{array}
\end{equation}

\item[(iii)] Integration over $\Omega$ of the entropy version of the approximate energy balance (\ref{5.4}) (i.e. (\ref{5.4}) divided by $\vt$,
again in the form of (\ref{5.10a})), after slightly technical
computations, yields
\begin{equation} \label{5.103}
\begin{array}{c}
\displaystyle \intO{\big(\kappa_{\eta}(\vt) + \delta \vt^{B} + \delta \vt^{-1}\big) \frac{\varepsilon + \vt}{\vt}\frac{|\nabla \vt|^2}{\vt^2}} + t \intO {\Big(\frac{1}{\vt} \tn{S}_\eta(\vt,\vu):\nabla \vu +  \delta\vt^{-2}\Big)} \\
\displaystyle + \intdO{ \frac{1}{\vt}\Big(t \big(L + \delta \vt^{B-1}\big) \Theta_0^\eta - \varepsilon \ln \vt\Big)} + t\varepsilon \delta \intO{\frac{1}{\vt}|\nabla \vr|^2 \big(\beta\vr^{\beta-2} + 2\big)}
\\[8pt]
\displaystyle \leq t \intdO {\big(L+ \delta \vt^{B-1}\big)}  + t \frac{\varepsilon}{2} \frac{\beta}{\beta-1} \intO {\vr^\beta} + Ct\varepsilon.
\end{array}
\end{equation}

\item[(iv)] Combing identities from steps (ii)--(iii) we get
\begin{equation} \label{5.104}
\begin{array}{c}
\displaystyle \intO{\big(\kappa_{\eta}(\vt) + \delta \vt^{B} + \delta \vt^{-1}\big) \frac{\varepsilon + \vt}{\vt}\frac{|\nabla \vt|^2}{\vt^2}} + t \intO {\Big(\frac{1}{\vt} \tn{S}_\eta(\vt,\vu):\nabla \vu +  \delta\vt^{-2}\Big)} \\[8pt]
\displaystyle +(1-t) \intO { \tn{S}_\eta (\vt,\vu) : \nabla \vu} + \frac 12 \varepsilon \delta  t  \intO { \Big(\frac{\beta}{\beta-1}\vr^\beta + 2 \vr^2\Big)} \\
\displaystyle + t\varepsilon \delta \intO{\frac{1}{\vt} |\nabla \vr|^2(\beta\vr^{\beta-2} + 2)}
 \\
\displaystyle
+\intdO{\Big(t \big(L\vt + \delta \vt^B\big) + \varepsilon |\ln \vt|  + t \frac{\Theta_0^\eta}{\vt} L  \Big)} 
\displaystyle   \leq C t \Big(1+ \Big|\intO {\vr \vc{f} \cdot \vu} \Big|\Big).
\end{array}
\end{equation}

\item[(v)]
Estimates (\ref{5.101}) and (\ref{5.104}) lead to
\begin{equation} \label{5.105}
\|\vu\|_{1,2} + \|\vt\|_{3B} + \|\nabla \vt\|_{2} + \|\vr\|_{\beta} \leq C
\end{equation}
with $C$ independent of $t$ (and also of $\eta$ and $N$).

\item[(vi)] Properties of $X_N$ and standard regularity results for elliptic equations imply
\begin{equation} \label{5.106}
\|\vu\|_{2,p} + \|\vr\|_{2,p} \leq C(N).
\end{equation}

\item[(vii)] Standard tools as Kirchhoff transform and regularity results for elliptic problems finally yield
\begin{equation} \label{5.107}
\|r\|_{2,p} + \|\vt\|_{2,q} \leq C(N)
\end{equation}
which finishes the proof of Lemma \ref{l 5.5} as well as of Proposition \ref{p 5.1}.
\end{itemize}

\subsection{Limit passage $N\to \infty$ towards approximate system level 3}

From the proof of Proposition \ref{p 5.1} above we can deduce the following uniform estimates
\begin{equation} \label{2.11}
\begin{array}{c}
\displaystyle
\|\vu_N\|_{1,2} + \|\vr_N\|_{\beta} + \|\vt_N\|_{3B} + \|\vt_N\|_{1,2} + \|\vt_N^{-2}\|_{1} + \|\vt_N^{-1}\|_{1,\partial \Omega} \\
\displaystyle
+ \|\vt_N^{-4} |\nabla \vt_N|^2\|_{1} + \|\vr_N\|_{2,2} \leq C(\varepsilon, \delta).
\end{array}
\end{equation}


Thus, extracting suitable subsequences if necessary, we can get a triple  $(\vr,\vu,\vt)$ being a limit of $(\vr_{N_k}, \vu_{N_k}, \vt_{N_k})$ in spaces given by estimates (\ref{2.11}), and solving

\begin{equation} \label{2.12}
\begin{array}{c}
\displaystyle
\intO {\Big(\frac 12 \vr (\vu \cdot\nabla \vu)\cdot \vcg{\varphi} - \frac 12 \vr (\vu\otimes \vu) : \nabla \vcg{\varphi} + \tn{S}_\eta(\vt,\vu):\nabla  \vcg{\varphi}\Big)} \\
\displaystyle - \intO{\big(p(\vr,\vt) + \delta \vr^\beta + \delta \vr^2 \big)\Div \vcg{\varphi}}
\displaystyle = \intO {\vr \vc{f} \cdot \vcg{\varphi}} \qquad \forall \vcg{\varphi} \in W^{1,2}_0(\Omega;\R^3),
\end{array}
\end{equation}
\begin{equation} \label{2.13}
\varepsilon \vr -\varepsilon \Delta \vr + \Div (\vr \vu) = \varepsilon h \qquad \mbox{ a.e. in } \Omega,
\end{equation}
with
\begin{equation} \label{2.14}
\pder{\vr}{\vc{n}} = 0 \qquad \mbox{ a.e. at } \partial \Omega,
\end{equation}
and
\begin{equation} \label{2.15}
\begin{array}{c}
\displaystyle \intO {\Big(\big( \kappa_\eta(\vt) + \delta \vt^B + \delta \vt^{-1}\big) \frac{\varepsilon + \vt}{\vt} \nabla \vt\cdot\nabla \psi - \vr e(\vr,\vt) \vu \cdot \nabla \psi\Big)}\\
\displaystyle  + \intdO{ \Big(\big(L + \delta \vt^{B-1}\big) (\vt-\Theta_0^\eta) + \varepsilon \ln \vt\Big) \psi} 
\\
\displaystyle = \intO {\Big(\tn{S}_\eta(\vt,\vu):\nabla \vu + \delta \vt^{-1} - p(\vr,\vt) \Div \vu + \varepsilon \delta\big|\nabla \vr|^2(\beta \vr^{\beta-2} + 2 \big)\Big) \psi},
\end{array}
\end{equation}
for all $\psi \in C^1(\overline{\Omega})$.

Note that in order to get (\ref{2.15}) we need $\tn{S}_\eta(\vt_N,\vu_N):\nabla \vu_N \to \tn{S}_\eta(\vt,\vu):\nabla \vu $ in $L^1(\Omega)$. This fact does not follow from (\ref{2.11}) but we may show it realizing that we can use as a test function in (\ref{2.12}) the limit function $\vu$, together with the limit passage in (\ref{2.2}) with $\vc{w}^i$ replaced by $\vu_N$. Last but not least, we can also get the entropy inequality
\begin{equation} \label{2.16}
\begin{array}{c}
\displaystyle \intO{\Big(\vt^{-1} \tn{S}_\eta (\vt,\vu):\nabla \vu + \delta \vt^{-2}  + \big(\kappa_\eta(\vt) + \delta \vt^B + \delta \vt^{-1}\big) \frac{\varepsilon + \vt}{\vt} \frac{|\nabla \vt|^2}{\vt^2}  \Big)\psi} \\
\displaystyle \leq \intO {\Big(\big(\kappa_\eta(\vt) + \delta \vt^B + \delta \vt^{-1}\big) \frac{\varepsilon + \vt}{\vt} \frac{\nabla \vt \cdot \nabla \psi}{\vt} - \vr s(\vr,\vt) \vu \cdot \nabla \psi \Big)} \\
\displaystyle + \intdO{ \Big(\frac{L+ \delta \vt^{B-1}}{\vt} (\vt-\Theta_0^\eta) + \varepsilon \ln \vt\Big) \psi} + F_\varepsilon,
\end{array}
\end{equation}
for all $\psi \in C^1(\overline{\Omega})$, non-negative, with
$F_\varepsilon\to 0$ as $\varepsilon \to 0^+$.

\subsection{Limit passage $\eta \to 0^+$ towards approximate system level 2}

We can use again (\ref{2.11}) to pass to the limit in the approximate continuity equation, momentum equation and entropy inequality  (switching to subsequences if necessary)
\begin{equation} \label{2.17}
\begin{array}{c}
\displaystyle
\intO {\Big(\frac 12 \vr (\vu \cdot\nabla \vu)\cdot \vcg{\varphi} - \frac 12 \vr (\vu\otimes \vu) : \nabla \vcg{\varphi} + \tn{S}(\vt,\vu):\nabla  \vcg{\varphi}\Big)}\\
\displaystyle +\intO {\Big( \big(p(\vr,\vt) + \delta (\vr^\beta + \vr^2)\big) \Div \vcg{\varphi} + \vr \vc{f} \cdot \vcg{\varphi}\Big)} \qquad \forall \vcg{\varphi} \in W^{1,\frac{6B}{3B-2}}_0(\Omega;\R^3),
\end{array}
\end{equation}
\begin{equation} \label{2.18}
\varepsilon \intO {(\vr \psi + \nabla \vr \cdot \nabla \psi)} - \intO {\vr \vu \cdot \nabla \psi} = \varepsilon h \intO \psi \qquad \forall \psi \in W^{1,\frac 65}(\Omega),
\end{equation}
\begin{equation} \label{2.19}
\begin{array}{c}
\displaystyle \intO{\Big(\vt^{-1} \tn{S} (\vt,\vu):\nabla \vu + \delta \vt^{-2}  + \big(\kappa(\vt) + \delta \vt^B + \delta \vt^{-1}\big) \frac{\varepsilon + \vt}{\vt} \frac{|\nabla \vt|^2}{\vt^2}  \Big)\psi} \\
\displaystyle \leq \intO {\Big(\big(\kappa(\vt) + \delta \vt^B + \delta \vt^{-1}\big) \frac{\varepsilon + \vt}{\vt} \frac{\nabla \vt \cdot \nabla \psi}{\vt} - \vr s(\vr,\vt) \vu \cdot \nabla \psi \Big)} \\
\displaystyle + \intdO{ \Big(\frac{L+ \delta \vt^{B-1}}{\vt} (\vt-\Theta_0) + \varepsilon \ln \vt\Big) \psi}  + F_\varepsilon,
\end{array}
\end{equation}
for all $\psi \in C^1(\overline{\Omega})$, non-negative, with
$F_\varepsilon$ as above. The main difficulty in this step appears
in the limit passage in the energy balance. We are not anymore able
to guarantee the strong convergence of $\nabla \vu_\eta$ in
$L^2(\Omega;\R^{3\times 3})$ and thus we are not able to recover in
the limit the balance of the internal energy. However, we may consider
instead of it the balance of the total energy which we get summing the
approximate balance of the internal energy (\ref{2.15}) and the
approximate momentum equation (\ref{2.12}) tested by $\vu_\eta
\psi$ (i.e. the balance of the kinetic energy). Doing so we may now pass
with $\eta \to 0^+$ --- as the most difficult term
$\intO{\tn{S}_\eta(\vu_\eta,\vt_\eta):\nabla \vu_\eta \psi}$ is
replaced by $\intO{\tn{S}_\eta(\vu_\eta,\vt_\eta) \vu_\eta \cdot
\nabla \psi}$ --- and here the information from (\ref{2.11}) is
sufficient. We get
\begin{equation} \label{2.20}
\begin{array}{c}
\displaystyle \intO {\Big(\big(-\frac 12 \vr |\vu|^2 - \vr e(\vr,\vt)\big)\vu \cdot \nabla \psi + \big(\kappa(\vt) + \delta\vt^B + \delta \vt^{-1}\big) \frac{\varepsilon + \vt}{\vt} \nabla \vt\cdot\nabla \psi\Big)} \\
\displaystyle + \intdO{ \Big(\big(L+ \delta \vt^{B-1}\big) (\vt-\Theta_0) + \varepsilon \ln \vartheta\Big)\psi}  =
 \intO{\vr \vc{f} \cdot \vu \psi} \\
\displaystyle + \intO {\Big(\big(-\tn{S}(\vt,\vu)\vu +p(\vr,\vt)\vu + \delta( \vr^\beta + \vr^2)\vu\big) \cdot \nabla \psi + \delta \vt^{-1}\psi \Big) }   \\
\displaystyle + \delta \intO {\frac{1}{\beta-1}\Big( \varepsilon \beta h \vr^{\beta-1} \psi + \vr^\beta \vu  \cdot \nabla \psi - \varepsilon \beta \vr ^\beta \psi\Big)  } \\
\displaystyle + \delta \intO {\big( 2\varepsilon  h  \vr \psi + \vr^2 \vu  \cdot \nabla \psi - 2\varepsilon  \vr ^2 \psi\Big)  } \qquad \forall \psi \in C^1(\overline{\Omega}).
\end{array}
\end{equation}
Note that due to bounds (\ref{2.11}) the temperature is
positive a.e. in $\Omega$ and a.e. on $\partial \Omega$.

\subsection{Limit passage $\varepsilon \to 0^+$ towards approximate system level 1}

From the entropy inequality (\ref{2.19}) and the total energy
balance (\ref{2.20}), together with a version of Korn's inequality
(see Lemma \ref{l 4.1}), we can deduce the following estimates
independent of $\varepsilon$:
\begin{multline} \label{2.21}
\|\vue\|_{1,2}^2 +  \|\vte\|_{3B}^{B} + \|\vte\|_{1,2}^2 + \|\nabla (\vte^{-\frac 12})\|_2^2 + \|\vte^{-2}\|_1 + \|\vte\|_{B,\partial \Omega}^{B} + \|\vte^{-1}\|_{1,\partial \Omega}\\
 \leq C(1+ \|\vre\|_{\frac 65}^2),
\end{multline}
with $C=C(\delta)$, but independent of $\varepsilon$. The estimates
above do not contain any bound on the density.  To deduce it,  we can apply the 
Bogovskii-type estimates, meaning we employ  as test function in
(\ref{2.17}) a vector field $\vcg{\Phi}$, a solution to
\begin{equation} \label{2.22}
\begin{array}{c}
\displaystyle \Div \vcg{\Phi} = \vr_\varepsilon ^{(s-1)\beta} - \frac{1}{|\Omega|}\intO{\vr_\varepsilon^{(s-1)\beta}} \qquad \mbox{ in }\Omega, \\
\displaystyle \vcg{\Phi} = \vc{0} \qquad \mbox{ on } \partial \Omega
\end{array}
\end{equation}
with
$$
\|\vcg{\Phi}\|_{1,\frac{s}{s-1}}^{\frac{s}{s-1}} \leq C \|\vr_\varepsilon\|_{s\beta}^{s\beta}, \qquad 1<s<\infty,
$$
see Lemma \ref{l 4.2}. 
After straightforward calculations we  get
\begin{equation} \label{2.23}
\|\vr_\varepsilon\|_{\frac 53\beta} \leq C
\end{equation}
and thus, using also the approximate continuity equation with the test
function $\psi = \vre$, we obtain
\begin{equation} \label{2.24}
\|\vu_\varepsilon\|_{1,2} + \|\vt_\varepsilon\|_{3B} + \|\vt_\varepsilon\|_{1,2} + \|\vt_\varepsilon^{-\frac 12}\|_{1,2} + \|\ln \vt_{\varepsilon}\|_{1,2} + \|\vt_\varepsilon^{-1}\|_{1,\partial \Omega} + \|\vr_\varepsilon\|_{\frac 53 \beta} + \sqrt{\varepsilon} \|\nabla \vr_\varepsilon\|_2 \leq C
\end{equation}
with $C$ independent of $\varepsilon$. Note that we still miss an
information providing the compactness of the sequence approximating the density. Passing to the
limit $\varepsilon \to 0^+$ we get (switching to subsequences, if
necessary)
\begin{equation} \label{2.25}
\intO {\vr \vu \cdot \nabla \psi} = 0 \qquad \forall \psi \in W^{1,\frac{30\beta}{25\beta-18}}(\Omega),
\end{equation}
\begin{equation} \label{2.26}
\intO {\Big(-\vr (\vu\otimes \vu) :\nabla \vcg{\varphi} + \tn{S}(\vt,\vu):\nabla \vcg{\varphi} - \overline{\big(p(\vr,\vt) + \delta \vr^\beta + \delta\vr^2\big)} \Div \vcg{\varphi}\Big)} = \intO {\vr \vc{f} \cdot \vcg{\varphi}}
\end{equation}
for all $\vcg{\varphi} \in W^{1,\frac{5}{2}}_0(\Omega;\R^3)$. Here
and in the sequel, $\overline{g(\vr,\vu,\vt)}$ denotes the weak
limit of a sequence
$g(\vr_\varepsilon,\vu_\varepsilon,\vt_\varepsilon)$. Further, we
obtain
\begin{equation} \label{2.27}
\begin{array}{c}
\displaystyle \intO {\Big(\big(-\frac 12 \vr |\vu|^2 - \overline{\vr e(\vr,\vt)}\big)\vu \cdot \nabla \psi + \big(\kappa(\vt) + \delta\vt^B + \delta \vt^{-1}\big) \nabla \vt\cdot \nabla \psi\Big)} \\
\displaystyle + \intdO{ \big(L+ \delta \vt^{B-1}\big) (\vt-\Theta_0) \psi} =
 \intO{\vr \vc{f} \cdot \vu \psi} \\
\displaystyle + \intO {\Big(\big(-\tn{S}(\vt,\vu)\vu +\overline{\big(p(\vr,\vt) + \delta \vr^\beta + \delta \vr^2\big)}\, \vu \big)\cdot \nabla \psi + \delta\vt^{-1}\psi \Big) }   \\
\displaystyle + \delta \intO {\Big(\frac{1}{\beta-1}  \overline{\vr^\beta} + \overline{\vr^2}\Big) \vu  \cdot \nabla \psi}  \qquad \forall \psi \in C^1(\overline{\Omega}),
\end{array}
\end{equation}
and the entropy inequality
\begin{equation} \label{2.28}
\begin{array}{c}
\displaystyle \intO{\Big(\vt^{-1} \tn{S} (\vt,\vu):\nabla \vu + \delta \vt^{-2}  + \big(\kappa(\vt) + \delta \vt^B + \delta \vt^{-1}\big) \frac{|\nabla \vt|^2}{\vt^2}  \Big)\psi} \\
\displaystyle \leq \intO {\Big(\big(\kappa(\vt) + \delta \vt^B + \delta \vt^{-1}\big) \frac{\nabla \vt \cdot \nabla \psi}{\vt} - \overline{ \vr s(\vr,\vt)} \vu \cdot \nabla \psi \Big)} \\
\displaystyle + \intdO{ \frac{L + \delta \vt^{B-1}}{\vt} (\vt-\Theta_0) \psi } 
\end{array}
\end{equation}
for all non-negative $\psi \in C^1(\overline{\Omega})$.

In order to show the strong convergence of density (which is
sufficient to remove the bars in (\ref{2.26})--(\ref{2.28})) we can
combine technique introduced in \cite{Li_Book2}  with some
of techniques from \cite[Chapter 3]{FeNo_Book}.

Using, roughly speaking, as test function in (\ref{2.17}) $\vcg{\varphi} :=\nabla \Delta^{-1} \vre$ and in (\ref{2.26}) $\vcg{\varphi} :=\nabla \Delta^{-1} \vr$, passing to the limit $\varepsilon \to 0^+$, together with several deep results from the harmonic analysis, we end up with
\begin{equation} \label{2.29}
\begin{array}{c}
\displaystyle
\overline{\big(p(\vr,\vt) + \delta \vr^\beta + \delta \vr^2\big) \vr} - \Big(\frac 43 \mu(\vt) + \xi(\vt)\Big) \overline{\vr \Div \vu} \\
\displaystyle =  \overline{\big(p(\vr,\vt) + \delta \vr^\beta + \delta \vr^2\big)} \, \vr - \Big(\frac 43 \mu(\vt) + \xi(\vt)\Big) \vr \Div \vu
\end{array}
\end{equation}
a.e. in $\Omega$. This fact, together with the theory of renormalized solutions to continuity equation and standard properties of weakly convergent sequences, leads to
\begin{equation} \label{2.30}
\overline{\big(p(\vr,\vt)+ \delta \vr^\beta + \delta \vr^2\big)\vr} = \overline{\big(p(\vr,\vt)+ \delta \vr^\beta + \delta \vr^2\big)}\, \vr
\qquad \mbox{ a.e. in } \Omega,
\end{equation}
in particular
$$
\overline{\vr^{\beta+1}} = \overline{\vr^\beta} \vr,
$$
which implies the strong convergence of the density.
The reasoning above is somewhat similar (and simpler) than that one needed for the passage $\delta \to 0^+$.
The latter is described in more details below.

As a conclusion, (\ref{2.25})--(\ref{2.28}) can be rewritten as
\begin{equation} \label{2.31}
\intO {\vr \vu \cdot \nabla \psi} = 0
\end{equation}
 for all $\psi \in W^{1,\frac{30\beta}{25\beta-18}}(\Omega)$,
\begin{equation} \label{2.32}
\intO {\Big(-\vr (\vu\otimes \vu) :\nabla \vcg{\varphi} + \tn{S}(\vt,\vu):\nabla \vcg{\varphi} - \big(p(\vr,\vt) + \delta \vr^\beta + \delta\vr^2\big) \Div \vcg{\varphi}\Big)} = \intO {\vr \vc{f} \cdot \vcg{\varphi}}
\end{equation}
for all $\vcg{\varphi} \in W^{1,\frac 52}_0(\Omega;\R^3)$,
\begin{equation} \label{2.33}
\begin{array}{c}
\displaystyle \intO {\Big(\big(-\frac 12 \vr |\vu|^2 - \vr e(\vr,\vt)\big)\vu \cdot \nabla \psi + \big(\kappa(\vt) + \delta\vt^B + \delta \vt^{-1}\big) \nabla \vt\cdot \nabla \psi\Big)} \\
\displaystyle + \intdO{ \big(L+ \delta \vt^{B-1}\big) (\vt-\Theta_0) \psi}  =
 \intO{\vr \vc{f} \cdot \vu \psi } \\
\displaystyle + \intO {\Big(\big(-\tn{S}(\vt,\vu)\vu +\big(p(\vr,\vt) + \delta \vr^\beta + \delta \vr^2\big)\vu \big) \cdot \nabla \psi + \delta \vt^{-1} \psi \Big) }   \\
\displaystyle + \delta \intO {\Big(\frac{1}{\beta-1}  \vr^\beta + \vr^2\Big) \vu  \cdot \nabla \psi}
\end{array}
\end{equation}
for all $\psi \in C^1(\overline{\Omega})$, and
\begin{equation} \label{2.34}
\begin{array}{c}
\displaystyle \intO{\Big(\vt^{-1} \tn{S} (\vt,\vu):\nabla \vu + \delta \vt^{-2}  + \big(\kappa(\vt) + \delta \vt^B + \delta \vt^{-1}\big) \frac{|\nabla \vt|^2}{\vt^2}  \Big)\psi} \\
\displaystyle \leq \intO {\Big(\big(\kappa(\vt) + \delta \vt^B + \delta \vt^{-1}\big) \frac{\nabla \vt : \nabla \psi}{\vt} - \vr s(\vr,\vt) \vu \cdot \nabla \psi \Big)} \\
\displaystyle + \intdO{ \frac{L+ \delta \vt^{B-1}}{\vt} (\vt-\Theta_0) \psi}
\end{array}
\end{equation}
for all non-negative $\psi \in C^1(\overline{\Omega})$.

More details concerning all estimates and limit passages performed above are contained in \cite{NoPo_JDE}.

\section{Estimates independent of $\delta$: Dirichlet boundary conditions}

We now present basic estimates independent of $\delta$ for the solutions to system (\ref{2.31})--(\ref{2.34}). The first part (up to few details) is the same as in the section devoted to the a priori estimates, however, the estimates of the density are different.

\subsection{Estimates based on entropy inequality}

We first aim at showing the following estimates with constants independent of $\delta$: 
\begin{equation} \label{3.1}
\|\vu_\delta\|_{1,2} \leq C,
\end{equation}
\vspace{-.9cm}
\begin{equation} \label {3.2}
\|\vtd\|_{3m} \leq C \Big(1+ \Big|\intO{\vrd \vud \cdot \vc{f}}\Big|\Big).
\end{equation}

We proceed as in the case of the formal a priori estimates. We use as test functions in the approximate entropy inequality (\ref{2.34}) and in the total energy balance (\ref{2.33}) $\psi \equiv 1$, which leads to  
\begin{equation} \label{3.3}
\begin{array}{c}
\displaystyle \intO{\big(\kappa(\vtd) + \delta \vtd^{B} + \delta \vtd^{-1}\big)\frac{|\nabla \vtd|^2}{\vtd^2}} +  \intO {\Big(\frac{1}{\vtd} \tn{S}(\vtd,\vud):\nabla \vud +  \delta\vtd^{-2} \Big)} \\[8pt]
\displaystyle + \int_{\partial \Omega} \frac{L + \delta \vtd^{B-1}}{\vtd}  \Theta_0 \, \dS \leq  \int_{\partial \Omega} \big(L + \delta \vtd^{B-1}\big) \, \dS,
\end{array}
\end{equation}
and
\begin{equation} \label{3.4}
\int_{\partial \Omega} \big(L \vtd + \delta \vtd^B\big) \, \dS = \intO {\vrd \vud \cdot \vc{f}} + \int_{\partial \Omega} \big(L + \delta \vtd^{B-1}\big) \Theta_0  \, \dS + \delta \intO{\vtd^{-1}}.
\end{equation}
We can get rid of the $\delta$-dependent terms on the right-hand sides (r.h.s.) (more details are given in \cite{NoPo_JDE}) and hence deduce
for $\beta$ and $B$ sufficiently large
\begin{equation} \label{3.9}
\begin{array}{c}
\displaystyle
\|\vud\|_{1,2} + \|\nabla \vtd^{\frac m2}\|_2 + \|\nabla \ln \vtd\|_2 + \|\vtd^{-1}\|_{1,\partial \Omega} \\
\displaystyle + \delta \big(\|\nabla \vtd^{\frac B2}\|_2^2 + \|\nabla \vtd^{-\frac 12}\|_2^2 + \|\vtd\|_{3B}^{B-2} + \|\vtd^{-2}\|_1\big) \leq C.
\end{array}
\end{equation}
Estimate (\ref{3.9}) with (\ref{3.4}) leads to (\ref{3.1})--(\ref{3.2}).

\subsection {Estimates of the pressure}

As shown in the formal a priori estimates, the method based on the pressure estimates by means of the Bogovskii operator has a natural limitation, namely $\gamma > \frac 32$. To avoid this, we apply a different idea based on local pressure estimates. The method was developed in the context of the compressible Navier--Stokes system in the following three papers \cite{PlSo_JMFM}, \cite{BrNo_CMUC} and \cite{FrStWe_JMPA}.
 Note that unlike the heat-conducting case, the method gives existence of weak solutions to the compressible Navier--Stokes system only for $\gamma > \frac 43$. This problem has been removed in the recent paper \cite{PlWe_2015}, using a slightly different technique, which,
 however, leads in the heat-conducting case to more severe restrictions than the original method.

We denote for $b\geq 1$
\begin{equation} \label{3.11}
\AAA = \intO {\vrd^b |\vud|^2}.
\end{equation}
Using H\"older's inequality, one may easily deduce for any $b\geq 1$
\begin{equation} \label{3.11a}
\begin{array}{c}
\displaystyle \|\vud\|_{1,2} \leq C, \\
\displaystyle \|\vtd\|_{3m} \leq C\big(\AAA^{\frac {1}{6b-4}} + 1\big).
\end{array}
\end{equation}

We also need the following estimate based on the application of the Bogovskii operator from Lemma \ref{l 4.2}
\begin{lemma} \label{l 3.3}
We have for $1<s\leq \frac{3b}{b+2}$, $s\leq \frac{6m}{2+3m}$, $m>\frac 23$ and $b\geq 1$
\begin{equation} \label{3.12}
\intO{\vrd^{s\gamma}}
+ \intO {\vrd^{(s-1)\gamma} p(\vrd,\vtd)} + \intO {\big(\vrd |\vud|^2\big)^s} + \delta \intO {\vrd ^{\beta + (s-1)\gamma}} \leq C\big(1+ \AAA^{\frac {4s-3}{3b-2}}\big).
\end{equation}
\end{lemma}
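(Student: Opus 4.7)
The plan is to apply the Bogovskii-type pressure estimate of \cite{PlSo_JMFM,BrNo_CMUC,FrStWe_JMPA}, adapted here to the heat-conducting setting with artificial pressure. I would test the approximate momentum identity \eqref{2.32} with
$$
\vcg{\Phi} = B\!\left[\vrd^{(s-1)\gamma} - \tfrac{1}{|\Omega|}\intO{\vrd^{(s-1)\gamma}}\right] \in W^{1,q}_0(\Omega;\R^3),
$$
where $B$ is the Bogovskii operator of Lemma \ref{l 4.2}. The condition $\Div\vcg{\Phi} = \vrd^{(s-1)\gamma}-\text{mean}$ brings onto the left-hand side the pressure and artificial-pressure quantities that, by \eqref{1.24}--\eqref{1.25a}, dominate $\intO{\vrd^{s\gamma}}$ and $\delta\intO{\vrd^{\beta+(s-1)\gamma}}$, modulo a $\vrd\vtd$-cross-term absorbed later via the temperature bound $\|\vtd\|_{3m}\lesssim 1+\AAA^{1/(6b-4)}$ from \eqref{3.11a}. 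The mean-value correction is harmless thanks to the mass constraint $\intO{\vrd}=M$.

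The right-hand side has three substantive pieces: the convective term $\intO{\vrd(\vud\otimes\vud):\nabla\vcg{\Phi}}$, the viscous term $\intO{\tn{S}(\vtd,\vud):\nabla\vcg{\Phi}}$, and the force term $\intO{\vrd\vc{f}\cdot\vcg{\Phi}}$. For the convective term I would use the multi-factor decomposition
$$
\vrd|\vud|^2 = (\vrd^b|\vud|^2)^{1/b}\,\vrd^{(b-1)/b}\,|\vud|^{2(b-1)/b},
$$
combined with H\"older, Bogovskii's bound on $\|\nabla\vcg{\Phi}\|_q$, and the Sobolev embedding $\vud\in L^6$ coming from \eqref{3.1}; the admissibility of the resulting exponent balance is exactly $s\leq\tfrac{3b}{b+2}$. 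The viscous term, with $\mu(\vtd)\lesssim 1+\vtd$, similarly forces $s\leq\tfrac{6m}{2+3m}$ when combined with \eqref{3.11a}. The force and mean-value contributions are absorbed through the Sobolev embedding of $\vcg{\Phi}\in W^{1,q}_0$ and $\vc{f}\in L^\infty$.

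Collecting these, every right-hand-side contribution takes the form $C\AAA^{\alpha}\|\vrd\|_{s\gamma}^{\beta}$ with $\beta<s\gamma$; Young's inequality then absorbs the $\|\vrd\|_{s\gamma}$-factor into $\intO{\vrd^{s\gamma}}$ on the left, leaving a pure power of $\AAA$. The precise exponent $\tfrac{4s-3}{3b-2}$ emerges from the Young balance applied to the critical convective contribution. The bound on $\intO{(\vrd|\vud|^2)^s}$ then follows via the further H\"older decomposition $\vrd^{s}|\vud|^{2s} = (\vrd^b|\vud|^2)^{s/b}\,|\vud|^{2s(b-1)/b}$, once again admissible precisely under $s\leq\tfrac{3b}{b+2}$. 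The main obstacle is the exponent bookkeeping: three quantities ($\AAA$, $\|\vrd\|_{s\gamma}$, $\|\vtd\|_{3m}$) must be balanced with only two free parameters ($b$, $s$) so that every multi-factor H\"older split is simultaneously admissible under the two thresholds $s\leq\tfrac{3b}{b+2}$ and $s\leq\tfrac{6m}{2+3m}$, and so that the Young absorption closes with exactly the announced exponent.
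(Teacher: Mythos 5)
Your outline --- testing \eqref{2.32} with the Bogovskii solution for $f=\vrd^{(s-1)\gamma}-\frac 1{|\Omega|}\intO{\vrd^{(s-1)\gamma}}$ and estimating the convective, viscous and force terms --- is exactly the paper's route, and your treatment of the viscous term correctly yields $s\le\frac{6m}{3m+2}$ and $m>\frac 23$. However, the convective-term bookkeeping does not produce the claimed exponent. A minor slip first: $(\vrd^b|\vud|^2)^{1/b}\,\vrd^{(b-1)/b}\,|\vud|^{2(b-1)/b}$ multiplies out to $\vrd^{(2b-1)/b}|\vud|^2$, not $\vrd|\vud|^2$. More substantially, the two-factor H\"older you actually rely on, $\vrd^s|\vud|^{2s}=(\vrd^b|\vud|^2)^{s/b}|\vud|^{2s(b-1)/b}$ estimated against $\AAA$ and $\|\vud\|_6$, gives $\intO{(\vrd|\vud|^2)^s}\le C\AAA^{s/b}$, and after the Young absorption of $\|\vrd\|_{s\gamma}^{(s-1)\gamma}$ in $I_3=-\intO{\vrd(\vud\otimes\vud):\nabla\vcg{\Phi}}$ this produces $\AAA^{s/b}$, not $\AAA^{(4s-3)/(3b-2)}$. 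Since $\frac sb\ge\frac{4s-3}{3b-2}$ precisely when $s\le\frac{3b}{b+2}$, the bound you would obtain is strictly weaker than \eqref{3.12} whenever $s<\frac{3b}{b+2}$, so the lemma is not reached.

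The missing ingredient is a \emph{third} H\"older factor using the prescribed total mass $\|\vrd\|_1=M$, which you mention only in passing for the mean-value correction and never feed into the interpolation. Write $\vrd^s|\vud|^{2s}=(\vrd^b|\vud|^2)^{a_1}\vrd^{a_2}|\vud|^{a_3}$ with $ba_1+a_2=s$, $2a_1+a_3=2s$, and impose the H\"older balance $a_1+a_2+\frac{a_3}{6}=1$ against the quantities $\AAA$, $M$ and $\|\vud\|_6^6$. Solving gives $a_1=\frac{4s-3}{3b-2}$ and $a_2=\frac{3b-s(b+2)}{3b-2}$, whence $\intO{(\vrd|\vud|^2)^s}\le C\AAA^{(4s-3)/(3b-2)}M^{a_2}\|\vud\|_6^{a_3}\le C\AAA^{(4s-3)/(3b-2)}$. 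The admissibility condition $a_2\ge 0$ is \emph{exactly} $s\le\frac{3b}{b+2}$ --- so the restriction you identified is the right one, but it comes from the density exponent in the three-factor split rather than from the Sobolev embedding alone (the remaining H\"older constraints $a_1\le1$ and $0\le a_3\le 6$ are implied by it for $b\ge 1$). Feeding this corrected interpolation through the Young step for $I_3$ then reproduces the stated $\AAA^{(4s-3)/(3b-2)}$.
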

\begin{proof}
We use as test function in (\ref{2.32}) solution to (\ref{2.3}) with 
$$
f= \vr^{(s-1)\gamma} - \frac{1}{|\Omega|} \intO{\vr^{(s-1)\gamma}},
$$
we get
$$
\begin{array}{c}
\displaystyle \intO {\vrd^{(s-1)\gamma} p(\vrd,\vtd)} + \delta \intO {\vrd ^{(s-1)\gamma} \big(\vrd^\beta + \vrd^2)} = \frac{1}{|\Omega|} \intO {p(\vrd,\vtd)} \intO {\vrd^{(s-1)\gamma}} \\[9pt]
\displaystyle + \frac{\delta}{|\Omega|} \intO{(\vrd^\beta + \vrd^2)}  \intO {\vrd^{(s-1)\gamma}} - \intO{\vrd(\vud\otimes \vud):\nabla \vcg{\varphi}} + \intO{\tn{S}(\vud,\vtd): \nabla \vcg{\varphi}}  \\
\displaystyle - \intO{\vrd \vc{f} \cdot \vcg{\varphi}} = I_1 + I_2 + I_3 + I_4 + I_5.
\end{array}
$$
The most restrictive terms are $I_3$ leading to the first restriction on $s$ and $I_4$ which gives the second restriction as well as $m>\frac 23$. More details can be found in the paper \cite{NoPo_SIMA}.
\end{proof}

We are now coming to the most important (and also the most difficult) part of the estimates. We aim at proving that for some $\alpha >0$
$$
\sup_{x_0 \in \Ov{\Omega}} \intO{\frac{p(\vrd,\vtd)}{|x-x_0|^\alpha}} \leq C
$$
with $C$ independent of $\delta$. The goal is to get $\alpha$ as large as possible under some conditions on $m$ and $\gamma$ which still allow the values for these quantities as small as possible. Indeed, this might be sometimes contradictory. Here, the situation for the Dirichlet and the Navier boundary conditions differs, as in the latter we have larger class of possible test functions (only the normal component vanishes on $\partial \Omega$).

We distinguish three possible situations. In the first one, the point $x_0$ is ``far'' from the boundary, in the second one $x_0 \in \partial \Omega$ and in the last one $x_0$ is ``close'' to $\partial \Omega$, but does not belong to it. The first case is independent of the boundary conditions and we have  
\begin{lemma} \label{l 3.4}
Let $x_0 \in \Omega$, $R_0 < \frac 13 \mbox{dist}\, (x_0,\partial\Omega)$. Then
\begin{equation} \label{3.14}
\begin{array}{c}
\displaystyle
\int_{B_{R_0}(x_0)} \frac{p(\vrd,\vtd) + \delta(\vrd^\beta + \vrd^2)}{|x-x_0|^\alpha}\, \dx \\
\displaystyle \leq C\big(1+ \|p(\vrd,\vtd)\|_1 + \|\vud\|_{1,2}(1+\|\vtd\|_{3m}) + \|\vrd|\vud|^2\|_1 + \delta \|\vrd\|_\beta^\beta\big),
\end{array}
\end{equation}
provided
\begin{equation} \label{3.15}
\alpha < \min\Big\{\frac{3m-2}{2m},1\Big\}.
\end{equation}
\end{lemma}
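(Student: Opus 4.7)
The plan is to test the approximate momentum equation \eqref{2.32} with a Plotnikov--Sokolowski--type localized test function
\[
\vcg{\varphi}(x) := \eta(x)(x - x_0)|x - x_0|^{-\alpha},
\]
where $\eta \in C_c^\infty(B_{R_0}(x_0))$ is a smooth cut-off with $\eta \equiv 1$ on $B_{R_0/2}(x_0)$ and $|\nabla\eta|\leq C/R_0$. Since $R_0 < \tfrac{1}{3}\dist(x_0,\partial\Omega)$, this $\vcg{\varphi}$ has compact support in $\Omega$, and because $\alpha<1$ it lies in $W^{1,q}_0(\Omega;\R^3)$ for every $q<3/\alpha$; in particular $\|\vcg{\varphi}\|_\infty \leq C$ and $|\nabla\vcg{\varphi}(x)| \leq C\,\eta|x-x_0|^{-\alpha} + C\chi_{B_{R_0}\setminus B_{R_0/2}}$.

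A direct computation gives
\[
\Div\vcg{\varphi} = (3-\alpha)\,\eta\,|x-x_0|^{-\alpha} + |x-x_0|^{-\alpha}(x-x_0)\cdot\nabla\eta,
\]
and the second summand is uniformly bounded (supported in the annulus $R_0/2\leq|x-x_0|\leq R_0$). Plugging $\vcg{\varphi}$ into \eqref{2.32} and isolating the singular part of $\Div\vcg{\varphi}$ produces on the left
\[
(3-\alpha)\int_\Omega\bigl(p(\vrd,\vtd)+\delta\vrd^\beta+\delta\vrd^2\bigr)\,\eta\,|x-x_0|^{-\alpha}\dx,
\]
plus a bounded remainder controlled by $C\|p\|_1 + C\delta\|\vrd\|_\beta^\beta$. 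Extending the integration domain from $B_{R_0/2}(x_0)$ to the full $B_{R_0}(x_0)$ on the left is harmless: in the annulus the weight $|x-x_0|^{-\alpha}$ is bounded by $(R_0/2)^{-\alpha}$, so the added contribution is absorbed into the same two terms.

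It then remains to estimate the three terms of the right-hand side of \eqref{2.32}. The forcing term $\int \vrd \vc{f}\cdot\vcg{\varphi}$ is bounded by $C\|\vc{f}\|_\infty\|\vrd\|_1\leq C$. For the viscous term, using $|\tn{S}(\vtd,\vud)|\leq C(1+\vtd)|\nabla\vud|$ from \eqref{1.6}--\eqref{1.7} with $\alpha=1$ and Hölder's inequality with the triple of exponents $(2, 3m, q)$ where $q:=6m/(3m-2)$, one has
\[
\Big|\int \tn{S}(\vtd,\vud):\nabla\vcg{\varphi}\Big|\leq C\|\nabla\vud\|_2\,\|1+\vtd\|_{3m}\,\bigl\||x-x_0|^{-\alpha}\bigr\|_{L^q(B_{R_0})},
\]
and the last factor is finite iff $\alpha q < 3$, i.e.\ $\alpha < (3m-2)/(2m)$, which is precisely the first half of the stated condition \eqref{3.15}. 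The convective term is more delicate: Hölder with $|\vud|^2 \in L^3$ (via the Sobolev embedding $W^{1,2}_0\hookrightarrow L^6$) reduces the matter to estimating $\vrd|x-x_0|^{-\alpha}$ in $L^{3/2}$; one uses the $\delta$-level bound $\vrd \in L^\beta$ to make this integrable (the requirement being $\alpha < 2 - 3/\beta$, trivially satisfied once $\alpha<1$ and $\beta\geq 8$), then interpolates the resulting bound between $L^1$ and a higher norm, letting Young's inequality send the excess into $\delta\|\vrd\|_\beta^\beta$ and producing the target contribution $C\|\vrd|\vud|^2\|_1$. Combining the four pieces gives \eqref{3.14}.

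The principal obstacle is the convective term: unlike the viscous one, where the Hölder exponents line up naturally and the condition $\alpha<(3m-2)/(2m)$ is dictated cleanly, organising the convective estimate so that only the low-order quantity $\|\vrd|\vud|^2\|_1$ (and not some higher mixed norm) survives on the right-hand side requires a careful interplay between the Sobolev embedding of $\vud$, the $\delta$-level $L^\beta$ bound on $\vrd$, and Young's inequality; the bound $\alpha<1$ is the one ensuring that $\vcg{\varphi}$ is uniformly bounded and that $|x-x_0|^{-\alpha}$ remains locally in $L^p$ for $p$ large enough for all the above Hölder pairings to close.
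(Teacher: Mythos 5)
Your choice of test function and your handling of the pressure, forcing, and viscous terms follow the paper's argument closely; in particular the Hölder pairing $(2,3m,6m/(3m-2))$ for the viscous term and the resulting condition $\alpha<\frac{3m-2}{2m}$ are exactly right. The gap is in the convective term, and it is not a cosmetic one: the direct Hölder/Young strategy you propose cannot produce a $\delta$--independent constant, which is the whole point of the lemma.

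Concretely, your Hölder step gives
\[
\int_{B_{R_0}} \frac{\vrd|\vud|^2}{|x-x_0|^\alpha}\,\dx \;\le\; C\,\|\vud\|_6^2\,\|\vrd\|_\beta\,\big\||x-x_0|^{-\alpha}\big\|_{L^r(B_{R_0})},
\]
and you then want to absorb $\|\vrd\|_\beta$ via Young into $\delta\|\vrd\|_\beta^\beta$. But Young's inequality $ab\le \lambda a^\beta + C(\lambda)\,b^{\beta/(\beta-1)}$ forces $C(\lambda)$ to blow up like $\lambda^{-1/(\beta-1)}$ when you take $\lambda\sim\delta$, so the companion term is $\delta$--dependent; if instead you keep $\lambda$ fixed, the term $\lambda\|\vrd\|_\beta^\beta$ is not controlled by $\delta\|\vrd\|_\beta^\beta$. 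No splitting of $B_{R_0}$ or interpolation with $\|\vrd|\vud|^2\|_1$ repairs this: the weight $|x-x_0|^{-\alpha}$ is unbounded near $x_0$ and at this stage only $\|\vrd|\vud|^2\|_1$, not any higher norm, is available uniformly in $\delta$.

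The paper avoids the problem by not estimating the singular part of the convective term at all. Since
\[
\partial_i\varphi_j=\Big(\frac{\delta_{ij}}{|x-x_0|^\alpha}-\alpha\frac{(x-x_0)_i(x-x_0)_j}{|x-x_0|^{\alpha+2}}\Big)\tau^2+g_2(x),\qquad g_2\in L^\infty,
\]
the singular contribution of $\vrd(\vud\otimes\vud):\nabla\vcg{\varphi}$ equals
\[
\vrd\Big(\frac{|\vud|^2}{|x-x_0|^\alpha}-\alpha\frac{(\vud\cdot(x-x_0))^2}{|x-x_0|^{\alpha+2}}\Big)\tau^2\;\ge\;(1-\alpha)\,\frac{\vrd|\vud|^2}{|x-x_0|^\alpha}\,\tau^2\;\ge\;0
\]
by Cauchy--Schwarz and $\alpha<1$. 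This term is therefore moved to the left-hand side of the tested momentum equation and simply discarded (it has a favourable sign), while the remaining cut-off contribution involving $g_2$ and $\nabla\tau^2$ is bounded, giving exactly the term $C\|\vrd|\vud|^2\|_1$ on the right. This sign observation is the essential ingredient your proof is missing; without it the convective estimate does not close. Note also that this is the actual role of the condition $\alpha<1$ in \eqref{3.15}: it is what makes the quadratic form in $\nabla\vcg{\varphi}$ non-negative, not merely a convenience for boundedness of $\vcg{\varphi}$.
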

\begin{proof}
We use as test function in (\ref{2.32})
$$
\varphi_i(x) = \frac{(x-x_0)_i}{|x-x_0|^\alpha} \tau^2
$$
with $\tau \equiv 1$ in $B_{R_0}(x_0)$, $R_0$ as above, $\tau \equiv 0$ outside $B_{2R_0}(x_0)$, $|\nabla \tau| \leq \frac{C}{R_0}$. The important observation is that
$$
\begin{array}{c}
\displaystyle \Div \vcg{\varphi} = \frac{3-\alpha}{|x-x_0|^\alpha} \tau^2 + g_1(x), \\
\displaystyle \partial_i \varphi_j = \Big(\frac{\delta_{ij}}{|x-x_0|^\alpha} - \alpha \frac{(x-x_0)_i (x-x_0)_j}{|x-x_0|^{\alpha+2}}\Big) \tau^2 + g_2(x)
\end{array}
$$
with $g_1$, $g_2$ in $L^\infty(\Omega)$. Then
\begin{equation} \label{3.16}
\begin{array}{c}
\displaystyle \intO {\frac{p(\vrd,\vtd)+\delta (\vrd^\beta + \vrd^2)}{|x-x_0|^\alpha} (3-\alpha) \tau^2} + \intO {\Big(\frac{\vrd|\vud|^2}{|x-x_0|^\alpha} - \alpha \vrd \frac{(\vud \cdot (x-x_0))^2}{|x-x_0|^{\alpha+2}} \Big) \tau^2} \\[8pt]
\displaystyle = - \intO {\big(p(\vrd,\vud)+ \delta (\vrd^\beta +  \vrd^2)\big) \frac{(x-x_0)\cdot \nabla \tau^2}{|x-x_0|^\alpha}} + \intO {\tn{S}(\vud,\vtd):\nabla \Big(\frac{x-x_0}{|x-x_0|^\alpha}\Big)\tau^2}  \\
\displaystyle + \intO {\tn{S}(\vud,\vtd) : \frac{x-x_0}{|x-x_0|^\alpha} \nabla \tau^2}- \intO {\vrd \vc{f} \cdot \frac{x-x_0}{|x-x_0|^\alpha} \tau^2} \\
\displaystyle - \intO {\vrd (\vud \otimes \vud) : \frac{x-x_0}{|x-x_0|^\alpha} \nabla \tau^2}.
\end{array}
\end{equation}
We easily see that
$$
\nabla \frac{x-x_0}{|x-x_0|^\alpha} \sim \frac{1}{|x-x_0|^\alpha},
$$
hence
$$
\Big| \intO {\tn{S}(\vud,\vtd) : \nabla \frac{x-x_0}{|x-x_0|^\alpha} \tau^2}\Big| \leq C (1+\|\vtd\|_{3m}) \|\nabla \vud\|_2
$$
provided
$$
\frac 1q = 1 - \frac 12 -\frac 1{3m} > \frac{\alpha}{3},
$$
leading to the restriction $\alpha < \frac{3m-2}{2m}$ for $m>\frac 23$.
The second integral on the l.h.s. of
(\ref{3.16}) is non-negative; it even gives a certain information about $\vrd|\vud|^2$, however, we are not able to recover it (in the case of the Dirichlet b.c.) for $x_0$ near or on the boundary. As $\alpha\le 1$, the other terms on
the r.h.s. of (\ref{3.16}) are evidently bounded. 
\end{proof}

Next, we consider the situation when $x_0 \in \partial \Omega$. In this case we may use the following test function
\begin{equation} \label{3.17}
\vcg{\varphi}(x) = d(x) \nabla d(x) (d(x) + |x-x_0|^a)^{-\alpha}
\end{equation}
with $a= \frac{2}{2-\alpha}$, $x_0 \in \partial \Omega$ and $d(x)$ denoting the distance of $x$ from the boundary. As $\Omega \in C^2$, the distance function $d \in C^2(\overline{\Omega})$. Moreover, $\nabla d(x) = \frac{x-\xi(x)}{d(x)}$, where $\xi(x) \in \partial \Omega$ is the closest point to $x$, cf. \cite[Exercise 1.15]{Zi_1989}. Thus  there exist $c_1$, $c_2$ positive such that:
\begin{itemize} \item[(i)] $d(x) \in C^2(\overline{\Omega})$, $d(x) >0$ in $\Omega$, $d(x) =0$ at $\partial \Omega$
\item [(ii)] $|\nabla d(x)| \geq c_1 >0$, $x\in \overline{\Omega}$ with $\mbox{dist}\, (x,\partial \Omega) \leq c_2$
\item [(iii)] $d(x) \geq c_1 >0$, $x \in \overline{\Omega}$ with $\mbox{dist}\, (x,\partial \Omega)\geq c_2$
\end{itemize}

The main properties of $\vcg{\varphi}$ are (see \cite[Lemma 3.5]{NoPo_SIMA})
\begin{lemma} \label{l 3.5}
The function $\vcg{\varphi}$, defined by (\ref{3.17}), belongs to $W^{1,q}_0(\Omega; \R^3)$ for $1\leq q < \frac{3-\alpha}{\alpha}$. Moreover,
\begin{equation} \label{3.18}
\begin{array}{c}
\displaystyle \partial_j \varphi_i(x) = \frac{d(x) \partial^2_{ij}d(x)}{(d(x)+|x-x_0|^a)^{\alpha}} + \frac{(1-\alpha) d(x) + |x-x_0|^a}{2(d(x)+|x-x_0|^a)^{1+\alpha}} \partial_i d(x) \partial_j d(x) \\[8pt]
\displaystyle + \frac{(1-\alpha)d(x) + |x-x_0|^a}{2(d(x)+ |x-x_0|^a)^{1+\alpha}} (\partial_i d(x) - \mu^i(x))(\partial_j d(x) - \mu^j(x)) \\[8pt]
\displaystyle + \frac{\alpha d(x) [\partial_j d(x) \partial_i (|x-x_0|^a)- \partial_i d(x) \partial_j (|x-x_0|^a)]}{2(d(x)+|x-x_0|^a)^{1+\alpha}} \\[8pt]
\displaystyle - \frac{\alpha^2 d^2(x) \partial_i (|x-x_0|^a) \partial_j (|x-x_0|^a)}{2(d(x)+|x-x_0|^a)^{1+\alpha} \big((1-\alpha)d(x) + |x-x_0|^a\big)},
\end{array}
\end{equation}
where
$$
\mu^i(x) = \alpha d(x) \big((1-\alpha)d(x) + |x-x_0|^a\big)^{-1} \partial_i (|x-x_0|^a),
$$
$i=1,2,3$.
\end{lemma}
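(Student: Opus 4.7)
My plan is to obtain (3.18) by direct differentiation and then to read off the integrability of each resulting summand. Writing $\varphi_i = d\,\partial_i d\,(d+|x-x_0|^a)^{-\alpha}$ and applying the product and chain rules gives
\begin{equation*}
\partial_j \varphi_i = \frac{\partial_i d\,\partial_j d + d\,\partial^2_{ij}d}{(d+|x-x_0|^a)^{\alpha}} - \frac{\alpha\,d\,\partial_i d\,\bigl(\partial_j d + \partial_j(|x-x_0|^a)\bigr)}{(d+|x-x_0|^a)^{1+\alpha}}.
\end{equation*}
The form (3.18) then follows from a purely algebraic rearrangement. First, combining the two $\partial_i d\,\partial_j d$ contributions with the help of $(d+|x-x_0|^a)-\alpha d = (1-\alpha)d+|x-x_0|^a$ produces a single term with coefficient $\tfrac{(1-\alpha)d+|x-x_0|^a}{(d+|x-x_0|^a)^{1+\alpha}}$. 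Second, the cross term $d\,\partial_i d\,\partial_j(|x-x_0|^a)$ is split into its symmetric and antisymmetric parts in $(i,j)$; the antisymmetric half is exactly the fourth term of (3.18). Third, the symmetric remainder is merged with the pure $\partial_i d\,\partial_j d$ contribution by completing the square: the auxiliary field $\mu^i$ is chosen precisely so that the outcome is $\tfrac{(1-\alpha)d+|x-x_0|^a}{2(d+|x-x_0|^a)^{1+\alpha}}\bigl(\partial_i d - \mu^i\bigr)\bigl(\partial_j d - \mu^j\bigr)$, while the leftover $\mu^i\mu^j$ piece is cancelled by the last term of (3.18). This bookkeeping is mechanical but delicate; its purpose, in later applications of the test function, is to be able to extract a non-negative piece from the quadratic-in-$\vud$ convective integral.

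For the $W^{1,q}$-integrability claim, the $C^2$-regularity of $\Omega$ makes $d,\partial_i d, \partial^2_{ij}d$ uniformly bounded on $\overline{\Omega}$, while $|\partial_i(|x-x_0|^a)|\leq C|x-x_0|^{a-1}$. The aim is to dominate every summand in (3.18) pointwise by $C(d+|x-x_0|^a)^{-\alpha}$. The first three terms yield this at once (for the third one checks that $\mu^i$ is bounded, which follows directly from $d(x)\leq|x-x_0|$); for the antisymmetric fourth term I would use the elementary inequality $d(x)\leq|x-x_0|$, valid whenever $x_0\in\partial\Omega$, together with $|x-x_0|^a\leq d+|x-x_0|^a$ to obtain
\begin{equation*}
\frac{d\,|x-x_0|^{a-1}}{(d+|x-x_0|^a)^{1+\alpha}}\leq \frac{|x-x_0|^a}{(d+|x-x_0|^a)^{1+\alpha}}\leq (d+|x-x_0|^a)^{-\alpha},
\end{equation*}
and the same trick applied to $d^2\leq|x-x_0|^2$ controls the last term through $\tfrac{|x-x_0|^{2a}}{(d+|x-x_0|^a)^{2+\alpha}}\leq(d+|x-x_0|^a)^{-\alpha}$.

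It then suffices to check that $\int_\Omega (d(x)+|x-x_0|^a)^{-q\alpha}\,\dx$ is finite whenever $q\alpha<3-\alpha$, which is precisely the condition $q<(3-\alpha)/\alpha$. To this end I would pass to local coordinates $(y',y_3)$ near $x_0$ flattening $\partial\Omega$, so that $d(x)\simeq y_3$ and $|x-x_0|\simeq |y|$, and split the relevant ball into $\{y_3\geq|y'|^a\}$ and $\{y_3\leq|y'|^a\}$; on each piece the denominator is comparable to its dominant term, and a short direct computation shows that both regions give the same threshold $q\alpha<1+2/a=3-\alpha$. This is exactly why $a$ is chosen as $2/(2-\alpha)$. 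The vanishing trace $\vcg{\varphi}|_{\partial\Omega}=0$ is then immediate because the factor $d(x)$ kills $\vcg{\varphi}$ at every point of $\partial\Omega\setminus\{x_0\}$ and $\vcg{\varphi}$ is bounded on $\overline{\Omega}$; since $\{x_0\}$ has zero surface measure, the $L^q$-trace of $\vcg{\varphi}$ vanishes, whence $\vcg{\varphi}\in W^{1,q}_0(\Omega;\R^3)$. The main obstacle I anticipate is not the integral estimate but the algebraic decomposition leading to (3.18): the appearance of $\mu^i$ and of the factor $\tfrac12$ in front of the squared quantities is dictated entirely by what is subsequently required in the pressure estimates near the boundary, and is not visible from the direct differentiation alone.
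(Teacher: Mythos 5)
Your proof is correct; the paper gives no internal proof of this lemma (it merely cites \cite[Lemma 3.5]{NoPo_SIMA}), and your direct-differentiation derivation of (\ref{3.18}), the pointwise bound of every summand by $C(d(x)+|x-x_0|^a)^{-\alpha}$, and the local-coordinate integral computation giving the threshold $q\alpha<1+2/a=3-\alpha$ are exactly what is needed; I verified that the algebraic bookkeeping does close, using $(d+|x-x_0|^a)-\alpha d=(1-\alpha)d+|x-x_0|^a$ for the $\partial_id\,\partial_jd$ terms and the identity $\frac{(1-\alpha)d+|x-x_0|^a}{2(d+|x-x_0|^a)^{1+\alpha}}\mu^i\mu^j=\frac{\alpha^2d^2\,\partial_i(|x-x_0|^a)\partial_j(|x-x_0|^a)}{2(d+|x-x_0|^a)^{1+\alpha}\bigl((1-\alpha)d+|x-x_0|^a\bigr)}$, which is precisely what the final term of (\ref{3.18}) cancels. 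One small imprecision worth flagging: the denominator of that last summand is $(d+|x-x_0|^a)^{1+\alpha}\bigl((1-\alpha)d+|x-x_0|^a\bigr)$, not $(d+|x-x_0|^a)^{2+\alpha}$, so your estimate there implicitly uses $(1-\alpha)d+|x-x_0|^a\geq(1-\alpha)\bigl(d+|x-x_0|^a\bigr)$, valid since $0<\alpha<1$ in the intended range; this only costs a harmless factor $(1-\alpha)^{-1}$ in the constant.
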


We now use $\vcg{\varphi}$ from (\ref{3.17}) as a test function
in (\ref{2.32}). It yields
\begin{lemma} \label{l 3.6}
Under assumptions above, we have for $\alpha < \frac{9m-6}{9m-2}$ and $x_0 \in \partial \Omega$
\begin{equation} \label{3.20}
\begin{array}{c}
\displaystyle
\int_{B_{R_0}(x_0)\cap \Omega} \frac{p(\vrd,\vtd) + \delta (\vrd^\beta + \vrd^2)}{|x-x_0|^\alpha} \,{\rm d} x \\
\displaystyle \leq C\big(1+ \|p(\vrd,\vtd)\|_1 + (1+\|\vtd\|_{3m})\|\vud\|_{1,2} + \|\vrd |\vud|^2\|_1 +\delta \|\vrd\|_{\beta}^\beta\big).
\end{array}
\end{equation}
\end{lemma}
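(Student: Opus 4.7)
\medskip

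\noindent\emph{Proof plan.} The idea is to insert the test function $\vcg{\varphi}$ defined in \eqref{3.17} into the weak momentum equation \eqref{2.32} and use the explicit decomposition of $\nabla\vcg{\varphi}$ given by Lemma \ref{l 3.5} to extract the desired singular pressure integral on one side, while bounding all remaining terms by the quantities appearing on the right-hand side of \eqref{3.20}. First I would verify that $\vcg{\varphi}$ is an admissible test function: since $d(x)=0$ on $\partial\Omega$ we have $\vcg{\varphi}=\vc{0}$ there, and by Lemma \ref{l 3.5} the field $\vcg{\varphi}$ lies in $W^{1,q}_0(\Omega;\R^3)$ for every $q<(3-\alpha)/\alpha$, which certainly includes $W^{1,5/2}_0$ as required in \eqref{2.32} once $\alpha$ satisfies the stated upper bound.

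Next, from \eqref{3.18} I would compute $\Div\vcg{\varphi}$ by taking $j=i$ and summing. The leading contribution comes from the term with $|\nabla d(x)|^2$ (which equals $1$ up to lower-order corrections near $\partial\Omega$), producing a pointwise bound
\[
\Div\vcg{\varphi}(x)\ \geq\ c\,\frac{(1-\alpha)d(x)+|x-x_0|^a}{(d(x)+|x-x_0|^a)^{1+\alpha}}\;-\;G(x),
\]
with $G\in L^\infty(\Omega)$ accounting for the $d\,\Delta d$ contribution and for points away from $x_0$ where $d$ is bounded below. Because $x_0\in\partial\Omega$ forces $d(x)\leq|x-x_0|$ and because $a=2/(2-\alpha)\geq 1$, one has $d(x)+|x-x_0|^a\leq C|x-x_0|$ near $x_0$, so the leading term is minorized by $c|x-x_0|^{-\alpha}$. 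Testing \eqref{2.32} with $\vcg{\varphi}$ and rearranging therefore yields
\[
c\!\!\int_{B_{R_0}(x_0)\cap\Omega}\!\!\frac{p(\vrd,\vtd)+\delta(\vrd^\beta+\vrd^2)}{|x-x_0|^\alpha}\,\dx
\ \leq\ \Big|\!\intO{\vrd(\vud\otimes\vud):\nabla\vcg{\varphi}}\!\Big|
+\Big|\!\intO{\tn{S}(\vtd,\vud):\nabla\vcg{\varphi}}\!\Big|
+\mathcal{E},
\]
where $\mathcal{E}$ collects the $\vc{f}$-term, the low-order piece $\intO{p\,G}$ and a contribution proportional to $\|p\|_1+\delta\|\vrd\|_\beta^\beta$ arising from the region away from $x_0$. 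The convective, force, and low-order terms are then controlled directly by $\|\vrd|\vud|^2\|_1$, $\|\vud\|_{1,2}$ (via the Sobolev embedding and $|\vcg{\varphi}|\leq C$), and $\|p(\vrd,\vtd)\|_1$ respectively; here one also exploits the fact that the pieces of $\nabla\vcg{\varphi}$ built from $\partial_i d\,\partial_j d$ and from $(\partial_i d-\mu^i)(\partial_j d-\mu^j)$ contract with $\vud\otimes\vud$ to give sign-definite (positive) quantities, so that the convective integral is in fact bounded in absolute value by $C\|\vrd|\vud|^2\|_1$.

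The decisive step---and the main obstacle---is the estimate of the viscous term, since this is what forces the condition $\alpha<(9m-6)/(9m-2)$. Writing $\tn{S}(\vtd,\vud)=\mu(\vtd)\bigl[\nabla\vud+(\nabla\vud)^T-\tfrac{2}{3}\Div\vud\,\tn{I}\bigr]+\xi(\vtd)\Div\vud\,\tn{I}$ and using $\mu(\vt),\xi(\vt)\leq C(1+\vt)$, H\"older's inequality gives
\[
\Big|\!\intO{\tn{S}(\vtd,\vud):\nabla\vcg{\varphi}}\!\Big|
\ \leq\ C\bigl(1+\|\vtd\|_{3m}\bigr)\|\nabla\vud\|_2\,\|\nabla\vcg{\varphi}\|_{q},
\qquad \frac{1}{q}=\frac{1}{2}-\frac{1}{3m}\;\text{(dual exponent)}.
\]
From Lemma \ref{l 3.5} the expressions for $\partial_j\varphi_i$ show that $|\nabla\vcg{\varphi}|\leq C(d+|x-x_0|^a)^{-\alpha}$, and a direct computation in boundary-adapted coordinates (slicing $\Omega\cap B_{R_0}(x_0)$ into a tangential $\mathbb{R}^2$-ball of radius $\rho$ and a normal variable $d\in(0,\rho)$) shows that $\|\nabla\vcg{\varphi}\|_q<\infty$ precisely when $\alpha q a<3$, i.e.\ when $\alpha<\tfrac{3(2-\alpha)}{2q}$. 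Inserting $q=6m/(3m-2)$ and solving for $\alpha$ produces exactly the restriction $\alpha<(9m-6)/(9m-2)$. Combining the pressure lower bound, the convective and force estimates, and this viscous estimate yields \eqref{3.20}; the remaining absolute value terms on the right-hand side are already present in the statement, which completes the plan. The technical heart of the argument is thus entirely the careful $L^q$ bookkeeping for $\nabla\vcg{\varphi}$ near $x_0\in\partial\Omega$ in the degenerate weight $d+|x-x_0|^a$.
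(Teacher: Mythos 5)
Your overall plan coincides with the paper's: test \eqref{2.32} with $\vcg{\varphi}$ from \eqref{3.17}, read off the lower bound on $\Div\vcg{\varphi}$ from \eqref{3.18}, drop the sign-definite part of the convective contribution, and push the viscous term through H\"older with exponents $3m$, $2$, and $q$. That much is sound.

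However, there is a genuine computational gap in the decisive step. You claim that the direct slicing computation shows $\|\nabla\vcg{\varphi}\|_q<\infty$ precisely when $\alpha q a < 3$, and that with $q = \frac{6m}{3m-2}$ and $a = \frac{2}{2-\alpha}$ this yields $\alpha < \frac{9m-6}{9m-2}$. Neither half of that is right. The criterion $\alpha q a < 3$ amounts to replacing the weight $(d(x)+|x-x_0|^a)^{-\alpha}$ by $|x-x_0|^{-a\alpha}$ throughout, which is legitimate only in the thin collar $\{d(x)\lesssim|x-x_0|^a\}$. If you actually carry out the slicing (fix $\rho=|x-x_0|$, parametrize $d=\rho\sin\theta$, integrate $\int_0^{R_0}\rho^2\int_0^1(\rho u+\rho^a)^{-\alpha q}\,{\rm d}u\,{\rm d}\rho$), the inner integral behaves like $\rho^{a-1-a\alpha q}$ when $\alpha q>1$, and the outer integral converges iff $\alpha q < 1 + 2/a$. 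With $a = \frac{2}{2-\alpha}$ this is exactly $\alpha(q+1)<3$, i.e.\ $q<\frac{3-\alpha}{\alpha}$, which is precisely the statement already asserted in Lemma~\ref{l 3.5}. Plugging $q=\frac{6m}{3m-2}$ into the \emph{correct} threshold $\alpha<\frac{3}{q+1}$ does give $\alpha<\frac{9m-6}{9m-2}$; plugging it into your stated criterion $\alpha q a<3$ instead gives $\alpha<\frac{6m-4}{7m-2}$, which is strictly smaller, so the claim that your criterion ``produces exactly the restriction'' is an arithmetic non sequitur that should have flagged the error. The fix is simple: either redo the slicing carefully, or just invoke Lemma~\ref{l 3.5} as the paper does. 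A minor secondary point: your statement that the convective integral is ``bounded in absolute value by $C\|\vrd|\vud|^2\|_1$'' overstates what is true and needed; the singular positive part of $\intO{\vrd(\vud\otimes\vud):\nabla\vcg{\varphi}}$ is not controlled from above, only the lower bound $\geq -C\|\vrd|\vud|^2\|_1$ holds, which suffices because that term sits on the favourable side of \eqref{3.21}.
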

\begin{proof}
We have
\begin{equation} \label{3.21}
\begin{array}{c}
\displaystyle
\intO{\big(p(\vrd,\vtd) + \delta (\vrd^\beta + \vrd^2)\big) \Div \vcg{\varphi}} + \intO{\vrd (\vud \otimes \vud):\nabla \vcg{\varphi}} \\
\displaystyle
= \intO {\tn{S} (\vud,\vtd):\nabla \vcg{\varphi}} - \intO {\vrd \vc{f} \cdot \vcg{\varphi}}.
\end{array}
\end{equation}
From (\ref{3.18}) we see that (recall that $a = \frac{2}{2-\alpha}$)
$$
\begin{array}{c}
\displaystyle\Div \vcg{\varphi} = \frac{d(x) \Delta d(x)}{(d(x)+|x-x_0|^a)^\alpha} + \frac{(1-\alpha)d(x) + |x-x_0|^a}{2(d(x)+|x-x_0|^a)^{\alpha+1}}|\nabla d(x)|^2 \\[8pt]
\displaystyle + \frac{(1-\alpha)d(x) + |x-x_0|^a}{2(d(x)+|x-x_0|^a)^{\alpha+1}} |\nabla d(x) - \mu(x)|^2 \\[8pt]
\displaystyle - \frac{\alpha^2 d^2(x) |\nabla|x-x_0|^a|^2}{2(d(x)+|x-x_0|^a)^{\alpha+1} \big((1-\alpha)d(x) + |x-x_0|^a\big)}
\displaystyle \geq \frac{C_1 |\nabla d(x)|^2}{(d(x)+ |x-x_0|^a)^\alpha} - C_2.
\end{array}
$$
Thus, as $d(x) + |x-x_0|^a\leq C |x-x_0|$, $d(x)$ is continuously differentiable near the boundary and $d(x_0) = 0$, together with $a>1$
$$
\begin{array}{c}
\displaystyle 
\intO {(p(\vrd,\vtd) +\delta(\vrd^\beta + \vrd^2))\Div\vcg{\varphi}} \\
\displaystyle \geq C_1 \int_{B_{R_0}(x_0)\cap \Omega} \frac{p(\vrd,\vtd) +\delta (\vrd^\beta + \vrd^2)}{|x-x_0|^\alpha} \, \mbox{d} x - C_2 \intO {\big(p(\vrd,\vtd) +\delta (\vrd^\beta +\vrd^2)\big)}.
\end{array}
$$
Next
$$
\intO {\vrd (\vud \otimes \vud):\nabla \vcg{\varphi}} \geq -C \intO {\vrd |\vud|^2 },
$$
as the skew symmetric part of $\nabla \vcg{\varphi}$ has zero contribution. Note that the positive part of $\vrd (\vud \otimes \vud):\nabla \vcg{\varphi}$ does not provide any useful information. The last term on the r.h.s. of \eqref{3.21} can be estimated by $\|\vc{f}\|_\infty \|\vrd\|_1 \|\vcg{\varphi}\|_\infty$,  and
$$
\Big| \intO {\tn{S}(\vud,\vtd):\nabla \vcg{\varphi}}\Big| \leq C \|\nabla \vud\|_2 (1+ \|\vtd\|_{3m})
$$
provided
$
\frac 1q = \frac 12 - \frac{1}{3m}
$
is such that $q< \frac{3-\alpha}{\alpha}$, i.e. $\alpha < \frac{9m-6}{9m-2}$.
\end{proof}

Finally, we deal with the case when $x_0$ is close to the boundary, but does not belong to it. Here we must combine the test functions from the previous two situations. Note that this case was not carefully commented in the original papers.

Assume that $x_0 \in \Omega$ is such that $\dist\{x_0,\partial \Omega\}= 5\varepsilon$ for some $0<\varepsilon \ll 1$. Our aim is to get estimates as above with constants independent of $\varepsilon \to 0^+$. First we consider the test function as in the case when $x_0 \in \partial \Omega$, i.e.
$$
\vcg{\varphi}^1(x) = d(x) \nabla d(x) (d(x) + |x-x_0|^a)^{-\alpha}.
$$

We have as above  
$$
\intO {\vrd (\vud \otimes \vud):\nabla \vcg{\varphi}^1} \geq C_1 \intO{\frac{\vrd(\vud\cdot \nabla d)^2}{(d(x) + |x-x_0|^a)^\alpha}}-C_2 \intO {\vrd |\vud|^2},
$$
and,
$$
\begin{array}{c}
\displaystyle
\intO {\big(p(\vrd,\vtd) + \delta(\vrd^\beta + \vrd^2)\big) \Div\vcg{\varphi}^1} \\
\displaystyle \geq C_1 \intO {\frac{p(\vrd,\vtd) + \delta(\vrd^\beta + \vrd^2)}{(d(x)+ |x-x_0|^a)^\alpha}} - C_2 \intO {\big(p(\vrd,\vtd) +\delta(\vrd^\beta + \vrd^2)\big)}.
\end{array}
$$
It is easy to see that $\|\vcg{\varphi}^1\|_{1,q} \leq C$ independently of $\varepsilon$ for $q< \frac{3-\alpha}{\alpha}$. 
However, we have 
$$
\frac{1}{(d(x)+ |x-x_0|^a)^\alpha} \geq \frac{C}{|x-x_0|^\alpha}
$$
only for $x \in \Omega \setminus B_\varepsilon(x_0)$. Therefore we need an additional estimate in the ball $B_\varepsilon(x_0)$.

To this aim we use a test function, which is similar to the test function when $x_0$ is far from the boundary. Here, however, we have additional difficulty connected with the fact that the test function must vanish on $\partial \Omega$. We consider
\begin{equation} \label{lovely_test}
\vcg{\varphi}^2(x) = 
\left\{
\begin{array}{rl}
\displaystyle \frac{x-x_0}{|x-x_0|^\alpha} \Big(1-\frac{1}{2^{\frac \alpha 2}}\Big)^2, & \quad |x-x_0| \leq \varepsilon, \\
\displaystyle (x-x_0) \Big( \frac{1}{|x-x_0|^{\frac{\alpha}{2}}}  - \frac{1}{(|x-x_0|+ \varepsilon)^{\frac{\alpha}{2}}}\Big)^2, & \quad |x-x_0| > \varepsilon, d(x) >\varepsilon, \\
\displaystyle (x-x_0) \Big( \frac{1}{|x-x_0|^{\frac{\alpha}{2}}}  - \frac{1}{(|x-x_0|+ d(x))^{\frac{\alpha}{2}}}\Big)^2, & \quad |x-x_0| > \varepsilon, d(x) \leq \varepsilon. \\
\end{array}\right.
\end{equation}
It is easy to verify that $\vcg{\varphi}^2 \in W^{1,q}_0(\Omega;\R^3)$ with the norm bounded independently of $\varepsilon$ for all $1\leq q< \frac{3}{\alpha}$. Moreover, due to properties mentioned above, we can verify that
$$
\begin{array}{c}
\displaystyle \intO {\vrd (\vud \otimes \vud):\nabla \vcg{\varphi}^2} \geq K_1 \int_{B_\varepsilon(x_0)} \frac{\vrd|\vud|^2}{|x-x_0|^\alpha}\, \dx \\
\displaystyle +K_2 \int_{\{x; d(x)\leq \varepsilon\}} \frac{\vrd (\vud\cdot \nabla d)^2}{(d(x) +|x-x_0|^a)^\alpha}\, \dx -K_3 \intO {\vrd |\vud|^2},
\end{array}
$$
and,
$$
\begin{array}{c}
\displaystyle
\intO {\big(p(\vrd,\vtd) + \delta(\vrd^\beta + \vrd^2)\big) \Div\vcg{\varphi}^2} \\
\displaystyle \geq K_1 \int_{B_\varepsilon(x_0)} \frac{p(\vrd,\vtd) + \delta(\vrd^\beta + \vrd^2)}{|x-x_0|^\alpha} \, \dx - K_3 \intO{\big(p(\vrd,\vtd) +\delta(\vrd^\beta + \vrd^2)\big)}.
\end{array}
$$
Hence, taking as a test function in \eqref{2.32} 
$$
\vcg{\varphi} = K \vcg{\varphi}^1 + \vcg{\varphi}^2
$$
for a sufficiently large $K>0$, we end up with
\begin{equation} \label{3.23}
\begin{array}{c}
\displaystyle 
\sup_{x_0 \in \Ov{\Omega}}\intO {\frac{p(\vrd,\vtd) + \delta(\vrd^\beta + \vrd^2)}{|x-x_0|^\alpha}} \\
\displaystyle \leq C\Big(1+ \|p(\vrd,\vtd)\|_1 + \|\vud\|_{1,2}(1+\|\vtd\|_{3m}) + \|\vrd |\vud|^2\|_1 + \delta\|\vrd\|_\beta^\beta \Big),
\end{array}
\end{equation}
provided $\alpha < \frac{9m-6}{9m-2}$.

We can deduce from \eqref{3.23}

\begin{lemma} \label{l 3.7}
Let $1\leq b <\gamma$, $\alpha < \frac{9m-6}{9m-2}$ and $\alpha > \frac{3b-2\gamma}{b}$. Then
\begin{equation} \label{3.22}
\AAA = \intO{\vrd^b |\vud|^2}  \leq C \|\vud\|_{1,2}^2 \big(1+ \|p(\vrd,\vtd)\|_1 +  \|\vud\|_{1,2}(1+\|\vtd\|_{3m}) + \|\vrd |\vud|^2\|_{1}\big)^{\frac{b}{\gamma}}.
\end{equation}
\end{lemma}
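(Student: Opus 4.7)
I would deduce the estimate from the weighted pressure bound \eqref{3.23}. Since $p(\vrd,\vtd)\ge c\,\vrd^\gamma$ by \eqref{1.24}--\eqref{1.25a}, \eqref{3.23} gives
\[
  \sup_{x_0 \in \Ov{\Omega}} \int_\Omega \frac{\vrd^\gamma}{|x-x_0|^\alpha}\,\dx \le C\,M_\delta,
\]
with $M_\delta$ the right-hand side of \eqref{3.23}. The central step will then be the weighted pointwise factorisation
\[
  \vrd^b|\vud|^2 = \Big(\frac{\vrd^\gamma}{|x-x_0|^\alpha}\Big)^{b/\gamma}\, |\vud|^2\, |x-x_0|^{\alpha b/\gamma},
\]
followed by H\"older's inequality with conjugate exponents $\gamma/b$ and $\gamma/(\gamma-b)$, giving
\[
  \AAA \le M_\delta^{b/\gamma}\,\Big( \int_\Omega |\vud|^{2\gamma/(\gamma-b)}\,|x-x_0|^{\alpha b/(\gamma-b)}\,\dx \Big)^{(\gamma-b)/\gamma}.
\]

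What remains is to bound the inner integral by $C\,\|\vud\|_{1,2}^{2\gamma/(\gamma-b)}$. In the subcritical range $b\le 2\gamma/3$ the hypothesis $\alpha>(3b-2\gamma)/b$ is automatic (its right-hand side is non-positive), the exponent $q:=2\gamma/(\gamma-b)\le 6$ is Sobolev-admissible, and the positive weight $|x-x_0|^{\alpha b/(\gamma-b)}$ is bounded uniformly by $(\mbox{diam}\,\Omega)^{\alpha b/(\gamma-b)}$; the embedding $W^{1,2}_0(\Omega)\hookrightarrow L^6(\Omega)$ then closes the argument directly. In the supercritical range $b\in(2\gamma/3,\gamma)$ (always the case when $\gamma<3/2$) one has $q>6$ and the straightforward Sobolev embedding fails; here the condition $\alpha>(3b-2\gamma)/b$, equivalent to $b(3-\alpha)<2\gamma$, is precisely the scaling threshold allowing a weighted Sobolev inequality of Caffarelli--Kohn--Nirenberg type to handle the integrand, the positive weight $|x-x_0|^{\alpha b/(\gamma-b)}$ compensating for the super-critical exponent.

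\textbf{The hard part} is this supercritical regime $b>2\gamma/3$: subcritically the argument is a routine H\"older--Sobolev computation, but supercritically one needs a genuine weighted embedding whose validity is ensured exactly by the sharp scaling condition $\alpha>(3b-2\gamma)/b$. Once the inner integral is controlled, I would take $x_0\in\Ov{\Omega}$ so that the weighted-pressure supremum is (essentially) attained, and absorb geometric constants to arrive at \eqref{3.22}.
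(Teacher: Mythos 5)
Your direct H\"older factorisation works only in the subcritical range $b\le 2\gamma/3$; in the range $b>2\gamma/3$ the argument breaks down at exactly the point you acknowledge as hard, and the fix you propose is not available. The inner integral you need to bound is
$$\intO{|\vud|^{q}\,|x-x_0|^{\theta}},\qquad q=\frac{2\gamma}{\gamma-b}>6,\quad \theta=\frac{\alpha b}{\gamma-b}>0,$$
with a \emph{positive} power of $|x-x_0|$. A positive power weight is bounded above on the bounded domain $\Omega$ and small only near the single point $x_0$; it does nothing to compensate for a supercritical exponent $q>6$, since $W^{1,2}_0(\Omega)\not\hookrightarrow L^q(\Omega)$ for $q>6$ and the weight is bounded below away from $x_0$. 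The Caffarelli--Kohn--Nirenberg/Hardy--Sobolev inequalities you invoke trade a \emph{negative} power of $|x-x_0|$ for a Lebesgue exponent \emph{smaller} than $6$ (in $\R^3$, $\big(\int |u|^{2^*(s)}|x|^{-s}\big)^{1/2^*(s)}\le C\|\nabla u\|_2$ with $2^*(s)=6-2s\le6$); they never yield $q>6$ from $\nabla u\in L^2$, and certainly not with a positive-power weight. The hypothesis $\alpha>(3b-2\gamma)/b$ is not the scaling threshold of any such embedding; in the paper it merely ensures that $\nu=\frac{\gamma-\alpha b}{\gamma-b}<3$, so that the pure weight $|x-x_0|^{-\nu}$ is integrable.

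The paper's argument avoids the supercritical obstacle entirely by never applying H\"older to the factor $|\vud|^2$ directly. Instead, it first uses \eqref{3.23} together with H\"older (distributing the weight between $\vrd^\gamma/|x-x_0|^\alpha$ and $|x-x_0|^{-\nu}$, $\nu<3$) to obtain a Newtonian-potential bound
$\sup_{x_0\in\Ov\Omega}\intO{\vrd^b/|x-x_0|}\le C\,(\cdots)^{b/\gamma}$, see \eqref{3.24}. It then introduces the auxiliary potential $h$ with $-\Delta h=\vrd^b$, $h|_{\partial\Omega}=0$, uses the Green-function representation to get $\|h\|_\infty\le C\sup_{x_0}\intO{\vrd^b/|x-x_0|}$, and finally controls $\AAA=\intO{(-\Delta h)|\vud|^2}$ by \emph{two successive integrations by parts}, producing a closed system involving $\AAA$, $D=\intO{|\vud|^2|\nabla h|^2}$, $\|h\|_\infty$ and $\|\nabla\vud\|_2$ that Young's and Friedrichs' inequalities close to $\AAA\le C\|\nabla\vud\|_2^2\|h\|_\infty$. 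This uses $\vud$ only through $\|\nabla\vud\|_2$ and never needs any $L^q$-norm of $\vud$ with $q>6$. Since the lemma is later applied precisely in the regime of small $\gamma$ (where one must take $b$ close to $\gamma$, hence $b>2\gamma/3$), the supercritical case is essential, and your proposal has a genuine gap there.
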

\begin{proof}
Take $b<\gamma$ and $\nu = \frac{\gamma-\alpha b}{\gamma-b} <3$ (i.e. $\alpha > \frac{3b-2\gamma}{b}$). As $\vrd^\gamma \leq C p(\vrd,\vtd)$, we have
\begin{equation} \label{3.24}
\begin{array}{c}
\displaystyle \intO {\frac{\vrd^b}{|x-x_0|}} = \intO {\Big(\frac {\vrd^\gamma}{|x-x_0|^\alpha}\Big)^{\frac b\gamma} \Big(\frac{1}{|x-x_0|^\nu}\Big)^{1-\frac b \gamma}} \\
\displaystyle \leq C \Big(1+ \|p(\vrd,\vtd)\|_1 + \|\vud\|_{1,2} (1+ \|\vtd\|_{3m}) + \|\vrd |\vud|^2\|_1 \Big)^{\frac b\gamma}.
\end{array}
\end{equation}

Let $h$ be the unique solution to
\begin{equation} \label{3.25}
\begin{array}{c}
\displaystyle -\Delta h = \vrd^b >0 \qquad \mbox{ in } \Omega,\\
\displaystyle h = 0 \qquad \mbox{ at } \partial \Omega.
\end{array}
\end{equation}
Then
$$
h(x) = \int_{\Omega} G(x,y) \vrd^b(y)\, \mbox{d} y
$$
with $G(\cdot,\cdot)$ the Green function to problem (\ref{3.25}). As $|G(x,y)| \leq \frac{C}{|x-y|}$ for all $x,y \in \Omega$, $x\neq y$, we get from (\ref{3.23})--(\ref{3.24}) that $h \in L^\infty(\Omega)$ with
\begin{equation} \label{3.26}
\|h\|_\infty \leq C \sup_{x_0 \in \Ov{\Omega}} \intO {\frac{\vrd^b(x)}{|x-x_0|}}.
\end{equation}
Therefore
$$
\AAA = \intO {-\Delta h(x) |\vud(x)|^2} = 2 \intO {\nabla \vud: (\vud \nabla h)},
$$
and
\begin{equation} \label{3.27}
\AAA \leq C \|\nabla\vud\|_2 \Big(\intO{|\vud|^2 |\nabla h|^2}\Big)^{\frac 12}.
\end{equation}
Now
$$
\begin{array}{c}
\displaystyle D = \intO{|\vud|^2 |\nabla h|^2} = - \intO {h \nabla h \cdot \nabla |\vud|^2} - \intO {|\vud|^2 h \Delta h} \\
\displaystyle \leq C \|h\|_\infty (\AAA + \|\nabla \vud\|_2 D^{\frac 12}).
\end{array}
$$
Thus
$$
D \leq C (\AAA \|h\|_\infty + \|\nabla \vud\|_2^2 \|h\|_{\infty}^2 ).
$$
Returning to (\ref{3.27}), Young's and Friedrichs' inequalities imply
$$
\AAA \leq C \|\nabla \vud\|_2^2 \|h\|_\infty
$$
which finishes the proof.
\end{proof}

Now, combining  Lemmas \ref{l 3.3} and \ref{l 3.7}
$$
\AAA \leq C \Big(1+ \AAA^{\frac{4s-3}{3b-2}\frac 1s} + \AAA^{\frac 1{6b-4}} + \AAA^{\frac 1{6b-4} (\frac{(s-1)\gamma}{(s-1)\gamma+1} + \frac{2(4s-3)}{(s-1)\gamma+1})}\Big)^{\frac b\gamma},
$$
i.e.
\begin{equation} \label{3.29}
\AAA \leq C \Big(1+ \AAA^{\frac{4s-3}{3b-2} \frac 1s} + \AAA^{\frac 1 {6b-4} (1+ \frac{8s-7}{(s-1)\gamma +1})} \Big)^{\frac b\gamma}.
\end{equation}
Therefore, assuming
\begin{equation} \label{3.30}
\frac{4s-3}{s} \frac{1}{3b-2} \frac{b}{\gamma} <1,   \qquad \frac 1 {6b-4} \Big(1+ \frac{8s-7}{(s-1)\gamma +1}\Big) \frac{b}{\gamma} <1
\end{equation}
we get
$$
\AAA \leq C.
$$
Checking all conditions throughout the computations above we conclude
\begin{lemma} \label{l 3.8}
Let $\gamma>1$, $m>\frac 23$ and $m > \frac{2}{9} \frac \gamma{\gamma-1}$. Then there exists $s>1$ such that $\vrd$ is bounded in $L^{s\gamma}(\Omega)$ and $p(\vrd,\vtd)$, $\vrd |\vud|$ and $\vrd |\vud|^2$ are bounded in $L^s(\Omega)$. Moreover, if $\gamma > \frac 43$, and
\begin{equation} \label{4.5}
\begin{array}{c}
\displaystyle m>1 \qquad \mbox{ for } \qquad \gamma > \frac{12}{7}, \\[8pt]
\displaystyle m> \frac{2\gamma}{3(3\gamma-4)} \qquad \mbox{ for } \qquad \gamma \in \Big(\frac 43,\frac {12}{7}\Big],
\end{array}
\end{equation}
we can take $s> \frac 65$.
\end{lemma}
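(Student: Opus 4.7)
The proof strategy is already laid out in the discussion preceding the lemma: it remains to exhibit parameters $s > 1$ and $b \in [1,\gamma)$ for which all structural constraints of Lemmas \ref{l 3.3} and \ref{l 3.7} are compatible and the two exponent inequalities (\ref{3.30}) hold strictly. Once such a choice exists, Young's inequality applied to the self-bound (\ref{3.29}) converts it into $\AAA \leq C$, and the claimed $L^{s\gamma}$ bound on $\vrd$ together with the $L^s$ bounds on $p(\vrd,\vtd)$, $\vrd|\vud|$ and $\vrd|\vud|^2$ are then read off from (\ref{3.11a}), (\ref{3.12}) and the temperature bound (\ref{3.2}).

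First I would verify that admissible parameters exist. The structural constraints are $1 < s \leq \min\{3b/(b+2),\,6m/(3m+2)\}$ and $1 \leq b < \gamma$, plus the requirement that the $\alpha$-window $(3b-2\gamma)/b < \alpha < (9m-6)/(9m-2)$ from Lemma \ref{l 3.7} be non-empty. A short rearrangement shows this window is non-empty iff $b < \gamma(9m-2)/(9m)$; combined with $b \geq 1$, this is compatible precisely when $m > 2\gamma/(9(\gamma-1))$, which is exactly the hypothesis of the first part. I would then let $s \to 1^+$ and $b \to 1^+$ simultaneously; both exponents on the right-hand side of (\ref{3.29}) collapse to $b/(\gamma(3b-2)) \to 1/\gamma$, which is strictly less than $1$ since $\gamma > 1$. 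By continuity, the two inequalities in (\ref{3.30}) hold strictly in an open neighbourhood of $(s,b)=(1,1)$, delivering the first part of the lemma.

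For the refined conclusion $s > 6/5$, I would fix $s = 6/5$ and maximise the freedom in $b$. The constraint $s \leq 6m/(3m+2)$ forces $m \geq 1$ at $s = 6/5$, which together with $\gamma > 12/7$ reproduces the first alternative of (\ref{4.5}). For $\gamma \in (4/3,12/7]$, $m$ may drop below $1$ provided $b$ is pushed closer to $\gamma$; plugging $s = 6/5$ into (\ref{3.30}) and optimising over $b$ in the admissible window pins the threshold at $m > 2\gamma/(3(3\gamma-4))$, matching the second alternative of (\ref{4.5}). A continuity argument in $s$ then gives room to take $s$ strictly greater than $6/5$.

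The main obstacle is the tight coupling of $s$, $b$ and $\alpha$: raising $s$ forces $b$ upward through $s \leq 3b/(b+2)$, which shrinks the $\alpha$-window (hence tightens the second exponent in (\ref{3.30})), while at the same time pushing $b$ toward $\gamma$ deteriorates the first exponent. The hypotheses on $m$ are precisely the thresholds that keep both exponents below $1$ at the relevant corners of parameter space, so the only real work is the careful but elementary algebraic bookkeeping needed to identify these thresholds and to confirm that one admissible $(s,b,\alpha)$ survives all constraints simultaneously.
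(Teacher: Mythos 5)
Your overall strategy is the right one and matches the paper's (the paper defers the details to~\cite{NoPo_SIMA}, but the self-bound (\ref{3.29}) and the exponent conditions (\ref{3.30}) are the engine, and you use them the same way). The first part is argued correctly: at $(s,b)\to(1^+,1^+)$ both exponents in (\ref{3.30}) tend to $1/\gamma<1$, the constraint $s\le 3b/(b+2)$ just opens up for $b>1$, and the $\alpha$-window non-emptiness $b<\gamma(9m-2)/(9m)$ together with $b\ge 1$ is precisely the condition $m>\frac{2\gamma}{9(\gamma-1)}$ of the lemma. That part is sound.

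The reasoning for the refined claim $s>\frac 65$ contains two concrete errors, even though the final thresholds happen to be correct. First, you write that for $\gamma\in(\frac 43,\frac{12}{7}]$ ``$m$ may drop below $1$ provided $b$ is pushed closer to $\gamma$''; this is false and internally inconsistent with your own conclusion, since $\frac{2\gamma}{3(3\gamma-4)}\ge 1$ exactly on that $\gamma$-range, so the required $m$ is \emph{larger} than $1$, not smaller. Second, you claim that ``plugging $s=\frac 65$ into (\ref{3.30}) and optimising over $b$ pins the threshold at $m>\frac{2\gamma}{3(3\gamma-4)}$''. This cannot be: (\ref{3.30}) involves only $s$, $b$, $\gamma$ and contains no $m$ at all. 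The constraint on $m$ arises elsewhere. At $s=\frac 65$ the Bogovskii constraint $s\le \frac{3b}{b+2}$ from Lemma~\ref{l 3.3} forces $b\ge\frac 43$, and combining this lower bound with the upper bound $b<\gamma\frac{9m-2}{9m}$ (non-emptiness of the $\alpha$-window needed in Lemma~\ref{l 3.7}) yields exactly $\gamma\frac{9m-2}{9m}>\frac 43$, i.e.\ $m>\frac{2\gamma}{3(3\gamma-4)}$. (A short check shows the two lower bounds on $b$ produced by (\ref{3.30}) at $s=\frac 65$ are both $<\frac 43$ for $\gamma>\frac 43$, so (\ref{3.30}) is not binding here.) The case split at $\gamma=\frac{12}{7}$ is then simply where $\frac{2\gamma}{3(3\gamma-4)}$ crosses $1$, so that for $\gamma>\frac{12}{7}$ the binding constraint is $m\ge 1$ coming from $s\le\frac{6m}{3m+2}$, while for $\gamma\le\frac{12}{7}$ it is the $\alpha$-window condition. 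You should rework the bookkeeping for the $s>\frac 65$ part to reflect where each constraint actually comes from.
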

\begin{proof}
The details can be found in \cite{NoPo_SIMA}.
\end{proof}
 
\section{Estimates independent of $\delta$: Navier boundary conditions}

Note that we get in the case when $\Omega$ is not axially symmetric estimates \eqref{3.1}, \eqref{3.2} and \eqref{3.9} exactly as for the Dirichlet boundary conditions. Therefore, we only deal here with the estimates of the pressure, where we closely follow the papers \cite{JeNo_JMPA} and \cite{JeNoPo_M3AS}.

\subsection{Estimates of the pressure}

We define now for $1\leq a\leq \gamma$ and $0<b<1$
\begin{equation} \label{II3.11}
\BBB = \intO{\big(\vrd^a |\vu|^2 + \vrd^b |\vu|^{2b+2}\big)},
\end{equation}
where $1\leq a\leq \gamma$ and $0<b<1$. Employing H\"older's inequality we easily have
\begin{lemma} \label{l II3.2}
Under the assumptions on $a$ and $b$, there exists $C$ independent of $\delta$, such that
\begin{equation} \label{II3.12}
\|\vrd \vud\|_1 \leq C \BBB^{\frac{a-b}{2(ab+a-2b)}}.
\end{equation}
\end{lemma}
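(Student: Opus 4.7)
The plan is to combine Cauchy--Schwarz with a single three-function Hölder inequality, exploiting both pieces of $\BBB$ together with the mass bound $\intO{\vrd}=M$.

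First I would peel off a factor of $|\vud|$ using Cauchy--Schwarz:
$$
\intO{\vrd|\vud|}=\intO{\vrd^{1/2}\cdot \vrd^{1/2}|\vud|}\leq M^{1/2}\Big(\intO{\vrd|\vud|^2}\Big)^{1/2}.
$$
This reduces the claim to showing $\intO{\vrd|\vud|^2}\leq C\,\BBB^{(a-b)/(ab+a-2b)}$.

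The main step is then a pointwise decomposition
$$
\vrd|\vud|^2=(\vrd^a|\vud|^2)^{p_1}\,(\vrd^b|\vud|^{2b+2})^{p_2}\,\vrd^{\,p_3},
$$
followed by three-function Hölder against the two $\BBB$-factors and the mass. Matching powers of $\vrd$ and $|\vud|$ and imposing $p_1+p_2+p_3=1$ (so that Hölder applies) gives the linear system
$$
ap_1+bp_2+p_3=1,\qquad 2p_1+(2b+2)p_2=2,\qquad p_1+p_2+p_3=1,
$$
whose unique solution is
$$
p_1=\frac{1-b}{ab+a-2b},\qquad p_2=\frac{a-1}{ab+a-2b},\qquad p_3=\frac{b(a-1)}{ab+a-2b}.
$$
Under the hypotheses $1\le a\le\gamma$ and $0<b<1$ one has $ab+a-2b>0$ and all three exponents are non-negative, so Hölder yields $\intO{\vrd|\vud|^2}\leq \BBB^{p_1+p_2}M^{p_3}$. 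Since $p_1+p_2=(a-b)/(ab+a-2b)$, substituting back into the Cauchy--Schwarz bound produces exactly $\|\vrd\vud\|_1\leq C\BBB^{(a-b)/(2(ab+a-2b))}$ with $C$ depending only on $M$.

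I do not foresee a genuine obstacle: the argument is purely algebraic once one notices that the admissibility conditions $p_i\geq 0$ coincide with the stated ranges of $a$ and $b$. The only mild subtlety is the boundary case $a=1$, where $p_2=p_3=0$ and Hölder degenerates to the trivial inequality $\intO{\vrd|\vud|^2}\leq\BBB$; the exponent formula $(a-b)/(ab+a-2b)=1$ is consistent with this, so the statement extends continuously to $a=1$.
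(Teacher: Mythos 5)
Your proof is correct and matches the paper's intent: the paper states the lemma after "Employing H\"older's inequality we easily have" without further detail, and your Cauchy--Schwarz followed by a three-function H\"older interpolation (with exponents $p_1,p_2,p_3$ as computed) is exactly that interpolation. The algebra checks out, the sign conditions $p_i\geq 0$ are guaranteed by $1\le a$ and $0<b<1$, and you handle the degenerate case $a=1$ correctly.
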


\begin{lemma} \label{l II3.3}
Under the assumptions on $a$ and $b$, and for $1<s< \frac{1}{2-a}$ (if $a<2$), $0<(s-1)\frac{a}{a-1} <b<1$, there exists $C$, independent of $\delta$, such that
\begin{equation} \label{II3.13}
\|\vrd |\vud|^2\|_s \leq C \BBB^{\frac{a-b/s}{ab+a-2b}}.
\end{equation}
\end{lemma}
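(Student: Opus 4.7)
The plan is to derive the bound from the generalized three-factor H\"older inequality applied to a carefully chosen factorization of $\vrd^{s}|\vud|^{2s}$. Specifically, I would seek exponents $\theta_1,\theta_2,\theta_3\ge 0$ with $\theta_1+\theta_2+\theta_3=1$ such that
$$
\vrd^{s}|\vud|^{2s}=\big(\vrd^{a}|\vud|^{2}\big)^{\theta_1}\big(\vrd^{b}|\vud|^{2b+2}\big)^{\theta_2}\,\vrd^{\theta_3}.
$$
Matching the powers of $\vrd$ and of $|\vud|$ in this identity together with the summation constraint on the $\theta_i$'s leads to the linear system $a\theta_1+b\theta_2+\theta_3=s$, $\theta_1+(b+1)\theta_2=s$, $\theta_1+\theta_2+\theta_3=1$, whose unique solution is
$$
\theta_1=\frac{2s-b-1}{ab+a-2b},\qquad \theta_2=\frac{s(a-2)+1}{ab+a-2b},\qquad \theta_3=\frac{a(1+b-s)-b}{ab+a-2b},
$$
and one checks directly that $\theta_1+\theta_2=(sa-b)/(ab+a-2b)$.

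The next step is to verify that each $\theta_i$ is non-negative. The common denominator $ab+a-2b=a(1+b)-2b$ is strictly positive since $a\ge 1$ and $b<1$. Positivity of $\theta_1$ follows from $2s-b-1>1-b>0$ (using $s>1$ and $b<1$); positivity of $\theta_2$ is automatic when $a\ge 2$, and when $a<2$ it is equivalent to $s<1/(2-a)$, which is the first standing hypothesis of the lemma; positivity of $\theta_3$ simplifies to $b\ge(s-1)a/(a-1)$, which is the second standing hypothesis. Thus the stated restrictions on $a,b,s$ are precisely what guarantees the admissibility of the H\"older decomposition.

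With the $\theta_i$'s in hand, the generalized H\"older inequality with exponents $1/\theta_1,1/\theta_2,1/\theta_3$, combined with the prescribed total mass $\intO{\vrd}=M$, yields
$$
\intO{\vrd^{s}|\vud|^{2s}}\le\Big(\intO{\vrd^{a}|\vud|^{2}}\Big)^{\theta_1}\Big(\intO{\vrd^{b}|\vud|^{2b+2}}\Big)^{\theta_2}\Big(\intO{\vrd}\Big)^{\theta_3}\le M^{\theta_3}\,\BBB^{\theta_1+\theta_2}.
$$
Taking $s$-th roots and substituting $\theta_1+\theta_2=(sa-b)/(ab+a-2b)$ gives
$$
\|\vrd|\vud|^{2}\|_{s}\le C\,\BBB^{(sa-b)/(s(ab+a-2b))}=C\,\BBB^{(a-b/s)/(ab+a-2b)},
$$
which is the claimed estimate. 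The essentially only substantive point is recognising that a \emph{three}-factor decomposition (rather than two) is needed, which in turn forces the use of the mass constraint on $\vrd$ to absorb the leftover factor $\vrd^{\theta_3}$; the rest of the argument is bookkeeping with exponents.
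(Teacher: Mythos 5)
Your proof is correct and uses precisely the approach the paper intends: the statement of Lemma~\ref{l II3.3} is introduced in the paper simply with ``Employing H\"older's inequality we easily have,'' and your three-factor decomposition with the mass constraint $\intO{\vrd}=M$ absorbing the leftover $\vrd^{\theta_3}$ is exactly the intended argument. Your solution of the linear system, the computation $\theta_1+\theta_2=(sa-b)/(ab+a-2b)$, and the verification that the lemma's standing hypotheses on $a$, $b$, $s$ are exactly the non-negativity conditions for $\theta_2$ and $\theta_3$ are all correct.
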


Next, using also the Bogovskii-type estimate we get as in Lemma \ref{l 3.3} 

\begin{lemma} \label{l II3.4}
Let $1\leq a\leq \gamma$, $0<b<1$, $1<s<\frac 1{2-a}$ (if $a<2$), $0< (s-1)\frac{a}{a-1} <b<1$, $s\leq \frac{6m}{3m+2}$, $m> \frac 23$. Then there exists a constant $C$ independent of $\delta$ such that
$$
\intO{\big(\vrd^{s\gamma} + \vrd^{(s-1)\gamma} p(\vrd, \vtd) + (\vrd |\vud|^2)^s + \delta \vrd^{\beta + (s-1)\gamma}\big)} \leq C(1+ \BBB^{\frac{sa-b}{ab+a-2b}}).
$$
\end{lemma}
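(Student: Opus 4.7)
The plan is to follow the same Bogovskii test-function argument as in Lemma \ref{l 3.3}, with two small adaptations reflecting the Navier setting: the use of the richer quantity $\BBB$ in place of $\AAA$, and the observation that the boundary contribution arising from the slip condition drops out on the chosen test field. Denote
$$
{\cal I} := \intO{\bigl(p(\vrd,\vtd) + \delta(\vrd^\beta + \vrd^2)\bigr)\vrd^{(s-1)\gamma}},
$$
and let $\vcg{\varphi}$ be the Bogovskii solution (Lemma \ref{l 4.2}) of
$$
\Div\vcg{\varphi} = \vrd^{(s-1)\gamma} - \tfrac{1}{|\Omega|}\intO{\vrd^{(s-1)\gamma}} \mbox{ in } \Omega, \qquad \vcg{\varphi} = \vc{0} \mbox{ on } \partial\Omega,
$$
so that $\|\vcg{\varphi}\|_{1,s/(s-1)} \leq C\|\vrd\|_{s\gamma}^{(s-1)\gamma}$. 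Testing the Navier momentum equation (which differs from (\ref{2.17}) by the additional slip term $\lambda\intdO{\vud\cdot\vcg{\varphi}}$, cf.\ (\ref{1.22a})) by $\vcg{\varphi}$, this slip term vanishes since $\vcg{\varphi}|_{\partial\Omega} = \vc{0}$, yielding
$$
{\cal I} = I_1 + I_2 + I_3 + I_4 + I_5,
$$
where $I_1, I_2$ are the mean-value corrections, $I_3 = \intO{\vrd(\vud\otimes\vud):\nabla\vcg{\varphi}}$, $I_4 = -\intO{\tn{S}(\vtd,\vud):\nabla\vcg{\varphi}}$, and $I_5 = \intO{\vrd\vc{f}\cdot\vcg{\varphi}}$.

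The key ingredient is that (\ref{1.25a}) combined with Young's inequality gives $\|\vrd\|_{s\gamma}^{s\gamma} + \delta\|\vrd\|_{\beta+(s-1)\gamma}^{\beta+(s-1)\gamma} \leq C(1+{\cal I})$, whence $\|\nabla\vcg{\varphi}\|_{s/(s-1)} \leq C(1+{\cal I})^{(s-1)/s}$. The decisive term is $I_3$: by H\"older and Lemma \ref{l II3.3},
$$
|I_3| \leq \|\vrd|\vud|^2\|_s\,\|\nabla\vcg{\varphi}\|_{s/(s-1)} \leq C\,\BBB^{(a-b/s)/(ab+a-2b)}(1+{\cal I})^{(s-1)/s}.
$$
For $I_4$, I write $|\tn{S}(\vtd,\vud)| \leq C(1+\vtd)|\nabla\vud|$ and apply H\"older with exponents $(3m, 2, 6m/(3m-2))$; the hypothesis $s \leq 6m/(3m+2)$ is exactly equivalent to $s/(s-1) \geq 6m/(3m-2)$, which is the regularity of $\nabla\vcg{\varphi}$ needed. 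The factors $\|\nabla\vud\|_2$ and $\|\vtd\|_{3m}$ are then bounded by (\ref{3.1})--(\ref{3.2}) and Lemma \ref{l II3.2} (which estimates $|\intO{\vrd\vud\cdot\vc{f}}|$ by a power of $\BBB$ strictly smaller than the $I_3$-exponent), so $I_4$ is subdominant to $I_3$. The remaining $I_1$, $I_2$ are controlled by $\|p(\vrd,\vtd)\|_1 \leq C$ from (\ref{3.4}), while $I_5$ is handled by Sobolev embedding together with the mass constraint $\|\vrd\|_1 = M$.

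Absorbing the factor $(1+{\cal I})^{(s-1)/s}$ on the left by Young's inequality, the surviving power $s\cdot(a-b/s)/(ab+a-2b) = (sa-b)/(ab+a-2b)$ produces ${\cal I} \leq C\bigl(1 + \BBB^{(sa-b)/(ab+a-2b)}\bigr)$. Combined with (\ref{1.25a}) this dominates the first, second and fourth terms in the statement; the third one, $\|\vrd|\vud|^2\|_s^s$, is nothing but the $s$-th power of Lemma \ref{l II3.3} and requires no further argument. The main subtlety, as in the Dirichlet case treated in \cite{NoPo_SIMA}, is the simultaneous compatibility of the exponent constraints: $s < 1/(2-a)$ for $a<2$ and $0 < (s-1)a/(a-1) < b$ arise from the H\"older interpolation in Lemma \ref{l II3.3}, $s \leq 6m/(3m+2)$ comes from pairing $\tn{S}$ with $\nabla\vcg{\varphi}$ in $I_4$, and $m > 2/3$ is needed so that $6m/(3m+2) > 1$ (hence $s>1$ is admissible at all).
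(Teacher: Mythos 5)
Your proposal is correct and follows essentially the same route as the paper, which itself only sketches the proof by deferring to the Bogovskii argument of Lemma~\ref{l 3.3} and the references \cite{JeNo_JMPA}, \cite{JeNoPo_M3AS}: test the momentum balance with the Bogovskii solution for $\vrd^{(s-1)\gamma}$, identify the convective term $I_3$ as dominant, bound it via Lemma~\ref{l II3.3}, bound the viscous term $I_4$ via $\|\vtd\|_{3m}$ and Lemma~\ref{l II3.2}, and absorb with Young's inequality; the algebraic reduction $s\cdot\frac{a-b/s}{ab+a-2b}=\frac{sa-b}{ab+a-2b}$ and the equivalence $s\le\frac{6m}{3m+2}\Leftrightarrow\frac{s}{s-1}\ge\frac{6m}{3m-2}$ are both checked correctly. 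The one small imprecision is your remark that $I_1$, $I_2$ are ``controlled by $\|p(\vrd,\vtd)\|_1\le C$ from (\ref{3.4})'': that global $L^1$ bound on the pressure is not available a priori (it is part of what the lemma delivers); rather, these mean-value corrections are subdominant because $\intO{\vrd^{(s-1)\gamma}}\le C\,\mathcal I^{(s-1)/s}$ and $\intO{p(\vrd,\vtd)}$ can be interpolated against $\mathcal I$ and absorbed by Young, exactly as for $I_3$, $I_4$. This does not affect the validity of the overall argument.
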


As in the previous case, we need to estimate the pressure. We proceed similarly as for the Dirichlet boundary conditions, however, since we have more freedom, i.e. we have a larger class of the test functions for the momentum equation, we can get better results here. First of all, in the case when $x_0$ is far from the boundary, we have again Lemma \ref{l 3.4}. However, it is now possible to use the information from the second term on the l.h.s. as it is possible to recover it for $x_0$ close and on the boundary. The difference appears near the boundary. The situation is more complex here. For the sake of simplicity, we restrict ourselves to the case of the flat boundary $\partial \Omega$. The general case can be treated using the standard change of variables which flattens the curved boundary. This is the reason why we require the boundary to be $C^2$. More details are given in \cite{JeNo_JMPA} or in \cite{JeNoPo_M3AS}. 

Let us hence assume that we deal with the part of boundary of $\Omega$ which is flat and is described by $x_3 = 0$, i.e. $z(x')=0$, $x' \in {\cal O}\subset \R^2$ with the normal vector $\vc{n} = (0,0,-1)$ and  $\vcg{\tau}_1 =(1,0,0)$, $\vcg{\tau}_2=(0,1,0)$ the tangent vectors. Consider  first that $x_0$ lies on the boundary of $\Omega$, i.e. $(x_0)_3 = 0$. Then it is possible to use as the test function in the approximate momentum equation
$$
\vc{w}(x) = \vc{v}(x-x_0),
$$   
where
$$
\vc{v}(x) = 
\frac{1}{|x|^\alpha}(x_1,x_2,x_3) = (x\cdot \vcg{\tau}_1)\vcg{\tau}_1 + (x\cdot \vcg{\tau}_2)\vcg{\tau}_2 + ((0,0,x_3-z(x'))\cdot \vc{n})\vc{n}, \quad x_3\geq 0.
$$
Note that if $(x_0)_3=0$ we get precisely what we need, i.e. estimate \eqref{II3.31} (but with $\sup_{x_0 \in \partial \Omega}$ instead of $\sup_{x_0 \in \Ov{\Omega}}$. 
 
However, if $x_0$ is close to the boundary but not on the boundary, i.e. $(x_0)_3>0$, but small, we loose  control of some terms for $0<x_3<(x_0)_3$. In this case, as for the Dirichlet boundary conditions, we must modify the test functions; recall that we require that only the normal component (i.e. in our case the third component) of the test function vanishes on $\partial \Omega$. We first consider
$$
\vc{v}^1(x) = \left\{
\begin{array}{ll}
\displaystyle \frac{1}{|x-x_0|^\alpha}\big((x-x_0)_1,(x-x_0)_2,(x-x_0)_3\big) , & x_3 \geq (x_0)_3/2, \\[8pt]
\displaystyle \frac{1}{|x-x_0|^\alpha}\Big((x-x_0)_1,(x-x_0)_2,4(x-x_0)_3 \frac{x_3^2}{|(x-x_0)_3|^2}\Big),  & 0<x_3 <  (x_0)_3/2.
\end{array}
\right.
$$
Nonetheless, using $\vc{v}^1$ as test function we would still miss control of some terms from the convective term, more precisely of those, which contain at least one velocity component $u_3$, however, only close to the boundary, i.e. for $x_3 < (x_0)_3/2$. Hence we further consider
$$
\vc{v}^2(x) = \left\{
\begin{array}{ll}
\displaystyle \frac{(0,0,x_3)}{(x_3+ |x-x_0| |\ln |x-x_0||^{-1})^\alpha} , & |x-x_0| \leq 1/K, \\[8pt]
\displaystyle \frac{(0,0,x_3)}{(x_3+ 1/K |\ln K|^{-1})^\alpha} , & |x-x_0| > 1/K
\end{array}
\right.
$$
for $K$ sufficiently large (but fixed, independently of the distance of $x_0$ from $\partial \Omega$). Note that both functions belong to $W^{1,q}_{\vc{n}}(\Omega;\R^3)$ and their norms are bounded uniformly (with respect to the distance of $x_0$ from $\partial \Omega$) provided $1\leq q<\frac 3\alpha$. Thus we finally use as the test function in the approximate momentum balance
\begin{equation} \label{119a}
\vcg{\varphi} = \vc{v}^1(x) + K_1 \vc{v}^2(x)
\end{equation}
with $K_1$ suitably chosen (large). Note that the choice of $K$ and $K_1$ is done in such a way that the unpleasant terms from both functions are controlled by those from the other one which provide us a positive information. This is possible due to the fact that the unpleasant terms from $\vc{v}^2$ are multiplied by $|\ln|x-x_0||^{-1} \leq |\ln K|^{-1}\ll 1$. 

Similarly as in the case of Dirichlet boundary conditions we can therefore verify that

\begin{multline}\label{II3.31}
\sup_{x_0 \in \Ov{\Omega}} \intO{ \frac{p(\vrd,\vtd) +\delta(\vrd^\beta + \vrd^2)+ (1-\alpha)\vrd |\vud|^2}{|x-x_0|^\alpha}} \\ 
\leq C (1+ \delta \|\vrd\|_\beta^\beta + \|p(\vrd,\vtd)\|_1 + (1+ \|\vtd\|_{3m})\|\vud\|_{1,2} + \|\vrd|\vud|^2 \|_1),
\end{multline}
provided $0<\alpha < \max\{1, \frac{3m-2}{2m}\}$, $m>\frac 23$,
and, moreover, the test function \eqref{119a} belongs to $W^{1,p}(\Omega;\R^3)$ for $1\leq p < \frac 3\alpha$ with the norm bounded independently of the distance of $x_0$ from $\partial \Omega$. 

Applying \eqref{II3.31} we have to distinguish two cases. First, for $m\geq 2$ we have $\frac{3m-2}{2m} \geq 1$, hence the only restriction on $\alpha$ is actually $\alpha <1$. In the other case, if $m \in (\frac 23, 2)$, we have the restriction $\alpha < \frac{3m-2}{2m}$.
Therefore, if $m \geq 2$, we get (passing with $\alpha \to 1^-$, by Fatou's lemma)
$$
\intO{\frac{p(\vrd,\vtd)}{|x-y|}} \leq C \big(1+ \delta \|\vrd\|_\beta^\beta + \|p(\vrd,\vtd)\|_1 + (1+ \|\vtd\|_{3m})\|\vud\|_{1,2} + \|\vrd|\vud|^2 \|_1\big).
$$
Next, for $0<b<1$
$$
\frac{\vrd^b |\vud|^{2b}}{|x-y|} \leq \Big(\frac{\vrd |\vud|^2}{|x-y|^\alpha}\Big)^b \frac{1}{|x-y|^{1-b\alpha}},
$$
thus
\begin{equation} \label{II3.28}
\intO{\frac{\vrd^b |\vud|^{2b}}{|x-y|} } \leq \Big(\intO{ \frac{\vrd |\vud|^2}{|x-y|^\alpha}}\Big)^b \Big(\intO{\frac{1}{|x-y|^{\frac{1-b\alpha}{1-b}}}}\Big)^{1-b}.
\end{equation}
Hence we get (note that we may take $a=\gamma$ in \eqref{II3.11}, see below)
\begin{lemma} \label{l II3.8}
Let $b \in ((s-1)\frac{\gamma}{\gamma-1},1)$, $1<s<\frac{2}{2-\gamma}$, $m\geq 2$, $s \leq \frac{6m}{3m+2}$. Then there exists $C$ independent of $\delta$ such that for any $y \in \overline{\Omega}$
\begin{equation} \label{II3.28a}
\begin{array}{c}
\displaystyle 
\intO{\frac{p(\vrd,\vtd) + (\vrd |\vud|^2)^b}{|x-y|}} \\
\displaystyle \leq C \big(1+ \delta \|\vrd\|_\beta^\beta + \|p(\vrd,\vtd)\|_1 + (1+ \|\vtd\|_{3m})\|\vud\|_{1,2} + \|\vrd|\vud|^2 \|_1\big).
\end{array}
\end{equation}
\end{lemma}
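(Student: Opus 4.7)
\emph{Proof plan.} I would split the integrand into the pressure piece and the kinetic piece and combine the weighted bound \eqref{II3.31} with the H\"older interpolation \eqref{II3.28}, using each once but with different choices of $\alpha$. Write $\Phi_\delta$ for the quantity $1+ \delta \|\vrd\|_\beta^\beta + \|p(\vrd,\vtd)\|_1 + (1+ \|\vtd\|_{3m})\|\vud\|_{1,2} + \|\vrd|\vud|^2 \|_1$ appearing on the right of \eqref{II3.28a}.

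For the pressure term I would exploit that under $m\ge 2$ the threshold in \eqref{II3.31} satisfies $\max\{1,\tfrac{3m-2}{2m}\}\ge 1$, so \eqref{II3.31} is available for every $\alpha<1$ with a constant that does not blow up as $\alpha\nearrow 1$. Dropping the non-negative kinetic contribution on the left-hand side of \eqref{II3.31} gives $\intO{p(\vrd,\vtd)/|x-y|^\alpha}\le C\Phi_\delta$ for every $\alpha\in(0,1)$, and passing $\alpha\to 1^-$ by Fatou's lemma produces $\intO{p(\vrd,\vtd)/|x-y|}\le C\Phi_\delta$.

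For the kinetic part I would keep $\alpha<1$ fixed and strictly larger than $3-\tfrac{2}{b}$; such an $\alpha$ exists since $b<1$. This choice forces $(1-b\alpha)/(1-b)<3$, so the singular integral $\intO{|x-y|^{-(1-b\alpha)/(1-b)}}$ is dominated by a constant $C(\alpha,b,\Omega)$. Feeding this into \eqref{II3.28} yields
$$\intO{\frac{(\vrd|\vud|^2)^b}{|x-y|}}\le C(\alpha,b,\Omega)\Big(\intO{\frac{\vrd|\vud|^2}{|x-y|^\alpha}}\Big)^{b}.$$
The inner integral is controlled directly by \eqref{II3.31}: dropping the pressure contribution on the left gives $(1-\alpha)\intO{\vrd|\vud|^2/|x-y|^\alpha}\le C\Phi_\delta$, hence $\intO{\vrd|\vud|^2/|x-y|^\alpha}\le C(1-\alpha)^{-1}\Phi_\delta$. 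Since $\alpha$ is fixed below $1$, the factor $(1-\alpha)^{-b}$ is absorbed in the constant, and the elementary inequality $t^b\le 1+t$ for $t\ge 0$, $b\in(0,1)$ turns $\Phi_\delta^{b}$ into $1+\Phi_\delta\le 2\Phi_\delta$, which closes the kinetic half of \eqref{II3.28a}.

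The main obstacle is the tension between the exponent $\alpha$ and the coefficient $(1-\alpha)$ multiplying $\vrd|\vud|^2$ on the left-hand side of \eqref{II3.31}. One cannot simultaneously send $\alpha\to 1^-$ (needed to clear the $\alpha$ in the denominator for the pressure) and keep $(1-\alpha)^{-1}$ under control (needed for the kinetic estimate to be uniform in $\delta$). The way around this is exactly what has been done above: treat the two pieces at distinct values of $\alpha$. The remaining hypotheses $b>(s-1)\gamma/(\gamma-1)$, $s<2/(2-\gamma)$ and $s\le 6m/(3m+2)$ do not enter the derivation of \eqref{II3.28a} itself; they are present because the lemma is used in tandem with Lemma \ref{l II3.4} to close the iteration bounding $\BBB$ further in the argument.
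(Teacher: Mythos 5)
Your proof is correct and matches the paper's (rather terse) argument: both split off the pressure term and push $\alpha\to 1^-$ by Fatou using that $m\ge 2$ forces $\tfrac{3m-2}{2m}\ge 1$ so that \eqref{II3.31} is available arbitrarily close to $\alpha=1$, and both handle $(\vrd|\vud|^2)^b$ with the H\"older step \eqref{II3.28} at a separate, fixed $\alpha\in\bigl(\max\{0,\tfrac{3b-2}{b}\},1\bigr)$ so that the factor $(1-\alpha)^{-1}$ coming from the coefficient in \eqref{II3.31} stays a finite constant. Your observation that the constraints $b>(s-1)\gamma/(\gamma-1)$, $s<2/(2-\gamma)$, $s\le 6m/(3m+2)$ are not consumed here but only later in conjunction with Lemma~\ref{l II3.4} is also accurate.
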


If $m<2$, we proceed rather as for Dirichlet boundary conditions. For $1\leq a<\gamma$ we compute
\begin{equation} \label{II3.29}
\begin{array}{c}
\displaystyle \intO{\frac{\vrd^a}{|x-y|}} = \intO{\Big(\frac{\vrd^\gamma}{|x-y|^\alpha}\Big)^{a/\gamma} \Big(\frac{1}{|x-y|^{1-\frac{a\alpha}{\gamma}}}\Big)}  \\
\displaystyle \leq \Big(\intO{\frac{\vrd^\gamma}{|x-y|^\alpha}}\Big)^{a/\gamma} \Big(\intO{\frac{1}{|x-y|^{\frac{\gamma-a\alpha}{\gamma-a}}}}\Big)^{\frac{\gamma-a}{\gamma}}.
\end{array}
\end{equation}
Hence for $\frac{\gamma-\alpha a}{\gamma-a} <3$, i.e. $\alpha > \frac{3a-2\gamma}{a}$, we have
\begin{equation} \label{II3.30}
\intO{\frac{\vrd^a}{|x-y|}} \leq C \big(1+ \delta \|\vrd\|_\beta^\beta + \|p(\vrd,\vtd)\|_1 + (1+ \|\vtd\|_{3m})\|\vud\|_{1,2} + \|\vrd|\vud|^2 \|_1\big)^{\frac a\gamma}.
\end{equation}
Further, proceeding as in \eqref{II3.28}
\begin{equation} \label{II3.31a}
\intO{\frac{\vrd^b |\vud|^{2b}}{|x-y|} } \leq C \big(1+ \delta \|\vrd\|_\beta^\beta + \|p(\vrd,\vtd)\|_1 + (1+ \|\vtd\|_{3m})\|\vud\|_{1,2} + \|\vrd|\vud|^2 \|_1\big)^b,
\end{equation}
however, now for $\frac{1-b\alpha}{1-b}<3$, i.e. (if $b>\frac 23$)
\begin{equation} \label{II3.32}
\alpha > \frac{3b-2}{b}. 
\end{equation}
Altogether we have
\begin{lemma} \label{l II3.9}
Let $b \in ((s-1)\frac{\gamma}{\gamma-1},1)$, $1<s<\frac{2}{2-\gamma}$, $\alpha > \max\{\frac{3a-2\gamma}{a}, \frac{3b-2}{b}\}$, $m\in (\frac 23, 2)$. Then there exists $C$ independent of $\delta$ such that 
\begin{equation} \label{II3.33}
\begin{array}{c}
\displaystyle \sup_{x_0 \in \Ov{\Omega}}\intO{\frac{\vrd^a + (\vrd |\vud|^2)^b}{|x-x_0|}} \\
\displaystyle \leq C \big(1+ \delta \|\vrd\|_\beta^\beta + \|p(\vrd,\vtd)\|_1 + (1+ \|\vtd\|_{3m})\|\vud\|_{1,2} + \|\vrd|\vud|^2 \|_1\big)^{\frac a \gamma} \\
\displaystyle+ C \big(1+ \delta \|\vrd\|_\beta^\beta + \|p(\vrd,\vtd)\|_1 + (1+ \|\vtd\|_{3m})\|\vud\|_{1,2} + \|\vrd|\vud|^2 \|_1\big)^{b}.
\end{array}
\end{equation}
\end{lemma}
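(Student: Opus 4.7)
The plan is to combine the key pressure-type estimate \eqref{II3.31} with two Hölder-interpolation arguments, one for each of the two terms $\vrd^a$ and $(\vrd |\vud|^2)^b$ appearing in the supremum. Note that the weight in \eqref{II3.31} is $|x-x_0|^{-\alpha}$ with $\alpha$ allowed to be any number smaller than $\max\{1,\frac{3m-2}{2m}\}$; in the regime $m\in(\tfrac 23,2)$ only values $\alpha<\frac{3m-2}{2m}<1$ are available. The goal is to convert weights with exponent $\alpha$ into weights with exponent $1$, at the price of an integrable remainder.

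First I would fix $x_0\in\Ov{\Omega}$ and decompose
\[
\intO{\frac{\vrd^a}{|x-x_0|}} = \intO{\Big(\frac{\vrd^\gamma}{|x-x_0|^\alpha}\Big)^{\!a/\gamma}\cdot\frac{1}{|x-x_0|^{1-a\alpha/\gamma}}},
\]
and apply Hölder with exponents $\gamma/a$ and $\gamma/(\gamma-a)$, exactly as in \eqref{II3.29}. The remainder integral $\intO |x-x_0|^{-(\gamma-a\alpha)/(\gamma-a)}\,\dx$ is uniformly bounded in $x_0$ iff $(\gamma-a\alpha)/(\gamma-a)<3$, i.e. iff $\alpha>\frac{3a-2\gamma}{a}$, which is part of the hypothesis. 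Taking the supremum in $x_0$ and plugging in \eqref{II3.31} (which controls $\sup_{x_0}\intO{\vrd^\gamma/|x-x_0|^\alpha}$ through $p(\vrd,\vtd)\ge c\vrd^\gamma$) produces the first term on the right-hand side of \eqref{II3.33} with exponent $a/\gamma$.

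Next I would treat the convective piece by the analogous splitting
\[
\intO{\frac{(\vrd|\vud|^2)^b}{|x-x_0|}} = \intO{\Big(\frac{\vrd|\vud|^2}{|x-x_0|^\alpha}\Big)^{\!b}\cdot\frac{1}{|x-x_0|^{1-b\alpha}}}
\]
and apply Hölder with exponents $1/b$ and $1/(1-b)$, leading to the remainder $\intO |x-x_0|^{-(1-b\alpha)/(1-b)}\,\dx$, which is uniformly bounded in $x_0$ iff $(1-b\alpha)/(1-b)<3$, i.e. iff $\alpha>\frac{3b-2}{b}$, as in \eqref{II3.32}. Again using \eqref{II3.31} (which carries the term $(1-\alpha)\vrd|\vud|^2/|x-x_0|^\alpha$ on its left-hand side, and $1-\alpha>0$ since $\alpha<1$), one obtains the second term on the right-hand side of \eqref{II3.33} with exponent $b$. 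Summing the two estimates and taking $\sup_{x_0\in\Ov\Omega}$ yields \eqref{II3.33}.

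The only genuine subtlety lies in the compatibility of the two lower bounds on $\alpha$ with the upper bound $\alpha<\frac{3m-2}{2m}$ coming from \eqref{II3.31}: one must choose some admissible $\alpha$ simultaneously exceeding $\frac{3a-2\gamma}{a}$ and $\frac{3b-2}{b}$ while staying strictly below $\frac{3m-2}{2m}$. This is the point at which the bounds on $m$, $a$, $b$ in the hypotheses of Lemmas \ref{l II3.4} and \ref{l II3.9} (together with $s<\frac{2}{2-\gamma}$ and $b\in((s-1)\tfrac{\gamma}{\gamma-1},1)$) enter; the rest is a routine Hölder computation. Aside from this parameter juggling — the main obstacle — the argument is the straightforward assembly of \eqref{II3.29}--\eqref{II3.32} already carried out in the text.
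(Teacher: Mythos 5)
Your proof is correct and follows the paper's argument essentially verbatim: you reproduce the Hölder splittings \eqref{II3.29} and \eqref{II3.28}, derive the same integrability thresholds $\alpha>\frac{3a-2\gamma}{a}$ and $\alpha>\frac{3b-2}{b}$, and close via the local pressure estimate \eqref{II3.31} together with $p(\vrd,\vtd)\gtrsim\vrd^\gamma$ and the presence of the $(1-\alpha)\vrd|\vud|^2$ term on its left-hand side. The remark about the compatibility of the lower bounds on $\alpha$ with the upper bound $\alpha<\frac{3m-2}{2m}$ is a fair observation, though the paper leaves that constraint implicit in the hypotheses inherited from \eqref{II3.31}.
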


We now consider the problem
\begin{equation} \label{II3.34}
\begin{array}{c}
\displaystyle
-\Delta h = \vrd^a + \vrd^b |\vud|^{2b} - \frac{1}{|\Omega|} \intO{(\vrd^a + \vrd^b |\vud|^{2b})}, \\
\displaystyle
\pder{h}{\vc{n}}|_{\partial \Omega} = 0.
\end{array}
\end{equation}
The unique strong solution admits the following representation
\begin{equation} \label{II3.35}
h(x) = \int_\Omega G(x,y) (\vrd^a + \vrd^b|\vud|^{2b}) \, \mbox{d} y - \frac{1}{|\Omega} \int_\Omega G(x,y) \, \mbox{d} y \intO{(\vrd^a + \vrd^b |\vud|^{2b})};
\end{equation}
since $G(x,y)\leq C |x-y|^{-1}$, we get due to Lemmas \ref{l II3.8}, \ref{l II3.9} together with Lemmas \ref{l II3.2} and \ref{l II3.3}
\begin{itemize} \item $m\geq 2$
\begin{equation} \label{II3.35a}
\|h\|_\infty \leq C(1+ \BBB^{\frac{\gamma -b/s}{b\gamma + \gamma -2b}}),
\end{equation}
provided
\begin{equation} \label{II3.35b} 
1<s< \frac{1}{2-\gamma}, \quad 0<(s-1)\frac{\gamma}{\gamma-1} <b<1, \quad s\leq \frac{6m}{3m+2}
\end{equation}
\item $\frac 23 <m<2$
\begin{equation} \label{II3.35c}
\|h\|_\infty \leq C(1+ \BBB^{\frac{a -b/s}{ab + a -2b} \frac a \gamma} + \BBB^{\frac{a -b/s}{ab + a -2b} b}),
\end{equation}
provided
\begin{equation} \label{II3.35d} 
\begin{array}{c}
\displaystyle 1<s< \frac{1}{2-a}, \quad 0<(s-1)\frac{a}{a-1} <b<1, \quad s\leq \frac{6m}{3m+2}, \\[8pt]
\displaystyle \alpha > \frac{3a-2\gamma}{a}, \quad \alpha > \frac{3b-2}{b}, \quad \alpha < \frac{3m-2}{2m}. 
\end{array}
\end{equation}
\end{itemize}
Now, from \eqref{II3.11} and \eqref{II3.34}, we have
\begin{equation} \label{II3.36}
\begin{array}{c}
\displaystyle \BBB = \intO{-\Delta h \vud^2} + \frac{1}{|\Omega|} \intO{\vud^2}\intO{(\vrd^a + \vrd^b |\vud|^{2b})} \\[8pt]
\displaystyle = \intO{\nabla h \cdot \nabla |\vud|^2 } \frac{1}{|\Omega|} \intO{\vud^2}\intO{(\vrd^a + \vrd^b |\vud|^{2b})} \\[8pt]
\leq 2\|\nabla \vud\|_2 D^{\frac 12} + C(\varepsilon) \|\vud\|_{1,2}^2 (1+ \BBB^{\Gamma+\varepsilon})
\end{array}
\end{equation}
for any $\varepsilon >0$, and 
$$ \Gamma =\left\{
\begin{array}{ll}
\displaystyle \frac{\gamma-b}{b\gamma +\gamma -2b} \quad &\mbox{ if } m \geq 2\\
\displaystyle \max\Big\{\frac{a-b}{ab+a-2b} \frac{a}{\gamma}, \frac{a-b}{ab+a-2b} b\Big\} \quad &\mbox{ if } \frac 23 <m<2.
\end{array}\right.
$$
Above, we denoted
\begin{equation} \label{II3.37}
D = \intO{|\nabla h \otimes \vud|^2}.
\end{equation}
Employing once more integration by parts
\begin{multline*}
\displaystyle D = -\intO{h \Delta h |\vud|^2} - \intO{h \nabla h \cdot \nabla \vud \cdot \vud} \\
\leq \|h\|_\infty (\AAA + C(\varepsilon)\|\vud\|_{1,2}^2 \BBB^{\Gamma +\varepsilon} +\|\nabla \vud\|_2 D^{\frac 12}),
\end{multline*}
i.e.
\begin{equation} \label{II3.38}
D \leq \|h\|_\infty \BBB + C(\varepsilon)\|\vud\|_{1,2}^2 \BBB^{\Gamma +\varepsilon}+ \frac 12 \|\nabla \vud||_2^2 \|h\|_\infty^2.
\end{equation}
Returning back to \eqref{II3.36}, \eqref{II3.38} and \eqref{II3.37} yield
\begin{equation} \label{II3.39}
\BBB \leq C \|\nabla \vud\|_2^2 \|h\|_\infty + C(\varepsilon)\|\vud\|_{1,2}^2 \BBB^{\Gamma +\varepsilon}.
\end{equation}

Hence, due to \eqref{II3.35a}, 
\begin{equation} \label{II3.40}
\begin{array}{c}
\BBB \leq C \big(1+ \BBB^{\frac{\gamma-b/s}{b\gamma + \gamma-2b}}\big) \quad \mbox{ if } m\geq 2, \\
\BBB \leq C\big(1+ \BBB^{\frac {a-b/s}{ab+a-2b}\frac{a}{\gamma}} + \BBB^{\frac {a-b/s}{ab+a-2b} b}\big) \quad \mbox {if } \frac 23 <m<2.
\end{array}
\end{equation}

Checking carefully all restrictions above, we proved that for $\gamma >1$ and $m>\frac{2}{4\gamma-3}$, $m>\frac 23$, 
there exists $s>1$ such that
\begin{equation} \label{II3.41}
\begin{array}{lcr}
\sup_{\delta>0} \|\vrd\|_{\gamma s} &<& \infty, \\
\sup_{\delta>0} \|\vrd\vud \|_{s} &<& \infty, \\
\sup_{\delta>0} \|\vrd|\vud|^2\|_{s} &<& \infty, \\
\sup_{\delta>0} \|\vud\|_{1,2} &<& \infty, \\
\sup_{\delta>0} \|\vtd\|_{3m} &<& \infty, \\
\sup_{\delta>0} \|\vtd^{m/2}\|_{1,2} &<& \infty, \\
\sup_{\delta>0} \delta \|\vrd^{\beta + (s-1)\gamma}\|_{1} &<& \infty.
\end{array}
\end{equation}
Moreover, we can take $s>\frac 65$ provided 
$\gamma >\frac 54$, $m>\max \{1,\frac{2\gamma +10}{17\gamma -15}\}$, for further details see \cite{JeNoPo_M3AS}.

\section{Limit passage $\delta \to 0^+$ towards the original system}

We present the last step of the proof of our Theorem \ref{t 1.2}. The proof of Theorem \ref{t 1.1} can be done similarly, we shall only comment on the conditions for $m$ and $\gamma$ at the end.
 
\subsection{Limit passage due to uniform bounds}

Estimates \eqref{II3.41} yield existence of a subsequence of $(\vrd, \vud, \vtd)$ (denoted again in the same way) such that
\begin{equation} \label{II4.1}
\begin{array}{c}
\vud \rightharpoonup \vu \quad \mbox{ in } W^{1,2}(\Omega;\R^3), \\
\vud \to \vu \quad \mbox{ in } L^q(\Omega;\R^3), \ q<6 \\
\vud \to \vu \quad \mbox{ in } L^r(\partial \Omega;\R^3), \ r<4 \\
\vrd \rightharpoonup \vr \quad \mbox{ in } L^{s\gamma}(\Omega), \\
\delta \vrd^\beta \to 0 \quad \mbox{ in } L^q(\Omega), \ q< 1+ (s-1)\frac{\gamma}{\beta} \\
\vtd \rightharpoonup \vt \quad \mbox{ in } W^{1,p}(\Omega), \ p = \min \{2,\frac{3m}{m+1}\}, \\
\vtd \to \vt \quad \mbox{ in } L^q(\Omega),\  q<3m, \\
\vtd \to \vt \quad \mbox{ in } L^r(\partial \Omega), \ r<2m, \\
p(\vrd,\vtd) \to \overline{p(\vr,\vt)} \quad \mbox{ in } L^r(\Omega), \mbox{ for some } r>1, \\
e(\vrd,\vtd) \to \overline{e(\vr,\vt)} \quad \mbox{ in } L^r(\Omega), \mbox{ for some } r>1, \\
s(\vrd,\vtd) \to \overline{s(\vr,\vt)} \quad \mbox{ in } L^r(\Omega), \mbox{ for some } r>1. \\
\end{array}
\end{equation}

Recalling also Lemma \ref{l 4.6}, we can pass to the limit in the weak formulation of the continuity equation, momentum equation, entropy inequality and global total energy balance to get

\begin{equation} \label{II4.2}
\intO {\vr \vu \cdot \nabla \psi} = 0
\end{equation}
for all $\psi \in C^1(\overline{\Omega})$,
\begin{multline} \label{II4.3}
\intO {\Big(-\vr (\vu\otimes \vu):\nabla \vcg{\varphi} + \tn{S}(\vt,\nabla \vu):\nabla \vcg{\varphi} - \overline{p(\vr,\vt)} \Div \vcg{\varphi}\Big)} \\
+ \lambda \intdO{ \vu \cdot \vcg{\varphi}}  = \intO {\vr \vc{f} \cdot \vcg{\varphi}}
\end{multline}
for all $\vcg{\varphi} \in C^1_{\vc{n}}(\overline{\Omega}; \R^3)$,
\begin{equation} \label{II4.4}
\begin{array}{c}
\displaystyle \intO{\Big(\vt^{-1} \tn{S} (\vt,\nabla \vu):\nabla \vu  + \kappa(\vt) \frac{|\nabla \vt|^2}{\vt^2}  \Big)\psi} \\[10pt]
\displaystyle \leq \intO {\Big(\kappa(\vt) \frac{\nabla \vt \cdot \nabla \psi}{\vt} - \overline{\vr s(\vr,\vt)} \vu \cdot \nabla \psi \Big)} + \intdO{ \frac{L}{\vt} (\vt-\Theta_0) \psi},
\end{array}
\end{equation}
for all non-negative $\psi \in C^1(\overline{\Omega})$, 
\begin{equation} \label{II4.5}
\intdO{ (L(\vt-\Theta_0)+\lambda |\vu|^2 )}  = \intO{\vr \vc{f} \cdot \vu}.
\end{equation}

However, to pass to the limit in the total energy balance, we need that $\vrd |\vud|^2 \rightharpoonup \vr |\vu|^2$ in some $L^q(\Omega)$, $q>\frac 65$, $\vtd \to \vt$ in some $L^r(\Omega)$, $r>3$. This is true for $s>\frac 65$ and $m>1$; note that in this case we  also have $\delta \|\vrd\|_{\frac 65\beta}^\beta \to 0$.

Hence, assuming $\gamma > \frac 54$, $m> \max\{1,\frac{2\gamma + 10}{17\gamma -15}\}$  we also get the total energy balance
\begin{equation} \label{II4.6}
\begin{array}{c}
\displaystyle \intO {\Big(\big(-\frac 12 \vr |\vu|^2 - \overline{\vr e(\vr,\vt)}\big)\vu \cdot \nabla \psi + \kappa(\vt)   \nabla \vt\cdot \nabla \psi\Big)} \\
\displaystyle + \intdO{ \big(L (\vt-\Theta_0)  + \lambda |\vu|^2\big)\psi} =
 \intO{\vr \vc{f} \cdot \vu \psi}
 \\[8pt]
\displaystyle + \intO {\big(-\tn{S}(\vt,\nabla \vu)\vu +\overline{p(\vr,\vt)\vu} \big) \cdot \nabla \psi}
\end{array}
\end{equation}
for all $\psi \in C^1(\overline{\Omega})$.

To finish the proof, we need to verify that $\vrd \to \vr$ in some $L^r(\Omega)$, $r\geq 1$. 

\subsection{Effective viscous flux}

We use in the approximative momentum equation as a test function
\begin{equation} \label{II4.7}
\vcg{\varphi} = \zeta \nabla \Delta^{-1} (1_\Omega T_k(\vrd)), \quad k\in \tn{N}, 
\end{equation} 
with $\zeta \in C^\infty_c(\Omega)$,
$$
T_k(z) = k T\Big(\frac{z}{k}\Big), \qquad
T(z) = \left\{ \begin{array}{l}
z \mbox{ for } 0\leq z\leq 1, \\
\mbox{concave on } (0,\infty), \\
2 \mbox{ for } z\geq 3.
\end{array}
\right.
$$

In its limit version \eqref{II4.3} we use
\begin{equation} \label{II4.8}
\vcg{\varphi} = \zeta \nabla \Delta^{-1} (1_\Omega \overline{T_k(\vr)}), \quad k\in \tn{N}, 
\end{equation}
where $\overline{T_k(\vr)}$ is the weak limit of $T_k(\vrd)$ as $\delta \to 0^+$ (the corresponding chosen subsequence). After technical, but standard computation  (cf. \cite{NoPo_SIMA} or \cite{FeNo_Book} for the evolutionary case,
 see also Chapter 10.2.1) we get
\begin{equation} \label{II4.9}
\begin{array}{c}
\displaystyle
\lim_{\delta \to 0^+} \intO {\zeta \Big(p(\vrd,\vtd) T_k(\vrd) - \tn{S}(\vtd,\nabla \vud):{\cal R}[1_\Omega T_k(\vrd)]\Big) } \\
\displaystyle = \intO {\zeta \Big(\overline{p(\vr,\vt)} \ \overline{T_k(\vr)} -\tn{S}(\vt,\nabla\vu):{\cal R}[1_\Omega \overline{T_k(\vr)}]\Big)} \\
\displaystyle + \intO {\zeta \vu\cdot \big(\vr \vu \cdot{\cal R}(\overline{T_k(\vr)})\big)} -\lim_{\delta \to 0^+}\intO{\zeta \vud\cdot \big(\vrd \vud \cdot{\cal R}(T_k(\vrd))\big)},
\end{array}
\end{equation}
see \eqref{2.4a} and \eqref{2.4b} for the definition of ${\cal R}$. We now apply Lemma \ref{l 4.3} with
$$
\begin{array}{c}
\displaystyle
v_\delta = T_k(\vrd) \rightharpoonup \overline{T_k(\vr)} \qquad \mbox{ in } L^{q}(\R^3), \, q<\infty \mbox{ arbitrary } \\
\displaystyle \vc{U}_\delta = \vrd \vud \rightharpoonup \vr
\vu \qquad \mbox{ in } L^{p}(\R^3;\R^3), \, \mbox{ for certain } p>1,
\end{array}
$$
hence
$$
\vrd \vud \cdot{\cal R}(T_k(\vrd)) \rightharpoonup \vr \vu \cdot{\cal R}(T_k(\vr))
$$
in $L^p(\Omega;\R^3)$ for any $p\in [1,s)$. On the other hand,
$$
\vrd (\vud \otimes \vud) :{\cal R}(T_k(\vrd)) = \vud\cdot\big({\cal R}(T_k(\vrd))\vrd \vud\big)
$$
is uniformly bounded in $L^p(\Omega)$ with $p \in [1,s]$. Therefore, by virtue of Lemma \ref{l 4.6}, together with the facts that
$$
\begin{array}{c}
\displaystyle
\vrd \vud \rightharpoonup \vr \vu \quad \mbox{ in } L^s(\Omega;\R^3), \\
\displaystyle 
\vrd \vud \otimes \vud \rightharpoonup \vr \vu \otimes \vu\quad \mbox{ in } L^s(\Omega;\R^{3\times 3})
\end{array}
$$
we get
\begin{equation} \label{II4.10}
\begin{array}{c}
\displaystyle
\lim_{\delta \to 0^+} \intO {\zeta \Big(p(\vrd,\vtd) T_k(\vrd) - \tn{S}(\vtd,\nabla\vud):{\cal R}[1_\Omega T_k(\vrd)]\Big) } \\[8pt]
\displaystyle = \intO {\zeta \Big(\overline{p(\vr,\vt)} \ \overline{T_k(\vr)} -\tn{S}(\vt,\nabla\vu):{\cal R}[1_\Omega \overline{T_k(\vr)}]\Big)}.
\end{array}
\end{equation}
Next, writing
\begin{equation} \label{II4.11}
\begin{array}{c}
\displaystyle \lim_{\delta \to 0^+} \intO {\zeta \tn{S}(\vtd,\nabla\vud):{\cal R}[1_\Omega T_k(\vrd)] }  = \lim_{\delta \to 0^+} \intO {\zeta \Big(\frac 43 \mu(\vtd)+ \xi(\vtd)\Big) \Div \vud T_k(\vrd)} \\
\displaystyle + \lim_{\delta \to 0^+} \intO {T_k(\vtd) \Big({\cal R}\Big[\zeta \mu(\vtd) \big(\nabla \vud + (\nabla \vud)^T\big)\Big] 
\displaystyle -\zeta \mu(\vtd){\cal R}:\big[\nabla \vud+ (\nabla \vud)^T\big]\Big)},
\end{array}
\end{equation} 
and using in the second term Lemma \ref{l 4.4} we end up with the {effective viscous flux} identity
\begin{equation} \label{II4.12}
\begin{array}{c}
\displaystyle
\overline{p(\vr,\vt) T_k(\vr)} - \Big(\frac 43 \mu(\vt) + \xi(\vt)\Big) \overline{T_k(\vr) \Div \vu} \\
\displaystyle =  \overline{p(\vr,\vt)} \,\, \overline{T_k(\vr)} - \Big(\frac 43 \mu(\vt) + \xi(\vt)\Big) \overline{T_k(\vr)} \Div \vu.
\end{array}
\end{equation}

\subsection{Oscillation defect measure and renormalized continuity  equation}

Our aim is to show that the renormalized continuity equation is fulfilled. However, it can be shown directly only for $\vrd$ bounded in $L^2(\Omega)$ which for $\gamma$ close to $1$ is generally not true. Following the idea originally due to E. Feireisl, we introduce the {\it oscillation defect measure}
\begin{equation} \label{II4.13}
\mbox{{\bf osc}}_{q} [\vrd\to\vr](Q) = \sup_{k>1} \Big(\limsup_{\delta \to 0^+} \int_Q |T_k(\vrd)-T_k(\vr)|^q \dx\Big).
\end{equation}
We have (see \cite[Lemma 3.8]{FeNo_Book})
\begin{lemma} \label{l II4.1}
Let $\Omega \subset \R^3$ be open and let
$$
\begin{array}{cl}
\vrd \rightharpoonup \vr &\mbox{ in } L^1(\Omega), \\
\vud \rightharpoonup \vu &\mbox{ in } L^r(\Omega;\R^3), \\
\nabla \vud \rightharpoonup \nabla \vu &\mbox{ in } L^r(\Omega;\R^{3\times 3}), \quad r>1.
\end{array}
$$
Let
\begin{equation} \label{II6.8}
\mbox{{\bf osc}}_{q} [\vrd\to\vr](\Omega) < \infty
\end{equation}
for $\frac 1q < 1-\frac 1r$, where $(\vrd,\vud)$ solve the
renormalized continuity equation. Then the limit
functions also solve (\ref{1.23c}) for all $b \in C^1([0,\infty)) \cap
W^{1,\infty}(0,\infty)$, $zb'\in L^\infty((0,\infty)$.
\end{lemma}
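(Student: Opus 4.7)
The plan is to cut off the density via the family of truncations $T_k(z)=kT(z/k)$, to pass to the limit at fixed $k$ in the renormalized equation for $(\vrd,\vud)$, and then to send $k\to\infty$, using the oscillation-defect-measure bound to close the argument. First, since each $T_k$ is an admissible renormalization function in the sense of Definition \ref{d 1.3} (approximated by $C^1$ functions if necessary), applying (\ref{1.23c}) with $b=T_k$ gives
$$
\Div\bigl(T_k(\vrd)\,\vud\bigr)+\bigl(\vrd T_k'(\vrd)-T_k(\vrd)\bigr)\Div\vud=0 \quad\mbox{in } \mathcal{D}'(\R^3).
$$
Passing $\delta\to 0^+$ along the given subsequence (using that $T_k(\vrd)$ is uniformly bounded in $L^\infty$, hence $T_k(\vrd)\rightharpoonup\overline{T_k(\vr)}$ in every $L^p$, while $\vud\to\vu$ in $L^r$ by compactness and $\nabla\vud\rightharpoonup\nabla\vu$ in $L^r$) yields
$$
\Div\bigl(\overline{T_k(\vr)}\,\vu\bigr)+\overline{\bigl(\vr T_k'(\vr)-T_k(\vr)\bigr)\Div\vu}=0 \quad\mbox{in } \mathcal{D}'(\R^3),
$$
where the bars denote weak limits of the bracketed expressions.

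Next, since $\overline{T_k(\vr)}\in L^\infty$ and $\vu\in W^{1,r}_{\rm loc}$ with $r>1$, I would apply the DiPerna--Lions regularization scheme (convolution with a mollifier plus Friedrichs' commutator lemma) to this linear transport-type equation. This yields, for every $b\in C^1([0,\infty))\cap W^{1,\infty}(0,\infty)$ with $zb'\in L^\infty$,
$$
\Div\bigl(b(\overline{T_k(\vr)})\vu\bigr)+\bigl(b'(\overline{T_k(\vr)})\,\overline{T_k(\vr)}-b(\overline{T_k(\vr)})\bigr)\Div\vu=-\,b'(\overline{T_k(\vr)})\,\overline{\bigl(\vr T_k'(\vr)-T_k(\vr)\bigr)\Div\vu}
$$
in $\mathcal{D}'(\R^3)$.

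Finally, I would send $k\to\infty$. By weak lower semicontinuity of the $L^q$-norm applied to $T_k(\vrd)-T_k(\vr)$, the hypothesis yields $\|\overline{T_k(\vr)}-T_k(\vr)\|_{L^q(\Omega)}^q\le \mbox{{\bf osc}}_q[\vrd\to\vr](\Omega)<\infty$ uniformly in $k$; combined with $T_k(\vr)\to\vr$ a.e.\ and in $L^1_{\rm loc}$, this gives $\overline{T_k(\vr)}\to\vr$ in $L^1_{\rm loc}$, so by continuity and boundedness of $b,b'$ the left-hand side converges (in $\mathcal{D}'$) to $\Div(b(\vr)\vu)+(\vr b'(\vr)-b(\vr))\Div\vu$. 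For the source term, $zT_k'(z)-T_k(z)$ is supported in $\{z\ge k\}$, and under the hypothesis $1/q+1/r<1$, H\"older's inequality pairs the uniform $L^q$-bound on $T_k(\vrd)-T_k(\vr)$ coming from the oscillation defect measure with $\Div\vud\in L^r$, producing a uniform-in-$\delta$ $L^s$-bound with some $s>1$; this upgrades the weak limit to an $L^1_{\rm loc}$-convergent one, and the $L^1_{\rm loc}$ limit vanishes as $k\to\infty$ since the support $\{\vr\ge k\}$ shrinks to a null set.

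The main obstacle is precisely this last step: showing that the source term in the renormalized equation for $\overline{T_k(\vr)}$ vanishes in the limit $k\to\infty$. Without the finiteness of the oscillation defect measure, the commutator $\bigl(\vrd T_k'(\vrd)-T_k(\vrd)\bigr)\Div\vud$ has no uniform-in-$\delta$ integrability exponent above one, and the gap between the weak limit $\overline{T_k(\vr)\Div\vu}$ and the pointwise product $T_k(\vr)\Div\vu$ cannot be closed. The quantitative condition $1/q<1-1/r$ is exactly what makes the H\"older pairing above available, and hence the source term controllable in $L^1_{\rm loc}$.
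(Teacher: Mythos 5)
Your overall strategy is the standard DiPerna--Lions one that underlies \cite[Lemma 3.8]{FeNo_Book}, which the paper cites rather than reproves: (i) apply $T_k$ to the $\delta$-level renormalized equation and pass $\delta\to 0^+$, (ii) mollify and renormalize the resulting bounded-coefficient transport equation for $\overline{T_k(\vr)}$ with source term, (iii) send $k\to\infty$. The architecture is right, but both substeps of (iii) as you state them have genuine gaps. First, the implication ``$\|\overline{T_k(\vr)}-T_k(\vr)\|_{L^q}$ is bounded uniformly in $k$, and $T_k(\vr)\to\vr$ a.e., hence $\overline{T_k(\vr)}\to\vr$ in $L^1_{\rm loc}$'' is simply false: a sequence that is merely \emph{bounded} in $L^q$ need not tend to zero. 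In fact the oscillation defect measure plays no role at this point; what is actually used is equi-integrability of $\{\vrd\}$, which comes from weak $L^1$-compactness via Dunford--Pettis and yields $\int_\Omega(\vrd - T_k(\vrd))\,\dx\to 0$ as $k\to\infty$ \emph{uniformly in} $\delta$, whence $\|\overline{T_k(\vr)}-\vr\|_{L^1}\to 0$.

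Second, your estimate of the source term conflates two different quantities. The source is $b'\big(\overline{T_k(\vr)}\big)\,\overline{(\vr T_k'(\vr)-T_k(\vr))\Div\vu}$, the weak limit of $(\vrd T_k'(\vrd)-T_k(\vrd))\Div\vud$, and it does \emph{not} contain the oscillation $T_k(\vrd)-T_k(\vr)$, so you cannot simply ``pair the uniform $L^q$-bound on $T_k(\vrd)-T_k(\vr)$ with $\Div\vud\in L^r$.'' The missing step is the interpolation
$\|\vrd T_k'(\vrd)-T_k(\vrd)\|_{L^{r'}}\leq \|\vrd T_k'(\vrd)-T_k(\vrd)\|_{L^1}^{\alpha}\,\|\vrd T_k'(\vrd)-T_k(\vrd)\|_{L^q}^{1-\alpha}$,
available exactly because $r'<q$ (that is, $1/q<1-1/r$). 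The $L^1$-factor tends to $0$ uniformly in $\delta$ by equi-integrability; the $L^q$-factor must be shown to remain bounded, and it is only here that \eqref{II6.8} enters — through a nontrivial comparison of $|\vrd T_k'(\vrd)-T_k(\vrd)|$ (which is supported in $\{\vrd>k\}$ and controlled there by $T_k(\vrd)$) with $|T_k(\vrd)-T_k(\vr)|$. You state the conclusion (``the $L^1_{\rm loc}$ limit vanishes since the support $\{\vr\geq k\}$ shrinks to a null set''), but after passing to the weak limit $\delta\to 0^+$ there is no longer any support condition on the source, so this phrase does not constitute a proof; the interpolation above is the actual quantitative mechanism.
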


We have 
\begin{lemma} \label{l II4.2}
Let $(\vrd,\vud,\vtd)$ be as above and let $m>\max\{\frac{2}{3(\gamma-1)},\frac 23\}$. Then there exists $q>2$ such that \eqref{II6.8} holds true.
Moreover,
\begin{equation} \label{II4.16}
\begin{array}{c}
\displaystyle
 \limsup_{\delta \to 0^+} \intO{ \frac{1}{\frac 43 \mu(\vt) + \xi(\vt)}|T_k(\vrd) -T_k(\vr)|^{\gamma+1}} \\
 \displaystyle \leq \frac 1d \intO {\frac{1}{\frac 43 \mu(\vt) + \xi(\vt)} \Big(\overline{p(\vr,\vt)T_k(\vr)}  - \overline{p(\vr,\vt)} \, \, \overline{T_k(\vr)}\Big)}.
 \end{array}
\end{equation}
\end{lemma}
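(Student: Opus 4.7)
The plan is to establish (\ref{II4.16}) first via a pointwise convexity estimate combined with Lemmas \ref{l 4.5} and \ref{l 4.6}, and then to read off the uniform-in-$k$ bound on the oscillation defect directly from the effective viscous flux identity (\ref{II4.12}). The starting point is an elementary pointwise bound: for $a,b \geq 0$ and $\gamma > 1$, using that $T_k$ is $1$-Lipschitz non-decreasing together with the inequality $(1-t)^\gamma \leq 1 - t^\gamma$ for $t \in [0,1]$ (verified by checking that the ratio is monotone on $[0,1]$),
\[
|T_k(a) - T_k(b)|^{\gamma+1} \leq |a-b|^\gamma \,|T_k(a) - T_k(b)| \leq (a^\gamma - b^\gamma)(T_k(a) - T_k(b)),
\]
the last quantity being non-negative.

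Multiplying by the fixed bounded weight $w := (\frac 43 \mu(\vt)+\xi(\vt))^{-1}$, integrating, and taking $\limsup_{\delta \to 0^+}$, the four bilinear terms on the right-hand side expand and pass to the weak limit by Lemma \ref{l 4.6}, applied to $\vrd^\gamma \rightharpoonup \overline{\vr^\gamma}$ and $T_k(\vrd) \rightharpoonup \overline{T_k(\vr)}$ paired with the fixed functions $T_k(\vr)$, $\vr^\gamma$ and $w$. Collecting terms yields $\int [\overline{\vr^\gamma T_k(\vr)} - \overline{\vr^\gamma}\,\overline{T_k(\vr)}]\, w\,\dx + \int (\overline{\vr^\gamma} - \vr^\gamma)(\overline{T_k(\vr)} - T_k(\vr))\, w\,\dx$. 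The second integral is non-positive, since convexity of $z^\gamma$ forces $\overline{\vr^\gamma} \geq \vr^\gamma$ a.e.\ while concavity of $T_k$ forces $\overline{T_k(\vr)} \leq T_k(\vr)$ a.e.; dropping it only weakens the bound, giving
\[
\limsup_{\delta \to 0^+} \int \frac{|T_k(\vrd) - T_k(\vr)|^{\gamma+1}}{\frac 43 \mu(\vt) + \xi(\vt)}\, \dx \leq \int \frac{\overline{\vr^\gamma T_k(\vr)} - \overline{\vr^\gamma}\,\overline{T_k(\vr)}}{\frac 43 \mu(\vt) + \xi(\vt)}\, \dx.
\]
The replacement of $\vr^\gamma$ by the full pressure then uses the decomposition $p = d\vr^\gamma + p_m$ from (\ref{1.25a}), with $\partial_\vr p_m \geq 0$: strong convergence $\vtd \to \vt$ identifies $\overline{p_m(\vr,\vt)}$ with the weak limit of $p_m(\vrd,\vt)$, and Lemma \ref{l 4.5}, applied pointwise in $x$ to the non-decreasing pair $(p_m(\cdot,\vt),T_k)$, yields $\overline{p_m(\vr,\vt) T_k(\vr)} \geq \overline{p_m(\vr,\vt)}\,\overline{T_k(\vr)}$. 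Hence $d\,[\overline{\vr^\gamma T_k(\vr)} - \overline{\vr^\gamma}\,\overline{T_k(\vr)}] \leq \overline{p(\vr,\vt) T_k(\vr)} - \overline{p(\vr,\vt)}\,\overline{T_k(\vr)}$, and substitution produces exactly (\ref{II4.16}).

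To conclude that $\mbox{{\bf osc}}_{q}[\vrd \to \vr](\Omega) < \infty$ for some $q > 2$, I insert the effective viscous flux identity (\ref{II4.12}) into the right-hand side of (\ref{II4.16}): the weight cancels and the expression reduces to $\frac{1}{d}\int [\overline{T_k(\vr)\Div \vu} - \overline{T_k(\vr)}\,\Div \vu]\,\dx = \frac{1}{d}\lim_{\delta \to 0^+} \int (T_k(\vrd) - \overline{T_k(\vr)}) \Div \vud\,\dx$. By H\"older's inequality with exponents $s\gamma$ and $2$ this is bounded by $C\sup_\delta \|\vrd\|_{s\gamma}\|\Div \vud\|_2$ uniformly in $k$ (using $T_k(z) \leq z$), and the hypothesis $m > \max\{\frac 23,\frac{2}{3(\gamma-1)}\}$ produces, via Lemma \ref{l 3.8} and its refinements, an exponent $s > 1$ with $s\gamma \geq 2$ so that the pairing closes. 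Choosing $q = \gamma + 1 > 2$ satisfies $\frac 1 q < \frac 12$ as required by Lemma \ref{l II4.1}.

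The principal difficulty is precisely this last density-integrability step. When $\gamma$ is close to $1$, the bare bound $\vrd \in L^\gamma$ is insufficient for pairing with $\Div \vud \in L^2$, and the Feireisl-type compactness scheme fails unless one improves it to $L^{s\gamma}$ with $s\gamma \geq 2$; this is exactly what forces the lower bound $m > \frac{2}{3(\gamma-1)}$ in the hypothesis and is the origin of the corresponding restriction on $(m,\gamma)$ in Theorem \ref{t 1.1}.
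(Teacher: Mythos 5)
Your derivation of estimate (\ref{II4.16}) is correct and follows the paper's proof essentially step by step: the pointwise inequality $|T_k(a)-T_k(b)|^{\gamma+1}\le(a^\gamma-b^\gamma)(T_k(a)-T_k(b))$, the passage to the weak limits, the drop of the non-positive cross term by convexity of $z\mapsto z^\gamma$ and concavity of $T_k$, and the upgrade from $d\vr^\gamma$ to the full pressure via $\partial_\vr p_m\ge0$ and Lemma \ref{l 4.5} are all present in the original argument.

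The second part — boundedness of the oscillation defect in $L^q$ for some $q>2$ — contains a genuine gap. After substituting the effective viscous flux identity (\ref{II4.12}) into (\ref{II4.16}), you correctly observe that the weight cancels and the right-hand side is bounded uniformly in $k$; so far, so good. But the resulting uniform bound applies to the \emph{weighted} quantity $\int (\tfrac43\mu(\vt)+\xi(\vt))^{-1}|T_k(\vrd)-T_k(\vr)|^{\gamma+1}\,\dx$, and since $\mu(\vt)\sim(1+\vt)$ with $\alpha=1$, the weight decays like $(1+\vt)^{-1}$ on the set where $\vt$ is large. A uniform bound on this weighted integral does \emph{not} imply a uniform bound on the unweighted $\int|T_k(\vrd)-T_k(\vr)|^{\gamma+1}\,\dx$. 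To pass to an unweighted estimate, the paper applies H\"older:
\begin{equation*}
\intO{|T_k(\vrd)-T_k(\vr)|^q}\le C\Big[\intO{(1+\vt)^{-1}|T_k(\vrd)-T_k(\vr)|^{\gamma+1}}\Big]^{\frac{q}{\gamma+1}}\Big[\intO{(1+\vt)^{\frac{q}{\gamma+1-q}}}\Big]^{\frac{\gamma+1-q}{\gamma+1}},
\end{equation*}
which requires $\frac{q}{\gamma+1-q}\le 3m$, i.e.\ $q\le\frac{3m(\gamma+1)}{3m+1}<\gamma+1$; demanding $q>2$ is exactly the condition $m>\frac{2}{3(\gamma-1)}$. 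This is the real purpose of that hypothesis in the lemma — it is not, as you suggest, about upgrading density integrability via Lemma \ref{l 3.8} (whose constraint $m>\tfrac29\tfrac{\gamma}{\gamma-1}$ is a different inequality and enters elsewhere). Your claim that $q=\gamma+1$ works is therefore false in general. The paper also closes the estimate more carefully by retaining the right-hand side as $C\limsup_\delta\|T_k(\vrd)-T_k(\vr)\|_2$ and then interpolating the $L^2$-norm between the uniformly bounded $L^1$-norm and the target $L^q$-norm, which turns the estimate into a self-improving inequality in $k$.
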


\begin{proof}
Recall that
$$
p(\vr,\vt) = d \vr^\gamma + p_m(\vr,\vt), \qquad \pder{p_m(\vr,\vt)}{\vr} \geq 0,
$$
see (\ref{1.25a}).
We get using Lipschitz continuity of $T_k$ and
trivial inequality $(a-b)^\gamma \leq a^\gamma -b^\gamma$, $a\geq
b\geq 0$,
$$
\begin{array}{c}
\displaystyle
d \limsup_{\delta \to 0^+} \intO{ |T_k(\vrd) -T_k(\vr)|^{\gamma+1}} \leq d\limsup_{\delta\to 0^+} \intO {(\vr^\gamma - \vrd^\gamma)(T_k(\vr)-T_k(\vrd))} \\
\displaystyle = d \intO { \big(\overline{\vr^\gamma T_k(\vr)} - \overline{\vr^\gamma} \overline{T_k(\vr)}\big)} + d \intO {(\vr^\gamma - \overline{\vr^\gamma})\big(T_k(\vr) - \overline{T_k(\vr)}\big)}.
\end{array}
$$
Using convexity of $\vr \mapsto \vr^\gamma$, concavity of $T_k$, strong convergence of the temperature, (\ref{1.25a}) and Lemma \ref{l 4.5}, we get
\begin{equation} \label{III4.13}
d \limsup_{\delta \to 0^+} \intO{ |T_k(\vrd) -T_k(\vr)|^{\gamma+1}} \leq \intO {\Big(\overline{p(\vr,\vt)T_k(\vr)}  - \overline{p(\vr,\vt)} \, \, \overline{T_k(\vr)}\Big)}.
\end{equation}
The same argument also yields
\begin{equation} \label{III4.15a}
d \limsup_{\delta \to 0^+} \intO{ \frac{1}{1+\vt}|T_k(\vrd) -T_k(\vr)|^{\gamma+1}} \leq \intO {\frac{1}{1+\vt} \Big(\overline{p(\vr,\vt)T_k(\vr)}  - \overline{p(\vr,\vt)} \, \, \overline{T_k(\vr)}\Big)}.
\end{equation}

Let $G_k(t,x,z) = d |T_k(z) - T_k(\vr(t,x)|^{\gamma+1}$. Thus
$$
\overline{G_k(\cdot,\cdot,\vr)} \leq \overline{p(\vr,\vt)T_k(\vr)}  - \overline{p(\vr,\vt)} \,\, \overline{T_k(\vr)}
$$
and using (\ref{II4.12}),
$$
\overline{G_k(\cdot,\cdot,\vr)} \leq \Big(\frac 43 \mu(\vt) + \xi(\vt)\Big) \Big(\overline{T_k(\vr) \Div \vu} - \overline{T_k(\vr)} \Div \vu\Big)
$$
for all $k\geq 1$. Then
\begin{equation} \label{7.1}
\begin{array}{c}
\displaystyle
\intO {(1+\vt)^{-1} \overline{G_k(t,x,\vr)}} \leq C \sup_{\delta>0}\|\Div\vud\|_{2} \limsup_{\delta\to 0^+} \|T_k(\vrd) - T_k(\vr)\|_2 \\
\displaystyle \leq C \limsup_{\delta\to 0^+} \|T_k(\vrd) - T_k(\vr)\|_2.
\end{array}
\end{equation}
On the other hand, for $q>2$
$$
\begin{array}{c}
\displaystyle \intO {|T_k(\vrd)-T_k(\vr)|^q }\leq \intO{|T_k(\vrd)-T_k(\vr)|^q  (1+\vt)^{-\frac{q}{\gamma+1}} (1+\vt)^{\frac{q}{\gamma+1}}} \\
\displaystyle \leq C\Big[ \intO {(1+\vt)^{-1}
|T_k(\vrd)-T_k(\vr)|^{\gamma+1}} \Big]^{\frac q{\gamma+1}}\;\Big[
\intO{(1+\vt)^{\frac{q}{\gamma+1-q}}}\Big]^{\frac{\gamma+1-q}{\gamma+1}},
\end{array}
$$
which, using (\ref{7.1}) and an obvious interpolation, yields  the desired
result.
\end{proof}

As $(\vrd,\vud)$ and $(\vr,\vu)$ verify due to Lemmas \ref{l II4.1} and \ref{l II4.2} the renormalized continuity equation, we  have the identities
$$
\intO{T_k(\vr) \Div \vu} = 0,
$$
and
$$
\intO{T_k(\vrd) \Div \vud} = 0, \qquad \mbox{ i.e. } \intO{\overline{T_k(\vr) \Div \vu}} = 0.
$$
Hence, employing the effective viscous flux identity (\ref{II4.12}),
\begin{equation} \label{II4.11b}
\begin{array}{c}
\displaystyle
\displaystyle \intO {\frac{1}{\frac 43 \mu(\vt) + \xi(\vt)}\Big(\overline{p(\vr,\vt) T_k(\vr)} -
\overline{p(\vr,\vt)}\, \, \overline{
T_k(\vr)}\Big)} = \intO{\big(T_k(\vr) - \overline{T_k(\vr)}\big) \Div \vu}.
\end{array}
\end{equation}
We easily have $\lim_{k\to \infty} \|T_k(\vr)-\vr\|_1 = \lim_{k\to \infty} \|\overline{T_k(\vr)} -\vr\|_1 = 0$. Thus,  (\ref{II4.16}) and (\ref{II4.11b}) yield
$$
\lim_{k \to \infty} \intO {\frac{1}{\frac 43\mu(\vt) + \xi(\vt)}\Big(\overline{p(\vr,\vt)T_k(\vr)}  - \overline{p(\vr,\vt)}\ \overline{T_k(\vr)}\Big)} = 0.
$$
Using once more (\ref{II4.16}) we get
$$
\lim_{k \to \infty} \limsup_{\delta \to 0^+} \intO{ \frac{1}{\frac 43 \mu(\vt) + \xi(\vt)}|T_k(\vrd) -T_k(\vr)|^{\gamma+1}} = 0,
$$
which implies
$$
\lim_{k \to \infty} \limsup_{\delta \to 0^+} \intO{|T_k(\vrd)
-T_k(\vr)|^q} = 0
$$
with $q$ as in Lemma \ref{l II4.2}. 

As
$$
\|\vrd-\vr\|_1 \leq \|\vrd-T_k(\vrd)\|_1 + \|T_k(\vrd) -T_k(\vr)\|_1 + \|T_k(\vr) -\vr\|_1,
$$
we have
$$
\vrd \to \vr \qquad \mbox{ in } L^1(\Omega);
$$
whence
$$
\vrd \to \vr \quad \mbox{ in } L^p(\Omega) \quad   \forall 1\leq p < s\gamma.
$$
To finish the proof of Theorem \ref{t 1.2}, note that  the condition $m>\frac{2}{3(\gamma-1)}$ is the most restrictive one. For Theorem \ref{t 1.1} we also easily check that $\frac{2}{3(\gamma-1)}> \frac{2}{9(\gamma-1)}$ and that for the weak solutions, both $m>1$ and $m> \frac{2\gamma}{3(3\gamma-4)}$ must be taken into account.

\section{Weak solutions with bounded density and internal energy balance}

In this section, we consider a modification of the problem studied above. The approach is based on the paper \cite{MuPo_CMP}. We consider our system (\ref{1.1})--(\ref{1.3}), however, we replace the total energy balance \eqref{1.3} by the internal energy balance
\begin{equation} \label{11.1.3}
\Div\big(\vr e(\vr,\vt) \vu\big) -\Div \big(\kappa(\vt)\nabla \vt\big)=
\tn{S}(\nabla \vu):\nabla \vu - p (\vr,\vt)\Div \vu,
\end{equation}
together with the Navier boundary conditions (the use of them is essential here). Note that for sufficiently smooth solution, the total and the internal energy balances are equivalent (one is just a consequence of the other one, using the balance of momentum and the continuity equation), however, for only weak solution, this might not be the case. We further assume that the viscosity coefficients are constants, i.e. $\alpha =0$ in \eqref{1.7} (this is also essential in this approach), hence we prefer to write the viscous part of the stress tensor in the form
$\tn{S}(\nabla \vu) = 2\mu \tn{D}(\vu) + \nu (\Div \vu) \tn{I}$. We assume $2\mu + 3\nu >0$ and consider the pressure law \eqref{1.18a} and (it is also essential in this approach) the function $L(\cdot)$ from \eqref{1.5} as
$$
L(\vt) = a(1+\vt)^l
$$
for some $l \in \R_0^+$. Finally, we also prescribe the total mass of the fluid. We have the following result
\begin{theorem}[Mucha, Pokorn\'y, 2009] \label{t11.1}
Let $\Omega \in C^2$ be a bounded domain in $\R^3$ which is not axially symmetric if $\lambda=0$.
Let $\vc{f} \in L^\infty(\Omega;\R^3)$ and 
 $$
  \gamma>3, \qquad m=l+1>\frac{3\gamma-1}{3\gamma-7}.
 $$
Then there exists a weak solution to our problem such that 
$$
\vr \in L^\infty(\Omega), \qquad \vu \in W^{1,q}(\Omega;\R^3),
 \mbox { \  \ } \vt \in W^{1,q}(\Omega)  \mbox{\ \ for all }1\leq q<\infty, 
$$
and $\vr \geq 0$,  $\vt>0$ a.e. in $\Omega$.
\end{theorem}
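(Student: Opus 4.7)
The plan is to adapt the approach of \cite{MuPo_CMP}, exploiting three features of the present setting that were absent in Theorems~\ref{t 1.1}--\ref{t 1.2}: the Navier boundary condition $\vu\cdot\vc{n}=0$, which enlarges the class of admissible test functions and allows a stationary-transport control of the density up to the boundary; the constancy of the viscosities, which turns the momentum balance into a genuine Lam\'e-type elliptic system for $\vu$ amenable to $W^{2,q}$ theory; and the strong growth hypotheses $\gamma>3$, $m>(3\gamma-1)/(3\gamma-7)$, which close a regularity bootstrap whose output is $\vr\in L^\infty$ and $\vu,\vt\in W^{1,q}$ for every $q<\infty$. With this level of regularity every term in the internal energy balance \eqref{11.1.3} is integrable with a comfortable margin, and limit passages in strong topologies become routine.

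First I would regularize the continuity equation as in Lemma~\ref{l 5.3},
$$\varepsilon \vr - \varepsilon \Delta \vr + \Div(\vr \vu) = \varepsilon h, \qquad \pder{\vr}{\vc{n}}\Big|_{\partial \Omega} = 0, \qquad h = \frac{M}{|\Omega|},$$
so that $\vr\in W^{2,q}$ for arbitrary $q<\infty$, and bound $\|\vre\|_\infty$ by a function of $\|\Div \vue\|_\infty$ and $h$ via a maximum-principle-type argument that exploits $\vu\cdot\vc{n}=0$ to eliminate any boundary contribution. In parallel, the momentum balance takes the Lam\'e form
$$-\mu \Delta \vu - (\mu + \nu) \nabla \Div \vu = \vr \vc{f} - \Div(\vr \vu \otimes \vu) - \nabla p(\vr,\vt),$$
with the Navier boundary data, for which standard elliptic regularity supplies $\vu\in W^{2,q}$ whenever the right-hand side lies in $L^q$; Sobolev's embedding for $q>3$ then delivers $\Div \vu \in L^\infty$, closing the loop. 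For the temperature I would pass to the Kirchhoff transform $K(\vt):=\int_0^\vt \kappa(s)\,{\rm d}s$, rewriting \eqref{11.1.3} as the semilinear elliptic problem
$$-\Delta K(\vt) = \tn{S}(\nabla \vu):\nabla \vu - p(\vr,\vt)\Div \vu - \Div\big(\vr e(\vr,\vt)\vu\big),$$
with the nonlinear Robin condition $\kappa(\vt)\pder{\vt}{\vc{n}} + L(\vt)(\vt-\Theta_0)=0$ derived from \eqref{1.5}. Once $\vu\in W^{1,q}$ is in hand, the dissipation lies in $L^{q/2}$, elliptic regularity gives $K(\vt)\in W^{2,q/2}$, and inverting $K$ recovers $\vt\in W^{1,q'}$ for every $q'<\infty$.

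These uniform bounds feed a Leray--Schauder scheme (Lemma~\ref{l 4.7}) applied to the map $\mathcal{T}(\vc{v},z)=(\vu,\vt)$ defined by solving in turn the three decoupled linear problems for $\vr$, $\vu$ and $K(\vt)$; its continuity and compactness follow from the regularity just established, and the limit $\varepsilon\to 0^+$ is comparatively mild because every nonlinearity is continuous in strong topologies once $\vr\in L^\infty$. The principal obstacle, and what I expect to consume most of the effort, is the simultaneous closure of the three coupled estimates: the $L^\infty$-bound on $\vre$ requires $\|\Div \vue\|_\infty$, which in turn needs the pressure $p(\vre,\vte)=\vre^\gamma+\vre\vte$ and the convective term $\Div(\vre \vue\otimes \vue)$ in high-$L^q$ norms, hence $\vte$ must already be well controlled; while the temperature estimates via the Kirchhoff transform only become available once $\vu$ is regular. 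The conditions $\gamma>3$ and $m>(3\gamma-1)/(3\gamma-7)$ are tuned precisely so that the integrability exponents along this coupling cycle are compatible, with the first-stage inputs to the bootstrap provided by the entropy/energy inequality together with the Bogovskii-type pressure estimate of Lemma~\ref{l 4.2}.
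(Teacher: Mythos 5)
Your proposal correctly identifies the enabling features of this setting (Navier boundary conditions, constant viscosities giving Lam\'e-type elliptic regularity, Kirchhoff transform for the temperature, a Schauder/Leray--Schauder scheme as in Lemma~\ref{l 4.7}), but it misses the central device by which the paper actually produces the $L^\infty$ bound on the density, and the mechanism you propose in its place does not close.

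The key gap is in the density estimate. You propose to bound $\|\vre\|_\infty$ ``by a function of $\|\Div\vue\|_\infty$ and $h$ via a maximum-principle-type argument.'' For the $\varepsilon$-regularized continuity equation $\varepsilon\vr - \varepsilon\Delta\vr + \Div(\vr\vu) = \varepsilon h$ with $\partial_{\vc{n}}\vr=0$, the interior maximum principle gives, at an interior maximizer $x_0$, $\vr(x_0)\big(\varepsilon+\Div\vu(x_0)\big)\leq \varepsilon h$; this is useless wherever $\Div\vu(x_0)<0$ and in any case degenerates as $\varepsilon\to 0^+$, so it does not provide a bound on $\|\vre\|_\infty$ uniform in $\varepsilon$. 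What the paper actually does is build the bound into the approximation: it introduces the cut-off $K(\cdot)$ from (\ref{def-K}) and replaces $\vr\vu$ by $K(\vr)\vr\vu$ in the approximate system (\ref{ap-ns}), together with a truncated pressure (\ref{ap-P}). With this truncation, integrating $(\ref{ap-ns})_1$ over $\{\vr<0\}$ and over $\{\vr>k\}$ yields $0\leq\vre\leq k$ by construction, independently of any control on $\Div\vue$. The real work of the proof is then to show that, in the limit $\varepsilon\to0^+$, the density actually remains below $k-3$, so that the truncation is inactive and the limiting system reduces to the original one. This is done via the Helmholtz decomposition (\ref{11.e4})--(\ref{11.e10}): the vorticity $\bomega_\varepsilon$ solves the elliptic problem (\ref{11.e8}) with boundary data coming from the Navier condition, and the effective viscous flux $G_\varepsilon=-(2\mu+\nu)\Div\vue+P(\vre,\vte)$ converges strongly in $L^2$ with $\|G\|_\infty\leq C(\eta)(1+k^{1+\frac23\gamma+\eta})$ (Lemma~\ref{l 11.4.4}). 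The algebraic inequality of Lemma~\ref{lt11.3}, $\frac{k-3}{k}(k-3)^\gamma - \|G\|_\infty\geq 1$, is where $\gamma>3$ is used: the left side grows like $k^\gamma$ and dominates $k^{1+\frac23\gamma}$ precisely when $\gamma>3$. Your proposal never explains where $\gamma>3$ enters, which is a symptom that the actual mechanism has not been identified.

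A second, smaller, gap: you assert that the limit $\varepsilon\to0^+$ is ``comparatively mild because every nonlinearity is continuous in strong topologies once $\vr\in L^\infty$.'' This is not so. An $L^\infty$ bound on $\vre$ gives only weak-$\ast$ compactness; strong convergence of the density (needed, e.g., to identify the limit of $P(\vre,\vte)$) is obtained in the paper through the effective viscous flux identity (Lemmas~\ref{l 11.4.4} and \ref{lt11}) and the observation that $\overline{P(\vr,\vt)\vr}=\overline{P(\vr,\vt)}\,\vr$, which then yields $\vre\to\vr$ in $L^q(\Omega)$. You would still need this compensated-compactness step even with the $L^\infty$ bound in hand; the decomposition of the vorticity into the three pieces $\bomega^0_\varepsilon,\bomega^1_\varepsilon,\bomega^2_\varepsilon$ of (\ref{11.e9})--(\ref{11.e10}) is exactly what allows the paper to conclude $\rot\vue\to\rot\vu$ and $\Div\vue\to\Div\vu$ strongly in $L^2$, whence $\tn{S}(\nabla\vue):\nabla\vue\to\tn{S}(\nabla\vu):\nabla\vu$ in $L^1$ and the internal energy balance passes to the limit.
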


The proof is based on the approximation procedure presented for the first time in the context of the two-dimensional steady compressible Navier--Stokes equations in \cite{MuPo_Nonlinearity}, see also \cite{Zat1}. We define 
\begin{equation} \label{def-K}
K(t)=\left\{
\begin{array}{lcr}
1 & \mbox{for} & t<k-1 \\
\in [0,1] & \mbox{for} &\quad k-1\leq t\leq k\\
0 & \mbox{for} & t>k;
\end{array}
\right.
\end{equation}
moreover, we assume that $K'(t)<0$ for $t\in (k-1,k)$, where $k\in \R^+$ will be sufficiently large.  Take $\varepsilon >0$ and $K(\cdot)$ as above. Our approximate problem reads
 \begin{equation}\label{ap-ns}
\left.\begin{array}{r}
\displaystyle\varepsilon \vr +\Div(K(\vr)\vr \vu)-\varepsilon \Delta \vr =\varepsilon
h K(\vr) \\[9pt]
\displaystyle\frac{1}{2}\Div(K(\vr)\vr \vu \otimes \vu)+\frac{1}{2}K(\vr)\vr \vu \cdot
\nabla \vu- \Div \tn{S}(\nabla \vu)+\nabla  P(\vr,\vt)=\vr K(\vr)\vc{f} \\[9pt]
\displaystyle -\Div\Big(\kappa(\vt)\frac{\varepsilon +\vt}{\vt}\nabla \vt \Big)
+\Div\Big(\vu\int_0^\rho K(t) \dt\Big)\vt +\Div\Big(K(\vr)\vr \vu\Big) \vt
\\[5pt]
\displaystyle
+K(\vr)\vr \vu \cdot \nabla \vt-\vt K(\vr) \vu\cdot \nabla \vr  = \tn{S}(\nabla \vu):\nabla \vu
\end{array}\right\}
\mbox{ in } \Omega,
\end{equation}
where 
\begin{equation}\label{ap-P}
P(\vr,\vt)=\int_0^\varrho \gamma t^{\gamma-1} K(t) \dt +\vt \int_0^\rho K(t) \dt = P_b(\vr) + \vt \int_0^\vr K(t) \dt, 
\end{equation}
and $h=\frac{M}{|\Omega|}$.

This system is completed by the boundary conditions on $\partial \Omega$
\begin{equation}\label{bc}
\begin{array}{c}
\displaystyle 
(1+\vt^m)(\varepsilon +\vt)\frac{1}{\vt}\pder{\vt}{\vc{n}}
+L(\vt)(\vt-\Theta_0)+\varepsilon \ln \vt=0,  \\[8pt]
\displaystyle 
\vu \cdot \vc{n} = 0, \qquad \vcg{\tau}_k \cdot (\tn{S}(\nabla \vu)\vc{n}) + \lambda \vu \cdot \vcg{\tau}_k = 0, \qquad k=1,2, \\[8pt]
\displaystyle \pder{\vr}{\vc{n}} =0.
\end{array}
\end{equation}
Recall that the reason for terms of the form $\ln \vt$ to appear in the approximate problem is that we in fact solve the approximate problem for the ``entropy'' $s = \ln \vt$ instead of the temperature itself. It provides straightaway  that the temperature of the approximate problem is positive a.e. in $\Omega$ and we keep this information throughout the limit passages. Moreover, the form of the function $K(\cdot)$ ensures  that the approximate density is bounded between $0$ and $k$; this can be easily seen if we integrate equation \eqref{ap-ns}$_1$ over the set, where $\vr<0$, and over the set, where $\vr >k$. The main problem in the limit passage $\varepsilon \to 0^+$ is to verify that $\vr \leq k-1$. This ensures that for the limit problem (which is our original problem) we have $K(\vr) \equiv 1$.

For the approximate problems, we can prove the following
\begin{proposition} \label{pt11.2}
Let the assumptions of Theorem \ref{t11.1} be satisfied. Moreover, let $\varepsilon >0$ and $k>0$. Then there exists a strong solution
$(\vr,\vu)$ to (\ref{ap-ns})--(\ref{bc})  such that
$$
\vr \in W^{2,p}(\Omega), \;\; \vu \in W^{2,p}(\Omega;\R^3) \mbox{ \ \ 
and \ \ } \ln \vt \in W^{2,p}(\Omega) \mbox{ \ \ for all \ \ }
1\leq p <\infty.
$$
Moreover 
$0 \leq \vr \leq k$ in $\Omega$, 
$\int_\Omega \vr \, \dx \leq M$ and
\begin{equation}\label{b-m}
\|\vu\|_{1,3m}+\sqrt{\varepsilon}\|\nabla \vr\|_{2} + \|\nabla \vt\|_{r} + \|\vt\|_{3m} \leq C(k),
\end{equation}
where $\vt >0$, $r=\min\{2,\frac{3m}{m+1}\}$ and the r.h.s. of (\ref{b-m}) is independent of  $\varepsilon$.
\end{proposition}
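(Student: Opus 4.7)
The plan is to follow the Leray--Schauder strategy of Proposition \ref{p 5.1}, simplified by the fact that the cutoff $K(\cdot)$ immediately yields the $L^\infty$-bound $0\le\vr\le k$ and that the viscosities are constant (so the momentum operator is a genuine Lam\'e system). I would define an operator $T\colon(\tilde{\vu},\tilde{s})\mapsto(\vu,s)$ on $X:=W^{1,q}(\Omega;\R^3)\times W^{1,q}(\Omega)$ with $q>3$ (where $\tilde\vt:=\mbox{e}^{\tilde s}$) by successively solving three decoupled linear elliptic problems: (i) the approximate continuity equation $(\ref{ap-ns})_1$ for $\vr$ with $\tilde\vu$ frozen, (ii) the Lam\'e equation for $\vu$ with Navier boundary conditions and right-hand side built from $\tilde\vu,\tilde\vt,\vr$, and (iii) the entropy-form equation for $s=\ln\vt$ with the Robin-type condition \eqref{bc}.

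For (i), the cutoff structure gives a maximum-principle argument (testing successively by $(\vr-k)^+$ and $\vr^-$, using $K\equiv 0$ outside $[0,k]$) which, combined with Neumann elliptic regularity, yields $\vr\in W^{2,p}$ with $0\le\vr\le k$. For (ii), since $\mu,\nu$ are constant and $\vr\in L^\infty$, Agmon--Douglis--Nirenberg-type regularity for the Lam\'e system with Navier boundary conditions on a $C^2$ domain gives $\vu\in W^{2,p}$ once the right-hand side lies in $L^p$. For (iii), the Kirchhoff transform $V(\vt):=\int_0^\vt\kappa(\tau)\,d\tau$ linearizes the principal part and standard elliptic theory delivers $s\in W^{2,p}$, hence $\vt=\mbox{e}^s>0$ pointwise.

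The $\varepsilon$-independent bounds (\ref{b-m}) follow the pattern of (\ref{5.103})--(\ref{5.105}). Combining the temperature equation divided by $\vt$ with the momentum equation tested by $\vu$ (both integrated over $\Omega$) produces an entropy inequality yielding
\[
\|\vu\|_{1,2}^2+\|\nabla\vt^{m/2}\|_2^2+\|\nabla\ln\vt\|_2^2+\varepsilon\|\nabla\vr\|_2^2\le C(k),
\]
with $C(k)$ independent of $\varepsilon$. The globally integrated internal energy balance then controls $\|\vt\|_{3m}$ via the Newton boundary term, since $L(\vt)\sim\vt^l$ with $l+1=m$ dominates a power of the boundary trace and one invokes the Sobolev embedding applied to $\vt^{m/2}\in W^{1,2}$. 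The bound $\|\vu\|_{1,3m}$ comes from Lam\'e--Navier $W^{1,3m}$-regularity with right-hand side $\nabla P(\vr,\vt)\in W^{-1,3m}$ (where $P$ lies in $L^{3m}$ thanks to $\vr\le k$ and $\vt\in L^{3m}$) together with the convective terms, handled by a bootstrap in the improving integrability of $\vu$. The Kirchhoff-transformed temperature equation finally yields $\|\nabla\vt\|_r\le C(k)$ with $r=\min\{2,3m/(m+1)\}$.

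The operator $T$ is continuous and compact from $X$ into itself via the compact embedding $W^{2,p}\hookrightarrow W^{1,q}$ for $p>q$, and the preceding bounds show that all solutions of $tT(\vu,s)=(\vu,s)$ for $t\in[0,1]$ lie in a bounded subset of $X$. Lemma \ref{l 4.7} then produces a fixed point, which solves (\ref{ap-ns})--(\ref{bc}) with the regularity asserted. The main obstacle is the bootstrap giving $\|\vu\|_{1,3m}$ uniformly in $\varepsilon$: the pressure $P$ in \eqref{ap-P} is only linear in $\vt$, so its contribution matches the regularity coming from the energy estimate, but the convective term $K(\vr)\vr\vu\otimes\vu$ requires $\vu\in L^{6m}$, which is only available once one invokes the $L^\infty$-cutoff on $\vr$ and iterates the Lam\'e--Navier estimate together with Sobolev embedding, exploiting the room left by the size of $m$ demanded in the statement.
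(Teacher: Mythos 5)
Your high-level strategy matches the paper's: linearize, solve for $\vr$ first (using the cutoff $K$ to get $0\le\vr\le k$ via integration over the level sets), then solve elliptic problems for $\vu$ and for $s=\ln\vt$, and invoke the Leray--Schauder theorem. The paper indeed points to exactly this route ("suitable linearization and a version of the Schauder fixed-point theorem"). Two issues, though, one minor and one more serious.

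The minor one: problem (i) in your scheme is not linear. The approximate continuity equation $\varepsilon\vr+\Div(K(\vr)\vr\tilde\vu)-\varepsilon\Delta\vr=\varepsilon hK(\vr)$ has $K(\vr)$ depending on the unknown $\vr$; the cutoff is precisely what furnishes the $L^\infty$-bound via the maximum principle, so it cannot be frozen. One either solves this nonlinear problem directly (a further fixed-point or monotonicity argument, after which continuity and compactness of the solution operator $\tilde\vu\mapsto\vr$ must still be checked) or restructures the iteration.

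The more serious issue is in the $\varepsilon$-independent a priori bounds. You assert that "combining the temperature equation divided by $\vt$ with the momentum equation tested by $\vu$ \dots produces an entropy inequality yielding $\|\vu\|^2_{1,2}+\|\nabla\vt^{m/2}\|_2^2+\dots\le C(k)$", citing the pattern of \eqref{5.103}--\eqref{5.105}. But that pattern relies crucially on the $1/\vt$-weighted Korn inequality of Lemma \ref{l 4.1}: with $\mu(\vt)\sim 1+\vt$, the term $\intO{\tn{S}(\vt,\nabla\vu):\nabla\vu/\vt}$ controls $\|\vu\|_{1,2}^2$ directly. Here the viscosities are \emph{constant}, so the entropy inequality only controls $\intO{|\nabla\vu|^2/\vt}$, which does not bound $\|\nabla\vu\|_2$; this is precisely why the paper warns that "the a priori estimates are obtained differently." The actual closure requires an interlocking argument: the momentum equation tested by $\vu$ gives $\|\nabla\vu\|_2^2\lesssim C(k)(1+\|\vt\|_2^2)$, the globally integrated energy balance gives $\intdO{L(\vt)\vt}\sim\intdO{\vt^m}\lesssim C(k)(1+\|\vu\|_{1,2})$ (here $m=l+1$ is essential), the entropy inequality and Poincar\'e give $\|\vt^{m/2}\|_{1,2}^2\lesssim C(k)(1+\|\vu\|_{1,2}^2)$, and then Sobolev $\|\vt\|_{3m}\lesssim\|\vt^{m/2}\|_{1,2}^{2/m}$ feeds back into the momentum test. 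Moreover, the convective contribution $\intO{K(\vr)\vr\,\vu\cdot\nabla\ln\vt}$ carries a factor $k$ that cannot naively be absorbed into the $1/\vt$-weighted dissipation. Closing this loop so that the powers of the unknowns come out subquadratic requires the size condition on $m$ from Theorem \ref{t11.1}, and you do not exhibit the computation showing it closes. You gesture at this ("exploiting the room left by the size of $m$"), but as written the central claim that (\ref{b-m}) follows "as in \eqref{5.103}--\eqref{5.105}" is false in its stated form; the constant-viscosity case needs a genuinely different derivation, which is the technical heart of Proposition \ref{pt11.2}.
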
 

The proof is based on suitable linearization and application of a version of the Schauder fixed-point theorem, similarly as above for the temperature dependent viscosities. However, the a priori estimates are obtained differently, see \cite[Theorem 2]{MuPo_CMP} for more details. 

We may therefore pass with $\varepsilon \to 0^+$. Estimates (\ref{b-m}) from Proposition \ref{pt11.2} guarantee us existence of a subsequence $\varepsilon\to 0^+$ such that
\begin{equation}\label{11.e2}
\begin{array}{c}
\vu_\varepsilon \rightharpoonup \vu \mbox{ \ \ \ in } W^{1,{3m}}(\Omega;\R^3),\qquad\qquad
\vu_\varepsilon \to \vu \mbox{ \ \ \ in } L^\infty(\Omega;\R^3),\\
\vr_\varepsilon \rightharpoonup^* \vr \mbox{ \ \ \ in } L^\infty(\Omega),\qquad\qquad
P_b(\vr_\varepsilon) \rightharpoonup^* \overline{P_b(\vr)} \mbox{ \ \ \ in } L^\infty(\Omega),\\
K(\vr_\varepsilon)\vr_\varepsilon \rightharpoonup^* \overline{K(\vr)\vr}\mbox{ \ \ \ in } L^\infty(\Omega),\qquad\qquad
K(\vr_\varepsilon)\rightharpoonup^* \overline{K(\vr)}\mbox{ \ \ \ in } L^\infty(\Omega),\\[8pt]
\displaystyle \int_0^{\vr_\varepsilon} K(t)\dt \rightharpoonup^* \overline{\int_0^\vr K(t)\dt}\mbox{ \ \ \ in } L^\infty(\Omega),\\
\vt_\varepsilon \rightharpoonup \vt \mbox{ \ \ \ in } W^{1,r}(\Omega) \mbox{ with }
r=\min\{2,\frac{3m}{m+1}\},\\
\vt_\varepsilon \to \vt \mbox{ \ \ \ \ in }L^q(\Omega) \mbox{\ \ \ for } q< 3m.
\end{array}
\end{equation}
Passing to the limit in the weak formulation of our problem (\ref{ap-ns}) we get (all equations are fulfilled in the weak sense)
\begin{equation}\label{11.4.1}
\Div (\overline{K(\vr)\vr} \vu) =0,
\end{equation} 
\begin{equation}\label{11.4.2}
\overline{K(\vr)\vr} \vu \cdot \nabla \vu - \Div\Big(2\mu \tn{D}(\vu)+\xi (\Div \vu) \tn{I}-
\overline{P_b(\vr)}\tn{I}-\vt \big(\overline{\int_0^\vr K(t)\dt}\big)\tn{I} \Big)=\overline{K(\vr)\vr}\vc{f},
\end{equation}
\begin{equation} \label{11.4.3}
-\Div(\kappa(\vt)\nabla \vt) + \vt \big(\overline{\Div \vu \int_0^\vr K(t) \dt }\big) + \Div (\overline{K(\vr)\vr }\vt \vu) = 
2\mu\overline{|\tn{D}(\vu)|^2} + \nu \overline{(\Div \vu)^2}
\end{equation}
together with the corresponding boundary conditions for the velocity and the temperature. Recall that (\ref{11.4.1})--(\ref{11.4.3}) is satisfied in the weak sense.

In what follows we  study the dependence of the a priori bounds on $k$. 

\begin{lemma} \label{l 11.4.1} 
Under the assumptions of Theorem \ref{t11.1} and Proposition \ref{pt11.2}, we have 
\begin{equation}\label{ee1}
\|\vr_\varepsilon\|_{\infty} \leq k \mbox{ \ \ and \ \  }
\|\vu_\varepsilon\|_{1,3m}\leq C(1+k^{\frac \gamma 3 \frac{3m-2}{m}}).
\end{equation}
\end{lemma}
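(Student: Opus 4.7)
The bound $\|\vr_\varepsilon\|_\infty\leq k$ is a maximum-principle consequence of the cut-off $K$. Since $K(t)=0$ for $t\geq k$, both the convective term $\Div(K(\vr_\varepsilon)\vr_\varepsilon\vu_\varepsilon)$ and the source $\varepsilon h K(\vr_\varepsilon)$ vanish pointwise on $\{\vr_\varepsilon\geq k\}$. Testing (\ref{ap-ns})$_1$ against the admissible function $(\vr_\varepsilon-k)_+$ and using the Neumann condition $\pder{\vr_\varepsilon}{\vc{n}}=0$ yields
\begin{equation*}
\varepsilon\intO{\vr_\varepsilon(\vr_\varepsilon-k)_+}+\varepsilon\intO{|\nabla(\vr_\varepsilon-k)_+|^2}=0,
\end{equation*}
which forces $(\vr_\varepsilon-k)_+\equiv 0$ in $\Omega$. (A parallel argument with the negative part gives $\vr_\varepsilon\geq 0$.)

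For the velocity I would proceed in two stages. First, testing (\ref{ap-ns})$_2$ by $\vu_\varepsilon$, exploiting the Navier slip b.c.\ and Korn's inequality (Lemma \ref{l 4.1}(ii), applicable thanks to the non-axial-symmetry or $\lambda>0$ hypothesis), together with the pointwise bound $P(\vr,\vt)\leq C(k^\gamma+k\vt_\varepsilon)$, leads to an inequality of the form
\begin{equation*}
\|\vu_\varepsilon\|_{1,2}^2\leq C\big(k^\gamma+k\|\vt_\varepsilon\|_2+k\|\vc{f}\|_\infty\big)\|\vu_\varepsilon\|_{1,2}.
\end{equation*}
This must be coupled with a temperature bound extracted from the entropy-type identity and the global total energy balance associated to (\ref{ap-ns})$_3$ (parallel to the reasoning leading to (\ref{3.4}) and (\ref{3.9})), giving $\|\vt_\varepsilon\|_{3m}+\|\nabla \vt_\varepsilon^{m/2}\|_2$ bounded by an explicit polynomial in $\|\vu_\varepsilon\|_{1,2}$ and $k$. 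Solving this coupled system produces polynomial-in-$k$ bounds for both $\|\vu_\varepsilon\|_{1,2}$ and $\|\vt_\varepsilon\|_{3m}$.

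To upgrade from $W^{1,2}$ to $W^{1,3m}$ the crucial ingredient is that the viscosities are constant, so (\ref{ap-ns})$_2$ can be recast as the Stokes system
\begin{equation*}
-\Div\tn{S}(\nabla\vu_\varepsilon)+\nabla P(\vr_\varepsilon,\vt_\varepsilon)=\vr_\varepsilon K(\vr_\varepsilon)\vc{f}-\tfrac12\Div(K(\vr_\varepsilon)\vr_\varepsilon\vu_\varepsilon\otimes\vu_\varepsilon)-\tfrac12 K(\vr_\varepsilon)\vr_\varepsilon(\vu_\varepsilon\cdot\nabla\vu_\varepsilon),
\end{equation*}
supplemented with Navier slip b.c. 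The standard $L^{3m}$-regularity for stationary Stokes with these b.c.\ gives a control of $\|\vu_\varepsilon\|_{2,3m}+\|P(\vr_\varepsilon,\vt_\varepsilon)\|_{1,3m}$ by the $L^{3m}$-norm of the right-hand side (with the $\nabla P$ term absorbed on the left). The force $\vr_\varepsilon K(\vr_\varepsilon)\vc{f}$ is bounded by $k$ in $L^\infty$, and the convective terms are controlled via $\vr_\varepsilon\leq k$ combined with H\"older's inequality and the embedding $W^{1,3m}\hookrightarrow L^\infty$ (valid since $m>1$, hence $3m>3$).

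The main obstacle is the $k$-bookkeeping. The Stokes step yields an implicit inequality of the shape $\|\vu_\varepsilon\|_{1,3m}\leq C k^\gamma+C k^a\|\vu_\varepsilon\|_{1,3m}^\theta$ with some $\theta<1$, the subcritical power $\theta$ arising from interpolating the convective terms between $\|\vu_\varepsilon\|_{1,2}$ (already controlled) and $\|\vu_\varepsilon\|_{1,3m}$. Resolving the inequality via Young and absorbing the subcritical power on the left produces exactly the exponent $\frac{\gamma}{3}\cdot\frac{3m-2}{m}=\gamma\cdot\frac{3m-2}{3m}$ claimed in (\ref{ee1}): the factor $\gamma$ traces back to the dominant pressure contribution $k^\gamma$, while $\frac{3m-2}{3m}$ is the interpolation weight between $L^2$ and $L^{3m}$. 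Executing this arithmetic in full, tracking each power of $k$ term by term, is the technical content of the lemma and is carried out in detail in \cite{MuPo_CMP}.
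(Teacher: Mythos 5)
Your density bound is fine: testing (\ref{ap-ns})$_1$ against $(\vr_\varepsilon-k)_+$, together with $K\equiv 0$ on $\{\vr_\varepsilon\geq k\}$ and the Neumann condition, gives the claimed sign identity and hence $\vr_\varepsilon\leq k$; this is essentially the paper's remark about integrating over $\{\vr_\varepsilon>k\}$.

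The velocity part, however, has a genuine gap in the $k$-bookkeeping, and it stems from never using the $k$-independent $L^1$ bound $\intO{\vr_\varepsilon}\leq M$ that the paper explicitly singles out. Your intermediate inequality
$\|\vu_\varepsilon\|_{1,2}\leq C(k^\gamma+k\|\vt_\varepsilon\|_2+k\|\vc{f}\|_\infty)$,
obtained from the crude pointwise bound $P(\vr,\vt)\leq C(k^\gamma+k\vt)$, cannot be the right starting point: since $W^{1,3m}(\Omega)\hookrightarrow W^{1,2}(\Omega)$ (as $3m>2$), the conclusion $\|\vu_\varepsilon\|_{1,3m}\leq C(1+k^{\gamma\frac{3m-2}{3m}})$ already forces $\|\vu_\varepsilon\|_{1,2}\leq C(1+k^{\gamma\frac{3m-2}{3m}})$, a \emph{strictly smaller} power of $k$ than $k^\gamma$. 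An elliptic lift can only preserve or worsen the power of $k$ coming from the estimate it is fed; starting from $k^\gamma$ in $W^{1,2}$ you would end at $k^\gamma$ in $W^{1,3m}$, not at $k^{\gamma\frac{3m-2}{3m}}$. The crucial refinement is to combine $\|\vr_\varepsilon\|_\infty\leq k$ with $\|\vr_\varepsilon\|_1\leq M$ so that $\|P_b(\vr_\varepsilon)\|_q\leq M^{1/q}k^{\gamma-1/q}$, and to run the momentum and temperature estimates together as a coupled system; your heuristic \textquotedblleft $\gamma$ from the pressure, $\frac{3m-2}{3m}$ from interpolation\textquotedblright{} does not correspond to any step of the absorbing argument, because if a term $\sim k^\gamma$ appeared on the right-hand side it would survive into the final bound rather than acquire a fractional weight.

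Two smaller points: the system $-\Div\tn{S}(\nabla\vu_\varepsilon)+\nabla P(\vr_\varepsilon,\vt_\varepsilon)=\vc{g}$ is a Lam\'e-type elliptic system with a prescribed forcing $\nabla P$ (no divergence constraint), not a Stokes system; and the route actually followed in \cite{MuPo_CMP} and reflected in this paper goes through the Helmholtz decomposition (\ref{11.e4})--(\ref{11.e6}) and the vorticity boundary-value problem (\ref{11.e8}), with the potential part controlled via the effective viscous flux, rather than a direct $L^{3m}$ regularity estimate for the full momentum equation.
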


\begin{proof}
The estimate for the density follows directly from the properties of the function $K(\cdot)$ and the estimate of the velocity follows from the momentum equation, using estimates of the temperature and the $L^1$-estimate of the density, which are independent of $k$. More details can be found in \cite{MuPo_CMP}.
\end{proof}

A crucial role in the proof of the strong convergence of the density is played by a quantity called the effective viscous flux. To define
it in this context, we use the {Helmholtz decomposition} of the velocity 
\begin{equation}\label{11.e4}
\vu=\nabla \phi  +\rot  \vc{A},
\end{equation}
where the divergence-less part of the velocity is given as a solution to the following 
elliptic problem
\begin{equation}\label{11.e5}
\begin{array}{c}
\rot\rot\vc{A}=\rot\vu=\bomega \mbox{ \ \ \ \ in } \Omega,\\
\Div\rot \vc{A}=0 \mbox{ \ \ \ \ in } \Omega,\\
\rot \vc{A}\cdot \vc{n} =0 \mbox{ \ \ \ \ on } \partial \Omega.
\end{array}
\end{equation}
The potential part of the velocity is given  by
the solution to 
\begin{equation}\label{11.e6}
\begin{array}{c}
\Delta \phi=\Div \vu \mbox{ \ \ in } \Omega,\\
\frac{\partial \phi}{\partial \vc{n}}=0 \mbox{ \ \ on } \partial \Omega,
\end{array}
\qquad\qquad  \int_\Omega \phi \ \dx =0.
\end{equation}
The classical theory for elliptic equations, see e.g. the papers \cite{So_LOMI} and \cite{MuPoJMFM},
gives us for $1<q<\infty$
\begin{equation*}\label{11.e7}
\begin{array}{rcl}
\|\nabla \rot\vc{A}\|_{q}\leq C\|\bomega\|_{q},
& \qquad &
\|\nabla^2 \rot \vc{A}\|_{q} \leq C\|\bomega\|_{1,q}, \\[8pt]
\|\nabla^2 \phi\|_{q}\leq C\|\Div \vu\|_{q}, &\qquad & 
\|\nabla^3 \phi\|_{q}\leq C\|\Div \vu\|_{1,q}.
\end{array}
\end{equation*}

We now use that the velocity satisfies the Navier boundary conditions. We have
\begin{equation}\label{11.e8}
\begin{array}{c}
\displaystyle -\mu\Delta \bomega_\varepsilon= \rot\big(K(\vr_\varepsilon)\vr_\varepsilon \vc{f} - K(\vre)\vre
\vue \cdot \nabla \vue \\[8pt]
\displaystyle -
\frac{1}{2} \varepsilon h K(\vre)\vue + \frac{1}{2} \varepsilon \vre \vue\big)
-\rot\Big(\frac{1}{2} \varepsilon \Delta \vre \vue\Big):=\vc{H}_1+\vc{H}_2 \mbox{ \ \ in } \Omega, \\[12pt]
\bomega_\varepsilon\cdot \vcg{\tau}_1=-(2\chi_2 -\lambda/\mu)\vue \cdot \vcg{\tau}_2 \mbox{ \ \ on } \partial \Omega,\\[8pt]
\bomega_\varepsilon\cdot \vcg{\tau}_2=(2\chi_1 -\lambda/\mu)\vue \cdot \vcg{\tau}_1 \mbox{ \ \ on } \partial \Omega,\\[8pt]
\Div \bomega_\varepsilon =0 \mbox{ \ \ on }\partial \Omega,
\end{array}
\end{equation}
where $\chi_k$ are the curvatures associated with the directions $\vcg{\tau}_k$. For the proof of relations (\ref{11.e8})$_{2,3}$ see e.g.
\cite{Mu_JDE} or \cite{MuRa}.

We may write
\begin{equation}\label{11.e9}
\bomega_\varepsilon=\bomega_\varepsilon^0+\bomega_\varepsilon^1+\bomega_\varepsilon^2,
\end{equation}
where 
\begin{equation}\label{11.e10}
\begin{array}{cccr}
-\mu \Delta \bomega_\varepsilon^0=0, &-\mu \Delta \bomega_\varepsilon^1=\vc{H}_1, &
-\mu \Delta \bomega_\varepsilon^2=\vc{H}_2 & \mbox{ in } \Omega,\\
\bomega_\varepsilon^0\cdot \vcg{\tau}_1=-(2\chi_2 -\lambda/\mu)\vue \cdot \vcg{\tau}_2, &
\bomega_\varepsilon^1\cdot \vcg{\tau}_1=0, & \bomega_\varepsilon^2\cdot \vcg{\tau}_1=0 & \mbox{ on } \partial \Omega,\\
\bomega_\varepsilon^0\cdot \vcg{\tau}_2=(2\chi_1 -\lambda/\mu)\vue \cdot \vcg{\tau}_1, &
\bomega_\varepsilon^1\cdot \vcg{\tau}_2=0, & \bomega_\varepsilon^2\cdot \vcg{\tau}_2=0 & \mbox{ on } \partial \Omega,\\
\Div \bomega_\varepsilon^0=0, & \Div \bomega_\varepsilon^1=0, &\Div \bomega_\varepsilon^2=0 & \mbox{ on } \partial \Omega.
\end{array}
\end{equation}
Using elliptic estimates and Lemma \ref{l 11.4.1} we can prove

\begin{lemma} \label{l 11.4.3} For the vorticity $\bomega_\varepsilon$ written in the form (\ref{11.e9}) we have:
\begin{equation}\label{11.e11}
\begin{array}{c}
\|\bomega_\varepsilon^2\|_{r}\leq C(k)\varepsilon^{1/2} \mbox{ \ \ \ for } 1\leq r \leq 2,
\\[8pt]
\|\bomega_\varepsilon^0\|_{1,q}+\|\bomega_\varepsilon^1\|_{1,q}\leq C(1+k^{1+\gamma (\frac 43 -\frac 2q)}) \mbox{ \ \ \ for }
2\leq q \leq 3m.
\end{array}
\end{equation}
\end{lemma}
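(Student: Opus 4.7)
\textit{Proof plan for Lemma \ref{l 11.4.3}.} The plan is to treat the three systems in \eqref{11.e10} independently as elliptic boundary-value problems of Agmon--Douglis--Nirenberg type for the Laplacian and, for each, reduce the proof of the $W^{1,q}$ or $L^{r}$ bound to a bound on the right-hand side plus boundary data that can then be controlled through the uniform information from Proposition \ref{pt11.2} and Lemma \ref{l 11.4.1}.

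For $\bomega_\varepsilon^0$ the interior equation is homogeneous, so standard elliptic regularity yields
$\|\bomega_\varepsilon^0\|_{1,q}\le C\|\vue\cdot\vcg{\tau}\|_{W^{1-1/q,q}(\partial\Omega)}$, and by the trace theorem this is dominated by $C\|\vue\|_{1,q}$. Since $2\le q\le 3m$, interpolation with the bound $\|\vue\|_{1,3m}\le C(1+k^{\gamma(3m-2)/(3m)})$ from Lemma \ref{l 11.4.1} gives a bound that is strictly smaller than the target $k^{1+\gamma(4/3-2/q)}$, so $\bomega_\varepsilon^0$ is never the dominant piece. For $\bomega_\varepsilon^1$ we use that $\vc{H}_1=\rot F$ with $F=K(\vre)\vre\vc{f}-K(\vre)\vre\vue\cdot\nabla\vue-\tfrac{\varepsilon}{2}hK(\vre)\vue+\tfrac{\varepsilon}{2}\vre\vue$. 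Elliptic regularity for the Laplacian with homogeneous boundary conditions \eqref{11.e10} gives $\|\bomega_\varepsilon^1\|_{1,q}\le C\|\vc{H}_1\|_{-1,q}\le C\|F\|_q$. The first term of $F$ contributes $k\|\vc{f}\|_\infty$, explaining the additive $1$ in the exponent, and the $\varepsilon$-terms are clearly of lower order. The nontrivial piece is $K(\vre)\vre\vue\cdot\nabla\vue$, which we estimate by H\"older combined with the Sobolev embedding $W^{1,3m}\hookrightarrow L^\infty$ (available because $m>1$ under the hypothesis $m=l+1>(3\gamma-1)/(3\gamma-7)$): plugging in $\|K(\vre)\vre\|_\infty\le k$ and the bound of Lemma \ref{l 11.4.1}, an interpolation of $\|\vue\|_{q_1}\|\nabla\vue\|_{q_2}$ with $1/q_1+1/q_2=1/q$ produces precisely the exponent $1+\gamma(\tfrac{4}{3}-\tfrac{2}{q})$ for $2\le q\le 3m$.

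For $\bomega_\varepsilon^2$, where the only available information on $\nabla\vre$ is the $\varepsilon$-dependent bound $\sqrt{\varepsilon}\|\nabla\vre\|_2\le C(k)$ from \eqref{b-m}, the idea is to extract a factor $\sqrt{\varepsilon}$ by writing
\begin{equation*}
\Delta\vre\,\vue=\Div(\nabla\vre\otimes\vue)-\nabla\vre\cdot\nabla\vue,
\end{equation*}
so that $\vc{H}_2=-\tfrac{\varepsilon}{2}\rot\Div(\nabla\vre\otimes\vue)+\tfrac{\varepsilon}{2}\rot(\nabla\vre\cdot\nabla\vue)$. By elliptic regularity (interpreted as a $W^{-2,r}$-, respectively $W^{-1,r}$-bound for the two summands) and Sobolev embedding,
$\|\bomega_\varepsilon^2\|_{r}\le C\bigl(\varepsilon\|\nabla\vre\|_2\|\vue\|_\infty+\varepsilon\|\nabla\vre\|_2\|\nabla\vue\|_{s}\bigr)$
for $r\le 2$ and a suitable $s$ depending on $r$ but $\le 3m$. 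Writing $\varepsilon=\sqrt{\varepsilon}\cdot\sqrt{\varepsilon}$ and absorbing one $\sqrt{\varepsilon}$ into $\|\nabla\vre\|_2$ via \eqref{b-m} leaves the remaining $\sqrt{\varepsilon}$ times a constant depending only on $k$, which is the asserted bound.

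The main obstacle is the bookkeeping of exponents in the middle step: the convective term $\vue\cdot\nabla\vue$ must be put in $L^q$ for every $2\le q\le 3m$ with the precise power $k^{\gamma(4/3-2/q)}$, which forces us to interpolate between the endpoint $q=2$ (controlled via $\|\vue\|_\infty\|\nabla\vue\|_2$) and the endpoint $q=3m$ (controlled via $\|\vue\|_\infty\|\nabla\vue\|_{3m}$), and to check that each interpolation exponent matches the right-hand side of \eqref{11.e11}. The $\bomega_\varepsilon^2$-estimate, by contrast, is technically easier because the smallness in $\varepsilon$ is built in; the complete calculation is in \cite{MuPo_CMP}.
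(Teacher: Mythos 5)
Your overall strategy — treating the three systems in \eqref{11.e10} as elliptic boundary-value problems, estimating $\vc{H}_1$ and $\vc{H}_2$ in $W^{-1,q}$, extracting $\sqrt{\varepsilon}$ for $\bomega_\varepsilon^2$ from $\sqrt{\varepsilon}\|\nabla\vre\|_2 \le C(k)$, and noting that $\bomega_\varepsilon^0$ is sub-dominant — is the right one, and the $\bomega_\varepsilon^0$ and $\bomega_\varepsilon^2$ pieces are handled correctly. However, the interpolation you propose for $\bomega_\varepsilon^1$ does not deliver the stated exponent $1+\gamma(\tfrac 43-\tfrac 2q)$, and the gap is genuine. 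Write $\nu := \tfrac\gamma 3\,\tfrac{3m-2}{m} = \gamma(1-\tfrac{2}{3m})$, so Lemma \ref{l 11.4.1} gives $\|\vue\|_{1,3m} \le C(1+k^\nu)$. Your endpoints are $\|\vue\|_\infty\|\nabla\vue\|_2$ at $q=2$ and $\|\vue\|_\infty\|\nabla\vue\|_{3m}$ at $q=3m$. Since the only $L^\infty$-control at hand is $\|\vue\|_\infty \le C\|\vue\|_{1,3m} \le Ck^\nu$, these yield, after multiplying by the factor $k$ from $K(\vre)\vre$, exponents $1+\nu$ at $q=2$ and $1+2\nu$ at $q=3m$; both strictly exceed the target $1+\tfrac\gamma 3$ (resp.\ $1+\gamma\tfrac{4m-2}{3m}$) precisely because $m>1$, and the surplus is $\tfrac{2\gamma(m-1)}{3m}$ at $q=3m$. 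Interpolating between these two already-lossy endpoints cannot repair this.

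To obtain the sharp exponent one needs an additional input that neither Lemma \ref{l 11.4.1} nor \eqref{b-m} records in this survey, namely the $k$-\emph{independent} energy bound $\|\vue\|_{1,2}\le C$ (available from the weak energy estimate for \eqref{ap-ns} and derived in the cited paper). Granting it, one interpolates $\|\vue\|_{1,p}\le \|\vue\|_{1,2}^{1-\theta_p}\|\vue\|_{1,3m}^{\theta_p}\le Ck^{\gamma(1-2/p)}$ for $2\le p\le 3m$, and then the Hölder split $\|\vue\cdot\nabla\vue\|_q\le\|\vue\|_{q_1}\|\nabla\vue\|_{q_2}$, with $q_1$ \emph{finite} and estimated through the Sobolev step $\|\vue\|_{q_1}\le C\|\vue\|_{1,p_1}$, $\tfrac{1}{q_1}=\tfrac{1}{p_1}-\tfrac 13$, yields an exponent $\gamma\bigl(1-\tfrac{2}{p_1}\bigr)+\gamma\bigl(1-\tfrac{2}{q_2}\bigr)=\gamma\bigl(\tfrac 43-\tfrac 2q\bigr)$ for any admissible split; taking $q_1=\infty$ forces $p_1>3$ and overshoots. (For instance at $q=2$ use $\|\vue\|_6\|\nabla\vue\|_3\le Ck^{\gamma/3}$, not $\|\vue\|_\infty\|\nabla\vue\|_2\le Ck^\nu$.) So your plan needs to (i) invoke the $k$-uniform $W^{1,2}$ estimate explicitly, and (ii) choose the Hölder split with both exponents finite; as stated, the middle step would produce a weaker bound on $\|G\|_\infty$ in Lemma \ref{l 11.4.4} and hence a stronger hypothesis on $\gamma$, $m$ than the theorem asserts.
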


We now introduce  the effective viscous flux which is in fact the potential part of the momentum equation.
Using the Helmholtz decomposition in the approximate momentum equation we have 
\begin{equation*}\label{11.e24}
\begin{array}{c}
\displaystyle \nabla (-(2 \mu+\nu) \Delta \phi_\varepsilon +P(\vre,\vte))=
\mu \Delta \rot \vc{A}_\varepsilon + K(\vre)\vre \vc{f} 
\\[12pt]
\displaystyle - K(\vre)\vre
\vue \cdot \nabla \vue 
-\frac 12 \varepsilon h K(\vre)\vue +\frac 12 \varepsilon \vre \vue
-\frac 12 \varepsilon\Delta \vre \vue.
\end{array}
\end{equation*}
We define
\begin{equation} \label{evf_eps}
G_\varepsilon = -(2 \mu+\nu) \Delta \phi_\varepsilon +P(\vre,\vte) = -(2 \mu+\nu) \Div \vue 
+P(\vre,\vte)
\end{equation}
and its limit version
\begin{equation} \label{evf}
G =  -(2 \mu+\nu) \Div \vu 
+\overline{P(\vr,\vt)}.
\end{equation}
Note that we are able to control  integrals $\int_\Omega G_\varepsilon \dx = \int_\Omega P(\vre,\vte) \dx$ and 
$\int_\Omega G \dx= \int_\Omega  \overline{P(\vr,\vt)} \dx$, where $\overline{P(\vr,\vt)} = \overline{P_b(\vr)} + \vt \big(\overline{\int_0^\vr
K(t)\dt}\big)$.

Using the results presented above we may show (see \cite{MuPo_CMP} for more details)

\begin{lemma} \label{l 11.4.4} 
We have, up to a subsequence $\varepsilon \to 0^{+}$:
\begin{equation}\label{11.e26}
G_\varepsilon \to G \mbox{ strongly in } L^2(\Omega) 
\end{equation}
and
\begin{equation} \label{11.e26a}
\|G\|_{\infty} \leq C(\eta)(1+k^{1+\frac 23 \gamma +\eta}) \mbox{ \ \ \ for any $\eta>0$.}
\end{equation}
\end{lemma}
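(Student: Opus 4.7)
The plan is to exploit the elliptic equation satisfied by $G_\varepsilon$, obtained from the identity just before \eqref{evf_eps} that expresses $\nabla G_\varepsilon$ through $\mu\Delta\rot\vc{A}_\varepsilon$ plus lower-order terms. Applying $\Div$ to both sides annihilates the vortex contribution, since $\Div(\Delta\rot\vc{A}_\varepsilon)=\Delta\Div(\rot\vc{A}_\varepsilon)=0$, and leaves a scalar elliptic problem of the form
\begin{equation*}
\Delta G_\varepsilon = \Div \vc{F}_\varepsilon, \qquad \vc{F}_\varepsilon := K(\vre)\vre \vc{f} - K(\vre)\vre \vue \cdot \nabla \vue + \tfrac 12 \varepsilon(\vre - hK(\vre))\vue - \tfrac 12 \varepsilon (\Delta \vre) \vue,
\end{equation*}
complemented by a boundary condition for $G_\varepsilon$ coming from \eqref{evf_eps}, the Navier conditions in \eqref{bc} and the formulas \eqref{11.e8}--\eqref{11.e10} for the vorticity.

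For the strong $L^2$-convergence \eqref{11.e26}, I would first show that $\vc{F}_\varepsilon$ is bounded in some $L^q(\Omega;\R^3)$, $q>1$, uniformly in $\varepsilon$. The dangerous contribution is the convective one, which, using $K(\vre)\vre\le k$ and Lemma \ref{l 11.4.1}, is bounded by $k\|\vue\|_{\infty}\|\nabla\vue\|_q\le C(k)$ thanks to the Sobolev embedding $W^{1,3m}\hookrightarrow L^\infty$ valid for $m>1$. The $\varepsilon$-small terms are controlled through $\sqrt{\varepsilon}\|\nabla\vre\|_2\le C(k)$ from \eqref{b-m}; distributing the $\varepsilon$ between the two factors puts the Laplacian term into $W^{-1,r}$, and it tends to zero there. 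Classical elliptic regularity then yields a bound of $G_\varepsilon$ in $W^{1,q}(\Omega)$, which embeds compactly in $L^2(\Omega)$. Combined with the weak convergence $G_\varepsilon\rightharpoonup G$ in $L^2(\Omega)$ (immediate from \eqref{11.e2}, the definitions \eqref{evf_eps}--\eqref{evf}, and the weak$-*$ limit of $P(\vre,\vte)$) this compactness upgrades the convergence to the strong one in $L^2(\Omega)$.

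For the $L^\infty$-bound \eqref{11.e26a} I would apply the Stampacchia-type estimate $\|G_\varepsilon\|_\infty\le C\|\vc{F}_\varepsilon\|_q$, valid for the elliptic problem above with $q>3$. The decisive term $K(\vre)\vre\,\vue\cdot\nabla\vue$ is split as
\begin{equation*}
\|K(\vre)\vre\, \vue\cdot\nabla\vue\|_q \le k\,\|\vue\|_{p_1}\|\nabla\vue\|_{p_2}, \qquad \tfrac{1}{p_1}+\tfrac{1}{p_2}=\tfrac{1}{q},
\end{equation*}
choosing $p_2$ close to $3m$ and $p_1$ close to its Sobolev conjugate, and then interpolating $\|\vue\|_{p_1}$ between an a priori $L^2$-norm (controlled by $M$ and data, independently of $k$) and the $W^{1,3m}$-norm from Lemma \ref{l 11.4.1}. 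Together with $q=3+\eta'(\eta)$, this bookkeeping — together with the already known control of the remaining terms of $\vc{F}_\varepsilon$ — yields $\|G_\varepsilon\|_\infty\le C(\eta)(1+k^{1+\frac{2}{3}\gamma+\eta})$. The bound is transferred to $G$ by weak-$*$ lower semicontinuity.

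The main obstacle, in my view, is not the derivation of the elliptic equation or the compactness argument but the sharpness of the exponent in the $L^\infty$-bound: naive Sobolev embedding $\vue\in L^\infty$ would cost a factor $k^{\frac{\gamma}{3}\frac{3m-2}{m}}$ in each velocity slot and produce an exponent close to $1+2\gamma$ rather than $1+\tfrac{2}{3}\gamma+\eta$. Recovering the announced power demands an interpolation of $\vue$ between $k$-independent and $k$-dependent norms, together with a careful split of the convective term that uses the boundedness of the factor $K(\vre)\vre$ together with the subcritical growth of the pressure. Establishing the correct boundary condition for $G_\varepsilon$ from the Navier slip conditions is a secondary technicality that nevertheless has to be handled to justify the elliptic estimates globally on $\Omega$.
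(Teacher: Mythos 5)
Your plan is conceptually in the right neighbourhood, and you identify the decisive point — the interpolation of $\vue$ between a $k$-independent norm and the $k$-dependent norm of Lemma~\ref{l 11.4.1} — that upgrades the naive exponent $1+2\gamma$ to the announced $1+\tfrac 23\gamma+\eta$. The strong $L^2$-convergence via $W^{1,q}$-boundedness plus compact embedding, with the $\varepsilon$-dependent remainder pushed into $W^{-1,r}$ and sent to zero, is also the right mechanism. However, the decision to take $\Div$ of the identity preceding \eqref{evf_eps} and treat $G_\varepsilon$ as a solution of a scalar elliptic equation is a detour that does not buy what you claim, and it hides a genuine issue in the boundary-condition step that you dismiss as secondary.

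Taking the divergence does \emph{not} annihilate the vortex contribution: it merely removes it from the interior equation and relocates it into the Neumann boundary data $\partial_{\vc{n}} G_\varepsilon = \vc{R}_\varepsilon\cdot\vc{n}$, which still contains $\mu\,\Delta\rot\vc{A}_\varepsilon\cdot\vc{n} = -\mu\,(\rot\bomega_\varepsilon)\cdot\vc{n}$. To run Stampacchia or any global elliptic estimate up to the boundary you then need exactly the $W^{1,q}$-control of the vorticity that Lemma~\ref{l 11.4.3} furnishes via the decomposition \eqref{11.e9}--\eqref{11.e10}, together with an estimate on the remainder $\bomega_\varepsilon^2$. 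The paper has already done this work; the economical route is therefore to use the displayed representation $\nabla G_\varepsilon = \mu\,\Delta\rot\vc{A}_\varepsilon + K(\vre)\vre\vc{f} - K(\vre)\vre\vue\cdot\nabla\vue + O(\varepsilon)$-terms directly: the main part of $\nabla G_\varepsilon$ is bounded in $L^q(\Omega)$, $q$ slightly above $3$, by $C(1+k^{1+\gamma(\frac43-\frac2q)}) = C(\eta)(1+k^{1+\frac23\gamma+\eta})$ using Lemma~\ref{l 11.4.3} and the same interpolation for the convective term; the $\varepsilon$-terms vanish; control of $\int_\Omega G_\varepsilon = \int_\Omega P(\vre,\vte)\,\dx$ closes the argument by Poincar\'e and $W^{1,q}\hookrightarrow L^\infty\cap\cap L^2$ (compactly into $L^2$). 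No auxiliary elliptic boundary-value problem for $G_\varepsilon$ and no Stampacchia machinery are needed.

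A second, smaller concern: for the Stampacchia route you need $\vc{F}_\varepsilon\in L^q$ with $q>3$, but the $\varepsilon\Delta\vre\,\vue$-term is only controlled as $\sqrt{\varepsilon}\cdot\sqrt{\varepsilon}\nabla\vre\,\vue$ in weak (divergence) form, i.e.\ in $W^{-1,r}$; this is sufficient for the $L^2$-compactness part as you note, but it is not an $L^q$-datum for the $L^\infty$-estimate. You implicitly sidestep this by proving the $L^\infty$ bound only for the limit $G$ (via lower semicontinuity), which is fine, but then the Stampacchia step must be applied to the \emph{limit} equation, not to $\Delta G_\varepsilon=\Div\vc{F}_\varepsilon$, and the limit Neumann data again involves the limit vorticity. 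In short: the vorticity cannot be avoided; since the estimates for it (Lemma~\ref{l 11.4.3}) are exactly calibrated to give $1+\gamma(\frac43-\frac2q)$, they should be used head-on rather than hidden in a boundary condition.
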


The following two results form the core of the method. First, we show that  $\vr \leq (k-3)$ a.e. in $\Omega$, i.e. $K(\vre)\to 1$, and next we get that $\vre\to \vr$ strongly in any $L^q(\Omega)$. More precisely,

\begin{lemma}\label{lt11.3}
{\it  There exists a sufficiently large number $k_0>0$ such that for $k> k_0$
\begin{equation}\label{s9}
\frac{k-3}{k}(k-3)^\gamma -\|G\|_{\infty} \geq 1
\end{equation}
and for a subsequence $\varepsilon \to 0^{+}$ it holds
\begin{equation}\label{s2}
\lim_{\varepsilon \to 0^+} |\{ x \in \Omega: \vre(x) > k-3 \} | =0.
\end{equation}
In particular it follows: $\overline{K(\vr)\vr}=\vr$ a.e. in $\Omega$.}
\end{lemma}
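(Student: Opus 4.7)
\emph{Approach.} The plan is to combine the approximate continuity equation in \eqref{ap-ns} with the effective viscous flux identity \eqref{evf_eps} in order to control the measure of the superlevel set $\{\vre>k-3\}$. The inequality (s9) follows at once from Lemma \ref{l 11.4.4}: since $\|G\|_\infty \leq C(\eta)(1+k^{1+\frac 23\gamma+\eta})$ for any $\eta>0$, while $\frac{k-3}{k}(k-3)^\gamma$ grows like $k^{\gamma}$ as $k\to\infty$, the assumption $\gamma>3$ of Theorem \ref{t11.1} permits one to choose $\eta$ small enough that $\gamma > 1+\frac{2\gamma}{3}+\eta$, so the left-hand side of (s9) dominates the right-hand side for all $k \geq k_0$ with $k_0$ sufficiently large.

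\emph{Measure estimate (s2).} Fix $\delta\in(0,1)$ and pick a smooth nondecreasing cut-off $\phi_\delta:\R\to[0,1]$ with $\phi_\delta \equiv 0$ on $(-\infty, k-3-\delta]$ and $\phi_\delta \equiv 1$ on $[k-3,\infty)$, so that $\phi_\delta' \geq 0$ is supported in $[k-3-\delta, k-3]$. Set $\Psi_\delta(z) := \int_0^z \phi_\delta'(t)K(t)\, t\, dt$. Since $K\equiv 1$ on $[k-3-\delta, k-3]\subset[0,k-1]$ (valid as soon as $k\geq 4$), one has $\Psi_\delta(\vre)=0$ on $\{\vre \leq k-3-\delta\}$, $\Psi_\delta(\vre) \geq k-3-\delta$ on $\{\vre \geq k-3\}$, and $\|\Psi_\delta(\vre)\|_\infty \leq k-3$. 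Multiplying the first equation of \eqref{ap-ns} by $\phi_\delta(\vre)$, integrating over $\Omega$ with the boundary conditions $\pder{\vre}{\vc{n}}=0$ and $\vue\cdot\vc{n}=0$, and dropping the non-negative contributions $\intO{\varepsilon\vre\phi_\delta(\vre)}$ and $\intO{\varepsilon \phi_\delta'(\vre)|\nabla\vre|^2}$ yields
\begin{equation*}
\intO{\Psi_\delta(\vre)\Div \vue} \leq C\varepsilon.
\end{equation*}
Substituting $(2\mu+\nu)\Div\vue = P(\vre,\vte) - G_\varepsilon$ from \eqref{evf_eps} and using $P(\vre,\vte)\geq P_b(\vre)\geq (k-3-\delta)^\gamma$ on the support of $\Psi_\delta(\vre)$ (while $\vte \int_0^{\vre} K(t)\, dt \geq 0$) gives
\begin{equation*}
(k-3-\delta)^\gamma \intO{\Psi_\delta(\vre)} \leq \intO{\Psi_\delta(\vre)G_\varepsilon} + C\varepsilon.
\end{equation*}
The strong convergence $G_\varepsilon\to G$ in $L^2(\Omega)$ from Lemma \ref{l 11.4.4} and the uniform bound $\|\Psi_\delta(\vre)\|_\infty \leq k-3$ imply $\intO{\Psi_\delta(\vre)G_\varepsilon} \leq \|G\|_\infty\intO{\Psi_\delta(\vre)} + o_\varepsilon(1)$, whence
\begin{equation*}
\big[(k-3-\delta)^\gamma-\|G\|_\infty\big]\intO{\Psi_\delta(\vre)} \leq o_\varepsilon(1).
\end{equation*}
By (s9) (already established), $(k-3)^\gamma \geq \|G\|_\infty + 1$, so by continuity the bracket above remains strictly positive for $\delta$ small enough. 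Hence $\intO{\Psi_\delta(\vre)}\to 0$ as $\varepsilon\to 0^+$; combined with $\Psi_\delta(\vre)\geq k-3-\delta$ on $\{\vre\geq k-3\}$ this gives (s2).

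\emph{Final identification and main obstacle.} Because $K\equiv 1$ on $[0,k-1]$ and $0\leq \vre\leq k$, one has $\|K(\vre)\vre-\vre\|_1 \leq k\cdot |\{\vre>k-1\}| \leq k\cdot |\{\vre>k-3\}| \to 0$, hence $K(\vre)\vre \to \vr$ in $L^1(\Omega)$; combined with the weak-$*$ convergences listed in \eqref{11.e2} this forces $\overline{K(\vr)\vr}=\vr$ almost everywhere. The principal difficulty of the argument is the balance of algebraic powers of $k$: the pressure contributes the factor $k^\gamma$ on the superlevel set, whereas the effective viscous flux grows like $k^{1+\frac{2\gamma}{3}+\eta}$ by Lemma \ref{l 11.4.4}, and it is precisely the hypothesis $\gamma>3$ (central to Theorem \ref{t11.1}) that renders the former dominant and thus drives the whole scheme.
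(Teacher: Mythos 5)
Your proposal is correct, and the method is the natural one for this type of statement: use the approximate continuity equation with a localizing test function (a smooth cutoff at height $k-3$), convert the resulting bound on $\intO{\Psi_\delta(\vre)\Div\vue}$ into a pressure estimate via the effective viscous flux identity \eqref{evf_eps}, and then exploit the gap (s9) between the pressure level and $\|G\|_\infty$ to force the superlevel set to shrink. All the individual steps check out: the sign of the dropped terms $\intO{\varepsilon\vre\phi_\delta(\vre)}$ and $\intO{\varepsilon\phi_\delta'(\vre)|\nabla\vre|^2}$ is as you claim under the Neumann condition on $\vre$; the identity $\intO{\phi_\delta(\vre)\Div(K(\vre)\vre\vue)}=\intO{\Psi_\delta(\vre)\Div\vue}$ is correct using $\vue\cdot\vc{n}=0$; the bounds $k-3-\delta\le\Psi_\delta(\vre)\le k-3$ on $\{\vre\ge k-3\}$ follow since $K\equiv 1$ on $[0,k-1]\supset[k-3-\delta,k-3]$; and $2\mu+\nu>0$ is guaranteed by $2\mu+3\nu>0$ together with $\mu>0$. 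The passage from (s9) to $(k-3)^\gamma>\|G\|_\infty+1$ uses $\frac{k-3}{k}<1$ and is valid, so the continuity argument for $\delta$ small is unobjectionable, and the final $L^1$-to-weak-$*$ argument identifying $\overline{K(\vr)\vr}$ with $\vr$ is standard. The paper itself omits the proof and refers to \cite{MuPo_CMP}; the specific prefactor $\frac{k-3}{k}$ in (s9) may reflect a slightly different choice of cutoff or normalization there, but since you prove (s9) directly from Lemma \ref{l 11.4.4} and $\gamma>3$, and only use its weaker consequence $(k-3)^\gamma>\|G\|_\infty+1$, this discrepancy is immaterial. The argument is complete.
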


\begin{lemma} \label{lt11}
 We have
\begin{equation}\label{11.s14}
\int_\Omega \overline{P(\vr,\vt)\vr}\ \dx \leq \int_\Omega G \vr \ \dx 
\mbox{ \ \ and \ \ } \int_\Omega \overline{P(\vr,\vt)}\vr \ \dx=\int_\Omega G \vr \ \dx;
\end{equation}
consequently, $\overline{P(\vr,\vt)\vr}=\overline{P(\vr,\vt)}\vr$ and
up to a subsequence $\varepsilon \to 0^+$
\begin{equation}\label{11.s15}
 \vre \to \vr \mbox{ \ \ strongly in } L^q(\Omega) 
\mbox{ \ \ for any \ \ } q<\infty.
\end{equation}

\end{lemma}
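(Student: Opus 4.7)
The plan is to combine the strong $L^2$-convergence of the effective viscous flux from Lemma \ref{l 11.4.4} with renormalization arguments at both the limit and the approximate level, and finally to invoke Lemma \ref{l 4.5}. Once the two relations in \eqref{11.s14} are at hand, the equality $\int G\vr\dx=\int\overline{P(\vr,\vt)}\vr\dx$ and the inequality $\int\overline{P(\vr,\vt)\vr}\dx\leq\int G\vr\dx$ together force $\int_\Omega(\overline{P(\vr,\vt)\vr}-\overline{P(\vr,\vt)}\vr)\dx\leq 0$. On the other hand, Lemma \ref{l 4.5} applied to the non-decreasing map $z\mapsto P(z,\vt)$, together with the strong convergence $\vte\to\vt$, gives $\overline{P(\vr,\vt)\vr}\geq\overline{P(\vr,\vt)}\vr$ a.e.\ in $\Omega$. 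These two facts force pointwise equality; using the decomposition $P=P_b+\vt\int_0^\vr K(t)\dt$ from \eqref{ap-P} and the elementary identity $\overline{(\int_0^{\vr}K\dt)\,\vr}=\overline{\vr^2}$ (which follows because $\int_0^{\vre}K\dt-\vre$ is supported on $\{\vre>k-1\}$, a set of vanishing measure by Lemma \ref{lt11.3}), the $\vt$-dependent piece yields $\vt(\overline{\vr^2}-\vr^2)=0$ a.e., and since $\vt>0$ a.e.\ we conclude $\overline{\vr^2}=\vr^2$ a.e. This is equivalent to $\vre\to\vr$ strongly in $L^2(\Omega)$, which, interpolated with the uniform bound $\vre\leq k$, upgrades to strong convergence in every $L^q(\Omega)$ for $q<\infty$.

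For the equality $\int\overline{P(\vr,\vt)}\vr\dx=\int G\vr\dx$ I would first reduce it, using $G=\overline{P(\vr,\vt)}-(2\mu+\nu)\Div\vu$, to the single claim $\int_\Omega\vr\Div\vu\dx=0$. The limit density satisfies $\vr\in L^\infty(\Omega)$ (Lemma \ref{lt11.3}) while $\vu\in W^{1,q}(\Omega;\R^3)$ for every $q<\infty$ with $\vu\cdot\vc{n}=0$ on $\partial\Omega$, so $(\vr,\vu)$ is a renormalized solution of the limit continuity equation by Lemma \ref{l II4.1}. Choosing $b(z)=z^\alpha$ with $\alpha>1$ produces $\Div(\vr^\alpha\vu)+(\alpha-1)\vr^\alpha\Div\vu=0$; integrating over $\Omega$ (the boundary contribution vanishes because $\vu\cdot\vc{n}=0$) yields $\int_\Omega\vr^\alpha\Div\vu\dx=0$ for every $\alpha>1$. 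Dominated convergence with the uniform majorant $\|\vr\|_\infty^{\alpha_0}$ then allows the passage $\alpha\to 1^+$ and delivers $\int_\Omega\vr\Div\vu\dx=0$.

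For the inequality $\int\overline{P(\vr,\vt)\vr}\dx\leq\int G\vr\dx$ I would use that $G_\varepsilon\to G$ strongly in $L^2(\Omega)$ by Lemma \ref{l 11.4.4}. Combined with $\vre\rightharpoonup^* \vr$ in $L^\infty(\Omega)$ this gives $\int_\Omega G_\varepsilon\vre\dx\to\int_\Omega G\vr\dx$; inserting $G_\varepsilon=P(\vre,\vte)-(2\mu+\nu)\Div\vue$ and passing to the weak limit in each factor reduces the inequality to $\limsup_{\varepsilon\to 0^+}\int_\Omega\Div\vue\,\vre\dx\leq 0$. To establish this I would multiply the approximate continuity equation \eqref{ap-ns}$_1$ by $\alpha\vre^{\alpha-1}$ (with $\alpha>1$) and integrate over $\Omega$; the boundary conditions $\pder{\vre}{\vc{n}}=0$ and $\vue\cdot\vc{n}=0$ wipe out the boundary terms, and introducing $\eta_\alpha(z):=\int_0^z K(t)t^{\alpha-1}\dt$ (so that $\nabla\eta_\alpha(\vre)=K(\vre)\vre^{\alpha-1}\nabla\vre$) produces the identity
\begin{equation*}
\varepsilon\int_\Omega\vre^\alpha\dx+\varepsilon(\alpha-1)\int_\Omega\vre^{\alpha-2}|\nabla\vre|^2\dx+(\alpha-1)\int_\Omega\eta_\alpha(\vre)\Div\vue\dx=\varepsilon h\int_\Omega K(\vre)\vre^{\alpha-1}\dx.
\end{equation*}
Dropping the non-negative gradient term and using $\vre\leq k$ yields $(\alpha-1)\int_\Omega\eta_\alpha(\vre)\Div\vue\dx\leq C(k,\alpha)\varepsilon$, hence $\limsup_{\varepsilon\to 0^+}\int_\Omega\eta_\alpha(\vre)\Div\vue\dx\leq 0$ for every fixed $\alpha>1$. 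The argument is closed by two observations: $\eta_\alpha(z)\to A(z):=\int_0^z K(t)\dt$ uniformly on $[0,k]$ as $\alpha\to 1^+$, and $A(\vre)-\vre$ is supported on $\{\vre>k-1\}$, whose measure vanishes by Lemma \ref{lt11.3}. Combining these with the uniform $L^2$-bound on $\Div\vue$ through a diagonal argument over a sequence $\alpha_n\downarrow 1$ produces $\limsup_{\varepsilon\to 0^+}\int_\Omega\Div\vue\,\vre\dx\leq 0$.

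The hardest step is precisely this last passage: the approximate continuity equation does not control $\int_\Omega\Div\vue\,\vre\dx$ directly, and multiplying it naively by $\vre$ yields only $\int_\Omega B(\vre)\Div\vue\dx$ with $B(z)=\int_0^z K(t)t\dt\sim z^2/2$. The exponent-renormalization by $\alpha\vre^{\alpha-1}$ with $\alpha>1$, combined with the crucial information $|\{\vre>k-1\}|\to 0$ from Lemma \ref{lt11.3} and the uniformity (on $[0,k]$, hence in $\varepsilon$) of the convergence $\eta_\alpha\to A$ as $\alpha\to 1^+$, is the technical device that extracts the correct first-order quantity in $\vre$.
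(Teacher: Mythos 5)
Your overall architecture — strong $L^2$-convergence of $G_\varepsilon$ to pass to the limit in $\int G_\varepsilon\vre\dx$, renormalization at both the approximate and the limit level to kill the $\Div\vu$-contributions, and Lemma \ref{l 4.5} applied to the decomposition $P=P_b+\vt\int_0^\vr K\dt$ to upgrade the integral information to a pointwise identity — is sound and matches the standard line of reasoning behind such lemmas. The key quantitative steps (the identity obtained by multiplying \eqref{ap-ns}$_1$ by $\vre^{\alpha-1}$, the uniform bound $\sup_{[0,k]}|\eta_\alpha-A|\to 0$ as $\alpha\to 1^+$, the control of $A(\vre)-\vre$ via $|\{\vre>k-1\}|\to 0$, and the extraction of $\overline{\vr^2}=\vr^2$ from the $\vt$-linear part) are all carried out correctly, and the final interpolation with the uniform $L^\infty$ bound gives \eqref{11.s15}.

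One technical point deserves attention. For $1<\alpha<2$ the multiplier $\vre^{\alpha-1}$ is only H\"older at $\vre=0$, so the integration by parts producing the term $\varepsilon(\alpha-1)\int_\Omega\vre^{\alpha-2}|\nabla\vre|^2\dx$ is not immediate, nor is it obvious that this term is a priori finite for a general nonnegative $\vre\in W^{2,p}$. You should either first replace $\vre^{\alpha-1}$ by $(\vre+\sigma)^{\alpha-1}$ (with a corresponding $\eta_{\alpha,\sigma}$ and $\sigma\to 0^+$ by dominated convergence at the end), or observe directly that on $\{\vre=0\}$ one has $\nabla\vre=0$ a.e.\ and the singular term is interpreted as zero. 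A cleaner variant of the same idea multiplies \eqref{ap-ns}$_1$ by $\ln\vre$: then $\int_\Omega\ln\vre\,\Div(K(\vre)\vre\vue)\dx=\int_\Omega A(\vre)\Div\vue\dx$ directly, the dissipative term $\varepsilon\int_\Omega|\nabla\vre|^2/\vre\,\dx$ is nonnegative, and the right-hand side $\varepsilon h\int_\Omega K(\vre)\ln\vre\,\dx$ is bounded above by $\varepsilon h|\Omega|\ln k$, avoiding the double limit in $(\varepsilon,\alpha)$ entirely; the positivity of $\vre$ at the approximate level (and the same $\sigma$-regularization if needed) supports this. Similarly, when you invoke the renormalized continuity equation at the limit level with $b(z)=z^\alpha$, note that $z^\alpha$ must be truncated above $\|\vr\|_\infty$ before it fits Definition \ref{d 1.3}; alternatively $b(z)=z\ln z$ with a cutoff near the origin gives $\int_\Omega\vr\Div\vu\,\dx=0$ in one shot. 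These are small gaps and do not affect the validity of the argument.

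Since the paper itself delegates the proof of this lemma entirely to \cite{MuPo_CMP} and records only the statement, I cannot certify that your route coincides line-by-line with the original, but it is a correct and self-contained proof of all four assertions of the lemma.
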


Recall that from Lemma \ref{lt11.3} and due to the strong convergence of the temperature it follows
\begin{equation*}\label{11.h1}
P(\vre, \vte) \to p(\vr,\vt)\mbox{ \ \  strongly in } L^2(\Omega),
\end{equation*}
hence (\ref{11.e26}) implies
\begin{equation}\label{11.h2}
\Div \vue \to \Div \vu \mbox{  \ \ \ \ strongly in } L^2(\Omega).
\end{equation}
Additionally, we have already proved that
\begin{equation}\label{11.h3}
\rot \vue \to \rot \vu \mbox{ \ \ \ \   strongly in } L^2(\Omega;\R^3),
\end{equation}
since we observed that the vorticity can be written as sum of two  parts, one bounded in $W^{1,q}(\Omega;\R^3)$, i.e. $\vcg{\omega}_\varepsilon^0 + \vcg{\omega}_\varepsilon^1$, and the other one going 
strongly to zero 
in $L^2(\Omega;\R^3)$, i.e. $\vcg{\omega}_\varepsilon^2$. 
Whence
\begin{equation}\label{11.h5}
\tn{S}(\nabla \vue):\nabla \vue \to \tn{S}(\vu) :\nabla \vu \mbox{ \ \ \ strongly  at least in } L^1(\Omega)
\end{equation}
which finishes the proof of Theorem \ref{t11.1}.

\begin{remark} \label{r xx}Similar results as above in the case of the two-dimensional flow can be found in the paper \cite{PePo_CMUC}. The existence of weak solutions with similar properties as in Theorem \ref{t11.1} was proved there for $\gamma >2$ and $m=l+1> \frac{\gamma-1}{\gamma-2}$.
\end{remark}

\section{Weak solutions in two space dimensions}

We consider our system of equations (\ref{1.1})--(\ref{1.3}) with the boundary conditions (\ref{1.4})--(\ref{1.5}) and the total mass \eqref{1.5a} in a bounded domain $\Omega \subset \R^2$. We assume the viscous part of the stress tensor in the form \eqref{1.6} $(N=2)$ with \eqref{1.7} for $\alpha=1$ and the heat flux in the form \eqref{1.8} with \eqref{1.9}. Moreover, we take $L=const$ in \eqref{1.5}. We assume the pressure law in the form \eqref{1.18a} or for $\gamma=1$ we take 
\begin{equation} \label{8.1.12}
p = p(\vr,\vt) = \vr \vt + \frac{\vr^2}{\vr + 1} \ln^\alpha(1+\vr)
\end{equation}
with $\alpha >0$, the corresponding specific internal energy is
\begin{equation} \label{8.1.16}
e = e(\vr,\vt) = \frac{\ln^{\alpha+1}(1+\vr)}{\alpha+1} + c_v \vt, \qquad c_v=const>0,
\end{equation} 
and the specific entropy is
\begin{equation} \label{8.1.17}
s(\vr, \vt) = \ln \frac{\vt^{c_v}}{\vr} + s_0.
\end{equation}
We consider weak solutions to the problem above defined similarly as in Definition \ref{d 1.1} with the corresponding modifications for the pressure law \eqref{8.1.12}. 
This problem was studied in \cite{NoPo_AppMa} for both \eqref{1.18a} and \eqref{8.1.12} and the result for the latter was improved in \cite{Po_JPDE}. More precisely, we have the following results

\begin{theorem}[Novotn\'y, Pokorn\,y, 20011; Pokorn\'y, 2011] \label{t 8.1}
Let $\Omega \in C^2$ be a bounded domain in $\R^2$, $\vc{f} \in
L^\infty(\Omega;\R^2)$, $\Theta_0 \geq K_0 >0$ a.e. on $\partial
\Omega$, $\Theta_0 \in L^1(\partial \Omega)$, $L>0$. \newline
(i) Let $\gamma >1$, $m > 0$.
Then there exists a weak  solution 
to our problem with the pressure law \eqref{1.18a}. \newline
(ii) Let $\alpha >1$ and $\alpha \geq \max \{1,\frac 1m\}$, $m>0$. Then there exists a weak solution to our problem with the pressure law \eqref{8.1.12}. \newline
Moreover, $(\vr,\vu)$ extended by zero outside of $\Omega$, is a renormalized solution 
to the continuity equation.
\end{theorem}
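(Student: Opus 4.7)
The plan is to transplant the four-level approximation scheme of Section 6 into two space dimensions, with modifications dictated by the improved Sobolev embedding $W^{1,2}(\Omega)\hookrightarrow L^{q}(\Omega)$ for every $q<\infty$ and, in case (ii), by the borderline character of the pressure, which forces the use of Orlicz spaces in place of Lebesgue spaces for the density estimates. I would use the same artificial pressure $\delta(\vr^{\beta}+\vr^{2})$, the same $\varepsilon$-regularisation of the continuity equation, the same $\eta$-regularisation of the viscosity, and the same Galerkin approximation of $\vu$ in a basis of $W^{1,2}_{0}(\Omega;\R^{2})$. Existence at the deepest level and the three successive limit passages $N\to\infty$, $\eta\to 0^{+}$, $\varepsilon\to 0^{+}$ follow the scheme of Proposition \ref{p 5.1} with only minor dimensional bookkeeping; the place where $N=2$ plays a genuine role is that Sobolev embedding yields $\vud\in L^{q}(\Omega;\R^{2})$ for all $q<\infty$, so that the awkward integrability conditions on $\vr|\vu|^{2}$ that drove the three-dimensional analysis become essentially automatic.

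For the final passage $\delta\to 0^{+}$, testing the approximate entropy inequality and the global total energy balance by $\psi\equiv 1$ produces, via the two-dimensional Korn inequality (Lemma \ref{l 4.1}) and Sobolev embedding,
\[
\|\vud\|_{1,2}+\|\nabla\vtd^{m/2}\|_{2}+\|\ln\vtd\|_{1,2}+\|\vtd\|_{q}\le C
\]
for every $q<\infty$, with $C$ independent of $\delta$. In case (i), the Bogovskii-type pressure estimate (as in Lemma \ref{l 3.3}) then works for every $\gamma>1$ and every $m>0$: the convective term $\vrd(\vud\otimes\vud):\nabla\vcg{\varphi}$, which was the bottleneck in three dimensions, is absorbed using the better integrability of $\vud$, yielding $\vrd\in L^{\gamma+\omega}(\Omega)$ with some $\omega>0$ depending on $\gamma$ and $m$. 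All integrabilities required by Definition \ref{d 1.1} then follow.

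Case (ii) is the principal obstacle. Since the non-thermal part of the pressure is only $\vr\ln^{\alpha}(1+\vr)$, no Lebesgue-scale gain is available from the Bogovskii test; one must instead work throughout in the Orlicz space $\LL{1}{\alpha}(\Omega)$ generated by the Young function $z\mapsto z\ln^{\alpha}(1+z)$. The assumption $\alpha>1$ ensures reflexivity and hence weak compactness of bounded sequences, while $\alpha\ge 1/m$ is the sharp threshold guaranteeing that the flux $\vrd e(\vrd,\vtd)\vud$, which contains the factor $\vr\ln^{\alpha+1}(1+\vr)$, is equi-integrable on $\Omega$ --- this is where the bounds $\vtd\in L^{3m}(\Omega)$ and $\vud\in L^{q}(\Omega;\R^{2})$ for all $q<\infty$ enter through Orlicz--H\"older duality. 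Systematically replacing the Lebesgue estimates of Section 7 by their Orlicz counterparts is the step that most requires care.

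The proof is completed by the effective viscous flux identity and the oscillation defect measure machinery of Section 11.2--11.3. Testing the approximate and limit momentum equations with $\zeta\nabla\Delta^{-1}(1_{\Omega}T_{k}(\vrd))$ and $\zeta\nabla\Delta^{-1}(1_{\Omega}\overline{T_{k}(\vr)})$ respectively, together with the $\R^{2}$ analogues of the commutator Lemmas \ref{l 4.3}--\ref{l 4.4}, yields identity \eqref{II4.12}. In case (i) the oscillation defect measure is controlled as in Lemma \ref{l II4.2} using the inequality $(a-b)^{\gamma}\le a^{\gamma}-b^{\gamma}$; in case (ii) the same role is played by the monotonicity of $z\mapsto z^{2}(1+z)^{-1}\ln^{\alpha}(1+z)$, giving an Orlicz-type defect bound still strong enough to apply Lemma \ref{l II4.1}. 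Once $\vrd\to\vr$ in $L^{1}(\Omega)$ is established, all remaining limits are routine; the extension by zero of $(\vr,\vu)$ outside $\Omega$ is then a renormalized solution by the standard Friedrichs commutator argument.
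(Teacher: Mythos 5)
Your overall scheme (same four-level approximation, Bogovskii-type pressure test with a sub-unit power, Orlicz-space bookkeeping, effective viscous flux, oscillation defect measure) is indeed the one used in the paper for case (ii), but there is one genuine gap in the final step and a couple of incorrect supporting claims worth flagging.

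The real gap is your assertion that the Orlicz-type oscillation-defect bound is ``still strong enough to apply Lemma~\ref{l II4.1}.'' It is not, and the paper is explicit about this. Lemma~\ref{l II4.1} requires an $L^q$-bound on ${\bf osc}_q[\vrd\to\vr]$ with $\frac1q<1-\frac1r$, i.e.\ here $q>2$. What the 2D Orlicz estimate (Lemma~\ref{l 8.7}) actually gives is boundedness of the oscillation defect measure in the scale $L_{z^2\ln^\sigma(1+z)}$ for some small $\sigma>0$; this space is strictly larger than every $L^{2+\epsilon}(\Omega)$, so the hypothesis of Lemma~\ref{l II4.1} is not met. The paper therefore replaces Lemma~\ref{l II4.1} by a separate argument (Lemma~\ref{l 8.7a}, following Erban): one mollifies the renormalized equation for the cut-offs $\overline{T_k(\vr)}$, passes $m\to\infty$ in the mollifier, and then lets $k\to\infty$ using that the logarithmic factor in $L_{z^2\ln^\sigma(1+z)}$ is enough to control the commutator remainder. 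This replacement lemma is the novel technical step in the 2D result and cannot be skipped.

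Two subsidiary claims are also off. First, the space $L_{z\ln^\alpha(1+z)}(\Omega)$ is \emph{not} reflexive: its complementary Young function behaves like $e^{z^{1/\alpha}}$, which does not satisfy the $\Delta_2$-condition, and reflexivity fails precisely when one of the two complementary functions fails $\Delta_2$. Weak-$*$ compactness (as the dual of $E_{\Phi^*}$) is what one actually uses; the paper works only with that. Second, the role of the hypothesis $\alpha\ge 1/m$ is not equi-integrability of the energy flux $\vr e(\vr,\vt)\vu$: it enters at the end of the proof of the oscillation-defect estimate, where one needs to absorb a factor $(1+\vt)^s$ via the Orlicz--H\"older inequality against $(1+\vt)^{-1}\ln^\alpha$-weighted oscillation, and this is possible only when $\alpha>\frac1m$ with $\sigma,s-1$ small relative to $\alpha-\frac1m$. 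Finally, what is used for $g(t)=t^2(1+t)^{-1}\ln^\alpha(1+t)$ is its \emph{convexity} for $\alpha>1$ (giving the subadditivity-type inequality $g(z)-g(y)\ge g(z-y)$), not merely its monotonicity.
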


As the proof for $\gamma>1$ is easy, we only refer to \cite{NoPo_AppMa} and consider the pressure law \eqref{8.1.12}. We need  to work here with a class of Orlicz spaces. We therefore recall some of their properties, referring  to \cite{KuJoFu_Book} or \cite{Ma_Book} for further details.

Let $\Phi$ be the Young function.  We denote by $E_\Phi(\Omega)$ the set of all measurable functions $u$ such that
$$
\intO{\Phi(|u(x)|)} <+\infty,
$$
and by $L_\Phi(\Omega)$ the set of all measurable functions $u$ such that the Luxemburg norm
$$
\|u\|_\Phi = \inf\Big\{k>0; \int_\Omega \Phi\Big(\frac 1k |u(x)|\Big) dx \leq 1\Big\} <+\infty.
$$

We say that $\Phi$ satisfies the $\Delta_2$-condition if there exist $k>0$ and $c\geq 0$ such that
$$
\Phi(2t) \leq k \Phi(t) \qquad \forall t \geq c.
$$
If $c=0$, we speak about the global $\Delta_2$-condition. Note that we have for all $u \in E_\Phi(\Omega)$
\begin{equation*} 
\|u\|_\Phi \leq \intO{\Phi(|u(x)|)} + 1,
\end{equation*}
while $E_\Phi(\Omega) = L_\Phi(\Omega)$ only if $\Phi$ fulfills the $\Delta_2$-condition.
For $\alpha \geq 0$ and $\beta \geq 1$ we denote by $\LL{\beta}{\alpha}(\Omega)$ the Orlicz spaces generated by $\Phi(z) = z^\beta \ln^\alpha(1+ z)$. In the range mentioned above $z^\beta \ln^\alpha(1+z)$  fulfills the global $\Delta_2$-condition. 
Recall that the complementary function to $z \ln^\alpha(1+z)$ behaves as ${\rm e}^{z^{1/\alpha}}$; however, this function does not satisfy the $\Delta_2$-condition.  We denote by $E_{{\rm e}(1/\alpha)}(\Omega)$  and $L_{{\rm e}(1/\alpha)}(\Omega)$ the corresponding sets of functions. 

It is well-known that $W^{1,2}(\Omega) \hookrightarrow L_{{\rm e}^{z^2}-1}(\Omega)$, and thus
\begin{equation} \label{8.2.2}
\|u\|_{{\rm e}(2)} \leq C (\|u\|_{1,2} +1).
\end{equation}
Further, the generalized H\"older inequality yields
\begin{equation} \label {8.2.3}
\|uv\|_1 \leq \|u\|_{z \ln^\alpha(1+z)} \|v\|_{{\rm e}(1/\alpha)}
\end{equation}
as well as
\begin{equation} \label{8.2.4}
\|uv\|_{z \ln^\alpha (1+z)} \leq C \|u\|_{z^p \ln^\alpha(1+z)} \|v\|_{z^{p'} \ln^\alpha (1+z)},
\end{equation}
for any $\alpha > 0$ and $\frac 1p + \frac{1}{p'} =1$, $1<p,p'<\infty$. The definition of the Luxemburg norm immediately yields for $\beta \geq 1$, $\alpha \geq 0$
\begin{equation} \label{8.2.5}
\|u\|_{z^\beta \ln^\alpha(1+z)} \leq \Big(1+ \intO{|u(x)|^\beta \ln^\alpha(1+ |u(x)|)} \Big)^{\frac 1\beta}
\end{equation}
as well as for $\delta >0$
$$
\||u|^\delta\|_{\Phi(z)} = \|u\|_{\Phi(z^\delta)}^\delta;
$$
hence, especially for $\delta >0$
\begin{equation} \label{8.2.6}
\||u|^\delta\|_{{\rm e}(\alpha)} \leq C \big(\|u\|^\delta_{{\rm e}(\delta \alpha)} + 1\big),
\end{equation}
and for $\delta \geq 1$
\begin{equation} \label{8.2.7}
\||u|^\delta\|_{z \ln^\alpha(1+z)} \leq C(\delta) \big(\|u\|^\delta_{z^\delta \ln^\alpha(1+z)} + 1\big).
\end{equation}   

Finally, let us consider the Bogovskii operator, i.e. the solution operator to \eqref{2.3}
for $f$ with zero mean value. For  Orlicz spaces such that the Young function $\Phi$ satisfies the global $\Delta_2$-condition and for certain $\gamma \in (0,1)$ the function $\Phi^\gamma $ is quasiconvex, we get a similar result as for the $L^p$-spaces, see \cite{Vo_AcPa}. Hence, especially for $\alpha \geq 0$ and $\beta > 1$ we have (provided $\intO{f} =0$) the existence of a solution to \eqref{2.3} such that
\begin{equation} \label{8.2.10}
\||\nabla \vcg{\varphi}|\|_{z^\beta \ln^\alpha(1+z)} \leq C \|f\|_{z^\beta \ln^\alpha(1+z)}.
\end{equation}

To construct a weak solution to our problem,
we use the same approximation scheme as in the three space dimensions, i.e. we have (\ref{2.31})--(\ref{2.34}) for any $\delta >0$. As in three space dimensions, we get  
\begin{multline} \label{8.3.5}
\|\vud\|_{1,2} + \|\nabla \vtd^{\frac m2}\|_2 + \|\nabla \ln \vtd\|_2 + \|\vtd^{-1}\|_{1,\partial \Omega} \\
+ \delta \big(\|\nabla \vtd^{\frac B2}\|_2^2 + \|\nabla \vtd^{-\frac 12}\|_2^2 + \|\vtd\|_{r}^{B-2} + \|\vtd^{-2}\|_1\big) \leq C,
\end{multline}
$r<\infty$, arbitrary, and 
\begin{equation} \label{8.3.6}
\|\vtd^{\frac m2}\|^{\frac 2m}_{1,2} \leq C\Big(1+ \|\vtd\|_{1,\partial \Omega} + \|\nabla \vtd^\frac m2\|_2^{\frac 2m}\Big) \leq C\Big(1+
\Big|\intO{\vrd \vc{f} \cdot \vud}\Big|\Big)
\end{equation}
with $C$ independent of $\delta$.

As it is the case in three space dimensions, it is more difficult to prove the estimates for the density. We use Lemma \ref{l 4.2} with $f = \vrd^s-\frac{1}{|\Omega|}\intO{\vrd^s}$ for some $0<s<1$ and use the corresponding $\vcg{\varphi}$ as test function in \eqref{2.32}. It reads
\begin{equation} \label{8.3.8}
\begin{array}{c}
\displaystyle \intO {\Big(\frac{\vrd^{2+s}}{1+\vrd} \ln^\alpha(1+\vrd)  + \vrd^{1+s}  \vtd\Big)} + \delta \intO {\big(\vrd^{\beta+s} + \vrd^{2+s}\big)} \\
=  \displaystyle \frac{1}{|\Omega|}\intO{\vrd^s} \intO {\Big(\frac{\vrd^{2}}{\vrd+1} \ln^\alpha(1+\vrd) + \vrd \vtd + \delta(\vrd^\beta + \vrd^2)\Big)}   \\
\displaystyle -  \intO{\vrd \vc{f} \cdot \vcg{\varphi}}  +\intO {\tn{S} (\vtd,\vud):\nabla \vcg{\varphi}}-\intO {\vrd (\vud\otimes\vud) :\nabla \vcg{\varphi}}  \\
\displaystyle  = J_1 + J_2 + J_3 + J_4.
\end{array}
\end{equation}
The estimates of $J_1$ and $J_2$ are easy; hence we concentrate ourselves only on the remaining two terms. We have due to (\ref{8.2.2}), (\ref{8.2.3}) and (\ref{8.2.5}) for $\alpha \geq 0$
$$
\begin{array}{c}
\displaystyle
|J_3| \leq C \intO {(1+\vtd) |\nabla \vud| |\nabla \vcg{\varphi}|} \leq C \|\nabla \vud\|_2 \|\nabla \vcg{\varphi}\|_{\frac{1+s^2}{s}} \big(1+ \|\vtd\|_{\frac{2(1+s^2)}{(1-s)^2}}\big)\\
\displaystyle \leq C \|\vrd\|^{s}_{1+s^2} \Big(1+ \Big|\intO{\vrd \vc{f}\cdot \vud}\Big|\Big) \leq C + \frac 14 \intO{\frac{\vrd^{2+s}}{1+\vrd} \ln^\alpha(1+\vrd)}
\end{array}
$$
and the last term can be shifted to the left-hand side (l.h.s.). Note that we needed here $s<1$.
Finally, using (\ref{8.2.3})--(\ref{8.2.10}), for $\alpha >1$,
$$
\begin{array}{c}
\displaystyle
|J_4| \leq \intO{\vrd |\vud|^2 |\nabla \vcg{\varphi}|} \leq C \||\vud|^2\|_{{\rm e}(1)} \| \vrd |\nabla \vcg{\varphi}|\|_{z \ln(1+z)} \\[8pt]
\displaystyle \leq C \big(\||\vud|\|^2_{{\rm e}(2)}+1\big)   \|\vrd\|_{z^{1+s} \ln(1+z)} \| |\nabla \vcg{\varphi}|\|_{z^{\frac{1+s}{s}} \ln(1+z)}  \leq C(1+\|\vrd\|_{z^{1+s} \ln(1+z)}^{1+s}) \\[8pt]
\displaystyle \leq C \Big(1+ \intO{\vrd^{1+s}\ln(1+\vrd)}\Big) \leq C + \frac 14 \intO{\frac{\vrd^{2+s}}{1+\vrd} \ln^\alpha(1+\vrd)}
\end{array}
$$
(hint: consider separately $\vrd \leq K$ and $\vrd \geq K$ for $K$ sufficiently large). Thus we have shown the estimate
\begin{equation} \label{8.3.9}
\intO {\vrd^{1+s} \ln^\alpha (1+\vrd)} \leq C(s)
\end{equation}
with $C(s) \to +\infty$ for $s \to 1^{-}$.

\begin{remark}
{\rm In \cite{NoPo_AppMa}, the authors used the same test function with $s=1$. This leads to an $L^2$-estimate of the density (and hence the limit $(\vr,\vu)$ is immediately a renormalized solution to the continuity equation). However, this method also requires additional restriction on $\alpha$ and $m$. Note that here, we are able to get the estimates for any $m>0$ and  $\alpha >1$; nevertheless, a certain restriction on $\alpha$ in terms of $m$ appears later, when proving the strong convergence of the density.
}
\end{remark}

We can now pass to the limit in the weak formulation of the approximate system (note that we still do not know whether the density converges strongly) as in three space dimensions. The main task is to get strong convergence of the density which is based on the effective viscous flux identity and validity of the renormalized continuity equation, which is connected with the boundedness of the oscillation defect measure. As the proof is similar to the three-dimensional solutions, we will only mention steps which are different here.

First of all, we may get the effective viscous flux identity in the form
 \begin{equation} \label{8.4.7}
\begin{array}{c}
\displaystyle
\overline{p(\vr,\vt) T_k(\vr)} - \big( \mu(\vt) + \xi(\vt)\big) \overline{T_k(\vr) \Div \vu} 
\displaystyle =  \overline{p(\vr,\vt)} \, \overline{T_k(\vr)} - \big(\mu(\vt) + \xi(\vt)\big)
\overline{T_k(\vr)} \Div \vu.
\end{array}
\end{equation}   

Next, we introduce the oscillation defect measure defined in a more general context of the Orlicz spaces 
\begin{equation} \label{8.4.9a}
{\bf osc}_{\Phi}[T_k(\vrd)-T_k(\vr)] = \sup_{k \in {\mbox {\FF N}}} \limsup_{\delta \to 0^+} \|T_k(\vrd)-T_k(\vr)\|_{\Phi}.
\end{equation}
In what follows, we show that there exists $\sigma >0$ such that
\begin{equation} \label{8.4.9}
{\bf osc}_{z^2 \ln^\sigma (1+z)}[T_k(\vrd)-T_k(\vr)] < +\infty;
\end{equation}
further we verify  that this fact implies the renormalized continuity equation to be satisfied. Note that to show the latter we cannot use the approach from the book \cite{FeNo_Book} (or \cite{NoPo_JDE}) as there, it is required that $\Phi = z^{2+\sigma}$ for $\sigma>0$ which we are not able to verify here.

\begin{lemma} \label{l 8.7}
Under the assumptions of Theorem \ref{t 8.1} (particularly, for $\alpha >1$ and $\alpha \geq\frac 1m$) we have (\ref{8.4.9}).
\end{lemma}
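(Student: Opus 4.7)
The strategy mirrors the proof of Lemma \ref{l II4.2} in three dimensions, but adapted to the two-dimensional pressure law $p(\vr,\vt)=\vr\vt+p_b(\vr)$ with $p_b(\vr)=\frac{\vr^2}{1+\vr}\ln^\alpha(1+\vr)$. First I would establish a pointwise inequality of the form
\begin{equation*}
|T_k(a)-T_k(b)|^2\ln^\sigma\!\bigl(1+|T_k(a)-T_k(b)|\bigr)\le C|T_k(a)-T_k(b)|+C\bigl(p_b(a)-p_b(b)\bigr)\bigl(T_k(a)-T_k(b)\bigr)
\end{equation*}
valid for all $a,b\ge 0$ and for some $\sigma>0$ depending on $\alpha$. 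The verification uses the Lipschitz bound $|T_k(a)-T_k(b)|\le|a-b|$ and the direct estimate $p_b(a)-p_b(b)\ge c(a-b)\ln^\alpha(1+\max(a,b))$ for $\max(a,b)\ge 1$ (which follows from monotonicity and a computation of $p_b'$), so that the $\ln^\sigma$ factor is absorbed whenever $\sigma\le\alpha$; the additive $C|T_k(a)-T_k(b)|$ term disposes of the bounded regime $\max(a,b)\le 1$.

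Integrating this inequality over $\Omega$ and passing to $\delta\to 0^+$, the first term on the right is controlled by $\|\vr_\delta\|_1+\|\vr\|_1\le C$. For the second I would use that $p_b$ and $T_k$ are both non-decreasing, invoke Lemma \ref{l 4.5}, and combine with the pointwise a.e. relations $p_b(\vr)\le\overline{p_b(\vr)}$ (Jensen) and $T_k(\vr)\ge\overline{T_k(\vr)}$ (concavity of $T_k$) to get
$$\limsup_{\delta\to0^+}\intO{(p_b(\vr_\delta)-p_b(\vr))(T_k(\vr_\delta)-T_k(\vr))}\le\intO{\bigl[\overline{p_b(\vr)T_k(\vr)}-\overline{p_b(\vr)}\,\overline{T_k(\vr)}\bigr]}.$$
Because the contribution of the thermal part $\vr\vt$ to the analogous defect is non-negative (strong convergence $\vt_\delta\to\vt$ and Lemma \ref{l 4.5}), this in turn is bounded by $\int[\overline{p(\vr,\vt)T_k(\vr)}-\overline{p(\vr,\vt)}\,\overline{T_k(\vr)}]\dx$, and the effective viscous flux identity \eqref{8.4.7} identifies the latter with $\int(\mu(\vt)+\xi(\vt))[\overline{T_k(\vr)\Div\vu}-\overline{T_k(\vr)}\Div\vu]\dx$.

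The core task is then to show that this last quantity is bounded uniformly in $k$. Using strong $L^p$-convergence of $\vt_\delta$ on the viscosities, a Vitali-type argument rewrites it as
$$\lim_{\delta\to0^+}\intO{\bigl(\mu(\vt_\delta)+\xi(\vt_\delta)\bigr)\bigl(T_k(\vr_\delta)-\overline{T_k(\vr)}\bigr)\Div\vu_\delta},$$
and I would then apply a three-factor Hölder--Orlicz inequality (in the spirit of \eqref{8.2.3}--\eqref{8.2.4}) with the factors estimated as follows: $\|T_k(\vr_\delta)-\overline{T_k(\vr)}\|_{\LL{1+s}{\alpha}}\le C$ uniformly in $k$ via $|T_k(\vr_\delta)-\overline{T_k(\vr)}|\le\vr_\delta+\vr$ and the density estimate \eqref{8.3.9}; $(1+\vt_\delta)$ in a sub-exponential Orlicz space of type $L_{\exp(z^m)}$, which follows from $\vt_\delta^{m/2}\in W^{1,2}(\Omega)$ combined with the Trudinger embedding \eqref{8.2.2}; and $\Div\vu_\delta\in L^2$ uniformly from \eqref{8.3.5}. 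Finally, restricting the exponent $\sigma$ if necessary, one obtains a uniform bound on $\limsup_{\delta\to0^+}\intO{|T_k(\vr_\delta)-T_k(\vr)|^2\ln^\sigma(1+|T_k(\vr_\delta)-T_k(\vr)|)}$ that is independent of $k$, which yields \eqref{8.4.9}.

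The main obstacle is the last step: matching the Orlicz pairs so that the product rule $\Phi_1^{-1}\Phi_2^{-1}\Phi_3^{-1}\ge\mathrm{id}$ is satisfied. The condition $\alpha\ge 1/m$ enters precisely here, because the logarithmic weight $\ln^\alpha$ in the density estimate must compensate for the loss in integrability of $(1+\vt)$ dictated by the Sobolev exponent $3m$; the interpolation between the polynomial piece $L^{3m}$ and the exponential piece inherited from $\vt^{m/2}\in W^{1,2}(\Omega)$ is what makes the $\ln^{-\alpha/s}$ weight from the conjugate Young function admissible for the product $(1+\vt)|\Div\vu|$. Everything else is essentially bookkeeping along the lines of the three-dimensional argument.
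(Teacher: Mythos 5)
Your opening moves are sound and broadly parallel to the paper's: exploit convexity/monotonicity of the cold pressure to get a pointwise lower bound on $(p_b(a)-p_b(b))(T_k(a)-T_k(b))$ in terms of a logarithmically weighted $T_k$-oscillation, use Lemma~\ref{l 4.5} to absorb the thermal part $\vr\vt$, and reduce everything to the effective-viscous-flux identity \eqref{8.4.7}. (Minor remark: the specific bound $p_b(a)-p_b(b)\ge c(a-b)\ln^\alpha(1+\max(a,b))$ requires some care near $b\approx a$ large; the paper's route via $g$ convex, $g(z)-g(y)\ge g(z-y)$, $g$ increasing, $z-y\ge T_k(z)-T_k(y)$ is the clean way to land on $g(T_k(z)-T_k(y))$, but this is a presentational point.)

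The genuine gap is in your last step. You propose to close the argument by a direct three-factor H\"older--Orlicz estimate on $(\mu(\vtd)+\xi(\vtd))\,(T_k(\vrd)-\overline{T_k(\vr)})\,\Div\vud$ with the pairing
$(1+\vt)\in L_{\exp(z^m)}$ (Trudinger from $\vt^{m/2}\in W^{1,2}$), $T_k$-difference in $\LL{1+s}{\alpha}$ with $s<1$ (from \eqref{8.3.9}), and $\Div\vu\in L^2$. The relevant inverse Young functions behave like $\Phi_1^{-1}(t)\sim(\ln t)^{1/m}$, $\Phi_2^{-1}(t)\sim t^{1/(1+s)}(\ln t)^{-\alpha/(1+s)}$, $\Phi_3^{-1}(t)\sim t^{1/2}$, so $\Phi_1^{-1}\Phi_2^{-1}\Phi_3^{-1}(t)\sim t^{\frac{1}{1+s}+\frac12}(\ln t)^{\frac 1m-\frac{\alpha}{1+s}}$. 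Since the only available density bound has $s<1$, the polynomial exponent $\frac{1}{1+s}+\frac 12$ is strictly larger than $1$, and no choice of $\alpha$ or $\sigma$ repairs a polynomial overshoot with logarithms. The three-factor H\"older you need therefore does not hold; the condition $\alpha\ge 1/m$ is \emph{not} the point at which the product of Young conjugates is saved, contrary to what your last paragraph suggests.

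The paper avoids the three-factor H\"older entirely. It first inserts the weight $\frac{1}{\mu(\vt)+\xi(\vt)}\sim(1+\vt)^{-1}$ into the defect, as in \eqref{8.4.11a}; this cancels the viscosity in the EVF identity, so the weighted oscillation $\int(1+\vt)^{-1}\overline{G_k}$ is controlled by an honest $L^2\times L^2$ H\"older pairing of $\Div\vud$ and $T_k(\vrd)-T_k(\vr)$, which is exactly \eqref{8.4.12a}. Only afterwards does one compare the unweighted $\ln^\sigma$ oscillation to the $(1+\vt)^{-1}$-weighted $\ln^\alpha$ one, and this is now a \emph{two}-factor Orlicz--H\"older step involving $(1+\vt)^s$ against $(1+\vt)^{-s}|T_k(\vrd)-T_k(\vr)|^2\ln^\sigma$ with $s>1$ close to $1$; this is where $\alpha>1/m$ really enters. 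The argument then closes by self-improvement: $X_k\lesssim 1+Y_k^{1/2}$ with $Y_k\le X_k+C$ forces $\sup_k X_k<\infty$. You need to introduce the $(1+\vt)^{-1}$ weight into the defect before estimating; without it the exponents do not close.
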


\begin{proof}
As 
\begin{equation} \label{8.4.9b}
g(t)= \frac{t^2}{t+1} \ln^\alpha(t+1)
\end{equation}
is for $\alpha >1$ convex on $\R^+_0$, we get for $z>y\geq 0$
\begin{equation} \label{8.4.10}
\begin{array}{c}
\displaystyle
\frac{z^2}{1+z} \ln^\alpha (1+z) - \frac{y^2}{1+y} \ln^\alpha (1+y) = \int_y^z g'(t) \,{\rm d}t \geq \int_y^z g'(t-y)\, {\rm d}t  \\
\displaystyle =\frac{(z-y)^2}{1+z-y} \ln^\alpha (1+z-y) \geq \frac 12 (z-y) \ln^\alpha(1+z-y) - \ln^\alpha 2 \, \vc{1}_{\{z-y \leq 1\}}.
\end{array}
\end{equation}

Moreover,
$$
\begin{array}{c}
\displaystyle
\limsup_{\delta \to 0^+} \intO{\Big[p(\vrd,\vtd) T_k(\vrd) - \overline{p(\vr,\vt)} \, \overline{T_k(\vr)}\Big]} \\
\displaystyle
=\limsup_{\delta \to 0^+} \Big[ \intO{(g(\vrd)-g(\vr))\big(T_k(\vrd)-T_k(\vr)\big)} + \intO{\big(\vrd \vtd T_k(\vrd) - \vr \vt \overline{T_k(\vr)}\big)} \Big] \\
\displaystyle + \intO{\big(\overline{g(\vr)}-g(\vr)\big) \big(T_k(\vr) - \overline{T_k(\vr)}\big)}.
\end{array}
$$
As $z \mapsto T_k(z)$ is concave and $z \mapsto g(z)$ is convex, we have (using also Lemma \ref{l 4.5} in the second integral and the fact that $g(\cdot)$ is increasing on $\R^+_0$)
\begin{equation} \label{8.4.10aa}
\begin{array}{c}
\displaystyle
\limsup_{\delta \to 0^+} \intO{\frac{|T_k(\vrd)-T_k(\vr)|}{1+ |T_k(\vrd)-T_k(\vr)|} \ln^\alpha (1+ |T_k(\vrd)-T_k(\vr)|)} 
\\
\displaystyle
\leq \limsup_{\delta \to 0^+} \intO{(g(\vrd)-g(\vr))\big(T_k(\vrd)-T_k(\vr)\big)} \\
\displaystyle \leq
\limsup_{\delta \to 0^+} \intO{\Big[p(\vrd,\vtd) T_k(\vrd) - \overline{p(\vr,\vt)} \, \overline{T_k(\vr)}\Big]},
\end{array}
\end{equation}
and also
\begin{equation} \label{8.4.11a}
\begin{array}{c}
\displaystyle
\limsup_{\delta \to 0^+} \intO{\frac{1}{\mu(\vt) + \xi(\vt)} \frac{|T_k(\vrd)-T_k(\vr)|}{1+ |T_k(\vrd)-T_k(\vr)|} \ln^\alpha (1+ |T_k(\vrd)-T_k(\vr)|)} 
\\
\displaystyle \leq
\limsup_{\delta \to 0^+} \intO{\frac{1}{\mu(\vt) + \xi(\vt)}\Big[p(\vrd,\vtd) T_k(\vrd) - \overline{p(\vr,\vt)} \, \overline{T_k(\vr)}\Big]}.
\end{array}
\end{equation}
Due to (\ref{8.4.10}) and Lipschitz continuity of $T_k(\cdot)$ with Lipschitz constant 1, together with (\ref{8.2.5}), we arrive at
\begin{equation} \label{8.4.10b}
\begin{array}{c}
\displaystyle \limsup_{\delta \to 0^+} \|T_k(\vrd)-T_k(\vr)\|_{z^2 \ln^\alpha (z+1)}^2 \\
\displaystyle \leq 1 + \limsup_{\delta \to 0^+} \intO{|T_k(\vrd) -T_k(\vr)|^2 \ln^\alpha (1+ |T_k(\vrd) - T_k(\vr)|)} \\
\displaystyle \leq C\Big(1+  \limsup_{\delta \to 0^+} \intO{\Big[p(\vrd,\vtd) T_k(\vrd) - \overline{p(\vr,\vt)} \, \overline{T_k(\vr)}\Big]}\Big). 
\end{array}
\end{equation}
Denote now $G_k^\delta(x,z) = |T_k(z) -T_k(\vr(x))|^2 \ln^\alpha (1+ |T_k(z) - T_k(\vr(x))|)$. Hence, (\ref{8.4.7}) implies
$$
\overline{G_k(\cdot,\vr)} \leq C \Big((\mu(\vt) + \xi(\vt)) \big(\overline{T_k(\vr) \Div \vu} - \overline{T_k(\vr)}\Div \vu\big) +1\Big)
$$
for all $k \geq 1$. Then
\begin{equation} \label{8.4.12a}
\begin{array}{c}
\displaystyle
\intO {(1+\vt)^{-1} \overline{G_k(\cdot,\vr)}} \leq C \Big(\sup_{\delta >0} \|\Div \vud\|_{2} \limsup_{\delta \to 0^+} \|T_k(\vrd)-T_k(\vr)\|_2 +1\Big)\\ 
\displaystyle \leq C \Big(\limsup_{\delta \to 0^+} \|T_k(\vrd) -T_k(\vr)\|_2+1\Big).
\end{array}
\end{equation}
On the other hand, take $\sigma >0$, $s>1$ and compute
$$
\begin{array}{c}
\displaystyle
\intO{|T_k(\vrd)-T_k(\vr)|^2 \ln^\sigma(1+ |T_k(\vrd)-T_k(\vr)|)} \\
\displaystyle \leq \intO{|T_k(\vrd)-T_k(\vr)|^2 \ln^\sigma(1+ |T_k(\vrd)-T_k(\vr)|) (1+\vt)^{-s} (1+\vt)^s} \\
\displaystyle \leq C \|1+\vt^s\|_{e(\frac ms)} \||T_k(\vrd)-T_k(\vr)|^2 \ln^\sigma(1+ |T_k(\vrd)-T_k(\vr)|)(1+\vt)^{-s}\|_{z \ln^{\frac sm}(1+z)} \\
\displaystyle \leq C \Big(1+ \intO{|T_k(\vrd)-T_k(\vr)|^2 \ln^\alpha(1+ |T_k(\vrd)-T_k(\vr)|) (1+\vt)^{-1}}\Big)
\end{array}
$$
provided $\alpha > \frac 1m$ and $\sigma>0$, $s-1>0$ are sufficiently small with respect to $\alpha - \frac 1m$. We have shown that for a certain $\sigma >0$ it holds, due to (\ref{8.4.12a}),
$$
\begin{array}{c}
\displaystyle \limsup_{\delta \to 0^+} \intO{|T_k(\vrd)-T_k(\vr)|^2 \ln^\sigma(1+ |T_k(\vrd)-T_k(\vr)|)} \\
\displaystyle \leq C\Big(1+ \limsup_{\delta \to 0^+} \Big(\intO{|T_k(\vrd)-T_k(\vr)|^2}\Big)^{\frac 12} \Big),
\end{array}
$$
and thus
\begin{equation} \label{8.4.10a}
\limsup_{\delta \to 0^+} \intO{|T_k(\vrd)-T_k(\vr)|^2 \ln^\sigma(1+ |T_k(\vrd)-T_k(\vr)|)} \leq C <+\infty
\end{equation}
with $C$ independent of $k$.
\end{proof}

Next we have to show that (\ref{8.4.9}) is sufficient to guarantee that $(\vr,\vu)$ verifies the renormalized continuity equation. We have
\begin{lemma} \label{l 8.7a}
Under the assumptions of Theorem \ref{t 8.1}, the pair $(\vr,\vu)$ is a renormalized solution to the continuity equation.
\end{lemma}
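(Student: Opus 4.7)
The plan is to adapt Lemma \ref{l II4.1} from $L^q$ oscillation bounds to the Orlicz setting, and then apply the adapted version with the bound \eqref{8.4.9} established in Lemma \ref{l 8.7}. The starting point is that at the approximation level $\vrd$ enjoys $W^{2,p}$-regularity (because of the elliptic term $\ep\vrd-\ep\Delta\vrd$ in \eqref{2.13}), so after extending $(\vrd,\vud)$ by zero outside $\Omega$ the pair is a renormalized solution of the continuity equation in $\mathcal{D}'(\R^2)$; this property is preserved when passing $\ep\to 0^+$ since the truncations $T_k(\vrd)$ remain uniformly bounded.

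Fix $b\in C^1([0,\infty))\cap W^{1,\infty}(0,\infty)$ with $zb'(z)\in L^\infty(0,\infty)$. The core step is the DiPerna--Lions mollification: convolving with a mollifier $\phi_\omega$ on $\R^2$ and using the classical commutator formula produces an approximate identity with an error $r_\omega^\delta$ which must vanish in $L^1_{\mathrm{loc}}$ as $\omega\to 0^+$. For bounded $b$ such as $b=T_k$ this is routine because $T_k(\vrd)$ is $L^\infty$-bounded, but to reach a general $b$ one is forced to work with truncations and then let $k\to\infty$. Specifically, I would first derive in the limit $\delta\to 0^+$, for $b=L_k$ chosen so that $zL_k'(z)-L_k(z)=T_k(z)$, the identity
\begin{equation*}
\Div\bigl(\overline{L_k(\vr)}\,\vu\bigr)+\overline{T_k(\vr)\,\Div\vu}=0 \quad \text{in } \mathcal{D}'(\R^2),
\end{equation*}
using strong convergence $\vud\to\vu$ in $L^p(\Omega;\R^2)$ for every $p<\infty$ (available in 2D by compactness of $W^{1,2}\hookrightarrow L^p$) and the weak-$*$ convergences of $L_k(\vrd)$ and $T_k(\vrd)$ in $L^\infty$.

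The remaining, decisive step is to identify, in the limit $k\to\infty$, the weak limits $\overline{L_k(\vr)}$ and $\overline{T_k(\vr)\Div\vu}$ with $b(\vr)$ and $(\vr b'(\vr)-b(\vr))\Div\vu$, respectively. The effective viscous flux identity \eqref{8.4.7} rewrites the defect $\overline{T_k(\vr)\,\Div\vu}-\overline{T_k(\vr)}\,\Div\vu$, up to the positive factor $\mu(\vt)+\xi(\vt)$, as the pressure oscillation $\overline{p(\vr,\vt)T_k(\vr)}-\overline{p(\vr,\vt)}\,\overline{T_k(\vr)}$, which by \eqref{8.4.10aa}--\eqref{8.4.11a} is controlled in the Orlicz norm generated by $z\mapsto\frac{z}{1+z}\ln^\alpha(1+z)$. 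Combining this with the uniform $\LL{2}{\sigma}$-bound \eqref{8.4.9} on $T_k(\vrd)-T_k(\vr)$, and with the Orlicz--H\"older inequality \eqref{8.2.3}--\eqref{8.2.4} applied to the complementary Young function of $z^2\ln^\sigma(1+z)$, both defects can be shown to vanish in $L^1_{\mathrm{loc}}$ as $k\to\infty$. A diagonal passage and standard pointwise limits then recover the renormalized equation for a general $b$.

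The main obstacle is precisely this Orlicz-scale analogue of Lemma \ref{l II4.1}. The classical DiPerna--Lions commutator argument requires the oscillation bounded in $L^q$ with $\tfrac{1}{q}+\tfrac{1}{2}<1$, i.e.\ some $q>2$, whereas here only the borderline bound in $\LL{2}{\sigma}$ is available, and this space does not embed into any $L^{2+\ep}$. The hypotheses $\alpha>1$ and $\alpha\geq 1/m$ in Theorem \ref{t 8.1} are tailored precisely so that $\sigma>0$ and so that the complementary Young function of $\frac{z}{1+z}\ln^\alpha(1+z)$ has sufficiently fast growth to close the commutator estimate in an Orlicz space strictly contained in $L^1$, which is the quantitative form taken by the DiPerna--Lions condition in this Orlicz framework.
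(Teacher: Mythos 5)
The proposal has a genuine gap in how it passes from the truncated level to a general renormalization function $b$. You renormalize the approximate continuity equation with $L_k$ satisfying $zL_k'(z)-L_k(z)=T_k(z)$, take $\delta\to 0^+$ to obtain
\[
\Div\bigl(\overline{L_k(\vr)}\,\vu\bigr)+\overline{T_k(\vr)\,\Div\vu}=0,
\]
and then aim to ``identify, in the limit $k\to\infty$, the weak limits $\overline{L_k(\vr)}$ and $\overline{T_k(\vr)\Div\vu}$ with $b(\vr)$ and $(\vr b'(\vr)-b(\vr))\Div\vu$, respectively.'' This cannot work: $L_k$ is a fixed sequence of functions determined by $T_k$ alone, so its pointwise limit $L_\infty$ is a specific function (essentially $z\ln z$ plus linear terms), independent of the arbitrary $b$ you fixed at the outset. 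At best, this route yields the renormalized identity for \emph{one} particular renormalizing function, not for an arbitrary admissible $b$. What is missing is the second renormalization performed on the \emph{limit} equation: one passes $\delta\to 0^+$ with $b=T_k$ to get the equation satisfied by the $L^\infty$-bounded weak limit $\overline{T_k(\vr)}$ in $\mathcal{D}'(\R^2)$, then mollifies that limit equation with $S_m$, multiplies by $b'\bigl(S_m[\overline{T_k(\vr)}]\bigr)$, and lets $m\to\infty$ via the DiPerna--Lions commutator to obtain the renormalized equation for $b(\overline{T_k(\vr)})$; only then does one let $k\to\infty$ using the oscillation bound \eqref{8.4.9}. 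This is precisely the structure of the paper's proof (which references Erban's Lemma~4.5). A related confusion is the attribution of the commutator error to an $r_\omega^\delta$: at the approximate level $\vrd$ is in $W^{2,p}$, so no commutator error appears there; the commutator argument is needed only when working with the weak limit $\overline{T_k(\vr)}$.

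That said, the diagnostic parts of your proposal are on target: you correctly recognize that the classical DiPerna--Lions argument would need oscillations bounded in some $L^{2+\ep}$, that here one only has the borderline $\LL{2}{\sigma}$ bound from Lemma~\ref{l 8.7}, and that the hypotheses $\alpha>1$ and $\alpha\geq 1/m$ are tuned precisely so that the logarithmic gain in the Orlicz scale replaces the integrability gain. With the two-stage renormalization restored, those observations would close the argument.
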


\begin{proof}
As the proof is similar (even slightly easier) to the evolutionary case, we give only the main steps here; for details see also \cite[Lemma 4.5]{Er_M2AS}. 

First we mollify the limit form of the renormalized continuity equation
$$
\Div(\overline{T_k(\vr)} \vu) + \overline{(T_k(\vr) - T_k'(\vr)\vr) \Div \vu} = 0 \qquad \mbox{ in } {\cal D}'(\R^2)
$$
to get
\begin{equation} \label{8.5.1}
\Div\big(S_m[\overline{T_k(\vr)}] \vu\big) + S_m[\overline{(T_k(\vr) - T_k'(\vr)\vr) \Div \vu}] = r_m \qquad \mbox{ in } {\cal D}'(\R^2)
\end{equation}
with $S_m[\cdot]$ the standard mollifier and $r_m \to 0$ in $L^2(\Omega;\R)$ as $m\to \infty$ for any fixed $k\in \tn{N}$. Let $b \in C^1(\R)$ satisfy $b'(z) \equiv 0$ for all $z \in \R$ sufficiently large, say $z\geq M$. 
Next multiply (\ref{8.5.1}) by $b'\big(S_m[\overline{T_k(\vr)}]\big)$ and letting $m \to \infty$ we deduce
\begin{equation} \label{8.5.2}
\begin{array}{c}
\displaystyle
\Div\big(b(\overline{T_k(\vr)})\vu\big) + \big(b'(\overline{T_k(\vr)}) \overline{T_k(\vr)} - b(\overline{T_k(\vr)})\big)\Div \vu \\
\displaystyle= b'(\overline{T_k(\vr)}) \big[\overline{(T_k(\vr) - T_k'(\vr)\vr) \Div \vu}\big] \qquad \mbox{ in } {\cal D}'(\R^2).
\end{array}
\end{equation}

Now, exactly as in \cite[Lemma 4.5]{Er_M2AS} we may pass with $k \to \infty$, employing (\ref{8.4.9}) to get the renormalized form of the continuity equation for any $b$ as above. Note that we basically need to control $T_k(\vrd)- T_k(\vr)$ in a better space than just $L^2(\Omega)$; the logarithmic factor is enough. By suitable approximation we finally get (\ref{1.23c}) for any $b$ as in Definition \ref{d 1.3}.
\end{proof}

The last step, i.e. that  the validity of the renormalized continuity equation, the effective viscous flux identity, and estimates above imply the strong convergence of the density can be shown similarly as in three space dimensions, thus we skip it. More details can be found in \cite{Po_JPDE}.

\section{Further results}

In the last section we briefly mention some results, where the steady compressible Navier--Stokes equations are incorporated in some more general systems and 
the methods explained in the first part of this paper are used to get existence of a solution. 

\subsection{Steady flow of a compressible radiative gas}

The modelling of a radiative gas is a complex problem. We are not going into details of its modelling, more information can be found e.g. in \cite{KrNePo_ZAMP_2013} and references therein.
We consider the following system of equations in a bounded $\Omega \subset \R^3$
\begin{equation} \label{9.1.1}
\Div (\vr \vu) = 0,
\end{equation}
\begin{equation} \label{9.1.2}
\Div (\vr \vu \otimes \vu) - \Div \tn{S} + \Grad p = \vr \vc{f}-\vc{s}_F,
\end{equation}
\begin{equation} \label{9.1.3}
\Div (\vr E \vu) = \vr \vc{f} \cdot \vu - \Div (p \vu) + \Div (\tn{S} \vu) -\Div \vc{q} -s_E,
\end{equation}
\begin{equation} \label{9.1.4}
\lambda I + \vcg{\omega} \cdot \nabla_x I = S,
\end{equation}
where the last equation describes the transport of radiative intensity denoted by $I$. The r.h.s. $S$ is a given function of $I$, $\vcg{\omega}$ and $\vu$, see \cite{KrNePo_ZAMP_2013} for more details. The quantity $\vc{s}_F$ denotes the radiative flux and $s_E$ is the radiative energy.  The  viscous part of the stress tensor is taken in the form \eqref{1.6} with the viscosity coefficients as in \eqref{1.7} for $0\leq \alpha \leq 1$.
The pressure is considered in the form \eqref{1.12} and the heat flux fulfills \eqref{1.8} and \eqref{1.9}, $L$ is a bounded function (i.e. $l=0$ in \eqref{1.10}). The system is completed by the homogeneous Dirichlet boundary conditions for the velocity \eqref{1.4} and the Newton boundary condition for the heat flux \eqref{1.5}. We finally prescribe the total mass of the fluid \eqref{1.5a}.

The main result reads as follows
\begin{theorem}[Kreml, Ne\v{c}asov\'a, Pokorn\'y, 2013]\label{t 9.1}
Let $\Omega \in C^2$ be a bounded domain in $\R^3$, $\vc{f} \in L^\infty(\Omega;\R^3)$, $\Theta_0 \geq K_0 > 0$ a.e. at $\partial\Omega$, $\Theta_0 \in L^1(\partial\Omega)$, $M>0$. Moreover, let
\begin{equation}\label{9.1.6}
\begin{array}{l}
\alpha \in (0,1] \\
\displaystyle \gamma > \max\Big\{\frac{3}{2},1 + \frac{1-\alpha}{6\alpha} + \frac{1}{2}\sqrt{\frac{4(1-\alpha)}{3\alpha} + \frac{(1-\alpha)^2}{9\alpha^2}}\Big\} \\[10pt] 
\displaystyle m > \max\Big\{1-\alpha, \frac{1+\alpha}{3}, \frac{\gamma(1-\alpha)}{2\gamma-3}, \frac{\gamma(1-\alpha)^2}{3(\gamma-1)^2\alpha-\gamma(1-\alpha)},  \\
\displaystyle \phantom{m > \Big\{}\frac{1-\alpha}{6(\gamma-1)\alpha-1}, \frac{1+\alpha+\gamma(1-\alpha)}{3(\gamma-1)}\Big\}.
\end{array}
\end{equation}
Then there exists a variational entropy solution to our system. Moreover, the pair $(\vr,\vu)$ is a renormalized solution to the continuity equation.

If additionally
\begin{equation} \label{9.1.7}
\begin{array}{l}
\displaystyle \gamma > \max\Big\{\frac{5}{3}, \frac{2+\alpha}{3\alpha}\Big\} \\
\displaystyle m > \max\Big\{1, \frac{(3\gamma-1)(1-\alpha)}{3\gamma-5}, \frac{(3\gamma-1)(1-\alpha)+2}{3(\gamma-1)}, \frac{(1-\alpha)(\gamma(2-3\alpha)+\alpha)}{\alpha(6\gamma^2-9\gamma+5)-2\gamma}\Big\},
\end{array}
\end{equation}
then this solution is a weak solution.
\end{theorem}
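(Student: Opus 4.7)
The plan is to follow the four-level approximation scheme of Section 6, coupled at each level with an auxiliary linear transport problem for the radiative intensity, and then to pass to the limit using the pressure/density machinery of Sections 7--8. More precisely, given $\vu$ (at any approximation level) the equation \eqref{9.1.4} is linear in $I$ with $\lambda>0$ and source $S$ depending on $I$ in a monotone/Lipschitz way, so it can be solved by the method of characteristics (or a fixed point argument on the $I$-variable), yielding $I$ in some $L^q$-space in $(x,\vcg{\omega},\nu)$ whose norm is controlled by norms of $\vu$ and $\vt$. From $I$ one recovers $\vc{s}_F$ and $s_E$ as moments/integrals; the key structural fact is that the radiative terms dissipate entropy (the contribution $s_E/\vt$ has a favorable sign) so that the entropy inequality retains the same form as in \eqref{1.23a} up to additional non-negative terms and terms absorbable by the diffusion and Korn inequalities.

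I would first build the approximate system exactly as in Section 6 (parameters $N,\eta,\varepsilon,\delta$, truncated pressure $\delta(\vr^\beta+\vr^2)$, regularized viscosities and heat conductivity, $\varepsilon$-terms in continuity and in the boundary condition for $\vt$), adjoining the transport equation for $I$ and replacing $\vc{f}$ by $\vc{f}-\vr^{-1}\vc{s}_F$ in the momentum balance and correspondingly in the energy balance. The Galerkin/fixed-point step of Proposition \ref{p 5.1} goes through because the map $\vu \mapsto I \mapsto (\vc{s}_F, s_E)$ is continuous and compact on $X_N$ (smoothing via transport with $\lambda>0$), and the a priori bounds on possible fixed points are obtained by combining (i)--(vii) of Lemma \ref{l 5.5} with the entropy structure of the radiation. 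The limit passages $N\to\infty$, $\eta\to 0^+$, $\varepsilon\to 0^+$ then proceed as in Sections 6.2--6.4, since the radiation contributions gain compactness from the strong convergence of $\vu$ and $\vt$ available at each step.

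The core analytical step --- and the place where the restrictions \eqref{9.1.6} and \eqref{9.1.7} are really generated --- is the $\delta$-independent bookkeeping of Section 7 combined with the pressure estimates of Section 6. Entropy inequality plus global total energy balance yield the analogue of \eqref{3.1}--\eqref{3.2} with $\|\vu\|_{1,2}$ bounded and $\|\vt\|_{3m}$ controlled by a power of $\|\vr\vu\cdot\vc{f}\|_1$ plus analogous radiative contributions; crucially, the absorption/emission coefficients in $S$ typically depend on $\vt$ polynomially, which is why the condition on $m$ picks up extra terms mixing $\alpha$ and $\gamma$. Then I would run the local pressure estimates of Lemmas \ref{l 3.4}--\ref{l 3.7} using the test functions \eqref{3.17} and \eqref{lovely_test}, taking care that the additional term $\int \vc{s}_F\cdot\vcg{\varphi}$ is estimable by $\|\vc{s}_F\|_q\|\vcg{\varphi}\|_{q'}$ with $q$ controlled through the bound on $I$; the extra restrictions on $\alpha,\gamma,m$ in \eqref{9.1.6} are precisely what is needed for this term to be subcritical against the positive terms extracted from the $p(\vr,\vt)$ pressure on the left-hand side.

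Finally, the strong convergence of the density follows the scheme of Sections 8.2--8.3: derive the effective viscous flux identity \eqref{II4.12} (the radiative term $\vc{s}_F$ enters the momentum equation as a compactly-supported-type correction by Div--Curl and the commutators Lemma \ref{l 4.3}, Lemma \ref{l 4.4}, so it does not spoil the identity), then bound the oscillation defect measure via Lemma \ref{l II4.2}, apply Lemma \ref{l II4.1} to deduce the renormalized continuity equation, and conclude $\vrd\to\vr$ in $L^1$ hence in $L^p$ for $p<s\gamma$. The passage to the weak-solution version \eqref{9.1.7} requires $s>\tfrac65$, $m>1$, and integrability of $\vr|\vu|^3$, which tightens $\gamma$ and $m$ further --- the strengthened conditions in \eqref{9.1.7} exactly encode this. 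The main obstacle I expect is controlling the radiative source in the momentum equation in the pressure test-function estimate near the boundary: one needs simultaneously $\vc{s}_F$ to have enough integrability coming from the transport equation and the Bogovskii/localized test function to have enough regularity, and it is the balance of these two that produces the rather technical bounds in \eqref{9.1.6}.
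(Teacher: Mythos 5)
There is a genuine gap, rooted in a misreading of what $\alpha$ means in the theorem and hence of where the conditions \eqref{9.1.6} actually come from. In Theorem~\ref{t 9.1}, $\alpha$ is the exponent in the viscosity law $\mu(\vt)\sim(1+\vt)^\alpha$ from \eqref{1.7}, not an exponent in the radiative absorption/emission coefficients. You assert that ``entropy inequality plus global total energy balance yield the analogue of \eqref{3.1}--\eqref{3.2} with $\|\vu\|_{1,2}$ bounded'' and that the $\alpha$-dependent terms in \eqref{9.1.6} arise because the source $S$ depends on $\vt$ polynomially. Both claims fail. For $\alpha<1$ the Korn-type argument applied to the entropy-dissipation term $\int_\Omega \vt^{-1}\tn{S}:\nabla\vu\,\dx$ no longer closes in $W^{1,2}$; instead one only obtains
\begin{equation*}
\|\vud\|_{1,p}^{\,p} \leq C\,\|\vtd\|_{3m}^{\frac{3m(2-p)}{2}}, \qquad p=\frac{6m}{3m+1-\alpha}<2,
\end{equation*}
so the velocity bound itself is coupled to the temperature bound, and closing the iteration between the two is what produces the intricate $\alpha,\gamma,m$ inequalities in \eqref{9.1.6} (and, with the additional requirement $s>\tfrac 65$, in \eqref{9.1.7}). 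Your scheme takes $\|\vu\|_{1,2}\le C$ as a given and then tries to generate the restrictions solely from the localized pressure estimates and the control of $\vc{s}_F$; that bookkeeping cannot reproduce \eqref{9.1.6}, because the dominant constraint is already in the basic energy/entropy step, before the pressure estimates even start.

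Apart from that, the architecture you sketch (four-level approximation, auxiliary linear transport solve for $I$ at each level, $\delta$-independent estimates via Bogovskii and local test functions, effective viscous flux, oscillation defect measure, renormalized continuity equation) does match the route taken in the paper, and your remark that the transport operator with $\lambda>0$ yields a compact, sign-friendly coupling is consistent with the paper's terse statement that the radiative difficulties concern ``compactness properties of the transport equation.'' But to make the proposal correct you must replace the ``$\|\vu\|_{1,2}\le C$'' step with the weaker $W^{1,p}$ bound above, thread the exponent $p=\tfrac{6m}{3m+1-\alpha}$ through the Bogovskii and local pressure estimates, and re-derive the admissible ranges of $\gamma$ and $m$ from that --- the rest of the argument rides on that modification.
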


\begin{remark}
For special values of $\alpha$ formulas (\ref{9.1.6}) and (\ref{9.1.7}) yield the following restrictions.

For $\alpha = 1$:
\begin{equation} \label{9.1.8a}
\gamma > \frac 32 \quad \text{ and }  \quad m > \max\Big\{\frac{2}{3}, \frac{2}{3(\gamma-1)}\Big\}
\end{equation}
for the variational entropy solutions, and additionally
\begin{equation} \label{9.1.8b}
\gamma > \frac 53 \quad \text{ and }  \quad m > 1
\end{equation}
for the weak solutions.

For $\alpha = \frac 12$ (physically more relevant):
\begin{equation} \label{9.1.9a}
\gamma > \frac{7+\sqrt{13}}{6}  \quad \text{ and }  \quad m > \max\Big\{\frac{1}{2}, \frac{\gamma}{4\gamma-6}, \frac{\gamma}{6\gamma^2-14\gamma+6}\Big\}
\end{equation}
for the variational entropy solutions, and additionally
\begin{equation} \label{9.1.9b}
m > \max\Big\{1, \frac{\gamma+1}{2(\gamma-1)}, \frac{3\gamma-1}{6\gamma-10}\Big\}
\end{equation}
for the weak solutions.
\end{remark}

The proof is similar to the case without radiation with two additional difficulties. One is connected with radiation, especially with compactness properties of the transport equation and we are not going to comment on this issue here, the other one is connected with the fact that for $\alpha <1$ we loose the nice structure of the a priori estimates coming from the entropy inequality and the situation becomes more complex. More precisely,  dropping the $\delta$-dependent terms (they can be treated as above), the entropy inequality \eqref{3.3} provides us only
\begin{equation} \label{9.1.10}
\|\vud\|_{1,p}^p \leq C \|\vtd\|_{3m}^{\frac{3m(2-p)}{2}},
\end{equation}
where $\vtd$ fulfills \eqref{3.4} and $p= \frac{6m}{3m+1-\alpha}$ (i.e. $p=2$ if $\alpha =1$). This complicates technically the situation, on the other hand, the values of $\alpha$ below $1$ are physically more realistic. More details can be found in \cite{KrNePo_ZAMP_2013}.

\subsection{Steady flow of chemically reacting mixtures} 

We finally review the results of the paper \cite{GiPoZa_An}, see also \cite{Zat} for the study of the isothermal case. We consider the following system of equations in $\Omega \subset \R^3$ 
\begin{equation}\label{10.1.1}
\begin{array}{c}
\Div (\vr \vu) = 0,\\
\Div (\vr \vu \otimes \vu) - \Div \tn{S} + \Grad \pi =\vr \vc{f},\\
\Div (\vr E\vu )+\Div(\pi\vu) +\Div\vc{Q}- \Div (\tn{S}\vu)=\vr\vc{f}\cdot\vu,\\
\Div (\vr Y_k \vu)+ \Div \vc{F}_{k}  =  m_k\omega_{k},\quad k\in \{1,\ldots,n\}.
\end{array}
\end{equation}
The above system describes the flow of a chemically reacting gaseous mixture of $n$-components. It is assumed that the molar masses of the components are comparable, which is assumed e.g. by a mixture of {\it isomers}. We denote by
$Y_k = \vr_k/\vr$ the mass fraction, $\vr_k$ is the density of the $k$-th constituent.

The system is completed by the boundary conditions at $\partial \Omega$
\begin{equation} \label{10.1.6}
\vu = \vc{0},
\end{equation}
\begin{equation} \label{10.1.7}
 \vc{F}_{k}\cdot\vc{n}=0,
\end{equation}
\begin{equation}\label{10.1.8}
-\vc{Q}\cdot\vc{n}+L(\vt-\Theta_{0})=0,
\end{equation} 
the given total mass
\begin{equation}\label{conserva}
\intO{\vr}=M>0,
\end{equation}
and the following assumptions on the form of: 
\begin{itemize}
\item the pressure law
\begin{equation}\label{defp}
\pi(\vr,\vt)=\pi_{c}(\vr)+\pi_{m}(\vr,\vt),
\end{equation}
with $\pi_{m}$ obeying the Boyle law 
\begin{equation}\label{mol}
\pi_{m}=\sum_{k=1}^n \vr Y_k\vt=\vr\vt
\end{equation}
and the so-called "cold" pressure
\begin{equation*}
\pi_{c}=\vr^{\gamma}, \quad \gamma>1;
\end{equation*}
the corresponding form of the specific total energy is
\begin{equation*}
E(\vr,\vu,\vt,\vr_1,\ldots,\vr_n)=\frac{1}{2}|\vu|^{2}+e(\vr,\vt,Y_1,\ldots,Y_n),
\end{equation*}
where the internal energy takes the form
\begin{equation*}
e=e_{c}(\vr)+e_{m}(\vt,Y_1,\ldots,Y_n) 
\end{equation*}
with
\begin{equation*}
e_{c}=\frac{1}{\gamma-1}\vr^{\gamma-1},\qquad\qquad e_m= \sum_{k=1}^n Y_ke_k=\vt\sum_{k=1}^n c_{vk}Y_k ,
\end{equation*}
where $c_{vk}$ is the mass constant-volume specific heat. The constant-pressure specific heat, denoted by $c_{pk}$, is related (under assumption on the equality of molar masses) to $c_{vk}$ in the following way
\begin{equation}\label{cpcv}
c_{pk}=c_{vk}+1,
\end{equation} 
and both $c_{vk}$ and $c_{pk}$ are assumed to be constant (but possibly different for each constituent).

\item the specific entropy
\begin{equation}\label{cotos}
s=\sum_{k=1}^n Y_k s_{k}
\end{equation}
with $s_k$ the specific entropy of the $k$-th constituent. The Gibbs formula has the form
\begin{equation}\label{Gibbs}
\vt \vc{D} s=\vc{D} e+\pi\vc{D}\left({\frac {1}{\vr}}\right)-\sum_{k=1}^n g_{k}\vc{D} Y_k,
\end{equation}
with the Gibbs functions
\begin{equation}\label{defg}
g_{k}=h_{k}-\vt s_{k},
\end{equation}
where $h_k=h_k(\vt)$, $s_{k}=s_{k}(\vr,\vt, Y_{k})$ denote the specific enthalpy and the specific entropy of the $k$-th species, respectively, with the following exact forms   
\begin{equation*}
h_k= c_{pk}\vt,\quad s_k= c_{vk}\log\vt-\log\vr-\log{Y_k}.
\end{equation*}
The cold pressure and the cold energy correspond to  isentropic processes, therefore using (\ref{Gibbs}) one can derive an equation for the specific entropy $s$
\begin{equation}\label{entropy}
\Div(\vr s\vu)+\Div\left( \frac{\vc{Q}}{\vt}-\sum_{k=1}^n \frac{g_{k}}{\vt}\vc{F}_{k}\right)=\sigma,
\end{equation}
where $\sigma$ is the entropy production rate
\begin{equation} \label{sigma}
\sigma=\frac{\tn{ S}(\vt, \nabla\vu):\nabla\vu}{\vt}-{\frac{\vc{Q}\cdot\Grad\vt}{\vt^{2}}}-\sum_{k=1}^n\vc{F}_{k}\cdot\nabla\left({\frac{g_{k}} {\vt}}\right)-\frac{\sum_{k=1}^n g_{k}\omega_{k}}{\vt}.
\end{equation}

\item the viscous stress tensor
\begin{equation} \label{vis_ten}
\tn {S} = \tn{S}(\vt, \nabla\vu)= \mu(\vt)\left[\nabla \vu + \nabla^T\vu -\frac{2}{3}\Div \vu \tn{I}\right]+\xi(\vt)(\Div \vu)\tn{I},
\end{equation}
with
$$\mu(\vt)\sim (1+\vt), \qquad 0\leq \xi(\vt) \leq (1+\vt)$$

\item the heat flux

\begin{equation}\label{eq:heatd}
\vc{Q}=\sum_{k=1}^n h_k \vc{F}_{k}+\vc{q},\quad \quad \vc{q}=-\kappa(\vt)\nabla\vt,
\end{equation}
where $\kappa=\kappa(\vt) \sim (1+\vt^m)$ is the thermal conductivity coefficient 

\item the diffusion flux
\begin{equation}
\vc{F}_{k}=-Y_k\sum_{l=1}^n D_{kl}\Grad Y_l,
\label{eq:diff1}
\end{equation}
where $D_{kl}=D_{kl}(\vt,Y_1,\ldots,Y_n)$, $k,l=1,\ldots,n$ are the multicomponent diffusion coefficients;
%
we consider
	\begin{equation}\label{10.prop}
	\begin{gathered}
		D=D^t,\quad
		N(D)=\R  \vec{Y},\quad
		R(D)={\vec{Y}}^{\bot},\\
		D \quad\text{ is positive semidefinite over } \R^n,
	\end{gathered}
	\end{equation}
where we assumed that $\vec{Y}=(Y_1,\ldots,Y_n)^t>0$ and
$N(D)$ denotes the nullspace of matrix $D$, $R(D)$ its range, ${ U}=(1,\ldots,1)^{t}$ and ${U}^{\bot}$ denotes the orthogonal complement of $\R{U}$.
Furthermore, we assume that   the matrix $D$ is homogeneous of a non-negative order with respect to $Y_1,\ldots,Y_n$ and that $D_{ij}$ are differentiable functions of $\vt,Y_1,\ldots,Y_n$ for any $i,j\in\{1,\ldots,n\}$ such that 
$$
|D_{ij}(\vt,\vec{Y})| \leq C(\vec{Y}) (1+\vt^a)
$$
for some $a\geq 0$. 

\item the species production rates
$$\omega_k=\omega_{k}(\vr,\vt,Y_1,\ldots,Y_n)$$
are smooth bounded functions of their variables such that 
\begin{equation}\label{wform}
		\omega_{k}(\vr,\vt,Y_1,\ldots,Y_n)\geq 0\quad{\rm whenever}\ \ Y_{k}=0.
	\end{equation}
Next, in accordance with the second law of thermodynamics we assume that
	\begin{equation}\label{admiss}
	-{\sum_{k=1}^n{g_k\omega_k}}\geq 0,
	\end{equation}
where $g_k$ are specified in \eqref{defg}. Note that thanks to this inequality and properties of $D_{kl}$, together with 
\eqref{vis_ten} and \eqref{eq:heatd} yield that the entropy production rate defined in \eqref{sigma} is non-negative.

\end{itemize}

We consider weak only weak solutions defined in the standard way. We have the following result 

\begin{theorem}[Giovangigli, Pokorn\'y, Zatorska, 2016] \label{T1}
Let $\gamma > \frac 53$, $M>0$, $m > 1$, $a < \frac{3m-2}{2}$. Let $\Omega \in C^2$ be a bounded domain in $\R^3$. Then there exists at least one  weak solution to our problem above.  Moreover, $(\vr,\vu)$ is the renormalized solution to the continuity equation. 
\end{theorem}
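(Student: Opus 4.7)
The plan is to imitate the four-level approximation scheme from Sections 5--8 and extend it by a fifth layer handling the species equations. At the deepest level one considers the Galerkin system for the velocity combined with the regularized continuity equation $\varepsilon\vr - \varepsilon\Delta\vr + \Div(\vr\vu) = \varepsilon h$, a parabolic regularization of the internal-energy balance (so that the approximate temperature stays strictly positive), augmented by the pressure regularization $\delta(\vr^\beta+\vr^2)$. For the species one sets
\begin{equation*}
\Div(\vr Y_k \vu) + \Div \vc{F}_k - \varepsilon \Delta Y_k = m_k \omega_k(\vr,\vt,\vec{Y})
\end{equation*}
with homogeneous Neumann data, and exploits structural properties \eqref{10.prop} of $D$ together with \eqref{wform} to show (e.g. via a Schauder fixed-point argument as in Lemma \ref{l 4.7}) existence at the deepest level. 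The positivity $Y_k\geq 0$ and the identity $\sum_k Y_k = 1$ follow from \eqref{wform} and the fact that $U \in N(D^t)$ implies $\sum_k\vc{F}_k = 0$, so that summing the species equations gives back the continuity equation.

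The key a priori bounds, uniform in $\delta$, come from testing the entropy production identity associated with \eqref{entropy}--\eqref{sigma} by $\psi \equiv 1$, combined with the global total energy balance. The admissibility assumption \eqref{admiss}, the positivity of $\tn{S}:\nabla\vu/\vt$, and the semi-definiteness of $D$ in \eqref{10.prop} provide control of $\|\vu\|_{1,2}$, $\|\nabla\vt^{m/2}\|_2$, $\|\nabla\ln\vt\|_2$, and additionally of $\sum_k \int_\Omega D_{kl}\nabla Y_k\cdot\nabla Y_l\,\dx$. Since $D$ is only semi-definite with kernel $\R\vec{Y}$, this last quantity must be combined with the constraint $\sum_k Y_k = 1$ (and positivity) to recover $L^2$-control of $\nabla Y_k$; the growth condition $a<(3m-2)/2$ ensures that $D_{kl}\sim 1+\vt^a$ is compatible with the available temperature integrability $\vt\in L^{3m}(\Omega)$. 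The pressure estimate $\|\vr\|_{s\gamma} \leq C$ for some $s>6/5$ is then obtained exactly as in Lemma \ref{l 3.3} by testing the momentum equation with the Bogovskii solution of $\Div\vcg{\varphi} = \vr^{(s-1)\gamma}-\text{mean}$; the restriction $\gamma>5/3$ is what keeps all convective terms, in particular $\vr|\vu|^2\vu$ and $\vr Y_k\vu$, integrable with exponent $>1$.

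The limit passages $N\to\infty$, $\varepsilon\to 0^+$, $\delta\to 0^+$ follow the pattern of Sections 5--7. Strong convergence of $\vrd$ is obtained through the effective viscous flux identity (as in \eqref{II4.12}) together with boundedness of the oscillation defect measure and the renormalized continuity equation à la Lemmas \ref{l II4.1}--\ref{l II4.2}; here the fact that the cold pressure $\pi_c = \vr^\gamma$ is strictly monotone plus the strong convergence of $\vt$ are sufficient to conclude as in the standard Navier--Stokes--Fourier case. For the species, once $\vud \to \vu$ strongly in $L^q$, $q<6$, and $\vrd \to \vr$ a.e., compactness of $Y_{k,\delta}$ in $L^2$ follows from the $W^{1,2}$-bound on each $Y_{k,\delta}$, which suffices to pass to the limit in $\vr Y_k \vu$ and in the nonlinear diffusion flux $-Y_k \sum_l D_{kl}(\vt,\vec{Y})\nabla Y_l$. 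The main obstacle, as we foresee, lies in step two: extracting genuine $W^{1,2}$-compactness of the mass fractions out of the degenerate dissipation $\sum_{k,l}D_{kl}\nabla Y_k\cdot\nabla Y_l$ (whose kernel is exactly $\vec{Y}$) and reconciling the temperature-dependent growth of $D_{kl}$ with the heat-flux contribution $\sum_k h_k\vc{F}_k$ appearing in $\vc{Q}$; once this is done in a form that is uniform along the approximation parameters, all other passages reduce to the scheme already developed in the previous sections.
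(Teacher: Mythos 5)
Your overall scheme --- Galerkin plus parabolic regularizations, a fixed point via Lemma \ref{l 4.7} at the deepest level, entropy-inequality-based a priori bounds together with the global total energy balance, Bogovskii pressure estimates, and the effective-viscous-flux argument for the strong convergence of $\vrd$ --- is indeed the strategy behind Theorem \ref{T1}, and your diagnosis of the role of $\gamma>\frac 53$, $m>1$ (integrability of the convective terms $\vr|\vu|^2\vu$ and $\vr Y_k\vu$) and of $a<\frac{3m-2}{2}$ (compatibility of the growth of $D_{kl}$ with $\vt\in L^{3m}$) is right. There are, however, two genuine gaps and one error.

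The error first: you write that ``$U\in N(D^t)$ implies $\sum_k\vc{F}_k=0$,'' but by \eqref{10.prop} one has $D=D^t$ and $N(D)=\R\vec{Y}$, so $U=(1,\dots,1)^t$ is \emph{not} in $N(D^t)$ unless all $Y_k$ coincide. The conservation $\sum_k\vc{F}_k=0$ follows instead from $\vec{Y}\in N(D)$: $\sum_k\vc{F}_k=-\sum_l\big(\sum_k Y_k D_{kl}\big)\nabla Y_l=-\sum_l (D\vec{Y})_l\nabla Y_l=0$. Conversely, the constraint $\sum_k Y_k=1$ places $(\nabla Y_1,\dots,\nabla Y_n)$ in $U^\perp$, and since $\vec{Y}\cdot U=1\ne 0$, the restriction of $D$ to $U^\perp$ is positive definite; that is the mechanism that turns the degenerate dissipation into a genuine $L^2$-bound on $\nabla Y_k$, which you gesture at but misattribute.

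The first gap concerns the species approximation. Your proposed level--4 equation
$\Div(\vr Y_k\vu)+\Div\vc{F}_k-\varepsilon\Delta Y_k=m_k\omega_k$
together with the approximate continuity equation $\varepsilon\vr-\varepsilon\Delta\vr+\Div(\vr\vu)=\varepsilon h$ does \emph{not} propagate the constraint $\sum_k Y_k=1$: summing the species equations and using $\sum_k\vc{F}_k=0$ and $\sum_k m_k\omega_k=0$ gives $\Div(\vr\vu)-\varepsilon\Delta\!\big(\sum_kY_k\big)=0$, which is inconsistent with $\Div(\vr\vu)=\varepsilon(h-\vr+\Delta\vr)\ne 0$. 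Without the constraint you lose both the algebraic control of the degenerate diffusion matrix and the very structure of the thermodynamics (the Gibbs relation \eqref{Gibbs} and the sign of $\sigma$ in \eqref{sigma} use $\sum_k Y_k=1$). One has to either formulate the approximation in the partial densities $\vr_k$ with $\varepsilon$-terms matching those in the total continuity equation (so that the sum of the species equations \emph{is} the approximate continuity equation), or add compensating artificial source terms; in either case this must be designed so that positivity of $Y_k$ and $\sum_kY_k=1$ survive each limit passage.

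The second gap is that you treat the entropy-inequality step as essentially the same as in Sections~6--7 and locate the ``main obstacle'' in the degenerate dissipation. In fact, as emphasized in the paper, the hardest part of the proof is to derive the \emph{correct form of the approximate entropy inequality} and to control the many extra terms generated by the regularizations: the species contribution $-\sum_k\vc{F}_k\cdot\nabla(g_k/\vt)$ and $-\sum_k g_k\omega_k/\vt$ in \eqref{sigma}, the cross-coupling through $\vc{Q}=\sum_k h_k\vc{F}_k+\vc{q}$ in \eqref{eq:heatd}, and, at the approximate level, the $\varepsilon$- and $\delta$-terms and the artificial species sources mentioned above, must all combine into a signed quantity (or be absorbed) before one can test by $\psi\equiv1$ as in \eqref{e1}. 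The admissibility condition \eqref{admiss} is used precisely here, but the bookkeeping is the delicate step, not the coercivity of $D$ on $U^\perp$. Your outline would need to spell this out to be a proof.
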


The proof is based on a complicated approximation procedure, where the most difficult part is to deduce the correct form of the approximate entropy inequality and to estimate  all additional terms that appear there due to approximation.  The reason for the bounds $\gamma>5/3$ and $m>1$ is, roughly speaking, the convective term in the total energy balance. To reduce the assumptions on $\gamma $ and $m$ (both using improved estimates of the pressure and consider variation entropy solutions) is the work in the progress.

\section{Conclusions}

The known existence results for the steady compressible Navier--Stokes--Fourier equations for large data were reviewed. It is well known that strong solutions may not exist. Therefore two different notions of a solution are proposed: the weak and the variational entropy one, where the former includes the weak formulation of the total energy balance while in the latter, the total energy balance is replaced by the weak formulation of the entropy inequality and the global total energy balance. More details in the existence proof for the three-dimensional flows were presented,  subject to either the homogeneous Dirichlet or the Navier boundary conditions for the velocity. The main ideas behind the proof of existence of more regular solution in the case of the Navier boundary conditions and $\gamma >3$ were explained. In this case even the internal energy balance is fulfilled. The two-dimensional flows for $\gamma$ almost one were also studied. Finally, few results for more complex models were presented, where the Navier--Stokes-Fourier system is combined with other equations.

\bigskip 

{\bf Acknowledgments.} The work of P.B. Mucha has been partly supported by Polish NCN grant No  2014/13/B/ST1/03094.

%
%


\end{document}